\documentclass[11pt,letterpaper]{amsart}

\usepackage{pstricks}

\usepackage[T1]{fontenc}
\usepackage[latin9]{inputenc}
\usepackage{geometry}
\geometry{verbose,letterpaper,tmargin= 1in,bmargin=1in,lmargin=1in,rmargin=1in}
\usepackage{wrapfig}
\usepackage{multicol}
\usepackage{enumerate}   
\usepackage{graphicx}
\usepackage{soul}
\usepackage{xcolor}
\usepackage{amssymb}
\usepackage{relsize}

\newtheorem{theorem}{Theorem}[section]
\newtheorem{lemma}[theorem]{Lemma}
\newtheorem{proposition}[theorem]{Proposition}

{ \theoremstyle{definition}
\newtheorem{definition}[theorem]{Definition}}
{ \theoremstyle{remark}
\newtheorem{remark}[theorem]{Remark}}
\usepackage{placeins}
\setcounter{tocdepth}{1}
\usepackage{cite}
\usepackage{caption}
\usepackage{enumerate}
\usepackage{afterpage}
\usepackage{enumitem}
\usepackage{bmpsize}
\usepackage{hyperref}
\usepackage{tabu}
\usepackage{enumitem}   
\numberwithin{equation}{section}
\usepackage{stmaryrd}
\usepackage{tikz}
\usetikzlibrary{matrix,graphs,arrows,positioning,calc,decorations.markings,shapes.symbols}
\definecolor{dullmagenta}{rgb}{0.4,0,0.4}   
\definecolor{darkblue}{rgb}{0,0,0.4}

\def\i{ \mathsf{i}}

\def\sfU { \mathsf{U}}
\def\sfV{\mathsf{V}}

\def\P {{\mathbb{P}^{\theta, K}_{\infty}}}
\def\PN {{\mathbb{P}^{\theta, K}_{N}}}

\def\I {{I}}

\def\G {\mathsf{G}}
\def\bX {{\bf X}}
\def\U {U_{K}^{n, k}}

\def\XXO {{\mathfrak{X}^{\theta,K}_{N}}}
\def\XXI {{\mathfrak{X}^{\theta,K}_{\infty}}}

\def\Re{\mathsf{Re}}
\def\Im{\mathsf{Im}}
\def\Df{\mathsf{D}}

\psset{unit=1pt}
\psset{arrowsize=4pt 1}
\psset{linewidth=1pt}
\newrgbcolor{mygreen}{.2 .7 .2}
\newrgbcolor{myred}{.7 .3 .2}
\newgray{mygray}{.90}
\countdef\x=23
\countdef\y=24
\countdef\z=25
\countdef\t=26

\def\tbox(#1,#2)#3{
\x=#1 \y=#2 
\multiply\x by 12 
\multiply\y by 12 
\z=\x \t=\y
\advance\z by 12 
\advance\t by 12 
\psline(\x,\y)(\x,\t)(\z,\t)(\z,\y)(\x,\y)
\advance\x by 6
\advance\y by 6 
\rput(\x,\y){{\bf #3}}}

\def\gbox(#1,#2)#3{
\x=#1 \y=#2 
\multiply\x by 12 
\multiply\y by 12 
\z=\x \t=\y
\advance\z by 12 
\advance\t by 12 
\psline[linecolor=gray, linewidth=0.5pt](\x,\y)(\x,\t)(\z,\t)(\z,\y)(\x,\y)
\advance\x by 6
\advance\y by 6 
\rput(\x,\y){\gray{#3}}}

\def\ebox(#1,#2)#3{
\x=#1 \y=#2 
\multiply\x by 12 
\multiply\y by 12 
\advance\x by 6
\advance\y by 6 
\rput(\x,\y){#3}}

\def\ggbox(#1,#2){
\x=#1 \y=#2 
\multiply\x by 12 
\multiply\y by 12 
\z=\x \t=\y
\advance\z by 12 
\advance\t by 12 
\psframe[fillstyle=solid, fillcolor=mygray, linewidth=0pt](\x,\y)(\z,\t)
\psline[linecolor=gray, linewidth=0.5pt](\x,\y)(\x,\t)(\z,\t)(\z,\y)(\x,\y)}

\def\tline(#1,#2)(#3,#4){
\x=#1 \y=#2 \z=#3 \t=#4
\multiply\x by 12 
\multiply\y by 12 
\multiply\z by 12 
\multiply\t by 12 
\psline(\x,\y)(\z,\t)}

\title[$\beta$-Krawtchouk corners processes and multi-level loop equations]{Global asymptotics for $\beta$-Krawtchouk corners processes via multi-level loop equations}
\date{\today}
\author{Evgeni Dimitrov and Alisa Knizel}

\begin{document}

\maketitle 

\begin{abstract}
We introduce a two-parameter family of probability distributions, indexed by $\beta/2 = \theta > 0$ and $K \in \mathbb{Z}_{\geq 0}$, that are called $\beta$-Krawtchouk corners processes. These measures are related to Jack symmetric functions, and can be thought of as integrable discretizations of $\beta$-corners processes from random matrix theory, or alternatively as non-determinantal measures on lozenge tilings of infinite domains. We show that as $K$ tends to infinity the height function of these models concentrates around an explicit limit shape, and prove that its fluctuations are asymptotically described by a pull-back of the Gaussian free field, which agrees with the one for Wigner matrices. The main tools we use to establish our results are certain multi-level loop equations introduced in our earlier work \cite{DK21}.
\end{abstract}

\tableofcontents

%
\section{Introduction and main result}\label{Section1}

%
\subsection{Preface}\label{Section1.0} In the paper we consider a two-parameter family of probability measures $\P$, which are indexed by $\beta/2 = \theta > 0$ and $K \in \mathbb{Z}_{\geq 0}$. We call these measures {\em $\beta$-Krawtchouk corners processes}, and they form a special subclass of {\em discrete $\beta$-corners processes}, which we introduced in \cite{DK21} as integrable discretizations of the $\beta$-corners processes from random matrix theory. The goal of this paper is to describe the global behavior of these models as $K \rightarrow \infty$. The tools we use to study $\P$ are certain {\em multi-level loop equations} that were developed in \cite{DK21}, which generalize the two-level loop equations from \cite{DK2020} and the single-level loop equations from \cite{BGG}, and which originate from the works of Nekrasov and his collaborators \cite{NPS, NS, N}. Loop equations (also known as Schwinger-Dyson equations) have proved to be a very efficient tool in the study of global fluctuations of log-gases and random matrices, see \cite{BoGu, BoGu2, JL, KS,S} and the references therein. It is worth mentioning that loop equations were introduced and have been widely used in the physics literature, cf. \cite{AM90, Ey1, EyCh, MI83} and the references therein.

Part of our motivation for studying $\P$ is to showcase our multi-level loop equations and explain how they can be used to obtain meaningful asymptotic results. That being said, we believe that the $\beta$-Krawtchouk corners processes are interesting models in their own right. They sit on the interface of $\beta$-corners processes (and more generally random matrix theory), {\em Macdonald processes} (and more generally integrable probability), and random lozenge tilings, and at least when $\theta = 1$ bear strong connections to orthogonal polynomials. In the present paper we have opted to introduce and motivate the models from the lozenge tiling perspective initially when $\theta = 1$ -- this is done in Section \ref{Section1.1}. In Section \ref{Section1.2} we introduce the class of models for general $\theta >0$ and explain some of its symmetric function origin, and connections to representation theory. Section \ref{Section1.3} contains the main results we establish about the $\beta$-Krawtchouk corners processes as well as a general discussion of how we use the multi-level loop equations. In Section \ref{Section1.4} we give an outline of the paper and end with some acknowledgments.

%
\subsection{Lozenge tilings}\label{Section1.1} Fix a polygonal domain in the triangular grid. By gluing together two triangles along a common side one can form three types of rhombuses, or {\em lozenges}, and one can consider a uniformly sampled covering of the domain by such lozenges. The latter model is called a {\em uniform lozenge tiling} of the domain and it satisfies the following {\em tiling Gibbs property}: if we fix a sub-domain as well as the tiling outside of this sub-domain, then the conditional distribution of the tiling of the sub-domain is uniformly distributed over all possible tilings. See Figure \ref{S1_1}.

\begin{figure}[h]
\centering
\scalebox{0.25}{\includegraphics{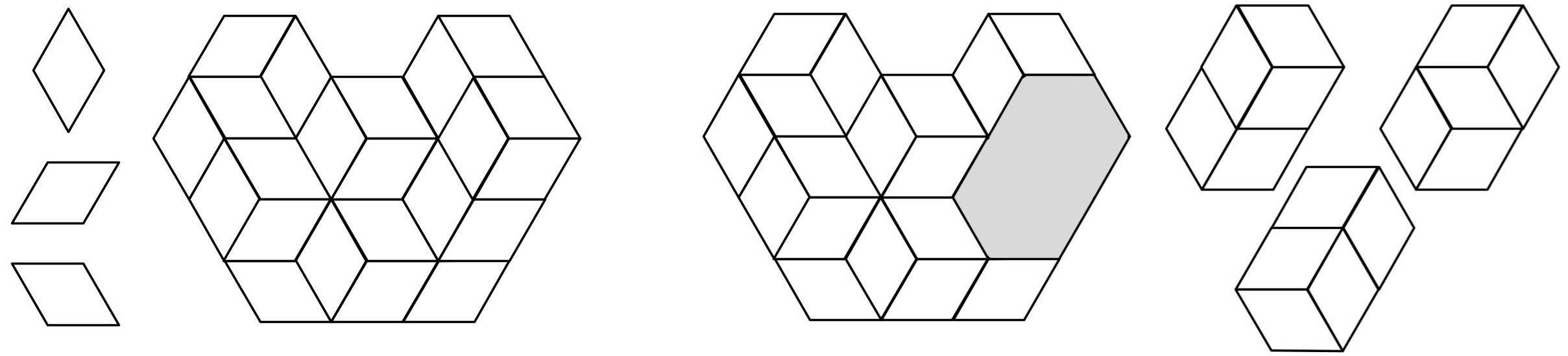}}
\caption{The left part depicts the three lozenges and a tiling of a domain. The gray region is a sub-domain and the conditional distribution of its tiling is uniform over all three tilings shown on the right. }
\label{S1_1}
\end{figure}
The core question one is interested in understanding is what happens to a random lozenge tiling as we increase the size of the domain in some regular way. Part of the interest in lozenge tilings comes from their exact solvability. Indeed, for such models (and more generally for planar {\em dimer models}) many quantities (such as partition and correlation functions) can be expressed as determinants of an inverse Kasteleyn matrix. One can then analyze these determinants in order to either predict, or in some cases rigorously prove, universal physical phenomena in the large scale limit. We refer the interested reader to the surveys of Kenyon \cite{Kenyon00, Kenyon09} for background on dimer models and to the book by Gorin \cite{GorBook} for a comprehensive introduction to random lozenge tilings.

A general feature of lozenge tiling models is that as the domain increases, the tiling develops {\em frozen facets} or {regions} where asymptotically only one type of lozenge remains present, as well as {\em liquid regions} where all three types of lozenges are asymptotically present. The frozen and liquid regions are separated by what are called {\em arctic boundaries} or {\em curves}, which for various models are algebraic curves that can be computed explicitly, see \cite{ADPZ, KOS, KO07}. The presence of liquid and frozen regions gives rise to rich asymptotic behavior for random lozenge tiling models (and more generally dimer models), and depending on the scaling one performs various universal scaling limits arise. Below we summarize some of asymptotic results available for dimer models and random lozenge tilings. We mention that the literature on dimer models, or even specifically lozenge tilings, is vast and so we do not attempt to give a full account. What we have tried to do is to include some of the earliest works establishing a particular kind of an asymptotic result, as well as a few more recent and/or more comprehensive results. The interested reader is referred to the introductions of the more recent papers we include for detailed explanations of the results, and a more comprehensive historical background on the works available for each scaling regime. For a friendly introduction to and overview of many of these results we refer to \cite{GorBook}.
\vspace{3mm}

{\raggedleft {\em Global laws of large numbers.}} A natural way to encode a tiling model is through a {\em height function}, which is an integer-valued random function. One way to define such a function, is by insisting that its value at any point is equal to the number of vertical lozenges $\lozenge$ to the right of that point. We mention that different papers define the height function differently, but they are all related through some deterministic transformations. One of the first results established for dimer models, is a {\em limit shape phenomenon}, which states that the height function (after suitable normalization) concentrates around a global limit, called the {\em limit shape}. This was first established for domino tilings for the Aztec diamond (a certain dimer model that is a cousin of lozenge tilings) in \cite{CEP}. Later, this result was substantially generalized by \cite{CKP}, which expressed the limit shape as the unique maximizer of a certain explicit concave surface tension functional. For lozenge tilings, this was rewritten by \cite{KO07} as the solution to the complex Burgers equation. A general feature of the limit shape of tiling models is that it is curved in the liquid region and flat in frozen regions.
\vspace{3mm}

{\raggedleft {\em Global central limit theorems.}} Once we know that the height function concentrates around a limit shape, it is natural to ask what the fluctuations around this limit shape look like. As was conjectured in \cite{KO07}, see also \cite[Lectures 11-12]{GorBook}, it is believed that the fluctuations are described by a suitable pull-back of the {\em Gaussian free field} (GFF) on $\mathbb{H}$ with Dirichlet boundary condition. The nature of the pull-back is dependent on the domain that is being tiled and for certain models is known explicitly, although its general form remains unknown. Some of the earlier works that establish convergence to the GFF for uniform lozenge tilings include \cite{Kenyon08} and \cite{BorFer}. More recent results include \cite{P1, BufGor18}.
\vspace{3mm}

{\raggedleft {\em Other asymptotics.}} In the bulk of the liquid region lozenge tilings converge to certain {\em ergodic Gibbs translation-invariant} (EGTI) measures on tilings of the whole plane, which were classified in \cite{Sheff05}, see\cite{KOS,Aggarwal19}. Near the arctic boundary, which separates the liquid from the frozen regions, the asymptotic behavior of random lozenge tilings is described by {\em Kardar-Parisi-Zhang} (KPZ) statistics. Some of the early works that establish edge asymptotics for uniform tiling models include domino tilings of the Aztec diamond \cite{Joh05} and lozenge tilings of the hexagon \cite{BKMM}, while for a recent comprehensive treatment of lozenge tilings of general polygonal domains we refer to the two-part paper \cite{Huang24, AggHuang21}. Near a {\em turning point} a random lozenge tiling (under appropriate scaling) converges to the {\em GUE corners process} or {\em minors process}, which is the joint law of the eigenvalues of a random matrix from the Gaussian universality ensemble (GUE) and its principal top-left corners, see \cite{Bar,Ne}. Some of the early works that establish convergence of lozenge tilings near turning points include \cite{JohNor, Nor, OR06}, while for a recent comprehensive treatment of lozenge tilings of general polygonal domains we refer to \cite{AggGorin}. Near a {\em cusp} of the arctic boundary one observes the Pearcey process, see \cite{OR07} and \cite{HYZ}. Near {\em cuspidal turning points} instead of the Pearcey process one observes the {\em cusp-Airy process}, see \cite{OR06, DJM16}. \\

\begin{figure}[h]
\centering
\scalebox{0.43}{\includegraphics{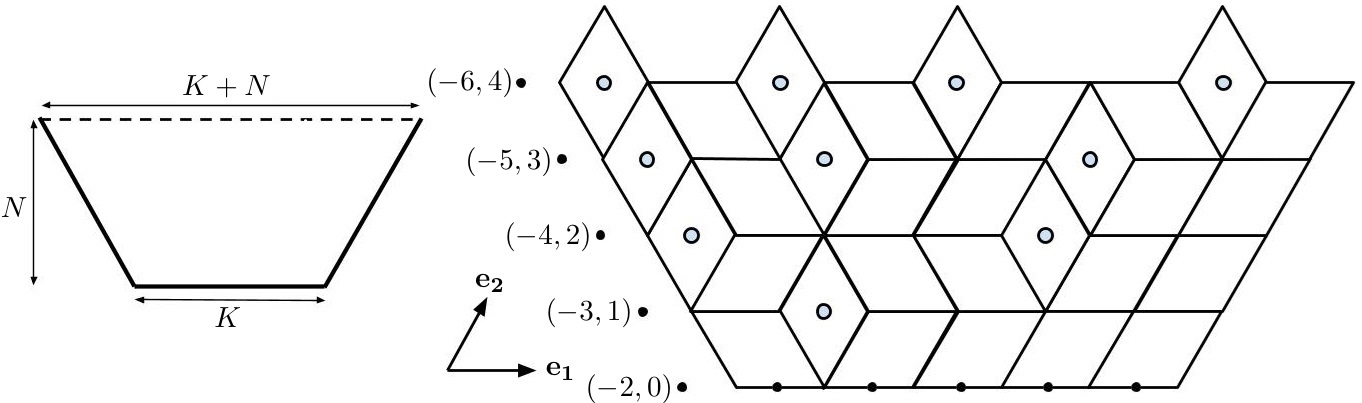}}
\caption{The left part depicts a trapezoidal domain. The right part depicts a random tiling of a trapezoid with base $K =5$ and height $N = 4$. For the right side, the particle locations on the fourth level are $\ell_1^4 = 2$, $\ell_2^4 = -1$, $\ell_3^4 = -3$ and $\ell^4_4 = -5$.}
\label{S1_2}
\end{figure}
In the present paper we deal with lozenge tilings of {\em trapezoidal domains}, also sometimes called {\em half-hexagons}. These are domains that depend on two parameters $K \in \mathbb{Z}_{\geq 0}$ (which is length of the short base of the trapezoid) and $N \in \mathbb{N}$ (which is the height of the trapezoid), see the left part of Figure \ref{S1_2}. For such domains the top boundary is special and we allow vertical lozenges $\lozenge$ to stick out from it, see the right part of Figure \ref{S1_2}. From the combinatorics of the model, one has that any tiling has precisely $k$ vertical lozenges on the $k$-th level of the model, and so there are precisely $N$ lozenges that stick out from the top. We place a particle in the middle of each vertical lozenge, and denote the locations of the particles on level $k$ by $(\ell_1^k, \dots, \ell^k_k)$, where $\ell_1^k > \ell_2^k > \cdots > \ell_k^k$. Another consequence of the combinatorics of the model is that if one looks at the lozenges of two consecutive levels, then they need to {\em interlace} in the sense that for $i = 1, \dots, N-1$ we have 
\begin{equation}\label{S1Interlace}
\ell^{i+1} \succeq \ell^i, \mbox{ which means } \lambda_1^{i+1} \geq \lambda_1^{i} \geq \lambda_2^{i+1} \geq \lambda_2^{i} \geq \cdots \geq \lambda^{i}_{i} \geq \lambda_{i+1}^{i+1},
\end{equation}
where we have written $\lambda^j_i = \ell_i^j + i$ for $j = 1, \dots, N$ and $i = 1, \dots, j$. 

For random tilings of trapezoids we allow the top level $(\ell_1^N, \dots, \ell^N_N)$ to have some distribution, but conditionally on the top level the remaining tiling needs to be uniformly distributed. In particular, the model still satisfies the tiling Gibbs property away from the top boundary. One readily observes that once the location of the vertical lozenges $\{\ell_i^j\}_{1 \leq i \leq j \leq N}$ are known the rest of the tiling can be uniquely reconstructed. Given this, one can view random lozenge tilings as measures on interlacing sequences of integer vectors, sometimes called {\em Gelfand-Tsetlin patterns}. Phrased in terms of $\{\ell_i^j\}_{1 \leq i \leq j \leq N}$ the tiling Gibbs property is seen to be equivalent to the statement that conditional on $(\ell^n_1, \dots, \ell^n_n)$ the distribution of $(\ell^1, \dots, \ell^{n-1})$ is uniform, subject to the interlacing conditions in (\ref{S1Interlace}), and is independent of $(\ell^{n+1}, \dots, \ell^{N})$. 

When $(\ell_1^N, \dots, \ell^N_N)$ are deterministic and converge to some limiting profile, random tilings of trapezoids have been previously studied using determinantal point processes in \cite{DJM16, DM15, DM18,DM20}, and independently in \cite{P1, P2}. They have also been studied using {\em Schur generating functions} in \cite{BufGor18, GorPan15}. In the present paper we consider the case then $\ell^N = (\ell_1^N, \dots, \ell^N_N)$ is {\em random} and has distribution
\begin{equation}\label{S1Krawt}
\mathbb{P}_{N}^K(\ell^N) \propto \prod_{1 \leq i < j \leq N} (\ell^N_i - \ell^N_j)^2 \cdot \prod_{i = 1}^N \binom{K+ N -1}{\ell_i^N + N},
\end{equation}
where $K-1 \geq \ell_1^N > \cdots > \ell_N^N \geq -N$. As before, $(\ell^1, \dots, \ell^{N-1})$ are uniformly distributed conditional on $\ell^N$ subject to interlacing. Using the Weyl dimension formula, the distribution of the whole vector $(\ell^1, \dots, \ell^N)$ becomes
\begin{equation}\label{S1Krawt2}
\mathbb{P}_{N}^K(\ell^1, \dots, \ell^N) \propto {\bf 1} \{\ell^1 \preceq \cdots \preceq \ell^N\} \cdot \prod_{1 \leq i < j \leq N} (\ell^N_i - \ell^N_j) \cdot \prod_{i = 1}^N \binom{K+ N -1}{\ell_i^N + N}.
\end{equation}

Let us briefly explain some of the properties of the measures in (\ref{S1Krawt2}) and why we are interested in them. Firstly, if one starts from (\ref{S1Krawt2}) and projects to $(\ell^1, \dots, \ell^n)$, where $1 \leq n \leq N$, then the pushforward measure is precisely of the form (\ref{S1Krawt2}) with $N$ replaced with $n$. We prove this statement more generally later in Lemma \ref{Consistent}. In particular, the measures in (\ref{S1Krawt2}) form a consistent family of measures (in $N$ for fixed $K$), and so there is a measure $\mathbb{P}_{\infty}^K$ on infinite sequences $(\ell^1, \ell^2, \dots)$ whose projection to any $(\ell^1, \dots, \ell^{N})$ is of the form (\ref{S1Krawt2}). The measures $\mathbb{P}_{\infty}^K$ form a one-parameter family of measures (indexed by $K$) that one can view as measures on lozenge tilings of the {\em vertically infinite} trapezoidal domain with base $K$. We mention that this measure is no longer ``uniform'', since there are infinitely many possible tilings of such a domain. Nevertheless, the conditional distribution of the tiling of any {\em finite} sub-domain is still uniform given the tiling outside of it, i.e. we still have the tiling Gibbs property from the beginning of this section. 

If one projects $\mathbb{P}_{\infty}^K$ to $\ell^N$, then its distribution (given by (\ref{S1Krawt})) upon shifting all entries by $N$ becomes the {\em Krawtchouk ensemble} from \cite[Section 5.1]{jo-en} with $p = q$. Consequently, all one-dimensional slices of $\mathbb{P}_{\infty}^K$ are given by Krawtchouk ensembles with different parameters that depend on the level, and $\mathbb{P}_{\infty}^K$ provides a coupling of all of these ensembles which satisfies the tiling Gibbs property, or equivalently the conditional uniformity of levels subject to the interlacing condition in (\ref{S1Interlace}). For this reason we refer to the measures $\mathbb{P}_{\infty}^K$ as {\em Krawtchouk corners processes}. We mention that the name of Krawtchouk ensembles comes from their connection to {\em Krawtchouk orthogonal polynomials}, see \cite{KLS}, and they occur in: the random domino tilings of the Aztec diamond and certain simplified directed first-passage percolation models \cite{jo-en}; stochastic systems of non-intersecting paths \cite{BBDT,KOR}; distributions on partitions arising from the representation theory of the symmetric group \cite{BO07}.

The term ``corners process'' comes from random matrix theory and we chose this name, since one can view $\mathbb{P}_{\infty}^K$ as an {\em integrable discretization} of  the GUE corners process that was mentioned earlier. Indeed, if we set $\ell_i^j = K/2 + (1/2) \cdot \sqrt{K} \cdot x_{j-i+1}^j$ in (\ref{S1Krawt2}) and take the $K \rightarrow \infty$ limit using the usual Stirling approximation argument from the classical DeMoivre-Laplace theorem we arrive at the density on $(x^1, \dots, x^N) \in \mathbb{R}^{N(N+1)/2}$, given by
\begin{equation}\label{S1GUECorners}
f(x^1, \dots, x^N) \propto {\bf 1} \{x^1 \preceq \cdots \preceq x^N\} \cdot \prod_{1 \leq i < j \leq N} (x^N_j - x^N_i) \cdot \prod_{i = 1}^Ne^{- (x^N_i)^2/2},
\end{equation}
which is precisely the GUE corners process, see \cite[Theorem 20.1]{GorBook}. We mention that for vectors $x^k\in \mathbb{R}^k$ and $x^{k+1} \in \mathbb{R}^{k+1}$ we have written $x^k \preceq x^{k+1}$ to mean $x_{k+1}^{k+1} \geq x^k_k \geq \cdots \geq x^k_1 \geq x^{k+1}_1$ (note that the indices are reversed compared to (\ref{S1Interlace}) so that $x_k^k$ is the largest and $x^k_1$ is the smallest  -- this is the usual convention from random matrix theory). We also mention that the last argument can be turned into a precise weak-convergence statement by adapting \cite[Proposition 4.3]{DK21}.

The usual way random tilings that satisfy the tiling Gibbs property arise in the literature is by taking a uniform tiling of a finite domain, and it is generally not easy to construct infinite random tilings with this property. Examples of such measures are the EGTI measures we mentioned earlier. Those measures are qualitatively different from the Krawtchouk corners processes $\mathbb{P}_{\infty}^K$ as they are translation-invariant and there is no boundary that the tiling is interacting with. Our measures are closer to the ones considered in \cite{BorFer}, as they describe random tilings of a sector in the plane that still has a boundary creating frozen facets, and have levels described by orthogonal polynomial ensembles (in \cite{BorFer} the authors get {\em Chariler ensembles} connected to {\em Charlier orthogonal polynomials}). The origin of the measures $\mathbb{P}_{N}^K$ in (\ref{S1Krawt2}) and the fact that they are consistent, comes from a connection to symmetric function theory, which we will elaborate once we introduce our general model in the next section. Here, we only mention that $\mathbb{P}_{N}^K$ arises as a special case of the {\em Schur processes} from  \cite{OR03}. This is also the case for the lozenge tilings in \cite{BorFer}, although in that paper the authors consider a {\em time evolution} of the tiling (i.e. there is an extra dimension).

%
\subsection{General model}\label{Section1.2}  In this section we introduce the general model that we study, which is a one-parameter generalization of the Krawtchouk corners process $\mathbb{P}_{\infty}^K$ from Section \ref{Section1.1}.

For $N \in \mathbb{N}$, $K \in \mathbb{Z}_{\geq 0}$ and $\beta/2 = \theta  > 0$  we define
\begin{equation}\label{S1GenState}
\begin{split}
&\Lambda^K_N = \{  (\lambda_1, \dots, \lambda_N) \in \mathbb{Z}^N : \lambda_1\geq  \lambda_2 \geq \cdots \geq \lambda_N,\mbox{ and }0  \leq \lambda_i  \leq K \}, \\
&\mathbb{W}^{\theta,K}_{k} = \{ (\ell_1, \dots, \ell_k):  \ell_i = \lambda_i - i \cdot \theta, \mbox{ with } (\lambda_1, \dots, \lambda_k) \in \Lambda^K_k\}, \\
&\XXO= \{ (\ell^1, \ell^{2}, \dots, \ell^{N}) \in \mathbb{W}^{\theta, K}_{1} \times \mathbb{W}^{\theta,K}_{2} \times \cdots \times \mathbb{W}^{\theta,K}_{N} : \ell^N \succeq \ell^{N-1} \succeq \cdots \succeq \ell^{1} \},
\end{split}
\end{equation}
where $\ell \succeq m$ means that if $ \ell_i = \lambda_i - i \cdot\theta$ and $m_i = \mu_i - i \cdot \theta$ then $\lambda_1 \geq \mu_1 \geq \lambda_2 \geq \mu_2 \geq \cdots \geq \mu_{N-1} \geq \lambda_N$. The set $\XXO$ is the state space of our point configuration $(\ell^1, \ell^{2}, \dots, \ell^{N})$. 

We consider measures on $\XXO$ of the form
\begin{equation}\label{S1PDef}
\PN(\ell^1, \dots, \ell^{N}) = \frac{1}{Z(\theta, K,N)} \cdot H_N^t(\ell^N) \cdot \prod_{j = 1}^{N-1}  \I(\ell^{j+1}, \ell^j), \mbox{ where }
\end{equation}
\begin{equation}\label{S1PDef2}
\begin{split}
\I(\ell^{j+1}, \ell^{j}) = & \prod_{1 \leq p < q \leq j+1}\frac{\Gamma(\ell^{j+1}_p - \ell^{j+1}_q + 1 - \theta)}{\Gamma(\ell^{j+1}_p - \ell^{j+1}_q) } \cdot \prod_{1 \leq p < q \leq j} \frac{\Gamma(\ell^{j}_p - \ell^{j}_q + 1)}{\Gamma(\ell^{j}_p - \ell^{j}_q + \theta)} \\
&\times\prod_{1 \leq p < q \leq j+1} \frac{\Gamma(\ell^{j}_p -
  \ell^{j+1}_q)}{ \Gamma(\ell^{j}_p - \ell^{j+1}_q + 1 - \theta)}  \cdot
\prod_{1 \leq p \leq q \leq j} \frac{\Gamma(\ell^{j+1}_p -
  \ell^{j}_q + \theta)}{\Gamma(\ell^{j+1}_p - \ell^{j}_q + 1)} ,
\end{split}
\end{equation}
\begin{equation}\label{S1PDef3}
\begin{split}
H_N^t(\ell^N) = \prod_{1 \leq p < q \leq N} &\frac{\Gamma(\ell^N_p - \ell^N_q + 1)}{\Gamma(\ell^N_p - \ell^N_q + 1 - \theta)} \cdot \prod_{p = 1}^N w^{\theta,K}_N(\ell^N_p).
\end{split}
\end{equation}
The constant $Z(\theta, K,N)$ in (\ref{S1PDef}) is a normalization constant and the weight $w_N^{\theta, K}$ in (\ref{S1PDef3}) is 
\begin{equation}\label{S1PDef4}
w^{\theta,K}_N(x) = \frac{1}{\Gamma(x +N\theta+1) \Gamma(K -x + 1- \theta) }.
\end{equation}
Notice that when $\theta = 1$, the measure $\PN$ becomes equivalent to (\ref{S1Krawt2}). Indeed, when $\theta = 1$ we have that $\I(\ell^{j+1}, \ell^{j}) = 1$ and $H^t_N(\ell^N)$ agrees with the product in (\ref{S1Krawt2}) upto a deterministic factor, which can be absorbed into the normalization constant.

We mention that unlike the $\theta = 1$ case, the particles $\ell_i^j$ no longer lie on $\mathbb{Z}$ but rather on the {\em shifted lattices} $\mathbb{Z} - i \theta$. We have chosen this convention in part to make the formulas in (\ref{S1PDef2}) and (\ref{S1PDef3}) more compact; however, one can easily go back to integer coordinates using the identities $ \ell^j_i = \lambda^j_i - i \cdot\theta$, which we will frequently use without mention.\\

The measures $\PN$ are special cases of discrete $\beta$-corners processes -- a class of models recently introduced in \cite{DK21} as integrable discretizations of $\beta$-corners processes from random matrix theory, such as the {\em Hermite}, {\em Laguerre} and {\em Jacobi} $\beta$-corners processes, see \cite{BGJ}. The word ``integrable'' reflects the symmetric function origin of the measures $\PN$, which we now explain briefly and discuss at length in Section \ref{Section2}. 

Starting from the measures $\mathbb{P}_N^K$ in (\ref{S1Krawt2}), one can replace the terms ${\bf 1}\{ \ell^i \preceq \ell^{i+1}\}$ with $s_{\lambda^{i+1}/ \lambda^i}(1)$, which is a {\em skew Schur polynomial} with a single variable equal to one. The terms in (\ref{S1PDef2}) are then obtained by replacing $s_{\lambda^{i+1}/ \lambda^i}(1)$ with $J_{\lambda^{i+1}/ \lambda^i}(1; \theta)$ which is a {\em skew Jack polynomial} with parameter $\theta$. We mention that this change affects the nature of the Gibbs property, which is no longer uniform subject to interlacing, but quite more involved and closer in form to the Gibbs property satisfied by $\beta$-corners processes. In \cite{GS} the authors referred to this new discrete Gibbs property as a {\em Jack-Gibbs property}. The lift from $\theta = 1$ to general $\theta > 0$, and correspondingly from Schur to Jack symmetric functions, is by now understood to be the ``correct'' way to obtain a generalization to arbitrary $\beta = \theta/2$. For example, in \cite{GS} the authors used Jack symmetric functions to construct general $\beta$-analogues of Dyson Brownian motion. 

For more background on models based on Jack symmetric functions and the connection to $\beta$-log gases (or $\beta$-ensembles) and $\beta$-corners processes we refer to \cite{CDM23, DK2020, DK21,GH22,huang}, and the references therein. Here, we mention that the interest in $\beta$-corners processes stems from their connection to random matrix theory, Macdonald processes \cite{BorCor}, and probability distributions on the irreducible representations of $U(N)$, see \cite[Section 3.2]{BufGor18} for the $\theta = 1$ case. In fact, the measures in (\ref{S1PDef}) are special cases of the ascending Macdonald processes with a pure $\beta$ specialization of length $K$, see Section \ref{Section2} for the details. The connection to Macdonald processes ensures that the measures $\PN$ are consistent, which gives rise to the following core definition of the paper.
\begin{definition}\label{BKCC} Fix $\theta > 0$ and $K \in \mathbb{Z}_{\geq 0}$. As will be shown in Lemma \ref{Consistent}, there exists a unique probability measure $\P$ on 
\begin{equation}\label{InfState}
\XXI = \{(\ell^1, \ell^2, \ell^3, \dots): \ell^{i+1} \succeq \ell^i \mbox{ for all $i \geq 1$}\},
\end{equation}
such that the pushforward of $\P$ onto $(\ell^1, \dots, \ell^N)$ is given by $\PN$ in (\ref{S1PDef}). We refer to the measure $\P$ as the {\em $\beta$-Krawtchouk corners process} with parameters $\beta = 2\theta$ and $K$.
\end{definition}
\begin{remark} Going back to the representation theory, one way to think about $\P$ is as a measure on the path-space of the {\em Gelfand-Tsetlin graph} or equivalently, as a {\em coherent system} of measures on the latter with Jack edge weights. When  $\theta = 1$ these systems are described in \cite{BO12} and for general $\theta > 0$ they implicitly appear in \cite{OO98}, and the measure $\P$ corresponds to setting $\beta_1^+ = \cdots = \beta_K^+ = 1$ and all other parameters to zero.
\end{remark}
\begin{remark}\label{DBE}  In Section \ref{Section2.3} we show that for each $n \in \mathbb{N}$ we have
\begin{equation}\label{S12E8}
\P(\ell^{n})  \propto \prod_{1 \leq i < j \leq n} \frac{\Gamma(\ell_i^{n} - \ell_j^n + 1 ) \Gamma(\ell_i^n - \ell_j^n  + \theta) }{\Gamma(\ell_i^{n} - \ell_j^n + 1 - \theta) \Gamma(\ell_i^n - \ell_j^n )} \cdot \prod_{i = 1}^n w_n^{\theta, K}(\ell_i^n),
\end{equation}
where $w_n^{\theta, K}$ is as in (\ref{S1PDef4}). Measures of the form (\ref{S12E8}) are called {\em discrete $\beta$-ensembles} and they were introduced and studied in \cite{BGG} for general weight functions. We refer to the measures in (\ref{S12E8}) as {\em $\beta$-Krawtchouk ensembles} and note that when $\theta = \beta/2 = 1$ they become (\ref{S1Krawt}). The measure $\P$ thus provides a natural coupling of infinitely many such ensembles, indexed by $n \in \mathbb{N}$.
\end{remark}

Starting from the next section we will almost exclusively talk about $\P$ as measures on $\XXI$ as in (\ref{InfState}), as opposed to lozenge tilings. Here, we mention that by placing vertical lozenges in locations $\{ (\lambda_i^j - i, i) : 1 \leq i \leq j < \infty \}$ in the coordinate system in Figure \ref{S1_2} one can alternatively view $\P$ as a {\em non free fermionic} measure on infinite lozenge tilings of the vertically infinite trapezoid of base $K$. The term ``non free fermionic'' reflects the fact that for general $\theta > 0$ (unlike $\theta = 1$) the measure $\P$ is not known to be determinantal. We finally mention that in principle one can lift our model higher in the hierarchy of symmetric functions and replace Jack with Macdonald symmetric functions in the definition of $\P$. In a different setting, lozenge tilings with semilocal interactions governed by Macdonald symmetric functions have been previously studied in \cite{Ahn20}.

%
\subsection{Main results and methods}\label{Section1.3} We now turn to our main results. They are formulated in terms of the {\em height function} $\mathcal{H}_K$, which we define presently. For $n \in \mathbb{N}$ and $i \in \llbracket 1, n \rrbracket$ we let
\begin{equation}\label{S1NewPart}
\tilde{\ell}_i^n = \ell_i^n - K/2 + \theta(n+1)/2,
\end{equation}
where two integers $p, q$ we write $\llbracket p, q \rrbracket = \{p, p+1, \dots, q\}$ with the convention that $\llbracket p,q \rrbracket = \emptyset$ if $q < p$. In words, $\tilde{\ell}^n_i$ are shifted versions of $\ell^n_i$ that are more symmetric. Specifically, note that from (\ref{S1GenState}) we have $\ell_i^n \in [-n \theta, K - \theta]$, while in view of (\ref{S1NewPart}) we have $\tilde{\ell}_i^n \in [- (n-1)\theta/2 - K/2, (n-1)\theta/2 + K/2]$, so that the range of $\tilde{\ell}_i^n$ is symmetric with respect to the origin.

With the above notation we define the height function $\mathcal{H}_K(x,s)$ for $(x,s) \in \mathbb{R} \times (0,\infty)$ as follows
\begin{equation}\label{S1DHF}
\mathcal{H}_K(x,s) = \sum_{i = 1}^n {\bf 1}\{\tilde{\ell}_i^n/K \geq x\}, \mbox{ where } n = \lceil s \cdot \theta^{-1} \cdot K \rceil.
\end{equation}
In words, $\mathcal{H}_K(x,s)$ counts the number of (scaled) particles to the right of location $x$. Note that since $\tilde{\ell}_i^n \in  [- (n-1)\theta/2 - K/2, (n-1)\theta/2 + K/2]$ for $i \in \llbracket 1, n\rrbracket$, we have that $\mathcal{H}_K(x,s) = n$ if $x \leq - (n-1)\theta/2 - K/2$ and $\mathcal{H}_K(x,s) = 0$ if $x > (n-1)\theta/2 + K/2$. 

Our first result shows that the scaled random height functions $K^{-1} \cdot \mathcal{H}_K(x,s)$ converge in probability to a {\em deterministic} height function $h(x,s)$ on $\mathbb{R} \times (0, \infty)$ that we proceed to describe. For $(x,s) \in \mathbb{R} \times (0, \infty)$ we define the function
\begin{equation}\label{S1Density}
\nu(x,s) = \begin{cases}  \mathlarger{ \frac{1}{\theta \pi} \cdot \arccos \left( \frac{1 - s}{\sqrt{(s+1)^2/4 - x^2} }  \right)} &\mbox{ if }  \mathlarger{ x \in \left(- \sqrt{s}, \sqrt{s} \right)}, \\ 
 \theta^{-1} \cdot {\bf 1} \{ s > 1 \} & \mbox{ if } x \in  \left(- \frac{s+1}{2} , \frac{s+1}{2} \right)  \setminus (-\sqrt{s}, \sqrt{s})              \\
 0 &   \mbox{ if }x \not \in \left(- \frac{s+1}{2} , \frac{s+1}{2} \right) .\end{cases}
\end{equation}
Using (\ref{S1Density}) we now define the deterministic height function
\begin{equation}\label{S1LimHeight}
h(x,s) = \int_x^{\infty} \nu(y,s) dy.
\end{equation}
With the above notation in place we can state our first result, which one can view as a global law of large numbers for the random height functions $\mathcal{H}_K(x,s)$.
\begin{theorem}\label{ThmLLN} Fix $\theta > 0$, assume that $(\ell^1, \ell^2, \dots)$ is distributed according to $\mathbb{P}^{\theta, K}_{\infty}$ as in Definition \ref{BKCC} and let $\mathcal{H}_K$ be as in (\ref{S1DHF}). Then, for any compact set $\mathcal{V} \subset \mathbb{R} \times (0,\infty)$ we have
\begin{equation}\label{S1LLNEq}
\lim_{K \rightarrow \infty} \sup_{(x,s) \in \mathcal{V}} \left| K^{-1} \cdot  \mathcal{H}_K(x,s) - h(x,s) \right| = 0,
\end{equation}
where $h(x,s)$ is as in (\ref{S1LimHeight}) and the convergence is in probability.
\end{theorem}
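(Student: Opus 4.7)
The plan is to prove Theorem \ref{ThmLLN} by first establishing pointwise-in-$(x,s)$ convergence via a single-level large deviations principle for the $\beta$-Krawtchouk ensemble of Remark \ref{DBE}, and then upgrading to local uniformity on $\mathcal{V}$ via a two-variable monotonicity argument. By Remark \ref{DBE}, the marginal of $\P$ on $\ell^n$ is the discrete $\beta$-ensemble \eqref{S12E8} with Krawtchouk weight $w_n^{\theta,K}$. Fixing $s>0$ and $n_K=\lceil s\theta^{-1}K\rceil$, Stirling's formula applied to the Gamma factors in \eqref{S1PDef4} shows that $K^{-1}\log w_{n_K}^{\theta,K}$, pulled back to the rescaled coordinate $y=\tilde\ell/K$, converges to an explicit smooth potential $V_s$ on $[-(s+1)/2,(s+1)/2]$. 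This embeds the rescaled level-$n_K$ marginal in the framework of Borodin--Gorin--Guionnet \cite{BGG}, whose single-level loop-equation analysis produces a large deviations principle: the empirical measure $\mu^{n_K}_K:=K^{-1}\sum_{i=1}^{n_K}\delta_{\tilde\ell_i^{n_K}/K}$ concentrates in probability on the unique minimizer $\mu^\star_s$ of a logarithmic energy functional with external field $V_s$ and upper constraint $\mu^\star_s\le\theta^{-1}\,dx$ coming from the discrete repulsion. Since $K^{-1}\mathcal{H}_K(x,s)=\mu^{n_K}_K([x,\infty))$, this gives pointwise convergence $K^{-1}\mathcal{H}_K(x,s)\to \mu^\star_s([x,\infty))$ in probability, and solving the Euler--Lagrange equations (a singular integral equation on the liquid region where the upper constraint is inactive, together with the saturation constraint on the frozen band) identifies $\mu^\star_s$ as having density $\nu(\cdot,s)$ of \eqref{S1Density}, with liquid edges at $\pm\sqrt{s}$ and a saturated band $\sqrt{s}<|x|<(s+1)/2$ appearing precisely when $s>1$.

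To upgrade pointwise convergence to the uniform statement \eqref{S1LLNEq}, I would combine monotonicity in both variables with continuity of $h$. The interlacing $\ell^{n+1}\succeq\ell^n$ combined with the $n$-dependent shift in \eqref{S1NewPart} yields $\tilde\ell_i^{n+1}\geq\tilde\ell_i^n+\theta/2$ for $i\le n$, so $\mathcal{H}_K(x,\cdot)$ is non-decreasing in $s$, and it is evidently non-increasing in $x$. The continuous limit $h(x,s)$ has the same two monotonicities and is jointly continuous on $\mathbb{R}\times(0,\infty)$, because $\nu(\cdot,s)$ is a bounded density depending continuously on $s$ (the upper constraint introduces no jumps in $h$). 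A two-variable Glivenko--Cantelli/Dini argument, applying pointwise convergence on the countable dense subset $(\mathbb{Q}\times\mathbb{Q})\cap\mathcal{V}$, then upgrades to uniform convergence on $\mathcal{V}$.

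The hardest step is the explicit identification of $\mu^\star_s$ with $\nu(\cdot,s)$ --- in particular handling the transition at $s=1$ where the saturation band first appears and the free boundary changes character. While this is a fairly standard computation for Krawtchouk-type potentials in principle, the upper constraint $\theta^{-1}\,dx$ must be tracked carefully and the candidate density verified to satisfy the Euler--Lagrange equations on exactly the right intervals, including matching of the boundary values of the density across the liquid and saturated bands. The other steps are routine, and it is worth emphasizing that the multi-level loop equations from \cite{DK21} --- the paper's central technical innovation --- are not needed for Theorem \ref{ThmLLN} itself and will instead be deployed for the fluctuation results.
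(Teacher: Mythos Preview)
Your proposal is correct and takes a genuinely different route from the paper's own argument. Both proofs rest on the single-level law of large numbers for the discrete $\beta$-ensemble (Proposition~\ref{LLN}, coming from \cite{BGG}) and both must pass from Lipschitz test functions to indicators, but the mechanisms for obtaining \emph{uniformity} over $\mathcal{V}$ differ. The paper uses the quantitative concentration inequality of Proposition~\ref{S32P2} (from \cite{DD21}): it approximates ${\bf 1}\{y\ge x\}$ by a Lipschitz $g_x$, obtains exponential tails uniformly in $n\in[\delta K,\delta^{-1}K]$, and then union-bounds over a $K$-dependent grid of size $O(K^2)$ in $(x,s)$. A final affine change of variables relates $\mathcal{H}_K$ and $h$ to the unshifted objects $\hat{\mathsf h}_K$, $\hat{\mathsf h}$ built from $\mu_n$ and $\mu(\cdot,s)$. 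Your approach instead exploits the structural observation that interlacing forces $\tilde\ell_i^{n+1}\ge\tilde\ell_i^n+\theta/2$, so that $\mathcal{H}_K(x,s)$ is monotone in each variable; together with joint continuity of $h$, a Dini/Glivenko--Cantelli argument over a \emph{fixed} finite grid then suffices. This is more elementary (no need for the exponential tails of Proposition~\ref{S32P2}) and highlights a pleasant combinatorial feature of the model, at the cost of losing any quantitative rate.

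One point is overstated in your proposal: you call the identification of $\mu^\star_s$ with $\nu(\cdot,s)$ the hardest step and propose solving the constrained Euler--Lagrange problem from scratch. In fact the paper already has the density of the equilibrium measure in the unshifted coordinates (equation~\eqref{S31E11}, via \cite{DD21}), and Step~3 of the paper's proof shows the relation $\nu(x,s)=\mu\!\left((\theta/s)(x-(s-1)/2),\,s\theta^{-1}\right)$ by a direct change of variables. So you can simply cite Proposition~\ref{LLN} and this substitution rather than re-deriving the saturated-band structure; your concern about the transition at $s=1$ is then automatically handled. With that shortcut, your argument is shorter than the paper's, while the paper's is more quantitative. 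Your final remark that the multi-level loop equations are not needed here is accurate and matches the paper's own logic.
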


The second major result of the paper describes the asymptotic global fluctuations of the height function $\mathcal{H}_K(x,s)$ and identifies them with a suitable pull-back of the Gaussian free field. We introduce the domain in $\mathbb{R}^2$
\begin{equation}\label{S1Domain}
\mathcal{D} = \{(x,s) \in \mathbb{R}^2: s > 0, \sqrt{s} > x > - \sqrt{s} \},
\end{equation}
which is the liquid region in our model, i.e. the set in $\mathbb{R} \times (0, \infty)$ where $\nu(x,s)$ is not equal to zero or one, or equivalently where $h(x,s)$ is not straight. We also define the map $\Omega: \mathcal{D} \rightarrow \mathbb{H}$ via 
\begin{equation}\label{S1Map}
\Omega(x,s) = x + \i \cdot \sqrt{s - x^2},
\end{equation}
which in words maps horizontal lines in $\mathcal{D}$ to half-circles in $\mathbb{H}$, see Figure \ref{S1_3}.
\begin{figure}[h]
\centering
\scalebox{0.46}{\includegraphics{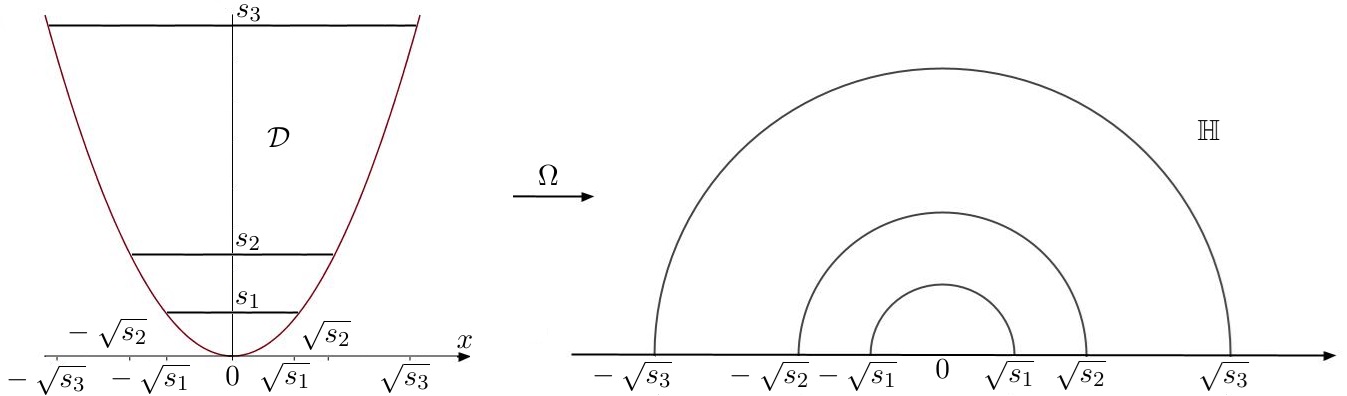}}
\caption{The liquid region $\mathcal{D}$ and the map $\Omega$ that sends it to $\mathbb{H}$. }
\label{S1_3}
\end{figure}
One readily verifies that $\Omega(x,s)$ defines a smooth bijection between $\mathcal{D}$ and $\mathbb{H}$ with inverse
\begin{equation}\label{S1MapInv}
\Omega^{-1}(z) = \left( \Re[z], |z|^2 \right).
\end{equation}

With the above notation in place we can state our second result, which one can view as a global central limit theorem for the random height functions $\mathcal{H}_K(x,s)$.
\begin{theorem}\label{ThmMain} Fix $\theta > 0$, assume that $(\ell^1, \ell^2, \dots)$ is distributed according to $\mathbb{P}^{\theta, K}_{\infty}$ as in Definition \ref{BKCC} and let $\mathcal{H}_K$ be as in (\ref{S1DHF}). Fix $m \in \mathbb{N}$, $s_1, \dots, s_m \in (0,\infty)$ and $m$ entire functions $f_1, \dots, f_m$, and consider the random variables
\begin{equation}\label{S1RVS}
X_i^K = \sqrt{\theta \pi} \cdot \int_{\mathbb{R}} \left(\mathcal{H}_K(x,s_i)  - \mathbb{E} \left[\mathcal{H}_K(x,s_i) \right] \right) \cdot f_i(x) dx \mbox{ for } i \in \llbracket 1, m \rrbracket.
\end{equation}
Then, as $K \rightarrow \infty$ the random vectors $X^K = (X^K_1, \dots, X^K_m)$ converge in the sense of moments to a Gaussian vector $(X_1, \dots, X_m)$ with mean zero and covariance
\begin{equation}\label{S1CovGFF}
\mathrm{Cov}(X_i, X_j) = \int_{-\sqrt{s_i} }^{\sqrt{s_i}} \int_{-\sqrt{s_j} }^{\sqrt{s_j}} f_i(x) f_j(y) \left( - \frac{1}{2\pi} \log \left| \frac{\Omega(x,s_i) - \Omega(y,s_j)  }{\Omega(x,s_i) - \overline{\Omega}(y,s_j) } \right|\right) dy dx,
\end{equation}
where $\Omega(x,s)$ is as in (\ref{S1Map}).
\end{theorem}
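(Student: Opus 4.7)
The strategy is to apply the multi-level loop equations of \cite{DK21} to control the joint fluctuations of the empirical Stieltjes transforms on the levels $n_i := \lceil s_i \theta^{-1} K \rceil$ and then transfer the result back to the linear statistics $X_i^K$ via a contour integral representation. Since each $f_i$ is entire and the scaled particles $\tilde{\ell}_j^{n_i}/K$ are confined to a fixed compact interval, a standard integration-by-parts plus Cauchy-theorem argument (after centering, which absorbs irrelevant additive constants) yields
\[
X_i^K \;=\; \frac{\sqrt{\theta\pi}}{2\pi \mathsf{i}} \oint_{\Gamma_i} F_i(z) \bigl( G_{n_i}^K(z) - \mathbb{E} G_{n_i}^K(z) \bigr)\, dz,
\]
where $G_n^K(z) = \sum_{j=1}^n (z - \tilde{\ell}_j^n/K)^{-1}$, $F_i$ is an entire antiderivative of $-f_i$, and $\Gamma_i$ is a positively oriented contour enclosing $\bigl[-(s_i+1)/2,(s_i+1)/2\bigr]$. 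Moment-convergence of $(X_1^K,\dots,X_m^K)$ to a Gaussian vector is therefore reduced to convergence of the joint cumulants of the centered Stieltjes transforms $\Delta G_{n_i}^K(z_i) := G_{n_i}^K(z_i) - \mathbb{E} G_{n_i}^K(z_i)$ at contour points $z_i \in \Gamma_i$.

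Specialized to the weight $w_n^{\theta,K}$ of \eqref{S1PDef4}, the multi-level loop equations of \cite{DK21} produce exact identities $\mathbb{E}[\mathcal{N}_n^K(z)] = P_n^K(z)$, where $\mathcal{N}_n^K(z)$ is a Nekrasov-type observable built from the joint configuration on the first $n$ levels and $P_n^K(z)$ is an explicit polynomial whose degree is controlled by the ratio $w_n^{\theta,K}(x+1)/w_n^{\theta,K}(x)$. Testing these identities against observables that probe two or more points on possibly distinct levels, and expanding in powers of $K^{-1}$ around the deterministic profile supplied by Theorem \ref{ThmLLN}, produces a triangular hierarchy for the $k$-point joint cumulants of the $\Delta G_{n_i}^K$: the leading order fixes the subleading mean correction, the next order closes into a \emph{linear} equation across different levels for the limiting two-point covariance, and at each further order the $k$-point cumulant is forced to be of size $O(K^{-k})$ and hence vanishes in the limit for $k\geq 3$. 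Establishing that the linearized multi-level operator appearing at each order is invertible on an appropriate space of holomorphic test functions, so that the hierarchy actually closes and produces Gaussian limits, is the main technical obstacle; it extends the single-level scheme of \cite{BGG} and the two-level argument of \cite{DK2020} to an arbitrary number of levels, and it is precisely at this point that the \emph{multi-level} (as opposed to single-level) nature of the loop equations becomes essential, since joint Gaussianity across different $s_i$ cannot be read off from the level-by-level ensembles of Remark \ref{DBE}.

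To identify the limit covariance, one first solves the leading-order equation for $G_s(z) := \lim_{K\to\infty} K^{-1} G_n^K(z)$; this is a single-variable algebraic equation (the leading part of the Nekrasov identity) whose discriminant vanishes exactly at $z = \pm\sqrt{s}$, so that $G_s$ extends to a two-sheeted cover of $\mathbb{C}$ branched along $[-\sqrt{s},\sqrt{s}]$. The conformal bijection $\Omega(\cdot, s)$ of \eqref{S1Map} is precisely the uniformization of that cover to $\mathbb{H}$, which is the algebraic reason the GFF of the theorem is pulled back through $\Omega$. One then verifies by direct substitution into the linearized two-level loop equation (and by matching boundary data at the arctic edges $\pm\sqrt{s_i}$) that
\[
\lim_{K \to \infty} K^{2}\,\mathrm{Cov}\bigl(G_{n_1}^K(z_1),\, G_{n_2}^K(z_2)\bigr) \;=\; -\frac{1}{2\pi}\, \partial_{z_1}\partial_{z_2}\log\!\left(\frac{\Omega(z_1, s_1) - \Omega(z_2, s_2)}{\Omega(z_1, s_1) - \overline{\Omega(z_2, s_2)}}\right)
\]
is the limiting two-point kernel. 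Substituting this kernel into the product of two contour integrals for $X_i^K X_j^K$, deforming $\Gamma_i,\Gamma_j$ onto the real segment $[-\sqrt{s_i},\sqrt{s_i}]\times[-\sqrt{s_j},\sqrt{s_j}]$, and integrating by parts twice to move the $z$-derivatives off $\Omega$ back onto $F_i$ and then convert the $F_i$'s to the original $f_i$'s produces exactly the integral in \eqref{S1CovGFF}. Combined with the vanishing of cumulants of order $\geq 3$ established in the second paragraph, this yields the stated convergence of $X^K$ to the Gaussian vector $(X_1,\dots,X_m)$ with covariance \eqref{S1CovGFF}.
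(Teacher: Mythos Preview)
Your high-level strategy---pass to Stieltjes transforms, use the multi-level loop equations of \cite{DK21} to show joint Gaussianity with an explicit covariance, then integrate against test functions and deform contours to recover the GFF form---is exactly the paper's approach. However, several of the concrete claims you make do not hold and would block the argument.

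First, the scaling is wrong: with your normalization $G_n^K(z)=\sum_j (z-\tilde\ell_j^n/K)^{-1}$ the covariances of the centered transforms are $O(1)$, not $O(K^{-2})$, so no factor $K^2$ should appear.

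More importantly, the limiting covariance of the Stieltjes transforms is \emph{not} the expression $-\tfrac{1}{2\pi}\partial_{z_1}\partial_{z_2}\log\bigl((\Omega(z_1,s_1)-\Omega(z_2,s_2))/(\Omega(z_1,s_1)-\overline{\Omega(z_2,s_2)})\bigr)$ you write. The map $\Omega(\cdot,s)$ of \eqref{S1Map} is defined only on the real interval $(-\sqrt s,\sqrt s)$ and is not conformal (it is a diffeomorphism $\mathcal D\to\mathbb H$ in two real variables), so your formula is not even well-defined for $z_i$ on contours in $\mathbb C\setminus\mathbb R$. What the loop equations actually produce (see \eqref{LimCov}) involves the transport map $F(z;s_1,s_2)=z+\theta(s_1-s_2)/(e^{\theta G(z/s_2,s_2)}-1)$ and an explicit Jacobian factor; the GFF structure with $\Omega$ only emerges \emph{after} pairing against test functions and deforming contours onto the cuts, via a lengthy direct computation (Steps~2--6 of the proof of Theorem~\ref{MainThmNew}). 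Your ``direct substitution\ldots and matching boundary data'' hides precisely this work.

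Finally, the vanishing of cumulants of order $\geq 3$ is not obtained by power counting $O(K^{-k})$. The multi-level loop equations give $O(K^{-1/2})$ bounds only for cumulants involving the auxiliary combination $\sfU_K$ of \eqref{S52E1}, not for the $\G_K$'s themselves. Passing from one to the other requires showing that any subsequential limit $f_\infty(z,s)$ of the multilevel cumulants satisfies a first-order PDE (equivalently the integral identity of Lemma~\ref{LUnique}) with zero boundary data at the top level, and then invoking the method of characteristics to force $f_\infty\equiv 0$. This step---and the equicontinuity needed to extract subsequential limits---is the heart of the proof (Proposition~\ref{S6Prop}) and is entirely absent from your outline; asserting that ``the linearized multi-level operator is invertible'' does not substitute for it.
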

\begin{remark}\label{S1RemGFF} Let us explain the connection between Theorem \ref{ThmMain} and the GFF, see also Section \ref{Section8.1}. If $\mathcal{F}$ denotes the GFF on $\mathbb{H}$ with Dirichlet boundary conditions, then $\mathcal{F} \circ \Omega$ is a generalized centered Gaussian field on $\mathcal{D}$ and for entire functions $f_1, \dots, f_m$ the covariance of the random variables 
$$\int_{-\sqrt{s_i}}^{\sqrt{s_i}} \mathcal{F} \left( \Omega(x,s_i) \right) f_i(x) dx, \hspace{3mm} i \in \llbracket 1, m \rrbracket,$$
is precisely the right side of (\ref{S1CovGFF}). I.e. the natural $L^2$ pairings of the centered height functions $\mathcal{H}_K(x,s)$ in (\ref{S1RVS}) converge to those of $\mathcal{F} \circ \Omega$ in the sense of moments as $K \rightarrow \infty$.
\end{remark}
\begin{remark}\label{S1RemTheta}
We mention that in Theorem \ref{ThmMain} the parameter $\theta$ enters only as a simple factor in (\ref{S1RVS}). The pull-back map $\Omega$ and other expressions in the statement are independent of $\theta$. This is in strong agreement with the results in \cite{BGJ} for the $\beta$-Jacobi corners process and in \cite{HuangJGF} for certain Jack processes.
\end{remark}
\begin{remark}\label{S1RMTGFF} The pull-back map $\Omega$ in (\ref{S1Map}) is exactly the same as the one discovered for spectra of Wigner matrices in \cite{BorWig} upto a factor of $2$. This factor can be traced to the fact that the $N$ eigenvalues in \cite{BorWig} concentrate in $[-2 \sqrt{N}, 2 \sqrt{N}]$ as opposed to $[-\sqrt{N}, \sqrt{N}]$. Depending on the reader's background, this might be very surprising or totally expected. The reason this might be surprising is that Theorem \ref{ThmLLN} shows that the limit shape for our model is completely different than the one for Wigner matrices. Namely, the limiting particle density on level $s$ in our model is $s^{-1} \cdot  \nu(x,s)$ with $\nu(x,s)$ as in (\ref{S1Density}), which is not the same as Wigner's semicircle law. So even though the limit shapes are {\em different} the fluctuations are the {\em same}. The reason this might be expected, is that (as explained in Section \ref{Section1.1}) the Krawtchouk corners process is an integrable discretization of the GUE corners process, and the GUE is a special case of Wigner matrices. Therefore, the asymptotic fluctuations of the two models should agree.
\end{remark}

We next sketch our approach to proving Theorems \ref{ThmLLN} and \ref{ThmMain}. We focus on Theorem \ref{ThmMain} as it is the one containing most of the novelty. We consider the random analytic functions 
$$\mathsf{G}_{K}(z,s) = \sum_{i = 1}^n \frac{1}{z - \ell_i^n/K}, \mbox{ where } n = \lceil s K \rceil,$$
which are nothing but the Stieltjes transforms of the point processes formed by our scaled particle locations. The main technical result of the paper, Theorem \ref{MainTechThm}, shows that as $K \rightarrow \infty$ the centered functions $\mathsf{G}_{K}(z,s) $ converge to a centered Gaussian field (in $z$ and $s$) with an explicit covariance. One then deduces Theorem \ref{ThmMain} from Theorem \ref{MainTechThm} by a conceptually straightforward, although computationally involved, argument that occupies the majority of Section \ref{Section8}.

To prove Theorem \ref{MainTechThm} we need to show that all third and higher order joint cumulants of $\mathsf{G}_{K}(z,s) $ vanish, while the second order cumulant (or covariance) converges to some function as $K \rightarrow \infty$. For a very short introduction to the definition and basic properties of joint cumulants we refer the reader to Appendix \ref{AppendixA}. As mentioned a few times earlier, the main new tool we use to study the joint cumulants of $\mathsf{G}_{K}(z,s) $ are the multi-level loop equations from \cite{DK21}. These equations are quite involved so we forego stating them until the main text. In order to apply these equations to our model, we need to be able to linearize them, which requires that we obtain moment bounds for $\mathsf{G}_{K}(z,s)$ and for differences of two $\mathsf{G}_{K}(z,s) $'s at consecutive levels. These moment bounds are established in Propositions \ref{S33P1} and \ref{S43P1} by combining results from \cite{DD21}, the single-level loop equations from \cite{BGG} and the two-level loop equations from \cite{DK2020}. We mention that both the single and two-level loop equations are special cases of the multi-level loop equations and the reason we can apply them in our setting is that the restriction of $\P$ to each level is a discrete $\beta$-ensemble, see Remark \ref{DBE}.

Upon linearization, the single-level loop equations from \cite{BGG} state that 
\begin{equation}\label{S1SingleLevel}
M(\mathsf{G}_{K}(z_1,s_1),\mathsf{G}_{K}(z_2,s_1), \dots, \mathsf{G}_{K}(z_m,s_1) ) = [\mbox{Something explicit}] + [\mbox{Something small}],
\end{equation}
where the left side is the joint cumulant. When $m \geq 3$ the explicit part is zero, and when $m = 2$ it is the limiting covariance. In particular, for the single-level analysis the problem is essentially over once we linearize the loop equations. The situation is more involved for the multi-level analysis and after linearizing the equations and simplifying expressions we instead arrive at the statement that any subsequential limit $f_{\infty}(z,s)$ of 
$$M(\mathsf{G}_{K}(z_1,s_1),\mathsf{G}_{K}(z,s) )$$
satisfies the integral identity
\begin{equation}\label{S1PDE}
 \left[e^{\theta G\left(z/s, s\right)} - 1 \right] f_{\infty}(z,s) + \theta \cdot \int_{s}^{s_1}  \partial_{z} f_{\infty}(z,u)du -   \left[e^{\theta G\left(z/s, s\right)} - 1 \right] f_{\infty}(z,s)  = 0, 
\end{equation}
provided that $s_1 \geq s > 0$. Here, $G(z,s)$ is an explicit function (it is the Stieltjes transform of certain $s$-dependent measures that arise as limiting particle densities). Upon differentiation, (\ref{S1PDE}) becomes a first order PDE with a boundary condition given by the single-level covariance that we mentioned can be computed in (\ref{S1SingleLevel}). This equation can be solved using the method of characteristics and this is how we show the covariances converge (the characteristic curves are actually straight lines). 

We mention that the above argument is quite reductive and we have omitted many challenges along the way. For example, the existence of subsequential limits for the cumulants is not obvious and we establish it by leveraging our moment bounds from  Propositions \ref{S33P1} and \ref{S43P1} in conjunction with the multi-level loop equations. Using these inputs we are able to show a type of Lipschitz (or equicontinuity) property satisfied by the joint cumulants that ensures that subsequential limits exist, and they have sufficient regularity. In addition, when we deal with $m$-th order cumulants we get PDEs involving $2m-2$ variables, which makes the PDE non-linear for $m \geq 3$. The way we deal with this issue is by implementing an intricate inductive argument that allows us to deal at each step of the induction with only two variables, see the proof of Proposition \ref{S6Prop}.\\

In the remainder of this section we compare our approach to previous ones used to study related models. As mentioned earlier, when $\theta \neq 1$ one cannot use techniques based on determinantal point processes, nor the Schur generating functions in \cite{BufGor18,BufGor19}. One possible approach is based on the {\em Jack generating functions} in \cite{HuangJGF}, which have been used to prove GFF asymptotics in certain Markov processes based on Jack symmetric functions. The existing work in that direction is in a dynamical setting of $N$ walkers that evolve without crossing as opposed to corners processes. 

Another approach is based on the {\em dynamical loop equations} introduced in \cite{GH22}. The methods developed in \cite{GH22} are applied to study lozenge tilings on what are called {\em quadratic lattices} as opposed to the shifted integer lattices in our case, and their top level is deterministic while ours is random, which does play a role in their analysis (especially in identifying the GFF structure). That being said, the dynamical loop equations are quite general and can be applied to different models as explained in \cite[Section 3]{GH22}, and our model specifically satisfies \cite[Corollary 3.5]{GH22}. In addition, in the context of studying random tilings of domains with holes, \cite{BufGor19} were able to deal with the case of a random top level, so there is a hope that the dynamical loop equations can be used to study models like ours. We mention that in \cite{GH22} the authors also end up solving a PDE using the method of characteristics to compute the covariance; however, it is hard to make a direct comparison as they have a PDE for the complex slope while our PDE is for the joint cumulants of the Stieltjes transforms. At a very high level, our loop equations provide access to the ``level'' integrals of the Stieltjes transforms, while the dynamical loop equations describe how one level changes if the previous is fixed, i.e. they provide access to the ``level'' derivatives. In this sense, the two approaches are complementary to each other, and we are hopeful they can be used in tandem to study a variety models in the future.

%
\subsection{Paper outline}\label{Section1.4} In Section \ref{Section2} we introduce Jack processes, and explain how the measures $\PN$ from (\ref{S1PDef}) can be interpreted as special cases of them. We utilize this connection to show some projection properties of $\PN$, which imply these measures are consistent and each level is described by a discrete $\beta$-ensemble. In Section \ref{Section3} we introduce several functions that arise from the law of large numbers limit of different levels, and obtain bounds on the moments of certain analogues of $\mathsf{G}_{K}(z,s)$ from the introduction -- this is Proposition \ref{S33P1}. In Section \ref{Section4} we prove moment bounds for the differences of two Stieltjes transforms on consecutive levels -- this is Proposition \ref{S43P1}. In Proposition \ref{S52P1} we prove a large class of multi-level joint cumulant relations. The key ingredients behind Proposition \ref{S52P1} are the multi-level loop equations, and certain product expansion formulas, which allow us to linearize them.

In Section \ref{Section6} we show that the third and higher joint cumulants of $\mathsf{G}_{K}(z,s)$ vanish in the large $K$ limit, see  Proposition \ref{S6Prop}, and in Section \ref{Section7} we show that the $\mathsf{G}_{K}(z,s)$ converge to a Gaussian field with an explicit covariance, see Theorem \ref{MainTechThm}. In Section \ref{Section8} we match the covariance in Theorem \ref{MainTechThm} with a suitable pull-back of the Gaussian free field and prove Theorems \ref{ThmLLN} and \ref{ThmMain}. In Appendix \ref{AppendixA} we summarize some basic properties of joint cumulants used throughout the paper, and in Appendix \ref{AppendixB} we explain how the covariance formula in Theorem \ref{MainTechThm} was guessed.

%
\subsection*{Acknowledgments}\label{Section1.5} The authors would like to thank Alexei Borodin, Vadim Gorin and Jiaoyang Huang for many useful discussions and comments on earlier drafts of the paper. ED was partially supported by the Minerva Foundation Fellowship and the NSF grant DMS:2054703. AK is partially supported by NSF grant DMS:2153958.

%
\section{Connection to Jack symmetric functions}\label{Section2} In this section we explain how the measures $\PN$ from (\ref{S1PDef}), can be interpreted as an ascending Jack process and we utilize this connection to derive some properties for $\PN$. In Section \ref{Section2.1} we recall the Jack symmetric functions and list some of their basic properties. Section \ref{Section2.2} introduces Jack measures and the ascending Jack process. The connection between the ascending Jack process and $\PN$ is made in Section \ref{Section2.3}.

%
\subsection{Jack symmetric functions}\label{Section2.1} The exposition in this section for the most part follows \cite[Section 6A]{DK2020}, which in turn is based on \cite[Chapter VI]{Mac}. 

A {\em partition} is a sequence $\lambda = (\lambda_1, \lambda_2, \dots)$ of non-negative integers such that $\lambda_1 \geq \lambda_2 \geq \cdots$ and all but finitely many terms are zero. We denote the set of all partitions by $\mathbb{Y}$ and by $\varnothing$ the partition $\lambda$ such that $\lambda_i = 0$ for all $i \in \mathbb{N}$. The {\em length} $\ell(\lambda)$ of a partition is the number of non-zero $\lambda_i$ and the {\em weight} of $\lambda$ is $|\lambda|= \lambda_1 + \lambda_2 + \cdots$. An alternative representation is given by $\lambda = 1^{m_1} 2^{m_2} \cdots$, where $m_j(\lambda) = | \{ i \in \mathbb{N}: \lambda_j = i\}|$ is called the {\em multiplicity} of $j$ in $\lambda$. There is a natural ordering on the set of partitions, called the {\em reverse lexicographic order}, given by 
$$\lambda > \mu \iff \mbox{ there exists $k \in \mathbb{N}$ such that $\lambda_i = \mu_i$ for $i < k$ and $\lambda_k > \mu_k$.}$$

A {\em Young diagram} is a graphical representation of a partition $\lambda$ with $\lambda_1$ left-justified boxes in the top row, $\lambda_2$ in the second row and so on. In general, we do not distinguish between a partition and the Young diagram representing it. The {\em conjugate} of a partition $\lambda$ is the partition $\lambda'$, whose Young diagram is the transpose of the diagram $\lambda$. Explicitly, we have $\lambda_i' = |\{ j \in \mathbb{N}: \lambda_j \geq i\}|$. 

Given two diagrams $\lambda$ and $\mu$ such that $\mu \subseteq \lambda$ (as collections of boxes), we call the difference $\kappa =\lambda - \mu$ a {\em skew Young diagram}. A skew Young diagram $\kappa$ is a {\em horizontal $m$-strip} if $\kappa$ contains $m$ boxes and no two lie in the same column. If $\lambda - \mu$ is a horizontal strip we write $\lambda \succeq \mu$ and note that $\lambda \succeq \mu \iff  \lambda_1 \geq \mu_1 \geq \lambda_2 \geq \mu_2 \geq \cdots$. For a box $\square = (i,j) $ of $\lambda$ (that is, a pair $(i,j)$ such that $\lambda_i \geq j$) we denote by $a(i,j)$ and $l(i,j)$ its {\em arm} and {\em leg lengths}:
$$a( \square) = a(i,j) = \lambda_i - j, \hspace{5mm} l(\square) = l(i,j) = \lambda_j' - i.$$
Further, we let $a'(\square)$ and $l'(\square)$ denote the {\em co-arm} and {\em co-leg lengths}:
$$a'(\square) = j - 1, \hspace{5mm} l'(\square) = i - 1.$$

Let $\Lambda_X$ be the algebra of symmetric functions in variables $X = (x_1, x_2, \dots)$. One way to view $\Lambda_X$ is as the algebra of polynomials in the Newton power sums 
$$p_k(X) = \sum_{i = 1}^{\infty} x_i^k \mbox{ for } k \geq 1.$$
For any partition $\lambda$ we define 
$$p_\lambda(X) = \prod_{i = 1}^{\ell(\lambda)} p_{\lambda_i}(X),$$
and note that $p_{\lambda}(X)$ for $\lambda \in \mathbb{Y}$ form a linear basis in $\Lambda_X$. 

In what follows we fix a parameter $\theta$. Unless the dependence on $\theta$ is important we will suppress it from the notation and we do the same for the variable set $X$. We consider the following scalar product $\langle \cdot, \cdot \rangle$ on $\lambda \otimes \mathbb{Q}(\theta)$:
\begin{equation}\label{S21E1}
\langle p_{\lambda}, p_{\mu} \rangle = {\bf 1} \{ \lambda = \mu \} \cdot \theta^{-\ell(\lambda)} \prod_{i = 1}^{\lambda_1} i^{m_i(\lambda)} m_i(\lambda)!.
\end {equation}
\begin{proposition}\label{S21P1} \cite[Chapter VI]{Mac} There are unique symmetric functions $J_{\lambda} \in \Lambda \otimes \mathbb{Q}(\theta)$ for all $\lambda \in \mathbb{Y}$ such that 
\begin{itemize}
\item $\langle J_{\lambda}, J_{\mu} \rangle = 0$ unless $\lambda = \mu$;
\item the leading (with respect to the reverse lexicographic order) monomial in $J_\lambda$ is $\prod_{i = 1}^{\ell(\lambda)} x_i^{\lambda_i}$.
\end{itemize} 
\end{proposition}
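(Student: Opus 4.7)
The plan is to construct the $J_\lambda$ by a Gram-Schmidt procedure applied to the basis of monomial symmetric functions $\{m_\mu\}_{\mu \in \mathbb{Y}}$, processed in reverse lexicographic order. The first step is to note that $\langle\cdot,\cdot\rangle$ respects the grading of $\Lambda \otimes \mathbb{Q}(\theta)$, since $\langle p_\lambda, p_\mu\rangle = 0$ unless $\lambda = \mu$, and in particular unless $|\lambda| = |\mu|$. The reverse lexicographic order and the leading monomial condition both preserve weight, so it suffices to construct $J_\lambda$ inside each finite-dimensional homogeneous component $\Lambda_n \otimes \mathbb{Q}(\theta)$ separately, for $n = |\lambda|$.

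Fix $n$ and enumerate the partitions of $n$ in reverse lexicographic order as $\mu^{(1)} < \mu^{(2)} < \cdots < \mu^{(r)}$. A key observation is that the inner product is non-degenerate on $\Lambda_n \otimes \mathbb{Q}(\theta)$: the Gram matrix of $\{p_\mu\}_{|\mu|=n}$ is diagonal with non-zero entries by (\ref{S21E1}), while the change-of-basis matrix between $\{p_\mu\}$ and $\{m_\mu\}$ has rational entries and is invertible over $\mathbb{Q}$, so the Gram matrix of $\{m_\mu\}_{|\mu|=n}$ is non-singular over $\mathbb{Q}(\theta)$.

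I would then inductively produce $J_{\mu^{(i)}}$ in the form $J_{\mu^{(i)}} = m_{\mu^{(i)}} + \sum_{j < i} c_{ij}\, m_{\mu^{(j)}}$, with the coefficients $c_{ij} \in \mathbb{Q}(\theta)$ determined by the requirement $\langle J_{\mu^{(i)}}, J_{\mu^{(j)}}\rangle = 0$ for $j < i$. This amounts to a linear system whose matrix of coefficients is the Gram matrix of $\{J_{\mu^{(1)}}, \dots, J_{\mu^{(i-1)}}\}$; by the inductive hypothesis this matrix is diagonal, and its diagonal entries $\langle J_{\mu^{(j)}}, J_{\mu^{(j)}}\rangle$ must be non-zero because the span of $\{J_{\mu^{(j)}}\}_{j \leq i-1}$ coincides with the span of $\{m_{\mu^{(j)}}\}_{j \leq i-1}$ (by triangularity), on which the form is non-degenerate. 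Thus the system has a unique solution, and the leading monomial of $J_{\mu^{(i)}}$ is $\prod_k x_k^{\mu^{(i)}_k}$, since this is the leading monomial of $m_{\mu^{(i)}}$ and strictly dominates the leading monomials of the $m_{\mu^{(j)}}$ with $j < i$.

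For uniqueness, suppose $\tilde J_\lambda$ is a second family satisfying both conditions and proceed by induction in reverse lexicographic order, assuming $J_\mu = \tilde J_\mu$ for all $\mu < \lambda$. Since the coefficient of $\prod_i x_i^{\lambda_i}$ in $J_\lambda - \tilde J_\lambda$ vanishes and both symmetric functions are homogeneous of weight $|\lambda|$, the triangular change of basis among partitions of $|\lambda|$ gives an expansion $J_\lambda - \tilde J_\lambda = \sum_{\mu < \lambda} a_\mu J_\mu$; pairing with any $J_\nu$ with $\nu < \lambda$ and using the orthogonality of both families produces $a_\nu \langle J_\nu, J_\nu\rangle = 0$, and the non-degeneracy argument again forces $a_\nu = 0$. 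The main technical point of this proof is the verification of non-degeneracy of $\langle\cdot,\cdot\rangle$ on each $\Lambda_n \otimes \mathbb{Q}(\theta)$; once that is secured, both existence and uniqueness reduce to standard triangular orthogonalization.
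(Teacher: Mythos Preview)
The paper does not supply a proof; the proposition is quoted from Macdonald as background for the Jack processes in Section~\ref{Section2}. Your Gram--Schmidt construction is the standard route and is essentially correct, with one point that deserves care: you claim the form is non-degenerate on $\operatorname{span}\{m_{\mu^{(j)}}\}_{j \leq i-1}$, but non-degeneracy on all of $\Lambda_n \otimes \mathbb{Q}(\theta)$ does not transfer automatically to subspaces for a symmetric bilinear form over a field, and without it Gram--Schmidt can stall (a zero norm at step $j$ may leave the linear system for the $c_{i\bullet}$ inconsistent or underdetermined). The usual remedy is to specialize $\theta$ to a positive real, where (\ref{S21E1}) makes $\langle\cdot,\cdot\rangle$ positive-definite on $\Lambda_n$, so every leading principal minor of the Gram matrix of $\{m_{\mu^{(j)}}\}$ is strictly positive; regarded as a rational function of $\theta$, each such minor is therefore not identically zero in $\mathbb{Q}(\theta)$. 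With that one line added your argument is complete.
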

The functions $J_\lambda$ in Proposition \ref{S21P1} are called {\em Jack symmetric functions} and they form a homogeneous linear basis for $\Lambda$ that is different from the $p_\lambda$ above. The {\em dual Jack symmetric functions} are defined through 
$$\tilde{J}_\lambda = J_\lambda \cdot \prod_{ \square \in \lambda} \frac{a(\square) + \theta \cdot  l (\square) + \theta }{ a(\square) + \theta \cdot  l(\square) + 1}.$$
\begin{remark}\label{S21R1} We mention that $J_\lambda$ are degenerations of {\em Macdonald symmetric function} $P_\lambda$ if one sets $t = q^{\theta}$ and let $q \rightarrow 1-$. Taking the same limit sends the dual Macdonald symmetric function $Q_\lambda$ to $\tilde{J}_\lambda$. This convention is slightly different from \cite[Chapter VI.10]{Mac}, where one sets $q = t^{\alpha}$ and sends $t \rightarrow 1-$. Our convention agrees with \cite[Chapter VI.10]{Mac} upon setting $\theta = \alpha^{-1}$. 
\end{remark}

\begin{definition} Take two infinite sequences of variables $X = (x_1, x_2, \dots)$ and $Y = (y_1, y_2, \dots)$ and let $(X,Y)$ denote their concatenation. For two partitions $\lambda$ and $\mu$ we define the {\em skew Jack symmetric functions} $J_{\lambda/ \mu}(X)$ and $\tilde{J}_{\lambda/ \mu}(X)$ as the coefficients in the expansions
\begin{equation}\label{S21E2} 
J_{\lambda}(X,Y) = \sum_{ \mu \in \mathbb{Y}} J_{\mu}(Y) J_{\lambda / \mu}(X) \hspace{2mm} \mbox{ and } \hspace{2mm} \tilde{J}_{\lambda}(X,Y) = \sum_{ \mu \in \mathbb{Y}} \tilde{J}_{\mu}(Y) \tilde{J}_{\lambda/ \mu}(X).
\end{equation}
In particular, if $\mu = \varnothing$ is the zero partition, we have that $J_{\lambda/ \varnothing}(X) = J_{\lambda}(X)$ and $\tilde{J}_{\lambda/ \varnothing}(X) = \tilde{J}_{\lambda}(X)$.
\end{definition}

A {\em specialization} is an algebra homomorphism from $\Lambda$ to $\mathbb{C}$. We say that a specialization $\rho$ of $\Lambda$ is {\em Jack-positive} if it takes non-negative values on all Jack symmetric functions, i.e. $J_\lambda(\rho) \geq 0$ for all $\lambda \in \mathbb{Y}$. The collection of all Jack-positive specializations is characterized by the following statement.
\begin{proposition}\label{S21Prop1}\cite[Theorem A]{KOO} For any fixed $\theta > 0$, Jack positive specializations can be parametrized by triplets $(\alpha, \beta, \gamma)$, where $\alpha, \beta$ are sequences of real numbers satisfying 
$$\alpha_1 \geq \alpha_2 \geq \cdots \geq 0, \hspace{2mm} \beta_1 \geq \beta_2 \geq \cdots \geq 0, \hspace{2mm} \sum_{i = 1}^{\infty} (\alpha_i + \beta_i) < \infty,$$
and $\gamma \geq 0$. The specialization corresponding to a triplet $(\alpha, \beta, \gamma)$ is uniquely determined by its values on the Newton power sums $p_k$, $k \geq 1$:
$$p_1 \rightarrow p_1 (\alpha, \beta, \gamma) := \gamma + \sum_{i = 1}^{\infty} (\alpha_i + \beta_i)$$
$$p_k \rightarrow p_k (\alpha, \beta, \gamma) := \sum_{i = 1}^{\infty} \alpha_i^k + (-\theta)^{k-1} \sum_{i = 1}^{\infty} \beta_i^k, \hspace{2mm} k \geq 2.$$
\end{proposition}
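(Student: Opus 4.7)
The plan is to follow the Vershik--Kerov ergodic method as adapted to Jack symmetric functions by Kerov--Okounkov--Olshanski. The argument splits into three parts: constructing the specializations from triples, showing the parametrization is injective, and (the main difficulty) showing that every Jack-positive specialization arises this way.

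First I would verify that each triple $(\alpha,\beta,\gamma)$ yields a Jack-positive specialization. The key observation is that Jack-positivity is closed under \emph{concatenation}: if $\rho_1,\rho_2$ are Jack-positive then the specialization defined by $p_k \mapsto p_k(\rho_1)+p_k(\rho_2)$ is also Jack-positive, which follows from the branching rule (\ref{S21E2}) together with the fact that the skew Jack functions $J_{\lambda/\mu}$ take non-negative values on any Jack-positive $\rho$ (a consequence of the Pieri rule for $J$). By concatenation it suffices to handle three elementary building blocks: (i) the single-variable evaluation $p_k \mapsto \alpha^k$ for $\alpha\geq 0$, which is Jack-positive since $J_\lambda$ has non-negative monomial coefficients; (ii) the "dual" specialization $p_k\mapsto(-\theta)^{k-1}\beta^k$, whose positivity follows from the duality $J_\lambda \leftrightarrow \tilde J_\lambda$ and case (i) applied to $\tilde J$; and (iii) the Plancherel specialization $p_1\mapsto\gamma$, $p_k\mapsto 0$ for $k\geq 2$, whose positivity on each $J_\lambda$ is read off from the principal specialization formula for Jack polynomials.

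Second, injectivity of the parametrization reduces to a moment-problem argument. Since the $p_\lambda$ form a basis of $\Lambda\otimes\mathbb{Q}(\theta)$, a specialization is uniquely determined by its values on $p_k$, $k\geq 1$. The generating function
\begin{equation*}
F_\rho(z) \;=\; \sum_{k\geq 1} p_k(\rho)\, z^{k-1}
\end{equation*}
extracted from the stated formulas has only simple poles, located at $z=1/\alpha_i$ and $z=-\theta/\beta_i$ with explicitly computable residues, plus a constant term contributed by $\gamma$; the summability condition $\sum(\alpha_i+\beta_i)<\infty$ guarantees convergence in a neighborhood of the origin. Inverting this structure one recovers $\alpha_i$, $\beta_i$, and $\gamma$ from $F_\rho$, proving injectivity.

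The hard step is completeness. Here I would pass to the Young branching graph $\mathbb{Y}$ with Jack cotransition weights, and identify Jack-positive specializations (normalized so that $p_1=1$) with \emph{coherent systems} of probability measures $\{M_N\}_{N\geq 1}$ on $\mathbb{Y}_N$ satisfying the Jack-analogue of the Gibbs property: $M_{N-1}(\mu) = \sum_{\lambda\succeq\mu} M_N(\lambda)\cdot p^\downarrow(\lambda,\mu)$, where $p^\downarrow$ is the cotransition probability defined through ratios of $J_\lambda(1^N)$ and $J_{\lambda/\mu}(1)$. The set of such coherent systems is convex and compact in the topology of pointwise convergence, so by Choquet's theorem every element decomposes into extreme points; the task is to show that the extreme points are exactly the "delta-specializations" $\rho_{\alpha,\beta,\gamma}$ of the claimed form. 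This is carried out by the Vershik--Kerov ergodic method: extreme coherent systems arise as limits $\lim_{N\to\infty} J_\lambda(\rho)/J_\lambda(1^N)$ along sequences $\lambda=\lambda(N)$ whose rescaled row-lengths $\lambda_i/N$ stabilize to $\alpha_i$, column-lengths $\lambda_i'/N$ stabilize to $\theta\beta_i$, and the residual mass $(|\lambda|-\sum\lambda_i-\theta\sum\lambda_i')/N$ stabilizes to $\gamma$. The main obstacle — and where the technical weight of \cite{KOO} sits — is the \emph{asymptotic multiplicativity of normalized Jack characters}: one must show that the functional $\lambda\mapsto J_\lambda(\rho)/J_\lambda(1^N)$, restricted to any extreme coherent system, converges to a multiplicative function of Frobenius coordinates, from which the precise formulas for $p_k(\alpha,\beta,\gamma)$ fall out. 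Absent this asymptotic factorization the decomposition could in principle produce pathological extreme points; ruling these out requires the full combinatorial machinery of shifted Jack polynomials and Okounkov--Olshanski interpolation identities, which I would take as a black box rather than reconstruct.
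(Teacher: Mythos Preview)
The paper does not prove this proposition at all: it is stated with a citation to \cite{KOO} and used as a black box. So there is no ``paper's own proof'' to compare against; the authors explicitly defer to Kerov--Okounkov--Olshanski.

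Your outline is a reasonable high-level summary of how \cite{KOO} proceeds, and the three-part structure (construction, injectivity, completeness via the ergodic method and shifted Jack polynomials) is accurate. A couple of minor points: in part (i) the reason a single-variable evaluation is Jack-positive is simply that $J_\lambda(x_1)$ vanishes unless $\ell(\lambda)\le 1$, in which case it equals $x_1^{\lambda_1}$; you don't need monomial positivity of $J_\lambda$ in general. In part 3, the precise normalization of how column lengths $\lambda_i'/N$ relate to the $\beta_i$ involves a factor of $\theta$ whose placement depends on conventions, so that detail should be checked against \cite{KOO} directly. But since the paper itself treats the result as imported, your sketch is already more than what the paper provides.
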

\begin{remark} We mention that \cite{Matveev19} proves an analogous classification result for Macdonald symmetric functions.
\end{remark}
\begin{remark} By degenerating Macdonald to Jack symmetric functions, one has from \cite[Proposition 2.2]{BorCor} that $J_{\lambda/ \mu}(\rho) \geq 0$ for any $\lambda, \mu \in \mathbb{Y}$ and Jack-positive specialization $\rho$.
\end{remark}

Of special interest to us is the case when $\alpha_1 = \alpha_2 = \cdots = \alpha_N = 1$ and all other parameters are equal to zero. This is known as a {\em pure-$\alpha$} specialization and we denote it by $1^N$. When $N = 1$ we simply denote it by $1$. If we set $\beta_1 = \beta_2 = \cdots = \beta_N = 1$ and all other parameters are equal to zero, we obtain what is known as a {\em pure-$\beta$} specialization, denoted by $1_\beta^N$. From \cite[Chapter VI, (10.20)]{Mac} we have 
\begin{equation}\label{S21E3}
\begin{split}
J_{\lambda}(1^N) &= {\bf 1}\{\ell(\lambda) \leq N \} \prod_{\square \in \lambda} \frac{N\theta + a'(\square) - \theta l'(\square)}{a(\square) + \theta l(\square) + \theta }  \\
&= {\bf 1}\{\ell(\lambda) \leq N \} \cdot \prod_{i = 1}^N \prod_{j = 1}^{\lambda_i} \frac{N\theta+(j-1) - (i-1) \theta}{\lambda_i - j + \theta (\lambda_j' - i) + \theta}.
\end{split}
\end{equation}
In addition, from \cite[Chapter VI, (7.14')]{Mac} (see also \cite[(2.4)]{GS}) we have
\begin{equation}\label{S21E4}
\begin{split}
J_{\lambda/\mu}(1) = {\bf 1}\{ \mu \preceq \lambda \}  & \times \prod_{1 \leq i < j \leq k-1} \frac{(\mu_i - \mu_j + \theta (j-i) + \theta)_{\mu_j - \lambda_{j+1}}  }{ (\mu_i - \mu_j + \theta (j-i) + 1)_{\mu_j - \lambda_{j+1}} },\\
&  \times \frac{(\lambda_i - \mu_j + \theta(j-i) + 1)_{\mu_j - \lambda_{j+1}} }{(\lambda_i - \mu_j + \theta (j-i) + \theta)_{\mu_j - \lambda_{j+1}}},
\end{split}
\end{equation}
where $k$ is any integer satisfying $\ell(\lambda) \leq k$ and $(b)_n = b (b+1) \cdots (b+n-1)$ is the {\em Pochhammer symbol}. Finally, from \cite[Chapter VI, Sections 5 and 10]{Mac} 
\begin{equation}\label{S21E5}
\begin{split}
\tilde{J}_{\lambda}(1_\beta^N; \theta) = J_{\lambda'}(1^N; \theta^{-1}) = {\bf 1}\{ \lambda_1 \leq N \} \cdot \prod_{i = 1}^N \prod_{j = 1}^{\lambda_i} \frac{N + \theta(i-1) - (j-1)}{\lambda_i - j + \theta(\lambda_j' - i) + 1},
\end{split}
\end{equation}
where the second equality follows from (\ref{S21E3}). 
\begin{remark} We mention that (\ref{S21E5}) can be found in the displayed equation below \cite[(6.6)]{DD21}, but the latter has a typo -- $\lambda_i' - j$ in the denominator should be replaced with $\lambda_j' - i$. This mistake does not propagate to \cite[(6.7)]{DD21}, which is the formula for $\tilde{J}_{\lambda}(1_\beta^M; \theta)$ used in the rest of the paper.
\end{remark}

%
\subsection{Ascending Jack processes}\label{Section2.2} Using the notion of Jack-positive specializations, one can define probability measures on $\mathbb{Y}$ as follows. 
\begin{definition}\label{S22Def1} Fix $\theta > 0$ and two Jack-positive specializations $\rho_1$, $\rho_2$ such that the (non-negative) series $\sum_{\lambda \in \mathbb{Y}} J_{\lambda}(\rho_1) \tilde{J}_{\lambda}(\rho_2)$ is finite. We define the {\em Jack probability measure} $\mathcal{J}_{\rho_1, \rho_2}(\lambda)$ on $\mathbb{Y}$ by
$$\mathcal{J}_{\rho_1, \rho_2}(\lambda) = \frac{J_{\lambda}(\rho_1) \tilde{J}_{\lambda}(\rho_2)}{H_{\theta}(\rho_1; \rho_2)},$$
where the normalization constant is given by $H_{\theta}(\rho_1; \rho_2) = \sum_{ \lambda \in \mathbb{Y}} J_{\lambda}(\rho_1) \tilde{J}_{\lambda}(\rho_2)$. 
\end{definition}
\begin{remark} The construction of probability measures through specializations was first considered in \cite{okounkov} in the context of Schur measures (these correspond to $\theta = 1$, in which case Jack symmetric functions degenerate to Schur symmetric functions). Since then, the construction has been extended to Macdonald symmetric functions, see \cite{BorCor}, of which Jack symmetric functions are a special case.
\end{remark}
\begin{remark}\label{S22R1} When $\rho_1 = 1^N$ and $\rho_2 = (\alpha, \beta, \gamma)$ as in Proposition \ref{S21Prop1} with $\alpha_i \in [0, 1)$ for $i \in \mathbb{N}$, we have the following formula for the normalization constant in Definition \ref{S22Def1}
\begin{equation}\label{S22E1}
H_{\theta}(\rho_1; \rho_2) = e^{N\theta \gamma} \prod_{i = 1}^{\infty} (1 + \theta \beta_i)^N \cdot \prod_{i = 1}^{\infty} \frac{1}{(1- \alpha_i)^{N\theta}},
\end{equation}
where the convergence of the product is ensured when 
$$1 > \alpha_1 \geq \alpha_2 \geq \cdots \geq 0, \hspace{2mm} \beta_1 \geq \beta_2 \geq \cdots \geq 0 , \hspace{2mm} \sum_{i = 1}^{\infty} (\alpha_i + \beta_i) < \infty, \hspace{2mm} \gamma \in [0, \infty).$$
We mention that the formula in (\ref{S22E1}) can be deduced by setting $t = q^{\theta}$ in \cite[(2.23) and (2.31)]{BorCor} and letting $q \rightarrow 1^-$. We also mention that the formula in (\ref{S22E1}) is slightly different from \cite[(2.23) and (2.31)]{BorCor}. This difference is due to the way we define $(\alpha, \beta, \gamma)$ specializations in Proposition \ref{S21Prop1} (made after \cite{KOO}), which is different from the implicit definition of the $(\alpha, \beta, \gamma)$ specialization in \cite[(2.23)]{BorCor} through certain generating series. The generating series \cite[(1.13)]{Matveev19} agrees with \cite[(2.23)]{BorCor}, and then if we set $t = q^{\theta}$ and $q \rightarrow 1^-$ in \cite[(1.7)]{Matveev19} we obtain precisely our specialization with $\beta$ and $\gamma$ variables multiplied by $\theta^{-1}$.
\end{remark}

We next turn to defining the main object of interest in this section.
\begin{definition}\label{S22Def2} Fix $N \in \mathbb{N}$, $K \in \mathbb{Z}_{\geq 0}$ and $\theta > 0$. We define the {\em ascending Jack process} to be the probability measure on sequences of partitions $(\lambda^1, \dots, \lambda^N)$ such that 
\begin{equation}\label{S22E3}
\mathcal{J}^{\theta, K}_N(\lambda^1, \dots, \lambda^N) = (1 + \theta)^{-NK} \cdot \prod_{ i = 1}^N J_{\lambda^i/ \lambda^{i-1}}(1) \cdot \tilde{J}_{\lambda^N}(1^K_\beta).
\end{equation}
In (\ref{S22E3}) we have that $\lambda^0 = \varnothing$ is the zero partition.
\end{definition}
\begin{remark}\label{S22R2} The ascending Jack process $\mathcal{J}^{\theta, K}_N$ is a special case of the ascending Macdonald process \cite[Definition 2.7]{BorCor} corresponding to $a_i = 1$ for $i = 1, \dots, N$ and $\rho = 1_{\beta}^K$. From \cite[(2.31)]{BorCor} we have that (\ref{S22E3}) is a probability measure -- i.e. the sum over all $\lambda^1, \dots, \lambda^N \in \mathbb{Y}$ is equal to $1$. As mentioned in Remark \ref{S22R1} our specialization $1^K_{\beta}$ corresponds to $\beta_1 = \cdots = \beta_K = \theta$ in \cite[Proposition 2.2]{BorCor}.
\end{remark}

In the remainder of the section we derive several properties of the measures $\mathcal{J}^{\theta, K}_N$ from Definition \ref{S22Def2}. We first note that from \cite[Chapter VI.7]{Mac}(see also \cite[(2.28)]{BorCor} with $\hat \nu = \varnothing$) we have for $1 \leq k \leq N$ and $\nu \in \mathbb{Y}$
\begin{equation}\label{S22E4}
\sum_{\kappa \in \mathbb{Y}} J_{\kappa / \nu} (1^k) \tilde{J}_{\kappa}(1^K_{\beta}) = (1 + \theta)^{(N-k)K}  \cdot \tilde{J}_{\nu}(1^K_{\beta}).
\end{equation}
Through repeated uses of (\ref{S21E2}) and (\ref{S22E4}) into (\ref{S22E3}) we have for any $1 \leq n \leq N$ and $1 \leq i_1 < i_2 < \cdots < i_n \leq N$ that the projection of $\mathcal{J}^{\theta, K}_N$ to the subsequence $(\lambda^{i_1}, \dots, \lambda^{i_n})$ has distribution
\begin{equation}\label{S22E5}
\mathcal{J}^{\theta, K}_N(\lambda^{i_1}, \dots, \lambda^{i_n}) = (1 + \theta)^{- i_n \cdot K}\cdot \prod_{j = 1}^n J_{\lambda^{i_j}/ \lambda^{i_{j-1}}}(1^{i_j - i_{j-1}}) \cdot \tilde{J}_{\lambda^{i_n}}(1^K_\beta),
\end{equation}
where we use the convention $i_0 = 0$. One particular consequence of (\ref{S22E5}) is that the projection to $(\lambda^{1}, \dots, \lambda^{n})$ with $1 \leq n \leq N$ has distribution $\mathcal{J}^{\theta, K}_n$. Thus, $\{ \mathcal{J}^{\theta, K}_n \}_{n \geq 1}$ form a consistent family of measures on $\mathbb{Y}^n$ and by Kolmogorov's extension theorem there exists a unique measure $\mathcal{J}^{\theta, K}_{\infty}$ on $\mathbb{Y}^{\infty}$, whose projection to the first $N$ coordinates is precisely $\mathcal{J}^{\theta, K}_N$. 

From (\ref{S22E5}) applied to $n = 1$ we have that under $\mathcal{J}^{\theta, K}_N$, the partition $\lambda^m$ with $1 \leq m \leq N$ is distributed according to a Jack measure as in Definition \ref{S22Def1} with specializations $\rho_1 = 1^m$ and $\rho_2 = 1_{\beta}^K$. In particular, from (\ref{S21E3}) and (\ref{S21E5}) we conclude that the probability of $(\lambda^1, \dots, \lambda^N)$ under $\mathcal{J}^{\theta, K}_N$ is zero unless $\lambda^m_{m+1} = 0$ and $0 \leq \lambda^m_1 \leq K$ for all $m = 1 , \dots, N$. Furthermore, (\ref{S21E4}) implies that $(\lambda^1, \dots, \lambda^N)$  also has zero probability unless $\lambda^i \preceq \lambda^{i+1}$ for $i = 0, \dots, N-1$.

%
\subsection{From $\mathcal{J}^{\theta, K}_N$ to $\PN$}\label{Section2.3} In this section we explain how the measure $\PN$ from (\ref{S1PDef}), can be interpreted as an ascending Jack process $\mathcal{J}^{\theta, K}_N$ as in Definition \ref{S22Def2}. We consider the following change of variables
\begin{equation}\label{S23E1}
\ell_i^j = \lambda_i^j - i \cdot \theta \mbox{ for } 1 \leq i \leq j \leq N.
\end{equation}
Below we show that if $(\lambda^1, \dots, \lambda^N)$ is distributed according to $\mathcal{J}^{\theta, K}_N$, then $(\ell^1, \dots, \ell^N)$ has distribution $\PN$. Let us first discuss the state spaces of the two measures. As explained in the last two paragraphs of Section \ref{Section2.2}, we have that $\mathcal{J}^{\theta, K}_N$ is supported on sequences of interlacing partitions, such that $\lambda_{m+1}^m = 0$ for all $m = 1, \dots, N$ (i.e. the $m$-th partition has length at most $m$) and $\lambda_1^m \leq K$. This shows that by dropping all (deterministically zero) elements of index that is greater than $m$ we have $(\lambda^m_1, \dots, \lambda^m_m) \in \Lambda^K_m$ as in (\ref{S1GenState}) for $m = 1, \dots, N$. Using the latter observation and the interlacing property, we conclude that $(\ell^1, \dots, \ell^N)$ defined through (\ref{S23E1}) a.s. lie inside of $\XXO$ as in (\ref{S1GenState}), so that these random elements take value in the correct space.

We next express the (skew) Jack symmetric functions involved in the definition of $\mathcal{J}^{\theta, K}_N$ through the variables $\ell_i^j$ from (\ref{S23E1}). Firstly, from \cite[(6.5)]{DD21} we have
\begin{equation}\label{S23E2}
J_{\lambda^N}(1^N) = \prod_{i = 1}^N \frac{\Gamma(\theta)}{\Gamma(i \theta)} \times \prod_{1 \leq i < j \leq N} \frac{\Gamma(\ell^N_i - \ell^N_j + \theta)}{\Gamma(\ell^N_i - \ell^N_j)}.
\end{equation}
In addition, from \cite[(6.7)]{DD21}
\begin{equation}\label{S23E3}
\tilde{J}_{\lambda^N}(1_{\beta}^K)  = \prod_{1 \leq i < j \leq N} \frac{\Gamma(\ell^N_i - \ell^N_j + 1)}{\Gamma(\ell^N_i - \ell^N_j + 1 - \theta)} \prod_{i = 1}^N \frac{\Gamma(K + \theta (i-1) +1)}{\Gamma(\ell^N_i + 1 + N\theta) \Gamma(K  - \ell^N_i + 1 - \theta)}.
\end{equation}
We mention that in \cite[(6.7)]{DD21} one needs to shift $\ell_i$ by $-N\theta$ (this is due to $\ell_i = \lambda_i + (N-i)\theta$ in that paper as opposed to $\ell_i = \lambda_i - i\theta$ as in (\ref{S23E1})). Finally, from \cite[(6.11)]{DK2020} we have that 
\begin{equation}\label{S23E4}
J_{\lambda^{j+1}/ \lambda^{j}}(1) = \I(\ell^{j+1}, \ell^{j}) ,
\end{equation}
where the latter is as in (\ref{S1PDef2}). Combining (\ref{S23E2}), (\ref{S23E3}) and (\ref{S23E4}) with (\ref{S22E3}) we conclude that the induced measure on $(\ell^1, \dots, \ell^N)$ under the map in (\ref{S23E1}) is precisely (\ref{S1PDef}) and moreover we find the following formula for the normalization constant
\begin{equation}\label{S23E5}
Z(\theta, K, N) = (1 + \theta)^{NK} \cdot \prod_{i = 1}^N \frac{\Gamma(i\theta)}{\Gamma(\theta) \cdot \Gamma(K + \theta (i -1 ) + 1)}.
\end{equation}

Using the projection formula for $\mathcal{J}^{\theta, K}_N$ from (\ref{S22E5}) we obtain the following formula for the joint distribution of $(\ell^{k_1}, \dots, \ell^{k_2})$ for any $N \geq k_2 \geq k_1 \geq 1$ 
\begin{equation}\label{S23E6}
\PN(\ell^{k_1}, \dots, \ell^{k_2}) = \frac{1}{Z(\theta, K,k_2)} \cdot \prod_{i = k_1 + 1}^{k_2} \frac{\Gamma(i \theta)}{\Gamma(\theta)} \cdot H^t_{k_2}(\ell^{k_2}) \cdot H^b_{k_1}(\ell^{k_1}) \cdot \prod_{j = k_1}^{k_2-1}  \I(\ell^{j+1}, \ell^j), \mbox{ where }
\end{equation}
$I(\ell^{j+1}, \ell^j)$ and $H^t_n$ are as in (\ref{S1PDef2}) and (\ref{S1PDef3}), while 
\begin{equation}\label{S23E7}
H^b_{n}(\ell^{n}) = \prod_{1 \leq i < j \leq n} \frac{\Gamma(\ell_i^n - \ell_j^n + \theta)}{\Gamma(\ell_i^n - \ell_j^n)}. 
\end{equation}
One consequence of (\ref{S23E6}) is that when $k_1 = k_2 = n$ for some $1 \leq n \leq N$ we have that 
\begin{equation}\label{S23E8}
\PN(\ell^{n}) = \frac{1}{Z( \theta, K, n )}\prod_{1 \leq i < j \leq n} \frac{\Gamma(\ell_i^{n} - \ell_j^n + 1 ) \Gamma(\ell_i^n - \ell_j^n  + \theta) }{\Gamma(\ell_i^{n} - \ell_j^n + 1 - \theta) \Gamma(\ell_i^n - \ell_j^n )} \cdot \prod_{i = 1}^n w_n^{\theta, K}(\ell_i^n),
\end{equation}
where $w_n^{\theta, K}$ is as in (\ref{S1PDef4}).

Applying (\ref{S23E6}) to $k_1 = 1$ and $k_2 = n$ for some $1 \leq n \leq N$, we conclude that the projection of $\PN$ to $(\ell^1, \dots, \ell^n)$ is precisely $\mathbb{P}^{\theta,K}_n$. The latter shows that $\{\PN\}_{N \geq 1}$ forms a consistent family of measures. By Kolmogorov's extension theorem we conclude the following statement.
\begin{lemma}\label{Consistent}
Fix $\theta > 0$ and $K \in \mathbb{Z}_{\geq 0}$. There exists a unique measure $\P$ on $\XXI$, as in (\ref{InfState}), whose projection to the first $N \in \mathbb{N}$ coordinates is precisely $\PN$ as in (\ref{S1PDef}).
\end{lemma}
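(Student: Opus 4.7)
The plan is to reduce the statement to Kolmogorov's extension theorem applied to the consistent family $\{\PN\}_{N\ge 1}$, and the key preliminary point is to verify that $\PN$ really does project to $\mathbb{P}^{\theta,K}_{n}$ under the map $(\ell^1,\dots,\ell^N)\mapsto(\ell^1,\dots,\ell^n)$ for $1\le n\le N$. This consistency is not at all obvious from the explicit formula (\ref{S1PDef}) itself, because the weight $H^t_N$ lives only on the top level and one has to carry out a nontrivial sum over $\ell^{n+1},\dots,\ell^N$ to integrate it out. The cleanest way to do this is to transport everything to the Jack side via the change of variables (\ref{S23E1}) and use the Jack branching rule there, which is exactly how Section \ref{Section2.3} is organized.

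Concretely, I would proceed as follows. First, using the identities (\ref{S23E2}), (\ref{S23E3}) and (\ref{S23E4}), rewrite the ascending Jack process $\mathcal{J}^{\theta,K}_N(\lambda^1,\dots,\lambda^N)$ in (\ref{S22E3}) in the $\ell$-coordinates and observe that, up to the explicit constant (\ref{S23E5}), it equals $\PN(\ell^1,\dots,\ell^N)$ defined by (\ref{S1PDef})--(\ref{S1PDef4}). This requires checking that the support matches: the interlacing condition $\lambda^{i}\preceq\lambda^{i+1}$ together with $\ell(\lambda^i)\le i$ and $\lambda^i_1\le K$, which holds $\mathcal{J}^{\theta,K}_N$-almost surely by the discussion following (\ref{S22E5}), translates into $(\ell^1,\dots,\ell^N)\in\XXO$ as in (\ref{S1GenState}). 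Second, apply the Jack branching identity (\ref{S22E4}) iteratively to (\ref{S22E3}) in order to obtain the joint marginal formula (\ref{S22E5}); specializing $(i_1,\dots,i_n)=(1,\dots,n)$ shows that the $(\lambda^1,\dots,\lambda^n)$-marginal of $\mathcal{J}^{\theta,K}_N$ is $\mathcal{J}^{\theta,K}_n$. Transporting back via (\ref{S23E1}) and using the identification from the first step then yields that the $(\ell^1,\dots,\ell^n)$-marginal of $\PN$ is $\mathbb{P}^{\theta,K}_n$.

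Once this consistency is in hand, existence and uniqueness of the inverse-limit measure $\P$ on the product space $\prod_{i\ge 1}\mathbb{W}^{\theta,K}_i$ follows from Kolmogorov's extension theorem applied to the compatible family $\{\PN\}_{N\ge 1}$. The measure is supported on the Borel subset $\XXI\subset\prod_{i\ge 1}\mathbb{W}^{\theta,K}_i$, because each $\PN$ is supported on the interlacing set $\XXO$ and $\XXI$ is the intersection (over $N$) of the cylinder events encoding $\ell^{i+1}\succeq\ell^i$.

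The only real obstacle is the first step, namely matching the combinatorial prefactors after the change of variables. All three inputs (\ref{S23E2})--(\ref{S23E4}) are already available from earlier work and are quoted verbatim in the excerpt, so this reduces to bookkeeping with Gamma-function shifts (being careful, as the remark after (\ref{S23E3}) warns, that the conventions $\ell_i=\lambda_i-i\theta$ versus $\ell_i=\lambda_i+(N-i)\theta$ differ between sources). The branching identity (\ref{S22E4}) is the analytic heart of the argument and is precisely the reason that the family $\{\PN\}$ is consistent; everything else is formal.
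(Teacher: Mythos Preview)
Your proposal is correct and follows essentially the same route as the paper: identify $\PN$ with the ascending Jack process $\mathcal{J}^{\theta,K}_N$ via (\ref{S23E1})--(\ref{S23E4}), use the Jack branching identity (\ref{S22E4}) to obtain the marginal formula (\ref{S22E5}) (equivalently (\ref{S23E6}) with $k_1=1$, $k_2=n$), deduce consistency of $\{\PN\}_{N\ge1}$, and conclude by Kolmogorov's extension theorem. Your additional remark about support on $\XXI$ as an intersection of cylinder events is a nice explicit detail the paper leaves implicit.
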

\begin{remark} Observe that the measure $\P$ in Lemma \ref{Consistent} is precisely the $\beta$-Krawtchouk corners process in Definition \ref{BKCC}.
\end{remark}

%
\section{Single-level analysis} \label{Section3} In (\ref{S23E8}) we derived the marginal distribution of $\ell^n$ under the measure $\P$ from Definition \ref{BKCC}. The form of this distribution is that of a {\em discrete $\beta$-ensemble} from \cite{BGG}. In this section we utilize various results available for discrete $\beta$-ensembles to derive certain moment bounds that are used in our asymptotic analysis -- see Proposition \ref{S33P1}. In addition, in Section \ref{Section3.1} we summarize several functions related to the $\beta$-Krawtchouk corners process that are used throughout the paper.

%
\subsection{Law of large numbers}\label{Section3.1} Given a random $(\ell^1, \ell^2, \dots ) $, distributed according to $\P$ as in Definition \ref{BKCC}, and $n \in \mathbb{N}$, we consider the random {\em empirical measure}
\begin{equation}\label{S31E1}
\mu_{n} = \frac{1}{n} \cdot \sum_{i = 1}^n \delta(\ell_i^n/n).
\end{equation}
In this section we show that $\mu_n$ converge to a certain deterministic measure $\mu(x, s)$ when $n = \lceil s K \rceil \rightarrow \infty$ -- this is Proposition \ref{LLN}. Afterwards we use $\mu(x,s)$  to construct several functions that will play an important role in many of our arguments later in the paper.\\

We record the following result, which one can think of as a law of large numbers for the measures $\mu_n$. We mention that this result follows from \cite[Theorem 5.3]{BGG}. In the proof below we are merely verifying that the assumptions of \cite[Theorem 5.3]{BGG} are satisfied in our setup, which has already been done in a slightly different notation in \cite[Section 6]{DD21}.
\begin{proposition}\label{LLN}
Fix $\theta, s \in (0, \infty)$ and set $n_K = \lceil s \cdot K \rceil$. Suppose that $(\ell^1, \ell^2, \dots)$ is distributed according to $\P$ as in Definition \ref{BKCC}, and let $\mu_{n_K}$ be as in (\ref{S31E1}). Then, as $K \rightarrow \infty$ the measures $\mu_{n_K}$ converge weakly in probability to a certain deterministic probability measure $\mu(x,s)dx$ that depends on $\theta$ and $s$. More precisely, for each compactly supported Lipschitz function $f(x)$ on $\mathbb{R}$ and each $\varepsilon > 0$ the random variables
$$K^{1/2 - \varepsilon} \left| \int_{\mathbb{R}} f(x) \mu_{n_K}(dx) - \int_{\mathbb{R}}f(x) \mu(x,s)dx \right|$$
converge to zero in the sense of moments. Furthermore, the measure $\mu(x,s)$ is supported in $[-\theta, s^{-1}]$ and has density bounded by $\theta^{-1}$. 
\end{proposition}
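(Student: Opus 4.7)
The plan is to invoke the law of large numbers for discrete $\beta$-ensembles, \cite[Theorem 5.3]{BGG}, applied to the single-level marginal computed in (\ref{S23E8}). The first step is to identify that formula as a discrete $\beta$-ensemble in the sense of \cite{BGG}: the particles $\ell^n$ sit on the shifted lattice $\mathbb{Z} - n\theta$, interact through the Gamma-ratio kernel appearing in (\ref{S23E8}), and carry the single-particle weight $w_n^{\theta,K}$ from (\ref{S1PDef4}). Because $\ell_i^n = \lambda_i^n - i\theta$ with $\lambda_i^n \geq \lambda_{i+1}^n$, any two consecutive $\ell$-particles are separated by at least $\theta$; after the scaling $n = n_K = \lceil sK\rceil$, this produces the hard-core bound that will translate into the density bound $\mu(x,s) \le \theta^{-1}$, and the fact that $\ell_i^n \in [-n\theta, K-\theta]$ places $\ell_i^n/n$ in $[-\theta, s^{-1}]$ up to $O(n^{-1})$ corrections, explaining the claimed support.

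The principal input required by \cite[Theorem 5.3]{BGG} is a semiclassical asymptotic for $\log w_n^{\theta,K}$. Using Stirling's formula and absorbing constants in the rescaled variable $y = x/n$ into the partition function, one obtains uniformly on compacts of the interior of $[-\theta, s^{-1}]$
\[
\frac{1}{n}\log w_n^{\theta,K}(ny) = -V(y) + C_n + O(n^{-1}\log n),
\]
with
\[
V(y) = (y+\theta)\log(y+\theta) + (s^{-1}-y)\log(s^{-1}-y).
\]
The potential $V$ is analytic on $(-\theta,s^{-1})$, strictly convex, and blows up at the endpoints, which ensures existence and uniqueness of the associated equilibrium measure. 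I would then define $\mu(x,s)\,dx$ to be the unique minimizer of
\[
\theta\int V\,d\mu - \theta^2 \iint \log|x-y|\,d\mu(x)\,d\mu(y), \qquad 0\le \mu \le \theta^{-1},\ \int d\mu = 1,
\]
so that the density bound is built into the admissible class (it is the standard ``Vershik--Kerov'' constraint for discrete $\beta$-ensembles) and the support is contained in $[-\theta,s^{-1}]$ because $V=+\infty$ outside.

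With $V$ in hand, \cite[Theorem 5.3]{BGG} delivers the weak convergence in probability together with the quantitative $K^{1/2-\varepsilon}$ moment rate for any compactly supported Lipschitz test function. A large portion of the verification of the hypotheses of that theorem has already been carried out, in a slightly different notation, in \cite[Section 6]{DD21}; my proof would reference that work after translating between the $\lambda$-coordinates used there and the $\ell$-coordinates of (\ref{S23E1}), and checking the mild additional regularity conditions (analyticity of $V$, absence of hard-edge degeneration, behavior of $w_n^{\theta,K}$ at the lattice endpoints) in our setting.

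The main obstacle is notational rather than conceptual: \cite{BGG}, \cite{DD21}, and the present paper each use slightly different lattice shifts, sign conventions for $V$, and Jack-theoretic normalizations, and these must be aligned before the abstract theorem can be invoked. Once this dictionary is set up, the heavy analytic work — concentration, uniqueness of $\mu(x,s)$, and the $K^{1/2-\varepsilon}$ rate — is entirely contained in \cite[Theorem 5.3]{BGG}, leaving the explicit Stirling computation of $V$ above and the routine verification of the support and density bounds as the only pieces specific to the Krawtchouk weight.
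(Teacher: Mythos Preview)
Your proposal is correct and follows essentially the same approach as the paper: both recognize (\ref{S23E8}) as a discrete $\beta$-ensemble, verify the hypotheses of \cite[Theorem 5.3]{BGG} by pointing to \cite[Section 6]{DD21}, and read off the support and density bound from the structure of the model. The paper is slightly more terse---it records the specific parameters $j=1$, $a_1=-n\theta-1$, $b_1=K+1$, $\hat a_1=-\theta$, $\hat b_1=s^{-1}$ and cites the explicit density formula \cite[Proposition 6.5]{DD21} rather than the variational characterization---but the content is the same.
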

\begin{proof} From (\ref{S23E8}) we have that $\ell^n = (\ell^{n}_1, \dots, \ell^{n}_n)$ is a discrete $\beta$-ensemble, i.e. it satisfies \cite[Definition 3.1]{BGG} with $j = 1$, $a_1= -n\theta - 1$ and $b_1 = K+1$. The regularity Assumption 1 in \cite[Section 3.2]{BGG} has been verified in \cite[Section 6.2]{DD21} and Assumption 2 holds with $\hat{a}_1 = -\theta$, $\hat{b}_1 = s^{-1}$. The convergence part of the proposition now follows from \cite[Theorem 5.3]{BGG}.

An explicit formula for $\mu(x,s)$ has been computed in \cite[Proposition 6.5]{DD21}, where it is denoted by $\phi_\beta^{\theta, {\tt M} + \theta}$, and the relationship is given by
\begin{equation}\label{S31E2}
\mu(x,s) = \phi_\beta^{\theta, {\tt M} + \theta}(x - \theta) \mbox{ with ${\tt M} = s^{-1}$}.
\end{equation}
The shift in the argument by $-\theta$ on the right side of (\ref{S31E2}) is due to the fact that the variables $\ell_i$ in \cite[Proposition 6.5]{DD21} are shifted by $n\theta$ compared to ours. The last part of the proposition follows from the formula for $\phi_\beta^{\theta, s^{-1} + \theta}$, although it can also be deduced from \cite[Section 5]{BGG}.
\end{proof}

In the remainder of this section we introduce several auxiliary functions related to the measures $\mu(x,s)$ from Proposition \ref{LLN} and discuss some of their properties. Let $w^{\theta,K}_n$ be as in (\ref{S1PDef4}). Using the functional equation $\Gamma(z+1) = z \Gamma(z)$ we observe that 
\begin{equation}\label{S31E3}
\frac{w^{\theta, K}_n(z-1)}{w^{\theta, K}_n(z)} = \frac{\Phi_{n,K}^-(z)}{\Phi_{n,K}^+(z)}, \mbox{ where } \Phi_{n,K}^-(z) = z + n\theta  \mbox{ and } \Phi_{n,K}^+(z) = K + 1 - \theta - z.
\end{equation}
It follows from \cite[Theorem 4.1]{BGG} that
\begin{equation}\label{S31E4}
R_{n,K}(z) = \Phi^{-}_{n,K}(zn) \cdot \mathbb{E}\left[ \prod_{i = 1}^n \frac{zn - \ell_i^n - \theta}{zn - \ell_i^n} \right] + \Phi^+_{n,K}(zn) \cdot \mathbb{E}\left[ \prod_{i = 1}^{n}\frac{zn- \ell^{n}_i + \theta - 1}{zn - \ell^{n}_i - 1} \right]
\end{equation}
is a degree $1$ polynomial in $z$. We mention that in deriving (\ref{S31E4}) we used that $\Phi^{\pm}_{n,K}$ are degree one polynomials and also that $\Phi^{-}_{n,K}(-n\theta) = 0 = \Phi^+_{n,K}(K + 1 - \theta)$.

From the proof of \cite[Proposition 5.11]{BGG}, see also \cite[(A.7)]{DK2020}, we have that if $n = \lceil s K \rceil$ for some fixed $s > 0$, then 
\begin{equation}\label{S31E5}
\lim_{K \rightarrow \infty} \mathbb{E}\left[ \prod_{i = 1}^n \frac{zn - \ell_i^n - \theta}{zn - \ell_i^n}  \right] = e^{-\theta G(z,s)} \mbox{ and } \lim_{K \rightarrow \infty}\mathbb{E}\left[ \prod_{i = 1}^{n}\frac{zn- \ell^{n}_i + \theta - 1}{zn - \ell^{n}_i - 1} \right] = e^{\theta G(z,s)},
\end{equation}
where $G(z,s)$ is the Stieltjes transform of $\mu(x,s)$, i.e.
\begin{equation}\label{S31E6}
G(z,s) = \int_{\mathbb{R}} \frac{\mu(x,s)dx}{z-x},
\end{equation}
and the convergence in (\ref{S31E5}) is uniform over compact subsets of $\mathbb{C} \setminus [-\theta, s^{-1}]$.

Dividing both sides of (\ref{S31E4}) by $n = \lceil s K \rceil$ and taking $K \rightarrow \infty$ we get using (\ref{S31E3}) and (\ref{S31E5})
\begin{equation*}
\begin{split}
 \lim_{K \rightarrow \infty} n^{-1} R_{n,K}(z) =  (z + \theta)  \cdot e^{-\theta G(z,s)} + (s^{-1} - z) \cdot e^{\theta G(z,s)}.
\end{split}
\end{equation*}
Since $n^{-1}R_{n,K}(z)$  is a degree $1$ polynomial for each $K$, so is its pointwise limit as $K \rightarrow \infty$. In particular, we have that there is a degree $1$ polynomial $R(z, s)$ such that 
\begin{equation}\label{S31E7}
\begin{split}
&R(z,s) =   (z + \theta) \cdot e^{-\theta G(z,s)} + (1/s - z) \cdot e^{\theta G(z,s)}.
\end{split}
\end{equation}
We next observe from (\ref{S31E6}) that as $|z| \rightarrow \infty$ we have 
\begin{equation}\label{S31E8}
\exp \left( \pm \theta G(z,s) \right) = 1 \pm \frac{\theta }{z} + O(|z|^{-2}).
\end{equation}
Using (\ref{S31E7}) and (\ref{S31E8}) we see that $R(z,s) = 1/s - \theta + O(|z|^{-1})$ and so we conclude that 
\begin{equation}\label{S31E9}
R(z,s) = 1/s - \theta.
\end{equation}
\begin{remark}
Once we have a formula for $R(z,s)$ we can use (\ref{S31E7}) to compute the Stieltjes transform $G(z,s)$ and then use that to find a formula for $\mu(x,s)$. This was already done in \cite[Lemma 3.6]{DK2020}. Specifically, setting 
\begin{equation}\label{S31E10}
\Phi^-(z,s) = z + \theta \mbox{ and } \Phi^+(z,s) = 1/s - z ,
\end{equation} 
we have from \cite[Lemma 3.6]{DK2020} that $\mu(x,s)$ is continuous on $[-\theta, 1/s]$, and given by
\begin{equation}\label{S31E11}
 \mu(x,s) = \frac{1}{\theta \pi } \cdot \arccos \left( \frac{R(x,s)}{2 \sqrt{\Phi^-(x,s)  \Phi^+(x,s)}}\right) =  \frac{1}{\theta \pi } \cdot \arccos \left( \frac{1/s - \theta }{2 \sqrt{(x + \theta )( 1/s - x )}}\right).
\end{equation}
In (\ref{S31E11}) we denote by $\arccos(x)$ the function, which is $\pi$ on $(-\infty, -1]$, $0$ on $[1 ,\infty)$, and the usual arccosine function on $(-1,1)$. The formula for $\mu(x,s)$ in (\ref{S31E11}) after some simple trigonometric  identities is seen match the one from \cite[Proposition 6.5]{DD21}, which was discussed in the proof of Proposition \ref{LLN}.
\end{remark}

Our next goal is to find explicit formulas for $ \exp ( \theta G(z,s)) $ and $ \exp ( -\theta G(z,s))$. These formulas will be given by certain square roots, and we isolate our convention in the following definition.
\begin{definition}\label{Branch} Throughout the paper all of the logarithms and square roots are defined with respect to the principal branch. Given $a, b \in \mathbb{R}$ with $a<b$, $f(z) = \sqrt{(z - a)(z- b)} $ means
$$f(z) = \begin{cases} \sqrt{z - a} \sqrt{z-b} &\mbox{ when $z \in \mathbb{C} \setminus (-\infty, b]$ }, \\ -\sqrt{a -z }\sqrt{b - z} &\mbox{ when $z \in (-\infty, a)$ }. \end{cases}$$ 
Observe that in this way $f$ is holomorphic on $\mathbb{C} \setminus [a,b]$, cf. \cite[Theorem 2.5.5]{SS}.
\end{definition}

Setting $Y =\exp ( \theta G(z,s))$ we see from (\ref{S31E7}) and (\ref{S31E9}) that $Y$ solves
$$X \cdot (1/ s - \theta) = X^2 \cdot   (1/s - z)   +  (z + \theta).$$
The above quadratic equation has two roots given by
$$X_{\pm} = \frac{(1/s - \theta) \pm 2 \sqrt{(z - z_+(s))(z - z_-(s))}  }{2 (1/ s - z)},$$
where we mention that the above square root is as in Definition \ref{Branch} and
\begin{equation}\label{S31E12}
z_\pm(s) =(1/2) \cdot (1/s - \theta) \pm \sqrt{ \theta/ s} .
\end{equation}
Setting $F(z,s) = (z - z_+(s))(z - z_-(s))$, we see that 
$$F(z,s) = (1/4) \cdot (1/s - \theta)^2 + (z+ \theta) (z- 1/s),$$
and so 
$$F(-\theta,s) = F(1/s) = (1/4) \cdot (1/s - \theta)^2 \geq 0,$$
which implies that 
\begin{equation}\label{S31E13}
-\theta \leq z_-(s) \leq z_+(s) \leq 1/s.
\end{equation}
The latter shows that $X_{\pm}$ is well-defined for $z \in \mathbb{C} \setminus [-\theta, 1/s]$.

From (\ref{S31E8}) we have $\exp ( \theta G(z,s)) = 1 + \theta/ z + O(|z|^{-2})$ as $z \rightarrow \infty$, and so we conclude that $\exp ( \theta G(z,s))  = X_-$, i.e.
\begin{equation}\label{S31E14}
\begin{split}
e^{\theta G(z,s)} = \hspace{2mm} &\frac{(1/s - \theta) - 2 \sqrt{(z - z_+(s))(z - z_-(s))}  }{2 (1/ s - z) },
\end{split}
\end{equation}
provided that $s > 0$ and $z \in \mathbb{C} \setminus [-\theta, 1/s]$. 

Similarly, we see from (\ref{S31E7}) that $Y^{-1}$ solves
$$X \cdot (1/ s - \theta) =  (1/s - z)   +  X^2 \cdot (z + \theta).$$
The above quadratic equation has two roots given by
$$X_{\pm} =\frac{(1/s - \theta) \pm 2 \sqrt{(z - z_+(s))(z - z_-(s))}  }{2 (z + \theta) }.$$
As before, we have that $X_{\pm}$ are well-defined for $z \in \mathbb{C} \setminus [-\theta, 1/s]$.

From (\ref{S31E8}) we have $\exp ( -\theta G(z,s)) = 1 - \theta/ z + O(|z|^{-2})$ as $z \rightarrow \infty$, and so we conclude that $\exp (-\theta G(z,s))  = X_+$, i.e.
\begin{equation}\label{S31E15}
\begin{split}
e^{ - \theta G(z,s)} = \frac{(1/s - \theta) + 2 \sqrt{(z - z_+(s))(z - z_-(s))}  }{2 (z + \theta) },
\end{split}
\end{equation}
provided that $s > 0$ and $z \in \mathbb{C} \setminus [-\theta, 1/s]$. \\

Combining (\ref{S31E10}), (\ref{S31E14}) and (\ref{S31E15}), we get 
\begin{equation}\label{S31E16}
\begin{split}
&Q(z,s) :=   \Phi^-(z,s) \cdot e^{- \theta G(z,s)} - \Phi^+(z,s) \cdot e^{\theta G(z,s)}  =  2 \sqrt{(z - z_+(s))(z - z_-(s))},
\end{split}
\end{equation}
and if we also use (\ref{S31E7}) we get
\begin{equation}\label{S31E17}
\begin{split}
\left(   e^{\theta G(z,s)}   -  1   \right)\left(  \Phi^+(z,s) - e^{-\theta G(z,s)} \Phi^{-}(z,s) \right) = \hspace{2mm} &R(z,s) -  \Phi^+(z,s)  -  \Phi^-(z,s) =  - 2\theta,
\end{split}
\end{equation}
provided that $s > 0$ and $z \in \mathbb{C} \setminus [-\theta, 1/s]$. The last equation in particular implies that $e^{\theta G(z,s)}   -  1 \neq 0$ for $s> 0$ and $z \in \mathbb{C} \setminus [-\theta, 1/s]$.

Using (\ref{S31E14}) we see that $G(z,s)$ is smooth in the region $\{ (z, s) \in \mathbb{C} \times (0, \infty): z \in \mathbb{C} \setminus [-\theta, s^{-1}]\}$ and also for each $s > 0$ and $m \in \mathbb{Z}_{\geq 0}$ the function $\partial_s^m G(z,s)$ is analytic in $z$ on $\mathbb{C} \setminus [-\theta, s^{-1}]$.  By a direct computation using (\ref{S31E14}) we have
\begin{equation}\label{SpecEqn}
\begin{split}
 \partial_s G(z,s) = \frac{\partial_s  e^{\theta G(z,s)} }{ \theta e^{\theta G(z,s)}} = \frac{  (ze^{\theta G(z,s)}   -  \theta -  z) \partial_z G(z,s) }{ s \left(e^{\theta G(z,s)} - 1 \right)},
\end{split}
\end{equation}
for $\{ (z, s) \in \mathbb{C} \times (0, \infty): z \in \mathbb{C} \setminus [-\theta, s^{-1}]\}$.

%
\subsection{Weak moment bounds}\label{Section3.2} Given a random $(\ell^1, \ell^2, \dots) $, distributed according to $\P$ as in Definition \ref{BKCC}, $L > 0$ and $n \in \mathbb{N}$, we consider the random analytic functions
\begin{equation}\label{S32E1}
G^n_L(z) = \sum_{i = 1}^n \frac{1}{z - \ell_i^n/L} \hspace{2mm} \mbox{ and } \hspace{2mm} \hat{G}^n_{L}(z) = G^n_L(z) - L \cdot G(z L/n,n/K),
\end{equation}
where $G(z,s)$ is as in (\ref{S31E6}). When $L = n$, $G^n_{n}(z)$ is $n$ times the Stieltjes transform of the empirical measures from (\ref{S31E1}) and $\hat{G}^n_{n}$ is a shifted version of it.

The goal of this section is to establish the following result.
\begin{proposition}\label{S32P1} Fix $\theta, \varepsilon >0$ and $\delta \in (0,1)$. Suppose that $(\ell^1, \ell^2, \dots)$ is distributed according to $\P$ as in Definition \ref{BKCC}. Then, for each $k \in \mathbb{N}$ and compact set $\mathcal{V} \subset \mathbb{C} \setminus [-\theta, \delta^{-1}]$ we have 
\begin{equation}\label{S32E2}
\lim_{K \rightarrow \infty} \sup_{z \in \mathcal{V}} \max_{ \delta K \leq n \leq \delta^{-1} K} K^{-k/2 - \varepsilon} \mathbb{E}\left[\left|\hat{G}_{n}^n(z) \right|^k \right] = 0.
\end{equation}
\end{proposition}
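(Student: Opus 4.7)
The plan is to deduce (\ref{S32E2}) directly from the law of large numbers in Proposition \ref{LLN}, applied to smooth cutoffs of the Cauchy kernel $1/(z-x)$. Since Proposition \ref{LLN} already delivers convergence at rate $K^{1/2-\varepsilon'}$ in the sense of moments for every $\varepsilon'>0$, and $\hat{G}_n^n(z)$ equals $n=O(K)$ times such an integral, the $k$-th moment of $\hat{G}_n^n(z)$ will be of order $o(K^{k/2+k\varepsilon'})$; choosing $\varepsilon'=\varepsilon/(2k)$ then yields $o(K^{k/2+\varepsilon/2})$, which is already stronger than what is claimed. The bound asked for in (\ref{S32E2}) is therefore a rather weak consequence of the single-level LLN, and the only thing that requires attention is uniformity in $z$ and $n$.

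Concretely, fix $z\in \mathcal{V}$ and set $s=n/K\in[\delta,\delta^{-1}]$. Choose a smooth cutoff $\chi:\mathbb{R}\to[0,1]$ equal to $1$ on $[-\theta,\delta^{-1}]$ and compactly supported in some fixed interval $J$ with $J\cap\mathcal{V}=\emptyset$. By (\ref{S1GenState}) every particle satisfies $\ell_i^n/n\in[-\theta,(K-\theta)/n]\subseteq[-\theta,\delta^{-1}]$, and by Proposition \ref{LLN} we have $\mathrm{supp}\,\mu(\cdot,s)\subseteq[-\theta,s^{-1}]\subseteq[-\theta,\delta^{-1}]$. Hence the truncation $\tilde{f}_z(x):=\chi(x)/(z-x)$ satisfies
\begin{equation*}
\hat{G}_n^n(z) \;=\; n\cdot\Big[\int \tilde{f}_z\, d\mu_n \;-\; \int \tilde{f}_z(x)\,\mu(x,s)\,dx\Big],
\end{equation*}
and $\tilde{f}_z$ is compactly supported and Lipschitz with Lipschitz norm bounded uniformly in $z\in\mathcal{V}$ (since $\mathrm{dist}(\mathcal{V},J)>0$). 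Proposition \ref{LLN}, applied to $f=\tilde{f}_z$ with the exponent $\varepsilon/(2k)$, therefore yields $\mathbb{E}\!\left[|\hat{G}_n^n(z)|^k\right]=o(K^{k/2+\varepsilon/2})$ for each fixed $(n,z)$.

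The remaining step, and the main technical point, is to promote this pointwise bound to the uniform statement in (\ref{S32E2}). Uniformity in $z\in\mathcal{V}$ is easy: $\hat{G}_n^n$ is holomorphic on a complex neighborhood of $\mathcal{V}$ with derivative trivially bounded by $n\cdot\mathrm{dist}(\mathcal{V},[-\theta,\delta^{-1}])^{-2}=O(K)$, so its supremum over $\mathcal{V}$ is controlled by its values on a finite $K^{-2}$-net plus a remainder of order $O(K^{-1})$. Uniformity in $n\in[\delta K,\delta^{-1}K]$ is the delicate point, because Proposition \ref{LLN} is stated for one fixed $s$; I expect this to be the real (though mild) obstacle. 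One resolves it by inspecting the proof of \cite[Theorem 5.3]{BGG}: all implicit constants depend continuously on the parameters of the discrete $\beta$-ensemble (which here are $a_1=-n\theta-1$, $b_1=K+1$, $\hat{a}_1=-\theta$, $\hat{b}_1=s^{-1}$, together with the normalized potential), and since $s=n/K$ ranges over the compact interval $[\delta,\delta^{-1}]$ these constants are uniformly bounded. Combining this uniform input with the pointwise estimate and the equicontinuity argument then yields (\ref{S32E2}).
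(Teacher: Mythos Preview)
Your strategy is correct in outline, and the paper's Remark \ref{S32R2} makes exactly your observation: for fixed $s$ and $z$, Proposition \ref{LLN} already gives the pointwise bound, and the whole content of Proposition \ref{S32P1} is the uniformity in $n\in[\delta K,\delta^{-1}K]$. That is precisely the step you defer to ``inspecting the proof of \cite[Theorem 5.3]{BGG}''. So the substantive part of the argument is left as an assertion; note also that ``constants depend continuously on the parameters'' is not by itself sufficient---one needs an actual quantitative bound that holds uniformly in $s=n/K$, not merely convergence for each fixed $s$.

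The paper takes a different route at this point. Instead of going back into \cite{BGG}, it states and invokes the explicit concentration inequality of Proposition \ref{S32P2} (imported from \cite{DD21}, not \cite{BGG}): an exponential tail bound whose implicit constant depends only on $\theta,p,\delta$ and is already uniform over all $n$ with $K/n\in[\delta,\delta^{-1}]$. Applying it to $g=g_v\cdot h$ (essentially your $\tilde f_z$) with $\gamma=r\,n^{\varepsilon/(2k)-1/2}$ and $p=3$ gives a uniform tail bound on $|\hat G_n^n(v)|$, which is then converted to a uniform moment bound via the layer-cake formula \cite[Lemma 2.2.13]{Durrett}. Uniformity in $z\in\mathcal V$ is handled not by a net argument but simply by bounding $\|g_v\cdot h\|_{1/2}$ and $\|g_v\cdot h\|_{\mathrm{Lip}}$ uniformly over $v\in\mathcal V$. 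What this approach buys over yours is that the uniformity in $n$ is built into the input rather than something to be extracted by hand from the proof in \cite{BGG}.
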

\begin{remark}\label{S32R1} In plain words, Proposition \ref{S32P1} says that the moments $\mathbb{E}\left[|\hat{G}_{n}^n(z) |^k \right]$ can grow at most as fast as $O(K^{k/2 + \varepsilon})$ for any $\varepsilon > 0$. Later in Proposition \ref{S33P1} we will show that $\mathbb{E}\left[|\hat{G}_{n}^n(z) |^k \right] = O(1)$. This is why we refer to (\ref{S32E2}) as {\em weak moment bounds}.
\end{remark}
\begin{remark}\label{S32R2} If we fix $s > 0$, $z \in \mathbb{C} \setminus [-\theta, s^{-1}]$ and set $n_K = \lceil s K \rceil$, then 
$$\frac{1}{n_K} \cdot \hat{G}_{n_K}^{n_K}(z)  = \int_{\mathbb{R}} f(x; z,s) \mu_{n_K}(dx) - \int_{\mathbb{R}} f(x;z,s) \mu(x,n_K/K) dx,$$
with $f(x;z,s) = \frac{1}{z -x}$. One can then use Proposition \ref{LLN} to show that
$$ \lim_{K \rightarrow \infty} K^{-k/2 - \varepsilon} \mathbb{E}\left[\left|\hat{G}_{n_K}^{n_K}(z) \right|^k \right] = 0.$$
The power of Proposition \ref{S32P1} is that this moment convergence happens uniformly as $z$ varies over a compact set in $\mathbb{C}$ that avoids the intervals $[-\theta, K/n]$ (that contain the singularities of $\hat{G}_{n}^n$) and as $K/n$ varies in a compact subset of $(0,\infty)$ (and not converging to some specified $s > 0$).
\end{remark}

To prove Proposition \ref{S32P1} we require an estimate from \cite{DD21}, which we establish next after introducing a bit of necessary notation. Let 
$$\hat{f}(\xi) = \int_{\mathbb{R}} f(x) e^{- i \xi x} dx$$
denote the Fourier transform of $f$. For a compactly supported Lipschitz function $g$ on $\mathbb{R}$ we define
\begin{equation}\label{S32E3}
\|{g}\|_{1/2}:=\left(\int_{\mathbb{R}} |t||\widehat{g}(t)|^2dt\right)^{1/2}, \quad \|{g}\|_{\operatorname{Lip}}=\sup_{x\neq y}\left|\frac{g(x)-g(y)}{x-y}\right|.
\end{equation}
We mention that both $\|{g}\|_{1/2}$ and $\|{g}\|_{\operatorname{Lip}}$ are finite for a compactly supported Lipschitz function $g$, cf. \cite[Lemma 2.5]{DD21}. The following is the key estimate we require.

\begin{proposition}\label{S32P2} Fix $\theta > 0$, $\delta \in (0,1)$ and let $(\ell^1, \ell^2, \dots)$ be distributed according to $\P$ as in Definition \ref{BKCC}. In addition, let $\mu_{n}$ be as in (\ref{S31E1}) and $\mu(x,s)$ be as in Proposition \ref{LLN}. If $n \geq 2$, $K/n \in [\delta, \delta^{-1}]$, $\gamma > 0$, $p\geq 2$ and $g$ is a compactly supported Lipschitz function, we have
\begin{equation}\label{S32E4}
\begin{split}
& \P  \left(\left|  \int_{\mathbb{R}}  g(x)\mu_{n}(dx)- \int_{\mathbb{R}} g(x)\mu(x, n/K) dx \right|\ge \gamma \|{g}\|_{1/2}+\frac{\theta \|{g}\|_{\operatorname{Lip}}}{n^p} \right)  \\
& \le  \exp \left(-2\pi^2\gamma^2\theta n^2+O\left(n\log n\right) \right),
\end{split}
\end{equation}
where the constant in the big $O$ notation depends on $\theta, p$ and $\delta$ alone.
\end{proposition}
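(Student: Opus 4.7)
The plan is to deduce Proposition \ref{S32P2} from a quantitative concentration inequality for discrete $\beta$-ensembles developed in \cite{DD21}, which is itself a discrete analogue of the Ma\"ida--Maurel-Segala transportation bound. The starting observation is that by (\ref{S23E8}) the marginal law of $\ell^n$ under $\mathbb{P}^{\theta,K}_{\infty}$ is a discrete $\beta$-ensemble with weight $w_n^{\theta,K}$, and the verification that this family of weights satisfies the regularity hypotheses of \cite[Section 3.2]{BGG} uniformly over $K/n \in [\delta,\delta^{-1}]$ was already carried out in \cite[Section 6.2]{DD21}. I would first rescale the particles by $x_i = \ell_i^n/n$, so that the empirical measure $\mu_n$ lives on the compact window $[-\theta,\delta^{-1}]$ and concentrates around the explicit equilibrium measure $\mu(x,n/K)$ from Proposition \ref{LLN} and (\ref{S31E11}), which is continuous, supported in $[-\theta,(n/K)^{-1}]$, and bounded above by $\theta^{-1}$.

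Next I would invoke the concentration theorem of \cite{DD21} essentially as a black box. The underlying mechanism is that $\mu(\cdot,n/K)$ is the unique minimizer of a strictly convex logarithmic energy functional $\mathcal{E}$, and that for any absolutely continuous competitor $\sigma$ one has a Fourier-analytic lower bound of the form
\begin{equation*}
\mathcal{E}(\sigma) - \mathcal{E}(\mu(\cdot,n/K)) \;\geq\; \frac{\theta}{2\pi} \int_{\mathbb{R}} \bigl|\widehat{\sigma - \mu(\cdot,n/K)}(\xi)\bigr|^{2} \, |\xi|^{-1} \, d\xi.
\end{equation*}
Combining this quadratic lower bound with Plancherel's identity against $g$ produces the $\|g\|_{1/2}$ term and the universal prefactor $2\pi^{2}\theta$ in the exponent of (\ref{S32E4}); matching it against the partition function asymptotics for the discrete $\beta$-ensemble produces the $O(n\log n)$ correction. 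The auxiliary Lipschitz term $\theta\|g\|_{\operatorname{Lip}}/n^p$ accounts for smoothing the atomic empirical measure $\mu_n$ by convolution with a box kernel of width $n^{-p}$ before the Fourier estimate is applied; the exponent $p$ can be made arbitrarily large by iterating this regularization step.

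The main technical obstacle is to ensure that the implicit constant in the $O(n\log n)$ remainder can be taken uniformly in $K/n \in [\delta,\delta^{-1}]$, rather than for a single fixed value of $s = n/K$. This demands (i) a uniform-in-$s$ lower bound on the strict convexity modulus of the functional $\mathcal{E}$, and (ii) a uniform-in-$s$ regularity bound on the effective potential coming from $w_n^{\theta,K}$ in (\ref{S1PDef4}). Both inputs follow from the compactness of $[\delta,\delta^{-1}]$ together with the continuous dependence of the objects on $s$, which is manifest from the closed-form expressions (\ref{S31E11}) and (\ref{S31E14})--(\ref{S31E17}). Once this uniformity is in place, (\ref{S32E4}) is a direct application of the cited concentration theorem.
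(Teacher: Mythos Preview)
Your approach is essentially the same as the paper's: reduce to the concentration inequality \cite[Proposition~2.6]{DD21} after checking that its hypotheses hold uniformly for $K/n\in[\delta,\delta^{-1}]$. The paper carries this out concretely by shifting the particles via $\tilde{\ell}^n_i=\ell^n_i+n\theta$ so that the law matches \cite[(6.11)]{DD21}, then explicitly verifying \cite[Assumptions~2.1 and~2.2]{DD21} (not the \cite[Section~3.2]{BGG} hypotheses you cite, which pertain to the LLN rather than the concentration bound) with constants depending only on $\theta,\delta,p$, and finally applying \cite[Proposition~2.6]{DD21} to $\tilde g(x)=g(x-\theta)$.
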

\begin{proof} Consider the random variables $\tilde{\ell}^n = (\ell^n_1 + n \theta, \dots, \ell^n_1 + n \theta)$. From (\ref{S1PDef4})  and (\ref{S23E8}) we have 
$$\P(\tilde{\ell}^{n}) = \frac{1}{\hat{Z}( \theta, K, n )}\prod_{1 \leq i < j \leq n} \frac{\Gamma(\tilde{\ell}_i^{n} - \tilde{\ell}_j^n + 1 ) \Gamma(\tilde{\ell}_i^n - \tilde{\ell}_j^n  + \theta) }{\Gamma(\tilde{\ell}_i^{n} - \tilde{\ell}_j^n + 1 - \theta) \Gamma(\tilde{\ell}_i^n - \tilde{\ell}_j^n )} \cdot \prod_{i = 1}^n e^{-n V_n(\tilde{\ell}_i/ n)},$$
where 
$$V_n(x) =  \frac{1}{n} \cdot \log \left( \frac{ \Gamma(\tilde{\ell}_i^n +1) \Gamma(K + n\theta - \tilde{\ell}_i^n + 1 - \theta) }{n^{K + n \theta + 2 - \theta} e^{-K - n \theta + \theta}} \right),$$
and $\hat{Z}( \theta, K, n )$ is a new normalization constant. The latter measures agree with \cite[(6.11)]{DD21} with $N = n$, ${\tt M} = K/n$ and $M_N = \lceil  {\tt M} n \rceil = K$.

We next proceed to check that the latter measures satisfy \cite[Assumptions 2.1 and 2.2]{DD21}. To make the notation consistent one needs to take $N, {\tt M}, M_N$ as in the previous line. Since $K/n \in [\delta, \delta^{-1}]$ by assumption we see that we can take $A_0 = \delta^{-1}$ and $a_0 = \delta$ in \cite[Assumption 2.1]{DD21}. We can also take $A_1 = 1$ and $q_n = 0$ by the definition of ${\tt M}$. The continuity of $V_n$ is assured by the continuity of the gamma function on $(0, \infty)$. If we set 
$$V(x) = x \log x + (K/n + \theta - x) \log (K/n + \theta - x),$$
we see that
$$V'(x) = \log x - \log (K/n + \theta - x).$$
The latter shows that $V$ satisfies \cite[(2.3)]{DD21} with $A_4 = 1$ and the second part of \cite[(2.2)]{DD21} with $I = [0, K/n + \theta]$ and 
$$A_3 = \sup_{y \in [\delta, \delta^{-1}]} \sup_{x \in (0, y + \theta)}  x \left| \log x \right| + (y + \theta - x) \cdot \left |\log (y + \theta - x) \right|,$$
which is a finite constant that depends on $\delta$ and $\theta$ alone. Finally, one can repeat the proof of \cite[(6.12)]{DD21} verbatim to show that there exists a constant $A_2 > 0$ such that if $x \in I_n = [0, K/n + \theta (n-1)/n] $, then 
$$|V_n(x) - V(x)| \leq A_2 \cdot n^{-1} \cdot \log(n+1).$$
In particular, the first part of \cite[(2.2)]{DD21} is satisfied with this choice of $A_2$ and $p_n = n^{-1} \log (n+1)$. 

Now that \cite[Assumptions 2.1 and 2.2]{DD21} are verified for the distribution of $\tilde{\ell}^n$, we obtain (\ref{S32E4}) as a direct consequence of \cite[Proposition 2.6]{DD21} applied to $\tilde{\ell}^n$ and the function $\tilde{g}(x) = g(x -\theta)$.
\end{proof}

In the remainder of this section we give the proof of Proposition \ref{S32P1}.
\begin{proof}[Proof of Proposition \ref{S32P1}] Fix $\varepsilon, \delta > 0$, $k \in \mathbb{N}$ and a compact set $\mathcal{V}$ as in the statement of the proposition. Let $\eta > 0$ be sufficiently small so that for $I_\eta := [-\eta - \theta, \delta^{-1} + \eta]$ we have $ I_\eta \cap \mathcal{V} = \emptyset$. Let $h(x)$ be a smooth function such that $0 \leq h(x) \leq 1$,  $h(x) = 1$ if $x \in [-\eta/2 - \theta, \delta^{-1} + \eta/2]$, $h(x) = 0$ if $x \leq -\eta - \theta$ or $x \geq \delta^{-1}  + \eta$ and $\sup_{ x \in I_\eta} |h'(x)| \leq \eta^{-1} \cdot 10$. Notice that for $n/K \in [\delta, \delta^{-1}]$ we have $\mu_{n}$  as in (\ref{S31E1}) and $\mu(x, n/K)$ as in Proposition \ref{LLN} are both supported in $[-\theta, \delta^{-1}]$ and so
\begin{equation*}
\left| \int_{\mathbb{R}}  g_v(x) \cdot h(x)  \mu_{n}(dx) -  \int_{\mathbb{R}} g_v(x) \cdot h(x)  \mu(x, n/K)dx \right|  = n^{-1} \left| \hat{G}_{n}^n(v) \right|,
\end{equation*}
where $g_v(x) = (v -x)^{-1}$. Let us denote 
\begin{equation*}
c_1(\mathcal{V}) := \sup_{v \in \mathcal{V}} \| g_v \cdot h \|_{1/2} \mbox{ and } c_2(\mathcal{V}) := \sup_{v \in  \mathcal{V}} \| g_v \cdot h \|_{\operatorname{Lip}},
\end{equation*} 
which from \cite[(A.16)]{DK2020} are finite positive constants that depend on $\mathcal{V}, \delta$ (and implicitly on our choices of $\eta, h$). 

We now apply Proposition \ref{S32P2} for the function $g_v \cdot h$ with $\gamma = r \cdot n^{\varepsilon/(2k) - 1/2}$, $r > 0$ and $p = 3$ to get for some constant $C$ that depends on $\delta$ and $\theta$ and all $v \in \mathcal{V}$ and $n, K$ with $n \geq 2$, $n/K \in [\delta, \delta^{-1}]$
$$\P \left(  \left| \hat{G}_{n}^n(v) \right| \geq c_1(\mathcal{V}) \cdot r n^{1/2+\varepsilon/(2k)} + c_2(\mathcal{V}) \cdot n^{-2} \right) \leq \exp\left( C n \log(n) - 2 \pi^2 r^2 \theta n^{1+\varepsilon/k }\right).$$
Combining the latter with \cite[Lemma 2.2.13]{Durrett}, we conclude that
\begin{equation*}
\mathbb{E} \left[ \left|\hat{G}_{n}^n(v)\right|^k \right] =O \left( n^{k/2 + \varepsilon/2} \right),
\end{equation*}
where the constant in the big $O$ notation depends on $\varepsilon, \delta, \mathcal{V}, \theta$ and $k$, and is uniform for $v \in \mathcal{V}$ and $n, K$ with $n \geq 2$, $n/K \in [\delta, \delta^{-1}]$. The latter moment estimate implies (\ref{S32E2}), since $K \geq \delta n$. 
\end{proof}

%
\subsection{Strong moment bounds}\label{Section3.3} The goal of this section is to establish the following result.
\begin{proposition}\label{S33P1} Fix $\theta >0$ and $\delta \in (0,1)$. Suppose that $(\ell^1, \ell^2,  \dots)$ is distributed according to $\P$ as in Definition \ref{BKCC}, and $\hat{G}^n_L$ are as in (\ref{S32E1}). Then, for any $k \in \mathbb{N}$ and any compact set $\mathcal{V} \subset \mathbb{C} \setminus [-\theta, \delta^{-1}]$ we have 
\begin{equation}\label{S33E1}
 \sup_{z \in \mathcal{V}} \max_{ \delta K \leq n \leq \delta^{-1} K} \mathbb{E}\left[\left|\hat{G}_{n}^n(z) \right|^k \right] = O(1),
\end{equation}
where the constant in the big $O$ notation depends on $\theta, \delta, \mathcal{V}$ and $k$ alone.
\end{proposition}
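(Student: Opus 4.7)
The approach is to bootstrap the weak bound of Proposition \ref{S32P1} into the uniform $O(1)$ bound by linearizing the single-level loop equation (\ref{S31E4}). Since the marginal law of $\ell^n$ under $\P$ is a discrete $\beta$-ensemble (equation (\ref{S23E8})) whose BGG-type assumptions have already been verified uniformly in $n/K \in [\delta, \delta^{-1}]$ during the proof of Proposition \ref{LLN}, all constants below can be chosen uniformly in $n$.

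First, I linearize the loop equation. Using the exponential identity
\[
\prod_{i=1}^{n} \frac{zn - \ell_i^n - \theta}{zn - \ell_i^n} = \exp\Bigl(-\tfrac{\theta}{n} G^n_n(z) - \sum_{j\geq 2} \tfrac{\theta^j}{jn^j}\sum_{i=1}^n (z - \ell_i^n/n)^{-j}\Bigr),
\]
splitting $G^n_n(z) = n\, G(z, n/K) + \hat G^n_n(z)$, factoring out the deterministic $e^{\mp \theta G(z, n/K)}$, and Taylor-expanding the remaining random factor (whose exponent is small in $L^p$ by Proposition \ref{S32P1} together with Cauchy-estimate control of the sums for $j \geq 2$ via derivatives of $G^n_n$), I obtain explicit expansions of $A_n(z)$ and $B_n(z)$ from (\ref{S31E4}) in powers of $\hat G^n_n/n$. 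Substituting these into (\ref{S31E4}) divided by $n$, subtracting the limit identity (\ref{S31E7}) at $s = n/K$, and simplifying using (\ref{S31E16}) and (\ref{S31E17}) yields a scalar relation of the form
\[
\theta\, Q(z, n/K)\cdot \mathbb{E}[\hat G^n_n(z)] = C(z, n/K) + \mathcal{E}_K(z),
\]
where $C$ is explicit and $O(1)$ and $\mathcal{E}_K(z)$ is a nonlinear remainder involving $\mathbb{E}[(\hat G^n_n)^2]$ (and its higher moments) on a slightly enlarged compact set. By (\ref{S31E13}), the branch points $z_{\pm}(n/K)$ lie in $[-\theta, \delta^{-1}]$, so $|Q(z,n/K)| = 2|\sqrt{(z-z_+)(z-z_-)}|$ is uniformly bounded below for $z \in \mathcal{V}$ and $n/K \in [\delta, \delta^{-1}]$; inverting $Q$ expresses $\mathbb{E}[\hat G^n_n(z)]$ in terms of higher-moment errors.

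To handle general $k$, I run the same linearization after weighting the products defining $A_n$ and $B_n$ by a centered tensor $\prod_{j=2}^k(\hat G^n_n(w_j) - \mathbb{E}[\hat G^n_n(w_j)])$ at auxiliary points $w_2,\ldots,w_k \in \mathcal{V}$. The resulting ``$k$-point'' linearized relation, produced again by subtracting the limit identity, expresses the joint cumulant of $\hat G^n_n$ at $(z, w_2, \ldots, w_k)$ in terms of strictly lower-order joint cumulants plus a nonlinear tail dominated by the weak bound of Proposition \ref{S32P1}. A joint induction on the cumulant order (simultaneously bounding all joint cumulants of order $\leq k$) then yields that each is $O(1)$; specialising $w_j \to z$ by analyticity and converting cumulants to moments gives the claim, with uniformity in $z \in \mathcal{V}$ and $n/K \in [\delta, \delta^{-1}]$ inherited from the uniform lower bound on $|Q|$ together with analyticity and compactness.

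The main obstacle is the self-referential character of the single-level linearization: the nonlinear remainder in the equation for $\mathbb{E}[\hat G^n_n(z)]$ already involves the second moment of $\hat G^n_n$, which is itself what one is trying to control. The multi-point weighting scheme resolves this by furnishing at once a finite system of equations for all joint cumulants up to order $k$, which can be solved simultaneously so that only the weak bound is needed as input; this is what prevents the bootstrap from becoming circular.
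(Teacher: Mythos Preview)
Your high-level strategy matches the paper's: linearize the single-level Nekrasov equation, derive multi-point relations for the joint cumulants of $\hat G^n_n$, divide by $Q(z,n/K)$ (bounded below via (\ref{S31E13})), and bootstrap from the weak bound. The linearization and the passage to cumulants are essentially as in the paper's Steps~1--2.

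The gap is in how the bootstrap closes. You assert that the multi-point scheme ``furnishes at once a finite system of equations for all joint cumulants up to order $k$, which can be solved simultaneously so that only the weak bound is needed as input.'' But the linearized $(m{+}1)$-point relation (the paper's (\ref{S33E11})) expresses $M(\hat G^n_n(v_0),\ldots,\hat G^n_n(v_m))$ in terms of remainders of the form $n^{-1}M(\xi(z)[\hat G^n_n(z)]^2;A)$ with $A\subseteq\llbracket 1,m\rrbracket$, which after opening the product are controlled by moments of order $|A|+2\leq m+2$. The system is therefore never closed at order $k$: bounding the $k$-th cumulant requires the $(k{+}2)$-nd moment. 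Plugging the weak bound $\mathbb{E}|\hat G^n_n|^{m+2}=O(n^{(m+2)/2+\varepsilon})$ into this remainder yields $n^{-1}\cdot O(n^{(m+2)/2+\varepsilon})=O(n^{m/2+\varepsilon})$, which is not $O(1)$, so a single pass does not suffice; and ``joint induction on the cumulant order'' runs in the wrong direction since higher orders feed into lower.

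The paper's resolution (Steps~3--5) is a quantitative self-improvement: assume $\mathbb{E}\prod_{a=1}^m|\hat G^n_n(v_a)|=O(1)+O(n^{m/2+1-M/2})$ for all $m\leq 4H{+}4$, and deduce the same with $M\mapsto M{+}1$ for all $m\leq 4H$. Each pass gains half a power at the cost of shrinking the admissible range of $m$ by four; starting from $M=1$ (the weak bound with $\varepsilon=1/2$) and $H=2k$, after $k{+}1$ iterations one reaches $M=k{+}2$, giving the $O(1)$ bound for $m\leq k$. This iteration, together with the handling of a corner case when $M=m+1$ in Step~5, is the missing mechanism in your proposal.
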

\begin{remark}\label{MomentBoundRem} Before we go to the proof let us explain how Proposition \ref{S33P1} will be applied later in the text. Fixing $\theta$ and $\delta$ as in the statement of the proposition, we consider $n, L, K$ such that $n/L, n/K \in [\delta, \delta^{-1}]$. For these parameters we have for $\hat{G}^n_L$ as in (\ref{S32E1}) that
 \begin{equation}\label{MBRem}
\begin{split}
&\hat{G}_{L}^n(z) = G_L^n(z) - L \cdot G(zL/n, n/K) =  \zeta(z),\\
& \partial_z  \hat{G}_{L}^n(z) = \partial_z G_L^n(z) - (L^2/n) \cdot \partial_z G(zL/n, n/K)=  \zeta(z),\\
& \partial^2_z  \hat{G}_{L}^n(z) =  \partial_z^2 G_L^n(z) - (L^3/n^2) \cdot \partial_z^2 G(zL/n, n/K) = \zeta(z).
\end{split}
\end{equation}
In (\ref{MBRem}) we have written $\zeta(z)$ for a generic random analytic function on $\mathbb{C} \setminus [-\theta \delta^{-1}, \delta^{-2}]$ such that 
\begin{equation}\label{REQ1}
\mathbb{E}\left[|\zeta(z)|^k \right] = O(1),
\end{equation}
where the constant in the big $O$ notation depends on $\theta, \delta, k$ and a compact subset $\mathcal{V}_1 \subset \mathbb{C} \setminus [-\theta \delta^{-1}, \delta^{-2}]$ and (\ref{REQ1}) holds uniformly as $z$ varies in $\mathcal{V}_1$.  

Let us explain how (\ref{MBRem}) follows from Proposition \ref{S33P1} briefly. The first line in (\ref{MBRem}) follows from the moment bounds of $\hat{G}_{n}^n(z)$ in Proposition \ref{S33P1}, the identity
\begin{equation}\label{REQ2}
\hat{G}_{L}^n(z) = (L/n) \cdot \hat{G}^n_n(zL/n),
\end{equation}
and the fact that if $\mathcal{V}_1$ is a compact subset of $\mathbb{C} \setminus [-\theta \delta^{-1}, \delta^{-2}]$, then 
$$\mathcal{V} := \{z \in \mathbb{C}: z = s w \mbox{ for some } w\in \mathcal{V}_1, s \in [\delta, \delta^{-1}]\}$$
is a compact subset of $\mathbb{C} \setminus[-\theta, \delta^{-1}]$. Once we have the first line in (\ref{MBRem}) we obtain the second and third from Cauchy's inequalities, see \cite[Corollary 4.3]{SS}.
\end{remark}

\begin{proof} The proof we present below is an adaptation of the proof of \cite[Proposition 3.11]{DK2020}, which in turn is based on the proof of \cite[Proposition 2.18]{BGG}. Essentially, the idea is to reduce the problem to a certain self-improvement estimate claim for the joint moments of $\hat{G}_{n}^n(v_1), \dots, \hat{G}_{n}^n(v_k)$. One uses Proposition \ref{S32P1} to establish the base case of this claim and by iterating it finitely many times one ultimately obtains the $O(1)$ bound in (\ref{S33E1}). The way one obtains the improvement at each step in the claim is by utilizing the single-level loop or Nekrasov's equations from \cite{BGG}. 

Even though the main ideas of the proof already appeared in \cite{DK2020} we decided to include it here for a few reasons. Firstly, there are various statements within the proof, which will be useful for us in later parts of the paper. Secondly, the notation in \cite{DK2020} is rather different from the one in this paper and translating between the two is somewhat hard. Finally, the present proposition is formulated more generally than \cite[Proposition 3.11]{DK2020}, where there is a single level as opposed to a large number of levels (indexed by $n \in[\delta K, \delta^{-1} K]$), and various estimates throughout \cite{DK2020} need to be reestablished in this more general form. \\

For any $\eta > 0$ we denote
\begin{equation}\label{Deta}
\mathcal{D}_{\eta} = \{z \in \mathbb{C}: |z - x| > \eta \mbox{ for all $x \in [-\theta, \delta^{-1}]$} \}.
\end{equation}
In addition, for compact sets $\mathcal{V}_1\subset \mathcal{D}_{\eta}$, and $\mathcal{V}_2 \subset \mathbb{C}$ we write $\xi_n(z;x,y)$ to mean a generic random function of $n \in [\delta K, \delta^{-1}K]$, $x,y \in \mathcal{V}_2$ and $z \in \mathcal{D}_{\eta}$ that is analytic on $\mathcal{D}_{\eta}$ for each fixed $x,y \in \mathcal{V}_2$ and such that almost surely
$$ \max_{\delta K \leq n \leq \delta^{-1} K}  \sup_{z \in \mathcal{V}_1} \sup_{x,y \in \mathcal{V}_2} \left|\xi_n(z; x,y) \right| = O(1),$$
where the constant in the big $O$ notation depends on $\mathcal{V}_1, \mathcal{V}_2, \theta, \delta$ and $\eta$, but not $K$ provided it is large enough. How large $K$ needs to be will be clear from the context. To ease the notation, we drop $x,y, n$ and simply write $\xi(z)$. The meaning of $\xi(z)$ may be different from line to line or even within the same equation. For the sake of clarity we split the proof into five steps.\\

{\bf \raggedleft Step 1.} The goal of this step is to obtain a certain product expansion formula that will be used in the next step to linearize the Nekrasov's equations. The key statement is in (\ref{S33E2}) and it is analogous to the asymptotic expansions of \cite[Section 4]{DK2020}.\\

Let us fix a compact set $\mathcal{V}_2 \subset \mathbb{C}$ and an $\eta > 0$. We further fix a compact set $\mathcal{V}_1 \subset \mathcal{D}_{\eta}$. Note that since $\ell_i^n/n \in [-\theta, \delta^{-1}]$ almost surely, we can find $B > 0$ (depending on $\eta, \delta, \mathcal{V}_2$) such that if $K \geq B$ and $n \geq \delta K$ we have $\ell_i^n/n \pm x/n \not \in \mathcal{D}_{\eta}$ for each $x \in \mathcal{V}_2$ and $i \in \llbracket 1, n \rrbracket$.

Below we show that if $K \geq B$, $n \in [\delta K, \delta^{-1}K]$, $x, y\in \mathcal{V}_2$ and $z \in \mathcal{D}_{\eta}$, then
\begin{equation}\label{S33E2}
\begin{split}
\prod_{i = 1}^n \frac{zn - \ell^n_i + x}{zn - \ell^n_i + y} = & e^{(x - y) G(z, n/K)} \cdot \left[1 + \frac{(x-y)\hat{G}_{n}^n(z)}{n} + \frac{(x^2 - y^2) \partial_z G(z,n/K)}{2n} \right] \\
&  + \frac{\xi(z) [ \hat{G}_{n}^n(z)]^2}{n^2} +  \frac{\xi(z) \partial_z \hat{G}_{n}^n(z)}{n^2} +\frac{\xi(z) \hat{G}_{n}^n(z)}{n^2} + \frac{\xi(z)}{n^2},
\end{split}
\end{equation}
where we recall that $G(z,s)$ is as in (\ref{S31E6}) and $G_{n}^n(z), \hat{G}_{n}^n(z)$ are as in (\ref{S32E1}).\\

We first have by Taylor expansion of the logarithm and (\ref{S32E1}) that
\begin{equation*}
\begin{split}
& \prod_{ i = 1}^n \frac{zn - \ell^n_i + x}{zn - \ell^n_i + y} = \exp \left[ \sum_{i = 1}^n \log \left( 1 + \frac{1}{n} \cdot \frac{x}{z - \ell^n_i/n } \right) - \sum_{i = 1}^n \log \left( 1 + \frac{1}{n} \cdot \frac{y}{z - \ell^n_i/n } \right) \right] \\
&=  \exp\left[ \frac{(x-y) G_{n}^n(z) }{n}+ \frac{(x^2 - y^2) \partial_z G_{n}^n(z)}{2n^2} + \frac{ \xi(z)}{n^2}\right].
\end{split}
\end{equation*}
Setting $F_1(u) = \frac{ e^u - 1 - u}{u^2}$ and noting that $n^{-1}G_{n}^n(z)  = \xi(z)$, we see that
\begin{equation*}
\begin{split}
&\prod_{ i = 1}^n \frac{zn - \ell^n_i + x}{zn - \ell^n_i + y} = e^{(x - y) G(z, n/K)} \cdot \Bigg{[}1 + \frac{(x-y) \hat{G}_{n}^n(z) }{n}+ \frac{(x^2 - y^2) \partial_z G_{n}^n(z)}{2n^2} \\
& + \frac{(x-y)(x^2 - y^2) \hat{G}_{n}^n(z) \partial_z G_n^n(z) }{2n^3}   + \frac{(x-y)^2 [\hat{G}_{n}^n(z)]^2}{n^2} F_1 \left(\frac{(x - y) \hat{G}_{n}^n(z)}{n}\right) \\
&+ \frac{ (x-y)^2 (x^2 - y^2)[\hat{G}_{n}^n(z)]^2\partial_z G_{n}^n(z)}{2n^4} F_1 \left(\frac{(x - y) \hat{G}_{n}^n(z)}{n} \right)  \Bigg{]} +\frac{ \xi(z)}{n^2}.
\end{split}
\end{equation*}
The latter implies (\ref{S33E2}).\\

{\bf \raggedleft Step 2.} In this step we give the output of applying the single-level loop equations from \cite{BGG} and linearize the resulting equations using (\ref{S33E2}) from Step 1. Equations (\ref{S33E9}) and (\ref{S33E11}) below are the main outputs of this step. We first introduce a bit of notation.\\

If $\xi_1, \dots, \xi_m$ are bounded complex-valued random variables we write $M(\xi_1, \dots, \xi_m)$ for their joint cumulant. We refer the reader to Appendix \ref{AppendixA} for the definition and some basic properties of joint cumulants that we use in the paper. Let us fix a compact set $\mathcal{V}_1 \subset \mathbb{C}\setminus [-\theta, \delta^{-1}]$. We can find an $\eta > 0$ and a positively oriented contour $\Gamma$ that encloses $[-\theta, \delta^{-1}]$ such that $\Gamma \subset \mathcal{D}_\eta$, $\mathcal{V}_1 \subset \mathcal{D}_{\eta}$ with $\mathcal{D}_{\eta}$ as in (\ref{Deta}) and with $\Gamma$ and $\mathcal{V}_1$ being at least distance $\eta/2$ away from each other. We then fix $m \in \mathbb{Z}_{\geq 0}$ and $m+1$ points $v_0, v_1, \dots, v_m \in \mathcal{V}_1$. For any set $A= \{a_1, \dots, a_r\}  \subseteq \llbracket 1, m \rrbracket $ and bounded complex-valued random variable $\xi$ we write
\begin{equation}\label{S3Cum}
M(\xi; A): = M\left(\xi, \hat{G}_{n}^n(v_{a_1}), \hat{G}_{n}^n(v_{a_2}), \dots, \hat{G}_{n}^n(v_{a_r}) \right). 
\end{equation}

We then have the following consequence of \cite[(B.5)]{DK2020}
\begin{equation}\label{S3Loop}
\begin{split}
0 = &\sum_{A \subseteq \llbracket 1, m \rrbracket } \frac{1}{2\pi \i}  \oint_{\Gamma}dz  \prod_{a \in A^c} \left( \frac{n^{-1}}{(v_a - z)(v_a - z + n^{-1} )} \right) \frac{\Phi^{-}_{n,K}(zn)}{2n(z-v_0)} M\left(\prod_{i = 1}^n \frac{zn - \ell_i^n - \theta}{zn - \ell_i^n}; A\right)  \\
&+ \frac{1}{2\pi \i}  \oint_{\Gamma} dz  \frac{\Phi^{+}_{n, K}(zn)}{2 n (z- v_0)} M\left(\prod_{i = 1}^n \frac{zn - \ell_i^n + \theta - 1}{zn - \ell_i^n-1}; \llbracket 1, m \rrbracket \right) ,
\end{split}  
\end{equation}
where $\Phi^{\pm}_{n,K}$ are as in (\ref{S31E3}) and $A^c = \llbracket 1, m \rrbracket \setminus A$. We mention that to identify (\ref{S3Loop}) with \cite[(B.5)]{DK2020} one needs to use the following correspondences:
$$n \leftrightarrow N, \hspace{3mm} \Phi^{\pm}_{n,K}(zn) \leftrightarrow N\cdot \Phi_N^{\pm}(zN + N \theta) , \hspace{3mm} \ell_i^{n} + n \theta \leftrightarrow \ell_i, \hspace{3mm} 2 \leftrightarrow H(z), \hspace{3mm} K + 1 + (n-1)\theta \leftrightarrow s_N,$$
$$ \Gamma \leftrightarrow \Gamma - \theta, \hspace{3mm}, v_0 \leftrightarrow v, \hspace{3mm} z_-(n/K) \leftrightarrow \alpha + \theta, \hspace{3mm} z_+(n/K) \leftrightarrow \beta + \theta, \hspace{3mm} \mathbb{C} \leftrightarrow \mathcal{M} $$
where we recall that $z_{\pm}(s)$ are as in (\ref{S31E12}), $\alpha, \beta, H(z)$ are as in \cite[Assumption 5, Section 3]{DK2020} and $\mathcal{M}, \Phi^{\pm}_N$ are as in \cite[Assumption 3, Section 3]{DK2020}. We further note that the first two lines in \cite[(B.5)]{DK2020} are equal to zero since from (\ref{S31E3})
$$\Phi_N^-(0) = n^{-1}\Phi_{n,k}^-(-n\theta) = 0 \mbox{ and } \Phi^+_N(s_N) = n^{-1} \Phi^{+}_{n,K}(K + 1 - \theta) = 0.$$

We now apply (\ref{S33E2}) to conclude that there exists $B$ (depending on $\theta, \delta, \eta$) such that for $K \geq B$ and $n \in [\delta K, \delta^{-1}K]$
\begin{equation}\label{S3Loop2}
\begin{split}
&0 =  \frac{1}{2\pi \i}  \oint_{\Gamma} dz \sum_{A \subseteq \llbracket 1, m \rrbracket }  \prod_{a \in A^c} \left( \frac{n^{-1}}{(v_a - z)(v_a - z + n^{-1} )} \right) \frac{\Phi^{-}_{n,K}(zn) e^{-\theta G(z, n/K)}}{2n(z-v_0)} \\
&\times  M\left(1 - \frac{\theta \hat{G}_{n}^n(z)}{n} + \frac{\theta^2 \partial_z G(z,n/K)}{2n} ; A\right) +  \\
&\frac{\Phi^{+}_{n, K}(zn)e^{\theta G(z, n/K)}}{2 n (z- v_0)} M\left(1 + \frac{\theta \hat{G}_{n}^n(z)}{n} + \frac{(\theta^2-2\theta) \partial_z G(z,n/K)}{2n}; \llbracket 1, m \rrbracket \right) + \frac{\mathbb{E}[\xi_n(z)]}{n^2}\\
&+ n^{-2} \cdot \sum_{A \subseteq \llbracket 1, m \rrbracket } M \left(\xi_n(z) [ \hat{G}_{n}^n(z)]^2 +  \xi_n(z) \partial_z \hat{G}_{n}^n(z) +\xi_n(z) \hat{G}_{n}^n(z) +\xi_n(z); A\right).
\end{split}  
\end{equation}
In (\ref{S3Loop2}) and the sequel $\xi_n(z)$ stands for a generic random function of $v_0, \dots, v_m \in \mathcal{V}_1$ and $z \in \Gamma$ that is a.s. uniformly $O(1)$ over these sets. This constant depends on $\Gamma, \mathcal{V}_1, \theta, \delta, \eta$. We next analyze (\ref{S3Loop2}) when $m = 0$ and $m > 0$ separately and derive some simpler equations that will be used later.\\

When $m = 0$ we get from (\ref{S3Loop2}) that 
\begin{equation}\label{S33E5}
\begin{split}
&0 = \frac{1}{2\pi \i}  \oint_{\Gamma} dz \frac{\Phi^{-}_{n,K}(zn) e^{-\theta G(z, n/K)} + \Phi^{+}_{n, K}(zn)e^{\theta G(z, n/K)} }{2n (z - v_0) } \\
& +  \frac{\theta \mathbb{E}\left[ \hat{G}_{n}^n(z) \right]}{2n^2(z -v_0)} \cdot \left[ \Phi^{+}_{n, K}(zn)e^{\theta G(z, n/K)} - \Phi^{-}_{n,K}(zn) e^{-\theta G(z, n/K)}\right] +  \frac{\mathbb{E}[\xi_n(z)]}{n},
\end{split}
\end{equation}
where we mention that the $\partial_zG(z,n/K)$ terms got absorbed into $\xi_n(z)$ and we used that 
\begin{equation}\label{S33E6}
n^{-1} \Phi^{-}_{n,K}(zn) = z + \theta = \Phi^-(z, n/K) \mbox{ and } n^{-1} \Phi^{+}_{n,K}(zn) = \Phi^+(z, n/K) + n^{-1}(1-\theta),
\end{equation}
which can be deduced from (\ref{S31E3}) and (\ref{S31E10}). If we focus on the first line of (\ref{S33E5}), we see from (\ref{S31E7}), (\ref{S31E9}) and (\ref{S33E6}) that it equals
\begin{equation}\label{S33E7}
\frac{1}{2\pi \i}  \oint_{\Gamma} dz \frac{(K/n - \theta) + n^{-1}(1-\theta) e^{\theta G(z, n/K)} }{2 (z - v_0) }  =  \frac{1}{2\pi \i}  \oint_{\Gamma} dz \frac{ n^{-1}(1-\theta) e^{\theta G(z, n/K)} }{2 (z - v_0) },
\end{equation}
where we used Cauchy's theorem and the fact that $v_0$ is outside of $\Gamma$ to evaluate the integral of the first term to $0$. On the other hand, we have from (\ref{S31E16}) and (\ref{S33E6}) that 
\begin{equation}\label{S33E8}
\begin{split}
&n^{-1}\left[ \Phi^{+}_{n, K}(zn)e^{\theta G(z, n/K)} - \Phi^{-}_{n,K}(zn) e^{-\theta G(z, n/K)}\right] \\
&= 2 \sqrt{(z - z_+(n/K))(z - z_-(n/K))} + n^{-1}(1- \theta)e^{\theta G(z, n/K)} .
\end{split}
\end{equation}
Substituting (\ref{S33E7}) and (\ref{S33E8})  into (\ref{S33E5}) and utilizing that $n^{-1} \hat{G}_{n}^n(z) = \xi_n(z)$ we get
\begin{equation*}
\begin{split}
&0 = \frac{1}{2\pi \i}  \oint_{\Gamma} dz \frac{\mathbb{E}[\xi_n(z)]}{n} +  \frac{\theta \mathbb{E}\left[ \hat{G}_{n}^n(z) \right] \cdot \sqrt{(z - z_+(n/K))(z - z_-(n/K))} }{n(z -v_0)}.
\end{split}
\end{equation*}
We can evaluate the integral of the second term as (minus) the residue at $z= v_0$ (note there is no residue at infinity), multiply both sides by $n$ and divide by $ \theta\sqrt{(v_0 - z_+(n/K))(v_0 - z_-(n/K))}$. The result of these operations is
\begin{equation}\label{S33E9}
\begin{split}
&\mathbb{E}\left[ \hat{G}_{n}^n(v_0) \right] = \frac{\theta^{-1} }{2\pi \i \cdot \sqrt{(v_0 - z_+(n/K))(v_0 - z_-(n/K))}}  \oint_{\Gamma}dz\mathbb{E}[\xi_n(z)] = \frac{1 }{2\pi \i }  \oint_{\Gamma}dz\mathbb{E}[\xi_n(z)],
\end{split}
\end{equation}
where in the last equality we used (\ref{S31E13}). 

The last thing we do in this step is simplify (\ref{S3Loop2}) when $m > 0$. We use
$$\frac{1}{(v_a - z)(v_a - z + n^{-1} )} = \frac{1}{(v_a - z)^2} + \frac{\xi_n(z)}{n}$$
and (\ref{S33E6}), linearity of cumulants and the fact that the joint cumulant of any nonempty collection of bounded random variables and a constant is zero, see (\ref{S3Linearity}), to simplify (\ref{S3Loop2}) to
\begin{equation}\label{S33E10}
\begin{split}
&0 =  \frac{1}{2\pi \i}  \oint_{\Gamma} dz  \frac{\theta \left[ \Phi^{+}_{n, K}(zn)e^{\theta G(z, n/K)} - \Phi^{-}_{n,K}(zn) e^{-\theta G(z, n/K)}\right]}{2n^2(z -v_0)} \cdot  M\left( \hat{G}_{n}^n(z); \llbracket 1 ,m \rrbracket \right) \\
&+  {\bf 1}\{m = 1\} \cdot \frac{\Phi^-(z, n/K) e^{-\theta G(z, n/K)}}{2n(z- v_0)(v_1 - z)^2 } \\
&+ n^{-2} \cdot \sum_{A \subseteq \llbracket 1, m \rrbracket } M \left(\xi_n(z) [ \hat{G}_{n}^n(z)]^2 +  \xi_n(z) \partial_z \hat{G}_{n}^n(z) +\xi_n(z) \hat{G}_{n}^n(z) +\xi_n(z); A\right).
\end{split}  
\end{equation}
We can now use (\ref{S33E8}) to simplify the first line in (\ref{S33E10}) to
$$ \frac{1}{2\pi \i}  \oint_{\Gamma} dz  \frac{\theta \sqrt{(z - z_+(n/K))(z - z_-(n/K))} }{n(z -v_0)} \cdot  M\left( \hat{G}_{n}^n(z); \llbracket 1 ,m \rrbracket \right)  + M(n^{-2} \xi_n(z) \hat{G}_{n}^n(z); \llbracket 1 , m \rrbracket).$$
As before, we can evaluate the first term as (minus) the residue at $z = v_0$ put the result back into (\ref{S33E10}), multiply both sides by $n$ and divide by $ \theta\sqrt{(v_0 - z_+(n/K))(v_0 - z_-(n/K))}$. The result of these operations is
\begin{equation}\label{S33E11}
\begin{split}
&M\left( \hat{G}_{n}^n(v_0); \llbracket 1 ,m \rrbracket \right)  =  \frac{1}{2\pi \i}  \oint_{\Gamma} dz     \frac{{\bf 1}\{m = 1\} \cdot \theta^{-1} \cdot \Phi^-(z, n/K) e^{-\theta G(z, n/K)}}{2(z- v_0)(v_1 - z)^2  \sqrt{(v_0 - z_+(n/K))(v_0 - z_-(n/K))} } \\
&+ n^{-1} \cdot \sum_{A \subseteq \llbracket 1, m \rrbracket } M \left(\xi_n(z) [ \hat{G}_{n}^n(z)]^2 +  \xi_n(z) \partial_z \hat{G}_{n}^n(z) +\xi_n(z) \hat{G}_{n}^n(z) +\xi_n(z); A\right).
\end{split}  
\end{equation}

{\bf \raggedleft Step 3.}  In this step we reduce the proof of the proposition to the establishment of the following self-improvement estimate claim.\\

{\bf \raggedleft Claim:} Suppose that for some $H, M \in \mathbb{N}$, and all $n, K \in\mathbb{N}$ with $n/K \in [\delta, \delta^{-1}]$ we have that 
\begin{equation}\label{S33E12}
\mathbb{E} \left[ \prod_{a = 1}^m \left|\hat{G}_{n}^n(v_a) \right| \right]= O(1) + O\left(n^{m/2 + 1 - M/2}\right) \mbox{ for $m = 1, \dots, 4H + 4$,}
\end{equation}
then
\begin{equation}\label{S33E13}
\mathbb{E} \left[ \prod_{a = 1}^m \left|\hat{G}_{n}^n(v_a)\right| \right]= O(1) + O\left(n^{m/2 + 1 - (M+1)/2}\right) \mbox{ for $m = 1, \dots, 4H $,}
\end{equation}
where the constants in the big $O$ notations are uniform as $v_a$ vary over compacts in $\mathbb{C} \setminus [-\theta, \delta^{-1}]$.
We prove the above claim in the following steps. Here we assume its validity and establish (\ref{S33E1}). \\

Fix $k \in \mathbb{N}$. From Proposition \ref{S32P1} with $\varepsilon = 1/2$ we have
\begin{equation*}
\mathbb{E} \left[ \prod_{a = 1}^m \left|\hat{G}_{n}^n(v)\right|^m \right] =O \left( n^{m/2 + 1/2} \right) \mbox{ for $m = 1, \dots, 8k + 4$}.
\end{equation*}
Using H{\"o}lder's inequality, the above implies that (\ref{S33E12}) holds for the for the pair $H = 2k$ and $M = 1$. The conclusion is that (\ref{S33E12}) holds for the pair $H  = 2k- 1$ and $M = 2$. Iterating the argument an additional $k$ times we conclude that (\ref{S33E12}) holds with $H = k - 1$ and $M = k+2$, which implies (\ref{S33E1}).\\

{\bf \raggedleft Step 4.}  In this step we will prove (\ref{S33E13}) except for a single case, which will be handled separately in the next step. 

The first thing we show is that 
\begin{equation}\label{S33E14}
M\left( \hat{G}_{n}^n(v_0),\hat{G}_{n}^n(v_1), \dots, \hat{G}_{n}^n(v_m) \right) = O(1) + O\left( n^{m/2 + 1 - M/2}\right) \mbox{ for $m = 1, \dots, 4H+2$},
\end{equation}
where constant in the big $O$ notation are uniform over $v_0, \dots, v_m$ in compact subsets of $\mathbb{C} \setminus [-\theta, \delta^{-1}]$.

We start by fixing $\mathcal{V}_1$ to be a compact subset of $\mathbb{C} \setminus [-\theta, \delta^{-1}]$, which is invariant under conjugation. We also fix $\eta, \Gamma$ as in Step 2. From (\ref{S33E11}) we have
\begin{equation}\label{S33E15}
\begin{split}
&M\left( \hat{G}_{n}^n(v_0),\hat{G}_{n}^n(v_1), \dots, \hat{G}_{n}^n(v_m) \right)    \\
& = \frac{1}{2\pi \i}  \oint_{\Gamma} dz     \frac{{\bf 1}\{m = 1\} \cdot \theta^{-1} \cdot \Phi^-(z, n/K) e^{-\theta G(z, n/K)}}{2(z- v_0)(v_1 - z)^2  \sqrt{(v_0 - z_+(n/K))(v_0 - z_-(n/K))} } \\
&+ n^{-1} \cdot \sum_{A \subseteq \llbracket 1, m \rrbracket } M \left(\xi_n(z) [ \hat{G}_{n}^n(z)]^2 +  \xi_n(z) \partial_z \hat{G}_{n}^n(z) +\xi_n(z) \hat{G}_{n}^n(z) +\xi_n(z); A\right).
\end{split}  
\end{equation}
In addition, from (\ref{S33E9}) we have
\begin{equation}\label{S33E16}
\begin{split}
&M\left( \hat{G}_{n}^n(v_0) \right)  = \mathbb{E} \left[\hat{G}_{n}^n(v_0) \right]= O(1).
\end{split}  
\end{equation}
We next use the fact that cumulants can be expressed as linear combinations of products of moments, see (\ref{Mal2}). This means that $M(\xi_1, \dots, \xi_r)$ can be controlled by the quantities $1$ and $\mathbb{E} \left[ |\xi_i|^r \right]$ for $ i = 1, \dots, r$. This and (\ref{S33E12}) together imply for $A \subseteq \llbracket 1, m \rrbracket$
$$n^{-1} \cdot M \left(\xi_n(z) + \xi_n(z) \hat{G}_{n}^n(z) ; A\right) = O(1) + O\left(n^{m/2 + 1/2 - M/2} \right), \hspace{2mm} $$
$$n^{-1} \cdot M \left(\xi_n(z) \partial_z \hat{G}_{n}^n(z) ; A\right) = O(1) + O\left(n^{m/2 + 1/2 - M/2} \right), \hspace{2mm} $$
$$n^{-1} \cdot M \left(\xi_n(z) [ \hat{G}_{n}^n(z)]^2 ; A\right) = O(1) + O\left(n^{m/2 + 1 - M/2} \right).$$
We mention that the second equality above utilizes Cauchy's inequalities, see e.g. \cite[Corollary 4.3]{SS}, which shows that the moment bounds we have for $\mathbb{E} \left[ |\hat{G}_{n}^n(z)|^m \right]$ in (\ref{S33E12}) imply analogous ones for $\mathbb{E} \left[ |\partial_z \hat{G}_{n}^n(z)|^m \right]$. In addition, the third equality above utilizes a special case of Malyshev's formula, see (\ref{Mal3}). Combining the last three estimates with (\ref{S33E15}) (where we mention that the second line is $O(1)$) gives (\ref{S33E14}).\\

Notice that by H{\"o}lder's inequality we have
\begin{equation*}
\sup_{v_1, \dots, v_m \in \mathcal{V}_1} \mathbb{E} \left[ \prod_{a = 1}^m \left|\hat{G}_{n}^n(v_a) \right| \right]\leq \sup_{v \in \mathcal{V}_1} \mathbb{E} \left[ \left|\hat{G}_{n}^n(v) \right|^m \right],
\end{equation*}
and so to finish the proof it suffices to show that for $m = 1, \dots, 4H$ we have
\begin{equation}\label{S33E17}
 \mathbb{E} \left[ \left|\hat{G}_{n}^n(v)\right|^m \right] = O(1)  + O(n^{m/2 + 1 - (M+1)/2}).
\end{equation}
Using the fact that one can express joint moments as linear combinations of products of joint cumulants, see (\ref{Mal1}), we deduce from (\ref{S33E14}) and (\ref{S33E16}) that
\begin{equation}\label{S33E18}
\sup_{ v_0, v_1, \dots, v_{m-1} \in \mathcal{V}_1} \mathbb{E} \left[ \prod_{a = 0}^{m-1}\hat{G}_{n}^n(v_a) \right] = O(1) + O( n^{(m-1)/2 + 1 - M/2}) \mbox{ for $m = 1, \dots, 4H+2$}.
\end{equation}

If $m = 2m_1$, we set $v_0 = v_1 = \cdots = v_{m_1 - 1} = v$ and $v_{m_1} = \cdots = v_{2m_1 - 1} = \overline{v}$ in (\ref{S33E18}) and get
\begin{equation}\label{S33E19}
\sup_{ v \in \mathcal{V}_1} \mathbb{E} \left[ \left|\hat{G}_{n}^n(v)\right|^m \right] = O(1) + O( n^{m/2 + 1/2 - M/2}) \mbox{ for $m = 1, \dots, 4H+2$, $m$ even}.
\end{equation}
In deriving the above we used that $\hat{G}_{n}^n(\overline{v}) = \overline{\hat{G}_{n}^n(v)}$ and so $\hat{G}_{n}^n(v) \cdot \hat{G}_{n}^n(\overline{v}) = |\hat{G}_{n}^n(v)|^2$. 

We next let $m = 2m_1 + 1$ be odd and notice that by the Cauchy-Schwarz inequality and (\ref{S33E19}) 
\begin{equation}\label{S33E20}
\begin{split}
&\sup_{ v \in \mathcal{V}_1} \mathbb{E} \left[ \left|\hat{G}_{n}^n(v)\right|^{2m_1 + 1} \right] \leq \sup_{ v \in \mathcal{V}_1} \mathbb{E} \left[ \left|\hat{G}_{n}^n(v)\right|^{2m_1 + 2} \right]^{1/2} \cdot \mathbb{E} \left[ \left|\hat{G}_{n}^n(v) \right|^{2m_1} \right]^{1/2} =  \\  
&O(1) + O( n^{m_1 + 1 - M/2}) + O(n^{m_1/2 + 3/4 - M/4}) \mbox{ for $m = 1 , \dots, 4H + 1$, $m$ odd}.
\end{split}
\end{equation}
We note that the bottom line of (\ref{S33E20}) is $O(1) + O(n^{m_1 + 1 - M/2})$ except when $M = 2m_1 + 2$, since
$$m_1/2 + 3/4 - M/4 \leq \begin{cases} m_1 + 1 - M/2 &\mbox{ when $M \leq 2m_1 + 1$,} \\ 0 &\mbox{ when $M \geq 2m_1 + 3$.} \end{cases}$$
Consequently, (\ref{S33E19}) and (\ref{S33E20}) together imply (\ref{S33E13}), except when $M = 2m_1 +2$ and $m = 2m_1 + 1$. We will handle this case in the next and final step.\\

{\bf \raggedleft Step 5.}  In this step we will show that (\ref{S33E13}) holds even when $M = 2m_1 + 2$ and $4H > m = 2m_1 + 1$. In the previous step we showed in (\ref{S33E19}) that
$\sup_{v \in \mathcal{V}_1} \mathbb{E} \left[ \left|\hat{G}_{n}^n(v)\right|^{2m_1 + 2} \right] = O(n^{1/2})$, and below we will improve this estimate to
\begin{equation}\label{S33E21}
\sup_{v \in \mathcal{V}_1} \mathbb{E} \left[ \left|\hat{G}_{n}^n(v)\right|^{2m_1 + 2} \right] = O(1).
\end{equation}
The trivial inequality $|x|^{2m_1 + 2} + 1 \geq |x|^{2m_1 + 1}$ together with (\ref{S33E21}) imply
$$\sup_{v \in \mathcal{V}_1} \mathbb{E} \left[ \left|\hat{G}_{n}^n(v)\right|^{2m_1 + 1} \right]  = O(1).$$
Consequently, we have reduced the proof of the claim to establishing (\ref{S33E21}). \\

Let us list the relevant estimates we will need
\begin{equation}\label{S33E22}
\begin{split}
& \mathbb{E} \left[ \prod_{a = 1}^{2m_1 + 2} \left|\hat{G}_{n}^n(v_a)\right| \right] = O(n^{1/2})\mbox{, } \hspace{2mm} \mathbb{E} \left[ \prod_{a = 1}^j \left|\hat{G}_{n}^n(v_a)\right| \right] = O(1) \mbox{ for $0 \leq j \leq 2m_1$,} \\\
&\mathbb{E} \left[ \prod_{a = 1}^{2m_1 + 3} \left|\hat{G}_{n}^n(v_a)\right| \right] = O(n) , \hspace{2mm} \mathbb{E} \left[ \prod_{a = 1}^{2m_1 + 1} \left|\hat{G}_{n}^n(v_a)\right| \right] = O(n^{1/4}).
\end{split}
\end{equation}
All of these identities follow from (\ref{S33E19}) and (\ref{S33E20}), which we showed to hold in the previous step. Below we feed the improved estimates of (\ref{S33E22}) into Step 4, which will ultimately yield (\ref{S33E21}).\\

Using (\ref{S33E22}) in place of (\ref{S33E12}) in Step 4 we get for $A \subseteq \llbracket 1, m \rrbracket$
$$n^{-1} \cdot M \left(\xi_n(z) + \xi_n(z) \hat{G}_{n}^n(z) + \xi_n(z) \partial_z \hat{G}_{n}^n(z) + \xi_n(z) [ \hat{G}_{n}^n(z)]^2 ; A\right) = O(1),$$
which together with (\ref{S33E15}) gives the following improvement over (\ref{S33E14})
\begin{equation}\label{S33E23}
M\left( \hat{G}_{n}^n(v_0),\hat{G}_{n}^n(v_1), \dots, \hat{G}_{n}^n(v_m)\right) = O(1). 
\end{equation}
We now use (\ref{S33E23}) in place of (\ref{S33E14}) to get the following improvement over (\ref{S33E18})
\begin{equation}\label{S33E24}
\sup_{ v_0, v_1, \dots, v_{m} \in \mathcal{V}_1} \mathbb{E} \left[ \prod_{a = 0}^{m}\hat{G}_{n}^n(v_a) \right] = O(1).
\end{equation}
Setting $v_0 = v_1 = \cdots = v_{m_1} = v$ and $v_{m_1+1} = \cdots = v_{2m_1 + 1} = \overline{v}$ in (\ref{S33E24}) gives (\ref{S33E21}).
\end{proof}

%
\section{Two-level analysis}\label{Section4} In Proposition \ref{S33P1} we showed that the $k$-th moments of $\hat{G}_{n}^n(z)$ as in (\ref{S32E1}) are all bounded. In this section we show that the same is true for $n^{1/2} \cdot [ \hat{G}_{n}^n(z) - \hat{G}_{n}^{n-1}(z)]$ -- this is Proposition \ref{S43P1}. In Section \ref{Section4.1} we derive a certain product expansion formula, Proposition \ref{ProdExp1}, which improves (\ref{S33E2}) by utilizing the moment bounds from Proposition \ref{S33P1}. In Section \ref{Section4.2} we utilize the two-level Nekrasov's equations from \cite{DK2020} to obtain certain cumulant identities involving $n^{1/2} \cdot [ \hat{G}_{n}^n(z) - \hat{G}_{n}^{n-1}(z)]$. Those identities are then utilized in Section \ref{Section4.3} to prove Proposition \ref{S43P1}.

%
\subsection{Product expansion formula}\label{Section4.1} The goal of this section is to establish the following result.

\begin{proposition}\label{ProdExp1} Fix $\theta >0$ and $\delta \in (0,1)$. Suppose that $(\ell^1, \ell^2, \dots)$ is distributed according to $\P$ as in Definition \ref{BKCC}, and let $n, L$ be integers such that $n/K, n/L \in [\delta, \delta^{-1}]$. Let us fix a compact set $\mathcal{V}_2 \subset \mathbb{C}$, an $\eta > 0$ and let $\mathcal{D}_{\eta}'$ be 
\begin{equation}\label{Deta2}
\mathcal{D}_{\eta}' = \{z \in \mathbb{C}: |z - x| > \eta \mbox{ for all $x \in [-\theta \delta^{-1}, \delta^{-2}]$} \}.
\end{equation}
Note that since $\ell_i^n/n \in [-\theta, \delta^{-1}]$ almost surely, we can find $B > 0$ (depending on $\eta, \delta, \mathcal{V}_2$) such that if $K \geq B$ and $n/K, n/L \in [\delta, \delta^{-1}]$ we have $\ell_i^n/L \pm x/L \not \in \mathcal{D}_{\eta}'$ for each $x \in \mathcal{V}_2$ and $i \in \llbracket 1, n \rrbracket$.

With the above data we define for $K \geq B$, $x,y \in \mathcal{V}_2$ the deterministic functions
\begin{equation}\label{S41E1}
\begin{split}
&  A^{\mathsf{s},1}_{n,L}(z;x,y) =  1 , \hspace{2mm} A^{\mathsf{s},2}_{n,L}(z;x,y) =  \frac{L(x^2 - y^2)}{2n} \cdot \partial_z G\left(zL/n, n/K\right) ,  \\
&A^{\mathsf{s},3}_{n,L}(z;x,y) = \frac{L^2(x^3 - y^3) \partial_z^2 G\left( zL/n, n/K\right)}{6n^2}   + \frac{L^2(x^2 - y^2)^2 [\partial_z G\left(zL/n, n/K\right) ]^2}{8n^2}  ,
\end{split}
\end{equation}
and the random functions 
\begin{equation}\label{S41E2}
\begin{split}
 B_{n,L}^{\mathsf{s},1}(z;x,y) = \hspace{2mm}& (x-y) \cdot  \hat{G}_{L}^n(z), \\
 B_{n,L}^{\mathsf{s},2}(z;x,y) =\hspace{2mm} &  \frac{x^2 - y^2}{2} \cdot \partial_z \hat{G}_{L}^n\left(z\right) + \frac{1}{2} \left( (x-y) \cdot \hat{G}_{L}^n\left(z \right) \right)^2 +   \\
&    \frac{L(x-y)(x^2 - y^2)}{2n} \cdot \partial_z G\left(zL/n, n/K\right)  \cdot  \hat{G}_{L}^n\left(z \right)   , \\
B_{n,L}^{\mathsf{s},3}(z;x,y) =\hspace{2mm} & L^3 \hspace{-1mm} \left( \hspace{-1mm} e^{(y-x) G\left(zL/n, n/K\right)} \hspace{-1mm} \prod_{ i = 1}^n \hspace{-1mm} \frac{zL - \ell_i^n  + x }{zL - \ell_i^n +y}  -\sum_{i = 1}^3 \frac{A_{n,L}^{\mathsf{s},i}(z;x,y)}{L^{i-1}} - \sum_{i = 1}^2 \frac{B_{n,L}^{\mathsf{s},i}(z;x,y) }{L^i}\hspace{-1mm} \right) \hspace{-1mm}.
\end{split}
\end{equation}
We claim that the functions in (\ref{S41E1}) and (\ref{S41E2}) are $\P$-a.s. holomorphic in $\mathcal{D}_{\eta}'$ and for each $k \in \mathbb{N}$
\begin{equation}\label{S41E3}
\begin{split}
&\sum_{i = 1}^3\mathbb{E}\left[ \left|A^{\mathsf{s},i}_{n,L}(z;x,y)  \right|^k + \left|B^{\mathsf{s},i}_{n,L}(z;x,y)  \right|^k \right]  = O(1),
\end{split}
\end{equation}
where the constant in the big $O$ notation depends on $\theta, \delta, \eta, k$ and is uniform as $z$ varies over compact subsets of $\mathcal{D}_{\eta}'$ and $x,y \in \mathcal{V}_2$. 
\end{proposition}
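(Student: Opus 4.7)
\textbf{Proof proposal for Proposition \ref{ProdExp1}.} The strategy is to perform a controlled asymptotic expansion of $\log \prod_{i=1}^n \tfrac{zL - \ell_i^n + x}{zL - \ell_i^n + y}$ in powers of $L^{-1}$, then exponentiate and collect terms by order, separating deterministic contributions (the $A^{\mathsf{s},i}$) from random ones (the $B^{\mathsf{s},i}$). By our choice of $B$, for $K \ge B$, $z \in \mathcal{D}_\eta'$ and $x,y \in \mathcal{V}_2$ we have $|zL - \ell_i^n| \geq L\eta/2$ uniformly, so the Taylor series $\log(1+u) = \sum_{k \geq 1} (-1)^{k-1} u^k/k$ converges absolutely for $u = x/(zL - \ell_i^n)$ and $u = y/(zL - \ell_i^n)$. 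Using $\sum_{i=1}^n (zL - \ell_i^n)^{-k} = L^{-k} \cdot \tfrac{(-1)^{k-1}}{(k-1)!} \partial_z^{k-1} G_L^n(z)$ and the defining decomposition $G_L^n(z) = L \cdot G(zL/n, n/K) + \hat{G}_L^n(z)$ (and analogously for derivatives), we obtain
\begin{equation*}
\log \prod_{i=1}^n \tfrac{zL-\ell_i^n+x}{zL-\ell_i^n+y} = (x-y) G\bigl(zL/n,n/K\bigr) + \tfrac{x-y}{L} \hat G_L^n(z) + \tfrac{x^2-y^2}{2n} \partial_z G + \tfrac{x^2-y^2}{2L^2} \partial_z \hat G_L^n + \tfrac{x^3-y^3}{6L^2 n^2/L} \partial_z^2 G + R_L(z;x,y),
\end{equation*}
where the remainder $R_L$ collects all contributions of size $O(L^{-3})$ coming both from the tail of the $\log$-series and from the $\partial_z^2 \hat G_L^n$ term. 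After multiplying by $e^{(y-x)G(zL/n,n/K)}$, the leading deterministic piece cancels.

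Next I would exponentiate by applying the usual Taylor expansion $e^u = 1 + u + u^2/2 + u^3/6 + \cdots$ to the remaining sum, and regroup by the total weight in $L^{-1}$, noting that each $\hat G_L^n$ (and each of its $z$-derivatives) contributes one factor of $L^{-1}$. Purely deterministic products of $G$-terms collected at orders $L^0, L^{-1}, L^{-2}$ will precisely reproduce $A^{\mathsf{s},1}, A^{\mathsf{s},2}/L, A^{\mathsf{s},3}/L^2$ as defined in \eqref{S41E1}; indeed, $A^{\mathsf{s},3}$ contains both the $\tfrac{1}{6}$-coefficient from $\partial_z^2 G$ and the $\tfrac{1}{8}$-coefficient from the square of the first-order $G$-term, in exact agreement with $\tfrac{1}{2}(\text{first-order})^2$ in the expansion of $\exp$. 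The mixed deterministic$\times$random products at orders $L^{-1}$ and $L^{-2}$ recover $B^{\mathsf{s},1}/L$ and $B^{\mathsf{s},2}/L^2$ of \eqref{S41E2}, including the cross term $\tfrac{L(x-y)(x^2-y^2)}{2n} \partial_z G \cdot \hat G_L^n$ coming from the product of the order-$L^{-1}$ random piece with the order-$L^0$ deterministic piece in the $G$-expansion. Everything left over is, by construction, $B^{\mathsf{s},3}/L^3$.

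For the bounds in \eqref{S41E3}, the $A^{\mathsf{s},i}$ are deterministic and uniformly bounded on compact subsets of $\mathcal{D}_\eta' \times \mathcal{V}_2$ since $G(\cdot, n/K)$ and its derivatives are holomorphic on $\mathbb{C} \setminus [-\theta, \delta^{-1}]$ (cf.\ \eqref{S31E14}) and $n/K \in [\delta, \delta^{-1}]$. The bounds on $B^{\mathsf{s},1}$ and $B^{\mathsf{s},2}$ follow at once from Proposition \ref{S33P1} combined with the scaling identity $\hat G_L^n(z) = (L/n) \hat G_n^n(zL/n)$ and Cauchy's inequalities for derivatives, exactly as spelled out in Remark \ref{MomentBoundRem}. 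For $B^{\mathsf{s},3}$ one expands the $L^3 \cdot R$-type remainder as a finite sum (the explicit order-$L^{-3}$ contributions from the $\log$ and $\exp$ expansions, involving at most $\partial_z^2 \hat G_L^n$ and cubes of $\hat G_L^n$) plus an absolutely convergent tail that is bounded deterministically by a geometric series of ratio $O(L^{-1})$; each term is a product of bounded deterministic factors and of at most three factors among $\hat G_L^n, \partial_z \hat G_L^n, \partial_z^2 \hat G_L^n$, so by H{\"o}lder and Remark \ref{MomentBoundRem} all moments of each piece are $O(1)$. Holomorphy in $\mathcal{D}_\eta'$ is immediate for the $A$'s and $B^{\mathsf{s},1}, B^{\mathsf{s},2}$, and follows for $B^{\mathsf{s},3}$ because the product on the left-hand side of \eqref{S41E2} is $\P$-a.s.\ holomorphic there (as $\ell_i^n/L \notin \mathcal{D}_\eta'$). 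The main obstacle is purely bookkeeping: keeping track of all cross-terms produced by exponentiation to order $L^{-2}$ in such a way that the identification with \eqref{S41E1}--\eqref{S41E2} is exact, while simultaneously controlling the geometric tail of $\log$ uniformly in $i$ and in the parameters.
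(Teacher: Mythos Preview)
Your proposal is correct and follows essentially the same approach as the paper: Taylor-expand the logarithm of the product to third order in $L^{-1}$, split $G_L^n$ into $L\cdot G(zL/n,n/K)+\hat G_L^n$, exponentiate, regroup by order, and invoke Proposition~\ref{S33P1} (via Remark~\ref{MomentBoundRem}) for the moment bounds. The paper organizes the exponentiation slightly differently---treating the three pieces $\exp\!\big((x-y)\hat G_L^n/L\big)$, $\exp\!\big((x^2-y^2)\partial_z G_L^n/2L^2\big)$, $\exp\!\big((x^3-y^3)\partial_z^2 G_L^n/6L^3\big)$ as separate factors and expanding each---but the content is the same.
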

\begin{remark}\label{S41R1} In plain words, the functions $A^{\mathsf{s},i}_{n,L}(z;x,y)$ and $B^{\mathsf{s},i}_{n,L}(z;x,y)$ are the deterministic and random functions that arise when we linearize 
$$ e^{(y-x) G\left(zL/n, n/K\right)} \prod_{ i = 1}^n \frac{zL - \ell_i^n  + x }{zL - \ell_i^n +y},$$
and group terms according to their size (as powers of $L$). The introduction of $B$ and $\mathcal{D}_{\eta}'$ is to ensure that $zL +x$ and $zL + y$ are bounded away from $\ell_i^n$ (this is where the above product is singular), which allows the linearization.
\end{remark}
\begin{remark}\label{S41R2} The formulation in Proposition \ref{ProdExp1} is made with an outlook to applying the result in our multi-level analysis in Section \ref{Section5}. In the present section we will only use the result for the variables $\ell_i^n$ and $\ell_i^{n-1}$ when $L = n$. In this case, for any $\eta > 0$ we can find $B > 0$ (depending on $\eta, \delta, \mathcal{V}_2$) such that if $K \geq B$ and $(n-1)/K, n/K \in [\delta, \delta^{-1}]$ then $\ell_i^n/n \pm x/n \not \in \mathcal{D}_{\eta}$ (as in (\ref{Deta})) for each $x \in \mathcal{V}_2$ and $i \in \llbracket 1, n \rrbracket$ and also $\ell_i^{n-1}/n \pm x/n \not \in \mathcal{D}_{\eta}$ for each $x \in \mathcal{V}_2$ and $i \in \llbracket 1, n-1 \rrbracket$. One can then essentially verbatim repeat the the proof below to conclude that the functions in (\ref{S41E1}) and (\ref{S41E2}) are analytic in $\mathcal{D}_{\eta}$ (as opposed to $\mathcal{D}_{\eta}'$) and the bound in (\ref{S41E3}) is uniform over compact subsets of $\mathcal{D}_{\eta}$ (as opposed to $\mathcal{D}_{\eta}'$), provided that $K \geq B$ and $(n-1)/K, n/K \in [\delta, \delta^{-1}]$. 
\end{remark}

\begin{proof} We first establish the analyticity of the functions in (\ref{S41E1}) and (\ref{S41E2}). From (\ref{S31E6}) we know that $G(z,s)$ is analytic in $\mathbb{C}\setminus [-\theta, s^{-1}]$ and since $n/K, n/L \in [\delta, \delta^{-1}]$ we conclude that $G(zL/n,n/K)$ is analytic in $\mathcal{D}_{\eta}'$. In addition, since $\P$-a.s. we know that $\ell_i^n/n \in [-\theta, K/n]$, we conclude from (\ref{S32E1}) that $G_{L}^n(z)$ and $\hat{G}_{L}^n(z)$ are both analytic in $\mathcal{D}_{\eta}'$. The latter observations show that all the functions in (\ref{S41E1}) and also $ B_{n,L}^{\mathsf{s},1}(z;x,y)$, $ B_{n,L}^{\mathsf{s},2}(z;x,y)$ are analytic in $\mathcal{D}_{\eta}'$. For $B_{n,L}^{\mathsf{s},3}(z;x,y) $ we note that analyticity follows from the analyticity of the earlier functions as well as that of the product (here is where we use that $K \geq B$, which ensures that $zL + y $ is bounded away from $\ell_i^n$ for $i = 1, \dots, n$, $z \in \mathcal{D}_{\eta}'$ and $y \in \mathcal{V}_2$). \\

In the remainder of the proof we focus on (\ref{S41E3}). We fix a compact set $\mathcal{V}_1 \subset \mathcal{D}_{\eta}'$ and throughout the proof write $\xi_{n,L}(z;x,y)$ and $\zeta_{n,L}(z;x,y)$ to mean generic random functions of $n, L$ such that $n/K, n/L  \in [\delta , \delta^{-1}]$,  $x,y \in \mathcal{V}_2$ and $z \in \mathcal{D}_{\eta}'$ that are analytic on $\mathcal{D}_{\eta}'$ for fixed $x,y \in \mathcal{V}_2$  and such that $\P$-almost surely
$$   \max_{\delta^{-1}  \geq n/L, n/K \geq \delta }  \sup_{z \in \mathcal{V}_1} \sup_{ x, y \in \mathcal{V}_2}  \left| \xi_{n,L}(z;x,y) \right| = O(1), \mbox{ and  }$$
$$\max_{\delta^{-1}  \geq n/L, n/K \geq \delta } \max_{\delta K \geq n \geq \delta^{-1} K}  \sup_{z \in \mathcal{V}_1}\sup_{ x, y \in \mathcal{V}_2}  \mathbb{E} \left[ \left|\zeta_{n,L}(z;x,y) \right|^k \right] = O(1),$$
for $k \in \mathbb{N}$, where the constants in the big $O$ notations depend on $\mathcal{V}_1$, $\mathcal{V}_2$, $\theta, \delta, \eta$ and also on $k$ for the second identity, but not on $K$ provided it is larger than $B$ as in the statement of the proposition. To ease the notation, we drop $x,y, n,L$ and simply write $\xi(z), \zeta(z)$. The meaning of $\xi(z)$ and $\zeta(z)$ may be different from line to line or even within the same equation. In the sequel we also assume that $K \geq B$, $n/K, n/L \in [\delta, \delta^{-1}]$, $x, y \in \mathcal{V}_2$ and $z \in \mathcal{D}_{\eta}'$ without mention.\\

Recalling the definition of $G_{L}^n(z)$ from (\ref{S32E1}) we have that 
$$G_{L}^n(z) = \sum_{i = 1}^{n} \frac{1}{z - \ell_i^n/L}, \hspace{2mm} \partial_z G_{L}^n(z) = - \sum_{i = 1}^{n} \frac{1}{(z - \ell_i^n/L)^2}, \hspace{2mm} \partial^2_z G_{L}^n(z) =  \sum_{i = 1}^{n} \frac{2}{(z - \ell_i^n/L)^3}.$$
By Taylor expanding the logarithm and using the latter identities we have
\begin{equation}\label{S41E4}
\begin{split}
& \prod_{ i = 1}^n \frac{zL - \ell^n_i + x}{zL - \ell^n_i + y} = \exp \left[ \sum_{i = 1}^n \log \left( 1 + \frac{1}{L} \cdot \frac{x}{z - \ell^n_i/L } \right) -  \sum_{i = 1}^n \log \left( 1 + \frac{1}{L} \cdot \frac{y}{z - \ell^n_i/L } \right) \right] = \\
&  \exp\left[ \frac{(x - y)  G_{L}^n(z) }{L}+ \frac{(x^2 - y^2)  \partial_z G_{L}^n(z)}{2L^2} + \frac{(x^3 - y^3)  \partial^2_zG_{L}^n(z)}{6L^3} +  \frac{ \xi(z)}{L^3}\right].
\end{split}
\end{equation}
By Taylor expanding the exponential function we get
\begin{equation}\label{S41E5}
\begin{split}
\exp \left(  \frac{(x^3 - y^3)  \partial^2_zG_{L}^n(z)}{6L^3}\right) = 1 +  \frac{(x^3 - y^3)  \partial^2_zG_{L}^n(z)}{6L^3} +\frac{\xi(z)}{L^4},
\end{split}
\end{equation}
\begin{equation}\label{S41E6}
\begin{split}
\exp \left(  \frac{(x^2 - y^2)  \partial_z G_{L}^n(z)}{2L^2}  \right) = 1 +   \frac{(x^2 - y^2)  \partial_z G_{L}^n(z)}{2L^2}  +  \frac{(x^2 - y^2)^2 [\partial_z G_{L}^n(z)]^2}{8L^4} + \frac{\xi(z)}{L^3}.
\end{split}
\end{equation}

From Proposition \ref{S33P1}, see (\ref{MBRem}) in Remark \ref{MomentBoundRem}), we have
\begin{equation}\label{S41E7}
\begin{split}
&\hat{G}_{L}^n(z) =  \zeta(z), \hspace{2mm} \partial_z  \hat{G}_{L}^n(z) =  \zeta(z), \hspace{2mm} \partial^2_z  \hat{G}_{L}^n(z)  = \zeta(z).
\end{split}
\end{equation}
Combining (\ref{S41E5}), (\ref{S41E6}) and (\ref{S41E7}) with the definition of $\hat{G}_{L}^n(z)$ from (\ref{S32E1})  we get
\begin{equation}\label{S41E8}
\begin{split}
&\exp \left(  \frac{(x^3 - y^3) \partial^2_zG_{L}^n(z)}{6L^3}\right) = 1 +  \frac{(x^3 - y^3)  \partial^2_zG(zL/n, n/K)}{6n^2} +\frac{\zeta(z)}{L^3}, \\
\end{split}
\end{equation}
\begin{equation}\label{S41E9}
\begin{split}
\exp \left(  \frac{(x^2 - y^2) \partial_z G_{L}^n(z)}{2L^2}  \right) = \hspace{2mm} 1 &+  \frac{(x^2 - y^2)  \partial_z G(zL/n, n/K)}{2n} +  \frac{(x^2 - y^2)  \partial_z \hat{G}^n_{L}(z)}{2L^2} \\
& +\frac{(x^2 - y^2)^2 [\partial_z G(zL/n,n/K)]^2}{8n^2}+\frac{\zeta(z)}{L^3}.
\end{split}
\end{equation}
Setting $F_2(u) = u^{-3}(e^u - 1  -u - u^2/2)$ and noting that $L^{-1} G_{L}^n(z) = \xi(z)$, we get
\begin{equation}\label{S41E10}
\begin{split}
&e^{(y- x) G(zL/n, n/K)}  \cdot \exp\left(\frac{(x - y)  G_{L}^n(z) }{L} \right) =  \exp\left(\frac{(x - y)  \hat{G}_{L}^n(z) }{L} \right) \\
&= 1 +  \frac{(x - y)  \hat{G}_{L}^n(z) }{L} + \frac{(x-y)^2 [ \hat{G}_{L}^n(z)]^2 }{2L^2} +  \frac{(x-y)^3 \hat{G}_{L}^n(z)^3}{L^3}   \cdot F_2 \left(\frac{(x-y)  \hat{G}^n_{L}(z) }{L}  \right)  \\
& = 1 +  \frac{(x-y)  \hat{G}_{L}^n(z) }{L} + \frac{(x-y)^2  [\hat{G}^n_{L}(z)]^2 }{2L^2}  + \frac{\zeta(z)}{L^3}.
\end{split}
\end{equation}
Substituting (\ref{S41E8}), (\ref{S41E9}) and (\ref{S41E10}) into (\ref{S41E4}) we get
\begin{equation}\label{S41E11}
\begin{split}
& e^{(y- x) G(zL/n, n/K)}  \prod_{ i = 1}^n \frac{zL - \ell^n_i + x}{zL - \ell^n_i + y} = \sum_{i = 1}^3 \frac{A_{n,L}^{\mathsf{s},i}(z;x,y)}{L^{i-1}} + \sum_{i = 1}^2 \frac{B_{n,L}^{\mathsf{s},i}(z;x,y) }{L^i} + \frac{\zeta(z)}{L^3}.
\end{split}
\end{equation}
The latter shows that $B_{n,L}^{\mathsf{s},3}(z;x,y) = \zeta(z)$ and so its $k$-th moments are all $O(1)$. The fact that the other functions in (\ref{S41E1}) and (\ref{S41E2}) have $O(1)$ moments is a consequence of (\ref{S41E7}). This completes the proof of (\ref{S41E3}).
\end{proof}

%
\subsection{Two-level loop equations}\label{Section4.2} In this section we state the consequence of applying the two-level loop equations from \cite{DK2020} to our model. The main outputs of this section are equations (\ref{S42E21}) and (\ref{S42E22}) below. Throughout we assume that $(\ell^1, \ell^2, \dots)$ are distributed according to $\P$ from Definition \ref{BKCC}. We start by introducing some useful notation.

We fix $\delta \in (0,1)$ and a compact set $\mathcal{V}_1 \subset \mathbb{C} \setminus [-\theta, \delta^{-1}]$. We can find an $\eta > 0$ and a positively oriented contour $\Gamma$ that encloses $[-\theta , \delta^{-1}]$ such that $\Gamma \subset \mathcal{D}_\eta$, $\mathcal{V}_1 \subset \mathcal{D}_{\eta}$ with $\mathcal{D}_{\eta}$ as in (\ref{Deta}) and with $\Gamma$ and $\mathcal{V}_1$ being at least distance $\eta/2$ away from each other. We then fix $m \in \mathbb{Z}_{\geq 0}$ and $m+1$ points $v_0, v_1, \dots, v_m \in \mathcal{V}_1$. For any set $A= \{a_1, \dots, a_r\}  \subseteq \llbracket 1, m \rrbracket $ and bounded complex-valued random variable $\xi$ we write
\begin{equation}\label{S4Cum}
M(\xi; A): = M\left(\xi, n^{1/2} \cdot [\hat{G}_{n}^n(v_{a_1}) - \hat{G}_{n}^{n-1}(v_{a_1})], \dots, n^{1/2} \cdot [\hat{G}_{n}^n(v_{a_r}) - \hat{G}_{n}^{n-1}(v_{a_r})] \right),
\end{equation}
where $M(\xi_1, \dots, \xi_r)$ denotes the joint cumulant of the variables $\xi_1, \dots, \xi_r$, see Appendix \ref{AppendixA}. We mention that the notation $M(\xi; A)$ coincides with (\ref{S3Cum}), where it means something else, but that notation is only used in Section \ref{Section3}, while the present one is the only one used in Section \ref{Section4}.

We have the following consequence of \cite[(4.20)]{DK2020} 
\begin{equation}\label{S42E1}
\begin{split}
0 =  & \frac{1}{2\pi \i} \oint_{\Gamma} dz  \frac{\Phi_{n,K}^-(zn) }{2  \theta (z - v_0) n^{m/2 + 1}} \cdot M \left( \prod_{ i = 1}^n \frac{zn - \ell_i^n - \theta}{zn - \ell_i^n} ; \llbracket 1, m \rrbracket  \right) \\
&+ \frac{\Phi_{n,K}^+(zn)}{2 \theta (z - v_0) n^{m/2 + 1}}  \cdot M \left( \prod_{i = 1}^{n-1} \frac{zn - \ell_i^{n-1} + \theta - 1}{zn - \ell_i^{n-1} - 1}; \llbracket 1, m \rrbracket  \right) \\
& + \sum_{A \subseteq \llbracket 1, m \rrbracket} \frac{\Phi^+_{n,K}(zn)}{2 \theta (z- v_0) n^{|A|/2 + 1}} \cdot \prod_{a \in A^c} \left( \frac{n^{-1}}{(v_a - z)(v_a - z + n^{-1}) } \right) \cdot M \left( \Pi_1^{\theta}(zn); A \right),
\end{split}
\end{equation}
where $A^ c = \llbracket 1, m \rrbracket \setminus A$, and 
\begin{equation}\label{S42E2}
\Pi_1^{\theta}(z) = \begin{cases} \mathlarger{\frac{\theta}{1- \theta} \cdot \prod_{i = 1}^n \frac{z - \ell_i^n - \theta}{z - \ell_i^n - 1} \cdot \prod_{i = 1}^{n-1} \frac{z - \ell_i^{n-1} + \theta - 1}{z - \ell_i^{n-1} }} &\mbox{ if } \theta \neq 1, \\ \mathlarger{\sum_{i = 1}^n \frac{1}{z - \ell_i^n - 1} - \sum_{i = 1}^{n-1} \frac{1}{z - \ell_i^{n-1}}} &\mbox{ if } \theta = 1. \end{cases}
\end{equation}
We also have the following consequence of \cite[(4.22)]{DK2020}
\begin{equation}\label{S42E3}
\begin{split}
&0 =  \frac{1}{2\pi \i} \oint_{\Gamma} dz \sum_{A \in \llbracket 1, m \rrbracket}  \frac{\Phi_{n,K}^+(zn) }{2  \theta (z - v_0) n^{|A|/2 + 1}} \cdot \prod_{a \in A^c} \left( \frac{-\theta n^{-2}(2 v_a - z_{\theta} - z^-)}{(v_a - z)(v_a - z^-)(v_a - z_{\theta})(v_a - z_{\theta}^-) } \right) \\
& \times M \left( \prod_{ i = 1}^n \frac{zn - \ell_i^n + \theta - 1}{zn - \ell_i^n - 1} ; A  \right) + \frac{\Phi_{n,K}^-(zn)}{2 \theta (z - v_0) n^{m/2 + 1}}  \cdot M \left( \prod_{i = 1}^{n-1} \frac{zn - \ell_i^{n-1} }{zn - \ell_i^{n-1} + \theta }; \llbracket 1, m \rrbracket  \right)  \\
& + \sum_{A \subseteq \llbracket 1, m \rrbracket} \frac{\Phi^-_{n,K}(zn)}{2 \theta (z- v_0) n^{|A|/2 + 1}} \cdot \prod_{a \in A^c} \left( \frac{n^{-1}}{(v_a - z_{\theta})(v_a - z_{\theta}^-) } \right) \cdot M \left( \Pi_2^{\theta}(zn); A \right),
\end{split}
\end{equation}
where $z^{\pm} = z \pm n^{-1}$, $z_{\theta} = z + \theta n^{-1}$, $z_{\theta}^{\pm} = z^{\pm} + \theta n^{-1}$ and 
\begin{equation}\label{S42E4}
\Pi_2^{\theta}(z) = \begin{cases} \mathlarger{\frac{\theta}{1- \theta} \cdot \prod_{i = 1}^n \frac{z - \ell_i^n + \theta - 1}{z - \ell_i^n} \cdot \prod_{i = 1}^{n-1} \frac{z - \ell_i^{n-1}}{z - \ell_i^{n-1} + \theta - 1}} &\mbox{ if } \theta \neq 1, \\  \mathlarger{\sum_{i = 1}^{n-1} \frac{1}{z - \ell_i^{n-1}} - \sum_{i = 1}^n \frac{1}{z - \ell_i^n}}  &\mbox{ if } \theta = 1. \end{cases}
\end{equation}
To identify (\ref{S42E1}) with \cite[(4.20)]{DK2020} and (\ref{S42E3}) with \cite[(4.22)]{DK2020} one needs to set $r = m$, $s = t = 0$ and use the following correspondences
$$n \leftrightarrow N, \hspace{3mm} \Phi^{\pm}_{n,K}(zn) \leftrightarrow N\cdot \Phi_N^{\pm}(zN + N \theta) , \hspace{3mm} \ell_i^{n} + n \theta \leftrightarrow \ell_i,\hspace{3mm} \ell_i^{n-1} + n \theta \leftrightarrow m_i, \hspace{3mm} 2\theta \leftrightarrow S(z), \hspace{3mm}  $$
$$K + 1 + (n-1)\theta \leftrightarrow s_N, \hspace{3mm} \Gamma \leftrightarrow \Gamma - \theta ,  \hspace{3mm}, v_0 \leftrightarrow v, \hspace{3mm},\mathbb{C} \leftrightarrow \mathcal{M},$$
$$ G^n_n(z) \leftrightarrow G^t_N(z +\theta), \hspace{3mm}G^{n-1}_n(z) \leftrightarrow G^b_N(z + \theta), \hspace{3mm} n^{1/2}[G_n^n(z) - G_n^{n-1}(z)] \leftrightarrow \Delta X_N(v_a),$$
where we recall that $\mathcal{M}, \Phi^{\pm}_N$ are as in \cite[Assumption 3, Section 3]{DK2020} and $G^t_N(z), G^{b}_N(z)$ and $\Delta X_{N}(z)$ are as in \cite[(3.18) and (4.1)]{DK2020}. We further note that $\tilde{V}_N^1$ and $\tilde{V}_N^2$ in \cite[(4.20)]{DK2020} and \cite[(4.22)]{DK2020} are equal to zero, since the random variables in \cite[(4.10)]{DK2020} are equal to zero, which in turn follows from (\ref{S31E3}) that implies 
$$\Phi_N^-(0) = n^{-1}\Phi_{n,K}^-(-n\theta) = 0 \mbox{ and } \Phi^+_N(s_N) = n^{-1} \Phi^{+}_{n,K}(K + 1 - \theta) = 0.$$
Finally, we remark that our joint cumulants in (\ref{S42E1}) and (\ref{S42E3})  involve the variables  $n^{1/2} \cdot [\hat{G}_{n}^n(v_{a}) - \hat{G}_{n}^{n-1}(v_{a})]$ as opposed to $n^{1/2} \cdot [{G}_{n}^n(v_{a}) - {G}_{n}^{n-1}(v_{a})]$ like in \cite{DK2020}, but this is not an issue since from (\ref{S32E1}) $\hat{G}_L^n(z)$ differs from $G_L^n(z)$ by a deterministic function, and second and higher order cumulants remain unchanged upon constant shifts, cf. (\ref{S3Linearity}).\\

In the remainder of the section, we use Proposition \ref{ProdExp1} to linearize (\ref{S42E1}) and (\ref{S42E3}), simplify the resulting equations and finally take their difference. The output of these procedures after considerable simplifications can be found in equations (\ref{S42E21}) and (\ref{S42E22}).

Below we write $\xi_n(z)$ for a generic random function of $v_0, \dots, v_m \in \mathcal{V}_1$ and $z \in \Gamma$ that that a.s. $O(1)$ over these sets. We also write $\zeta_n(z)$ for a generic random function of $v_0, \dots, v_m \in \mathcal{V}_1$ and $z \in \Gamma$ that that has uniformly $O(1)$ moments over these sets. 

We now apply Proposition \ref{ProdExp1} (with $L = n$ for the variables $\ell_i^n$ and $\ell_i^{n-1}$ as in Remark \ref{S41R2}) to (\ref{S42E1}).  The conclusion is that there exists $B$ (depending on $\theta$, $\delta$, $\eta$) such that if $K \geq B$ and $n/K, (n-1)/K \in [\delta, \delta^{-1}]$ we have
\begin{equation}\label{S42E5}
\begin{split}
&  \frac{1}{2\pi \i} \oint_{\Gamma} dz \frac{\Phi_{n,K}^-(zn) e^{-\theta G(z, n/K)}}{2  \theta (z - v_0) n^{m/2 + 1}} \cdot M \left(1 + \frac{\theta^2}{2n} \cdot \partial_z G(z,n/K) - \frac{\theta \hat{G}_n^n(z)}{n} ; \llbracket 1, m \rrbracket  \right) \\
&+ \frac{\Phi_{n,K}^+(zn) e^{\theta G(zn/(n-1), (n-1)/K)}}{2 \theta (z - v_0) n^{m/2 + 1}}  \\
&\times  M \left( 1 + \frac{\theta (\theta - 2)}{2(n-1)} \cdot \partial_z G\left(\frac{zn}{n-1}, \frac{n-1}{K}\right) + \frac{\theta \hat{G}_n^{n-1}(z)}{n}; \llbracket 1, m \rrbracket  \right) \\
& + \sum_{A \subseteq \llbracket 1, m \rrbracket} \frac{\Phi^+_{n,K}(zn) }{2 \theta (z- v_0) n^{|A|/2 + 1}}\cdot \prod_{a \in A^c} \left( \frac{n^{-1}}{(v_a - z)(v_a - z + n^{-1}) } \right) \cdot M\left(\Pi^{\theta}_1(zn);  A \right) \\
& + \frac{M(\zeta_n(z); \llbracket 1, m \rrbracket)}{n^{m/2 + 2}} = 0.
\end{split}
\end{equation}
When applying Proposition \ref{ProdExp1} we absorbed the terms $A_{n-1,n}^{\mathsf{s},3}(z;x,y)$, $A_{n,n}^{\mathsf{s},3}(z;x,y)$, $B_{n-1,n}^{\mathsf{s},2}(z;x,y)$, $B_{n-1,n}^{\mathsf{s},3}(z;x,y) $, $B_{n,n}^{\mathsf{s},2}(z;x,y)$ and $B_{n,n}^{\mathsf{s},3}(z;x,y) $ into $\zeta_n(z)$ terms above. We also used (\ref{S33E6}).

We now observe that 
\begin{equation}\label{S42PiExp1}
\begin{split}
&\Pi^{\theta}_1(zn) = \hspace{2mm} \frac{\theta {\bf 1}\{ \theta \neq 1\} }{1- \theta} +  \theta G(z,n/K) -  \theta G\left(\frac{zn}{n-1}, \frac{n-1}{K}\right)  - \frac{\theta (1+ \theta)}{2n} \cdot \partial_z G(z, n/K)  \\
& + \frac{\theta (1-\theta)}{2(n-1)} \cdot \partial_z G(zn/(n-1), (n-1)/K)+ \frac{\theta [\hat{G}_n^n(z) - \hat{G}_n^{n-1}(z)]}{n}  + \frac{\zeta_n(z)}{n^2}.
\end{split}
\end{equation}
When $\theta \neq 1$ equation (\ref{S42PiExp1}) follows by applying Proposition \ref{ProdExp1} to each product in the definition of $\Pi_1^{\theta}$ in (\ref{S42E2}). When $\theta = 1$ we have from (\ref{S32E1}) and (\ref{S42E2}) that 
\begin{equation*}
\begin{split}
&\Pi_1^1(zn) = \sum_{i = 1}^n \frac{1}{zn - \ell_i^n - 1} - \sum_{i = 1}^{n-1} \frac{1}{zn - \ell_i^{n-1}} = \frac{G_n^n(z) - G_n^{n-1}(z)}{n} - \frac{\partial_z G_n^n(z)}{n^2} + \frac{\xi_n(z)}{n^2}\\
& = \frac{\hat{G}_n^n(z) - \hat{G}_n^{n-1}(z)}{n} + G(z,n/K) -  G\left(\frac{zn}{n-1}, \frac{n-1}{K}\right) - \frac{\partial_z \hat{G}_n^n(z)}{n^2} - \frac{\partial_z G(z,n/K)}{n} + \frac{\xi_n(z)}{n^2},
\end{split}
\end{equation*}
which implies (\ref{S42PiExp1}) when $\theta = 1$ in view of (\ref{MBRem}).\\

We now proceed to substitute (\ref{S42PiExp1}) into (\ref{S42E5}) and simplify the resulting expression. We make use of the formulas 
\begin{equation}\label{S42E7}
G\left(\frac{zn}{n-1}, \frac{n-1}{K}\right) = G\left(z, n/K\right) + \frac{z \partial_zG(z,n/K) - (n/K) \partial_s G(z,n/K)}{n} + \frac{\xi_n(z)}{n^2} ,
\end{equation}
\begin{equation}\label{S42E8}
\partial_z G\left(\frac{zn}{n-1}, \frac{n-1}{K}\right) = \partial_z G\left(z, n/K\right) + \frac{\xi_n(z)}{n},
\end{equation}
\begin{equation}\label{S42E9}
e^{\theta G(zn/(n-1), (n-1)/K)} = e^{\theta G(z, n/K)} \cdot \left(1 +  \frac{\theta z \partial_zG(z,n/K) -  \theta (n/K) \partial_s G(z,n/K)}{n} \right) + \frac{\xi_n(z)}{n^2} .
\end{equation}

Suppose first that $m = 0$. Substituting (\ref{S42PiExp1}) (\ref{S42E7}), (\ref{S42E8}) and (\ref{S42E9}) into (\ref{S42E5}) we get
\begin{equation}\label{S42E10}
\begin{split}
&  \frac{1}{2\pi \i} \oint_{\Gamma}dz  \left[ \frac{\Phi_{n,K}^-(zn) e^{-\theta G(z, n/K)} + \Phi_{n,K}^+(zn) e^{\theta G(z, n/K)}}{2\theta (z-v_0)n} +  \frac{ {\bf 1}\{ \theta \neq 1\} \Phi^+_{n,K}(zn) }{2 (1- \theta) (z-v_0) n}  \right] \\
&+  \frac{ \partial_z G(z,n/K) \left( \theta \Phi_{n,K}^-(zn) e^{-\theta G(z, n/K)} + (\theta-2)  \Phi_{n,K}^+(zn) e^{\theta G(z, n/K)} - 2\theta \Phi_{n,K}^+(zn) \right) }{2 (z-v_0) n^2} \\ 
&+\frac{  \Phi_{n,K}^+(zn) [e^{\theta G(z, n/K)} - 1] \hat{G}_n^{n-1}(z) - [\Phi_{n,K}^-(zn) e^{-\theta G(z, n/K)} -  \Phi_{n,K}^+(zn)] \hat{G}_n^n(z) }{2(z-v_0) n^2}  \\
& + \frac{ [z \partial_zG(z,n/K) - (n/K) \partial_s G(z,n/K)] \cdot \Phi_{n,K}^+(zn)  \left(e^{\theta G(z, n/K)} -1 \right)}{ (z-v_0) n^2} + \frac{\mathbb{E}[\zeta_n(z)]}{n^2} = 0. \\
\end{split}
\end{equation}
We suppose now that $m > 0$, substitute (\ref{S42PiExp1}), (\ref{S42E7}), (\ref{S42E8}) and (\ref{S42E9}) into (\ref{S42E5}) and get
\begin{equation}\label{S42E11}
\begin{split}
&  \frac{1}{2\pi \i} \oint_{\Gamma}  dz M \left( \frac{\Phi_{n,K}^+(zn) e^{\theta G(z, n/K)}\hat{G}_n^{n-1}(z) - \Phi_{n,K}^-(zn) e^{-\theta G(z, n/K)} \hat{G}_n^{n}(z) }{2 (z - v_0) n^{m/2 + 2}} ; \llbracket 1, m \rrbracket  \right) \\
& + \sum_{A \subseteq \llbracket 1, m \rrbracket} \frac{\Phi^+_{n,K}(zn) }{2 \theta (z- v_0) n^{|A|/2 + 1}}\cdot \prod_{a \in A^c} \left( \frac{n^{-1}}{(v_a - z)(v_a - z + n^{-1})} \right)   \\
&\times M \left( \frac{\theta {\bf 1}\{ \theta \neq 1\} }{1- \theta} + \frac{\theta [\hat{G}_n^n(z) - \hat{G}_n^{n-1}(z)]}{n} ; A \right) + \frac{\mathbb{E}[\zeta_n(z)]}{n^{m  +1} }+  \sum_{A \subseteq \llbracket 1, m \rrbracket} \frac{M(\zeta_n(z); A)}{n^{m - |A|/2 + 2}} = 0.
\end{split}
\end{equation}
We mention that in deriving (\ref{S42E11}) we used the linearity of cumulants and the fact that the joint cumulant of any nonempty collection of bounded random variables and a constant is zero, see (\ref{S3Linearity}). Equations (\ref{S42E10}) and (\ref{S42E11}) are the main output of linearizing (\ref{S42E1}).\\

We next proceed to linearize (\ref{S42E3}). From Proposition \ref{ProdExp1} we have
\begin{equation}\label{S42E12}
\begin{split}
&0 =  \frac{1}{2\pi \i} \oint_{\Gamma}  dz  \frac{\Phi_{n,K}^+(zn) e^{\theta G(z, n/K)} }{2  \theta (z - v_0) n^{m/2 + 1}}  M \left( 1 + \frac{\theta(\theta-2)}{2n} \cdot \partial_z G(z, n/K) +  \frac{\theta \hat{G}_n^n(z) }{n}; \llbracket 1, m \rrbracket\right)\\
& + \frac{\Phi_{n,K}^-(zn) e^{-\theta G(zn/(n-1), (n-1)/K)} }{2  \theta (z - v_0) n^{m/2 + 1}}  \\
&\times M \left(1 - \frac{\theta^2}{2(n-1)} \cdot  \partial_z G\left(\frac{zn}{n-1}, \frac{n-1}{K} \right)  - \frac{\theta \hat{G}^{n-1}_n(z)}{n} ; \llbracket 1, m \rrbracket \right) \\
& + \sum_{A \subseteq \llbracket 1, m \rrbracket} \frac{\Phi^-_{n,K}(zn)}{2 \theta (z- v_0) n^{|A|/2 + 1}} \cdot \prod_{a \in A^c} \left( \frac{n^{-1}}{(v_a - z_{\theta})(v_a - z_{\theta}^-) } \right) \cdot M \left( \Pi_2^{\theta}(zn); A \right) +\\
& +    \sum_{A \subseteq \llbracket 1, m \rrbracket} \frac{M(\zeta_n(z); A)}{n^{2m - 3|A|/2 + 2}} = 0.
\end{split}
\end{equation}
We mention that in deriving (\ref{S42E12}) we absorbed all terms in the first line of (\ref{S42E3}) corresponding to $A \neq \llbracket 1, m \rrbracket$ in $\zeta_n(z)$ terms.

We next observe that 
\begin{equation}\label{S42PiExp2}
\begin{split}
&\Pi^{\theta}_2(zn) = \hspace{2mm} \frac{\theta {\bf 1}\{ \theta \neq 1\} }{1- \theta} -  \theta G(z,n/K) +  \theta G\left(\frac{zn}{n-1}, \frac{n-1}{K}\right)  - \frac{\theta (1- \theta)}{2n} \cdot \partial_z G(z, n/K)  \\
& + \frac{\theta (1-\theta)}{2(n-1)} \cdot \partial_z G(zn/(n-1), (n-1)/K) - \frac{\theta [\hat{G}_n^n(z) - \hat{G}_n^{n-1}(z)]}{n}  + \frac{\zeta_n(z)}{n^2}.
\end{split}
\end{equation}
When $\theta \neq 1$ equation (\ref{S42PiExp2}) follows by applying Proposition \ref{ProdExp1} to each product in the definition of $\Pi_2^{\theta}$ in (\ref{S42E4}). When $\theta = 1$ we have from (\ref{S32E1}) and (\ref{S42E4}) that 
\begin{equation*}
\begin{split}
&\Pi_2^1(zn) = \sum_{i = 1}^{n-1} \frac{1}{zn - \ell_i^{n-1}} - \sum_{i = 1}^n \frac{1}{zn - \ell_i^n} = \frac{G_n^{n-1}(z) - G_n^n(z)}{n} +  G\left(\frac{zn}{n-1}, \frac{n-1}{K}\right) - G(z, n/K),
\end{split}
\end{equation*}
which implies (\ref{S42PiExp2}) when $\theta = 1$.\\

Suppose that $m = 0$. Substituting (\ref{S42E7}), (\ref{S42E8}), (\ref{S42E9}) and (\ref{S42PiExp2}) into (\ref{S42E12}) we get
\begin{equation}\label{S42E13}
\begin{split}
&  \frac{1}{2\pi \i} \oint_{\Gamma}dz  \left[ \frac{\Phi_{n,K}^-(zn) e^{-\theta G(z, n/K)} + \Phi_{n,K}^+(zn) e^{\theta G(z, n/K)}}{2\theta (z-v_0)n} +  \frac{ {\bf 1}\{ \theta \neq 1\} \Phi^-_{n,K}(zn) }{2 (1- \theta) (z-v_0) n}  \right] \\
&+  \frac{ \partial_z G(z,n/K) \left( (\theta -2) \Phi_{n,K}^+(zn) e^{\theta G(z, n/K)} -  \theta  \Phi_{n,K}^-(zn) e^{-\theta G(z, n/K)} \right) }{2 (z-v_0) n^2} \\ 
&+\frac{[\Phi_{n,K}^+(zn) e^{\theta G(z, n/K)} - \Phi_{n,K}^-(zn)]  \hat{G}_n^n(z) -\Phi_{n,K}^-(zn) [e^{-\theta G(z, n/K)} - 1]  \hat{G}_n^{n-1}(z) }{2(z-v_0) n^2} \\
& + \frac{ [z \partial_zG(z,n/K) - (n/K) \partial_s G(z,n/K)] \cdot \Phi_{n,K}^-(zn)  \left(- e^{-\theta G(z, n/K)} +1 \right)}{ (z-v_0) n^2} + \frac{\mathbb{E}[\zeta_n(z)]}{n^2} = 0. \\
\end{split}
\end{equation}
We suppose now that $m > 0$, substitute (\ref{S42E7}), (\ref{S42E8}), (\ref{S42E9}) and (\ref{S42PiExp2}) into (\ref{S42E12}) and get
\begin{equation}\label{S42E14}
\begin{split}
&  \frac{1}{2\pi \i} \oint_{\Gamma}  dz M \left( \frac{\Phi_{n,K}^+(zn) e^{\theta G(z, n/K)}\hat{G}_n^{n}(z) - \Phi_{n,K}^-(zn) e^{-\theta G(z, n/K)} \hat{G}_n^{n-1}(z) }{2 (z - v_0) n^{m/2 + 2}} ; \llbracket 1, m \rrbracket  \right) \\
& + \sum_{A \subseteq \llbracket 1, m \rrbracket} \frac{\Phi^-_{n,K}(zn) }{2 \theta (z- v_0) n^{|A|/2 + 1}}\cdot \prod_{a \in A^c} \left( \frac{n^{-1}}{(v_a - z_{\theta})(v_a - z_{\theta}^-)} \right)   \\
&\times M \left( \frac{\theta {\bf 1}\{ \theta \neq 1\} }{1- \theta} - \frac{\theta [\hat{G}_n^n(z) - \hat{G}_n^{n-1}(z)]}{n} ; A \right)  + \frac{\mathbb{E}[\zeta_n(z)]}{n^{m  +1} } +   \sum_{A \subseteq \llbracket 1, m \rrbracket} \frac{M(\zeta_n(z); A)}{n^{m - |A|/2 + 2}} = 0.
\end{split}
\end{equation}
As before, we used linearity of cumulants and that the joint cumulant of any nonempty collection of bounded random variables and a constant is zero.\\

The last thing we do is we take the difference of (\ref{S42E10}) and (\ref{S42E13}) and the difference of (\ref{S42E11}) and (\ref{S42E14}) and do some simplifications of the result.

Firstly, when we consider (\ref{S42E10}) and (\ref{S42E13}) we note that the terms involving ${\bf 1}\{\theta \neq 1\}$ integrate to $0$ by Cauchy's theorem (here we used that $\Phi_{n,K}^{\pm}$ are polynomials and $v_0$ is outside of $\Gamma$). Taking this into account and subtracting (\ref{S42E13}) from (\ref{S42E10})  we get
\begin{equation}\label{S42E15}
\begin{split}
&  \frac{1}{2\pi \i} \oint_{\Gamma}dz  \frac{ \partial_z G(z,n/K) \left(  \theta \Phi_{n,K}^-(zn) e^{-\theta G(z, n/K)}  - \theta \Phi_{n,K}^+(zn) \right) }{ (z-v_0) n^2} +\frac{ S_{n,K}(z) [ \hat{G}_n^{n-1}(z) - \hat{G}_n^n(z)] }{2(z-v_0) n}  \\ 
&+\frac{ S_{n,K}(z)  [z \partial_zG(z,n/K) - (n/K) \partial_s G(z,n/K)] }{(z-v_0)n} + \frac{\mathbb{E}[\zeta_n(z)]}{n^2} = 0,
\end{split}
\end{equation}
where 
\begin{equation}\label{S42E16}
S_{n,K}(z) = n^{-1} \cdot \left[\Phi_{n,K}^+(zn) e^{\theta G(z, n/K)}  + \Phi_{n,K}^-(zn) e^{-\theta G(z, n/K)}  - \Phi_{n,K}^+(zn) - \Phi_{n,K}^-(zn) \right].
\end{equation}

Next we consider (\ref{S42E11}) and (\ref{S42E14}). In this case we note that the ${\bf 1}\{\theta \neq 1\}$ term either does not contribute if $A \neq \emptyset$ (as the joint cumulant of any nonempty collection of bounded random variables and a constant is zero) or $A = \emptyset$ in which case it does not contribute as it integrates to zero by Cauchy's theorem. Taking this into account and subtracting (\ref{S42E14}) from (\ref{S42E11})  we get
\begin{equation}\label{S42E17}
\begin{split}
&  \frac{1}{2\pi \i} \oint_{\Gamma}  dz M \left(\frac{ S_{n,K}(z) [ \hat{G}_n^{n-1}(z) - \hat{G}_n^n(z)] }{2(z-v_0) n^{m/2 + 1}} ; \llbracket 1, m \rrbracket \right)   + \frac{\mathbb{E}[\zeta_n(z)]}{n^{m  +1} } +  \sum_{A \subseteq \llbracket 1, m \rrbracket} \frac{M(\zeta_n(z); A)}{n^{m -|A|/2 + 2}} \\
& + \sum_{A \subsetneq \llbracket 1, m \rrbracket} \prod_{a \in A^c} \left( \frac{n^{-1}}{(v_a - z)^2} \right) \cdot \frac{[  \Phi_{n,K}^+(zn) +\Phi_{n,K}^-(zn)  ] M\left([ \hat{G}_n^{n}(z) - \hat{G}_n^{n-1}(z)] ; A\right)}{2 ( z - v_0) n^{|A|/2 + 2}}  = 0.
\end{split}
\end{equation}
where we mention that we used 
\begin{equation}\label{S42E18}
 \frac{1}{(v_a -z)(v_a - z + n^{-1})} = \frac{1}{(v_a - z)^2} + \frac{\xi_n(z)}{n} \mbox{ and }  \frac{1}{(v_a - z_{\theta})(v_a - z_{\theta}^-) } =    \frac{1}{(v_a - z)^2} + \frac{\xi_n(z)}{n}.
\end{equation}

We now use (\ref{S31E7}), (\ref{S31E17}) and (\ref{S33E6}) to get
\begin{equation}\label{S42E19}
S_{n,K}(z) = R(z,n/K) - \Phi^+(z,n/K) - \Phi^-(z,n/K) + n^{-1} \xi_n(z) = - 2\theta +n^{-1} \xi_n(z),
\end{equation}
and also use the latter equations and (\ref{SpecEqn}) to get by a direct computation
\begin{equation}\label{S42E20}
\begin{split}
&n^{-1} \partial_z G(z,n/K) \left( \theta  \Phi_{n,K}^-(zn) e^{-\theta G(z, n/K)}  - \theta \Phi_{n,K}^+(zn) \right)  \\
&+ S_{n,K}(z)  [z \partial_zG(z,n/K) - (n/K) \partial_s G(z,n/K)] = n^{-1} \xi_n(z).
\end{split}
\end{equation}
We substitute (\ref{S42E19}) and (\ref{S42E20}) into (\ref{S42E15}) to get
\begin{equation*}
\begin{split}
&  \frac{1}{2\pi \i} \oint_{\Gamma}dz \frac{ (-2\theta) [ \hat{G}_n^{n-1}(z) - \hat{G}_n^n(z)] }{2(z-v_0) n}  + \frac{\mathbb{E}[\zeta_n(z)]}{n^2} = 0.
\end{split}
\end{equation*}
We can now evaluate the first part of the integral as (minus) the residue at $z = v_0$ (note that there is no residue at $\infty$) and multiply the expression by $\theta^{-1} n^{3/2}$ to get
\begin{equation}\label{S42E21}
\begin{split}
& \mathbb{E} \left[ n^{1/2} [ \hat{G}_n^{n-1}(v_0) - \hat{G}_n^n(v_0)] \right] =   \frac{n^{-1/2}}{2\pi \i} \oint_{\Gamma}dz\mathbb{E}[\zeta_n(z)].
\end{split}
\end{equation}
Similarly, we substitute (\ref{S33E6}) and (\ref{S42E19}) into (\ref{S42E17}) to get
\begin{equation*}
\begin{split}
&  \frac{1}{2\pi \i} \oint_{\Gamma}  dz M \left(\frac{ (-2\theta) [ \hat{G}_n^{n-1}(z) - \hat{G}_n^n(z)] }{2(z-v_0) n^{m/2 + 1}} ; \llbracket 1, m \rrbracket \right)   + \frac{\mathbb{E}[\zeta_n(z)]}{n^{m  +1} } +  \sum_{A \subseteq \llbracket 1, m \rrbracket} \frac{M(\zeta_n(z); A)}{n^{m -|A|/2 + 2}} \\
& + \sum_{A \subsetneq \llbracket 1, m \rrbracket} \prod_{a \in A^c} \left( \frac{n^{-1}}{(v_a - z)^2} \right) \cdot \frac{(\theta + K/n) M\left([ \hat{G}_n^{n}(z) - \hat{G}_n^{n-1}(z)] ; A\right)}{2 ( z - v_0) n^{|A|/2 + 1}}  = 0.
\end{split}
\end{equation*}
As before, we can evaluate the first part of the integral as (minus) the residue at $z = v_0$ (again there is no pole at infinity) and multiply the expression by $\theta^{-1} n^{m/2 + 3/2}$ to get
\begin{equation}\label{S42E22}
\begin{split}
&   M \left( n^{1/2} [ \hat{G}_n^{n-1}(z) - \hat{G}_n^n(z)] ; \llbracket 1, m \rrbracket \right)   =  \frac{1}{2\pi \i} \oint_{\Gamma}  dz \frac{\mathbb{E}[\zeta_n(z)]}{n^{m/2  -1/2} } +  \sum_{A \subseteq \llbracket 1, m \rrbracket} \frac{M(\zeta_n(z); A)}{n^{m/2 -|A|/2 + 1/2}} \\
& + \sum_{A \subsetneq \llbracket 1, m \rrbracket} \prod_{a \in A^c} \left( \frac{1}{(v_a - z)^2} \right) \cdot \frac{(\theta + K/n) \cdot M\left(n^{1/2}[ \hat{G}_n^{n}(z) - \hat{G}_n^{n-1}(z)] ; A\right)}{2 \theta ( z - v_0) n^{m/2 -|A|/2}} .
\end{split}
\end{equation}
Equations (\ref{S42E21}) and (\ref{S42E22}) are the main outputs of this section.

%
\subsection{Moment bounds for differences}\label{Section4.3} The goal of this section is to establish the following result.

\begin{proposition}\label{S43P1} Fix $\theta >0$ and $\delta \in (0,1)$. Suppose that $(\ell^1, \ell^2, \dots)$ is distributed according to $\P$ as in Definition \ref{BKCC}, and that $\hat{G}_L^n$ are as in (\ref{S32E1}). For each $k \in \mathbb{N}$ and compact set $\mathcal{V} \subset \mathbb{C} \setminus [-\theta , \delta^{-1}]$
\begin{equation}\label{S43E1}
 \sup_{z \in \mathcal{V}} \max_{\delta K \leq n-1, n  \leq \delta^{-1} K} \mathbb{E}\left[\left|\hat{G}_{n}^n(z) - \hat{G}_{n}^{n-1}(z) \right|^k \right] = O(n^{-k/2}),
\end{equation}
where the constant in the big $O$ notation depends on $\theta, \delta, \mathcal{V}$ and $k$ alone.
\end{proposition}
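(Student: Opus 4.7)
The plan is to mirror the self-improvement bootstrap from the proof of Proposition \ref{S33P1}, but now using the two-level loop equations (\ref{S42E21}) and (\ref{S42E22}) as the key input. Write $X_n(v) := \hat{G}^n_n(v) - \hat{G}^{n-1}_n(v)$, so that the target (\ref{S43E1}) is $\mathbb{E}[|n^{1/2}X_n(v)|^k] = O(1)$. By Proposition \ref{S33P1} together with Remark \ref{MomentBoundRem} and identity (\ref{REQ2}), both $\mathbb{E}[|\hat{G}^n_n(v)|^k]$ and $\mathbb{E}[|\hat{G}^{n-1}_n(v)|^k]$ are uniformly $O(1)$ over compact sets $\mathcal{V} \subset \mathbb{C}\setminus[-\theta,\delta^{-1}]$, so by the triangle inequality I obtain the baseline $\mathbb{E}[|n^{1/2}X_n(v)|^k] = O(n^{k/2})$. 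The whole game is to upgrade this $O(n^{k/2})$ to $O(1)$.

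Fix a compact set $\mathcal{V}_1 \subset \mathbb{C}\setminus[-\theta,\delta^{-1}]$. The self-improvement claim I would set up reads: if for some $H, M \in \mathbb{N}$ one has, uniformly in $v_1,\dots,v_m \in \mathcal{V}_1$,
\begin{equation*}
\mathbb{E}\Bigl[\prod_{a=1}^m |n^{1/2}X_n(v_a)|\Bigr] = O(1) + O\bigl(n^{m/2 + 1 - M/2}\bigr), \quad m = 1, \dots, 4H+4,
\end{equation*}
then the same holds with $M$ replaced by $M+1$ for $m = 1, \dots, 4H$. The baseline above corresponds to $M = 2$. Iterating the claim finitely many times makes $M$ arbitrarily large, killing the second error term for any fixed $m$ and hence producing (\ref{S43E1}).

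To prove the claim I would feed the assumption into (\ref{S42E22}), which expresses the cumulant $M(n^{1/2}X_n(v_0); \llbracket 1, m \rrbracket)$ (an $m+1$-variable quantity) as a contour integral of: (i) expectations and joint cumulants of $\zeta_n(z)$-valued terms, all of whose moments are uniformly $O(1)$ by Proposition \ref{ProdExp1}; and (ii) strictly lower cumulants $M(n^{1/2}X_n(z); A)$ for $A \subsetneq \llbracket 1, m \rrbracket$, carrying the prefactor $n^{-(m-|A|)/2}$ that provides the improvement in $M$. The $m = 0$ case is handled separately by (\ref{S42E21}), which yields $\mathbb{E}[n^{1/2}X_n(v_0)] = O(n^{-1/2})$. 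Once cumulant bounds are in hand, I would convert them to joint-moment bounds using Malyshev's formula (\ref{Mal1})-(\ref{Mal3}) together with the earlier inductive assumption; and finally pass from joint moments to $\mathbb{E}[|n^{1/2}X_n(v)|^m]$ by the conjugate-invariance trick ($v_a = v$ vs.\ $v_a = \overline{v}$), exactly as in Step 4 of the proof of Proposition \ref{S33P1}.

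The main obstacle I anticipate is the exact analogue of Step 5 of that proof: when $m$ is odd Cauchy-Schwarz loses a fractional power and the improvement in $M$ breaks down in one residual case, requiring a secondary bootstrap where the improved even-moment bounds are fed back into the loop equation to mop up the odd-moment case. A secondary technical point will be careful bookkeeping of the $\zeta_n(z)$ error terms, in particular verifying (via Cauchy's inequalities, as in Remark \ref{MomentBoundRem}) that the moment bounds for $\hat{G}^n_n$ automatically carry over to the $\partial_z\hat{G}^n_n$ and $[\hat{G}^n_n]^2$ factors appearing implicitly inside the generic $\zeta_n$. Once these are handled, the argument closes as an exact parallel of the single-level proof.
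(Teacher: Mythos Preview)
Your proposal is correct and follows essentially the same approach as the paper's proof: a self-improvement bootstrap powered by the two-level loop equations (\ref{S42E21})--(\ref{S42E22}), with cumulant-to-moment conversion via (\ref{Mal1})--(\ref{Mal2}), the conjugate-pair trick for passing to absolute moments, and a secondary feedback step to handle the residual odd-$m$ case. The only cosmetic difference is your indexing of the error exponent as $O(n^{m/2+1-M/2})$ with baseline $M=2$, whereas the paper uses $O(n^{m/2+1/2-M/2})$ with baseline $M=1$; these are equivalent after the shift $M \mapsto M+1$.
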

\begin{remark}\label{S43MR}
Before we go to the proof let us explain how Proposition \ref{S43P1} will be applied later in the text. Fixing $\theta$ and $\delta$ as in the statement of the proposition, we consider $n, L, K$ such that $ n \geq 2$, $n/L, n/K, (n-1)/L, (n-1)/K \in [\delta, \delta^{-1}]$. For these parameters we have for $\hat{G}^n_L$ as in (\ref{S32E1}) that
\begin{equation}\label{S43MBRem}
\begin{split}
&L^{1/2} \cdot  [ \hat{G}_{L}^n(z) - \hat{G}_{L}^{n-1}(z)]  =  \zeta(z) \mbox{, and } L^{1/2} \cdot \partial_z  [ \hat{G}_{L}^n(z) - \hat{G}_{L}^{n-1}(z)] =  \zeta(z).
\end{split}
\end{equation}
In (\ref{S43MBRem}) we have written $\zeta(z)$ for a generic random analytic function on $\mathbb{C} \setminus [-\theta \delta^{-1}, \delta^{-2}]$ such that 
\begin{equation}\label{S43REQ1}
\mathbb{E}\left[|\zeta(z)|^k \right] = O(1),
\end{equation}
where the constant in the big $O$ notation depends on $\theta, \delta, k$ and a compact subset $\mathcal{V}_1 \subset \mathbb{C} \setminus [-\theta \delta^{-1}, \delta^{-2}]$ and (\ref{S43REQ1}) holds uniformly as $z$ varies in $\mathcal{V}_1$. One deduces (\ref{S43MBRem}) from Proposition \ref{S43P1} using the same argument as in Remark \ref{MomentBoundRem}.
\end{remark}

\begin{proof} The proof we present is similar to Steps 3-5 from the proof of Proposition \ref{S33P1}. Essentially, we reduce the statement to a certain self-improvement estimate claim. The base case of the claim is established using Proposition \ref{S33P1} and by iterating it finitely many times one ultimately obtains the bounds in (\ref{S43E1}). The way one obtains the improvement at each step in the claim is by utilizing equations (\ref{S42E21}) and (\ref{S42E22}) from the previous section. For clarity we split the proof into three steps.\\

{\bf \raggedleft Step 1.} In this step we reduce the proof of the proposition to the establishment of the following self-improvement estimate claim.\\

{\bf \raggedleft Claim:} Suppose that for some $H, M \in \mathbb{N}$, and all $n, K \in\mathbb{N}$ with $n/K, (n-1)/K \in [\delta, \delta^{-1}]$ we have 
\begin{equation}\label{S43E2}
\mathbb{E} \left[ n^{m/2} \prod_{a = 1}^m \left|\hat{G}_{n}^n(v_a) - \hat{G}_n^{n-1}(v_a) \right| \right]= O(1) + O\left(n^{m/2 + 1/2 - M/2}\right) \mbox{ for $m = 1, \dots, 4H + 4$,}
\end{equation}
then
\begin{equation}\label{S43E3}
\mathbb{E} \left[ n^{m/2} \prod_{a = 1}^m \left|\hat{G}_{n}^n(v_a) - \hat{G}_n^{n-1}(v_a) \right| \right]= O(1) + O\left(n^{m/2 - M/2}\right) \mbox{ for $m = 1, \dots, 4H $,}
\end{equation}
where the constants in the big $O$ notations are uniform as $v_a$ vary over compacts in $\mathbb{C} \setminus [-\theta , \delta^{-1}]$.
We prove the above claim in the following steps. Here we assume its validity and establish (\ref{S43E1}). \\

Fix $k \in \mathbb{N}$. From Proposition \ref{S33P1} we have for $n/K, (n-1)/K \in [\delta, \delta^{-1}]$
\begin{equation*}
\mathbb{E} \left[ \left|\hat{G}_{n}^n(v)\right|^m \right] =O (1) \mbox{ and } \mathbb{E} \left[ \left|\hat{G}_{n-1}^{n-1}(v)\right|^m \right] = O(1)  \mbox{ for $m = 1, \dots, 8k + 4$},
\end{equation*}
and the constants in the big $O$ notations depend on $\theta, \delta, k$ and are uniform as $v$ varies over compact subsets of $\mathbb{C} \setminus [-\theta, \delta^{-1}]$. Using that 
$$\hat{G}^{n-1}_{n}(z) = \sum_{i = 1}^n \frac{1}{z -  \ell_i^{n-1}/n } - n G\left( \frac{z n}{n-1}, \frac{n-1}{K} \right) = \frac{n}{n-1} \cdot \hat{G}_{n-1}^{n-1}\left(\frac{z n }{n-1} \right),$$
we conclude that uniformly over compact subsets of $\mathbb{C} \setminus [-\theta, \delta^{-1}]$ we have
\begin{equation*}
\mathbb{E} \left[ n^{m/2}\left|\hat{G}_{n}^n(v) - \hat{G}_{n}^{n-1}(v) \right|^m \right] =O \left(n^{m/2}\right)  \mbox{ for $m = 1, \dots, 8k + 4$},
\end{equation*}
Using H{\"o}lder's inequality, the above implies that (\ref{S43E2}) holds for the for the pair $H = 2k$ and $M = 1$. The conclusion is that (\ref{S43E2}) holds for the pair $H  = 2k- 1$ and $M = 2$. Iterating the argument an additional $k-1$ times we conclude that (\ref{S43E2}) holds with $H = k $ and $M = k+1$, which implies (\ref{S43E1}).\\

{\bf \raggedleft Step 2.}  In this step we will prove (\ref{S43E3}) except for a single case, which will be handled separately in the next step. For brevity we denote 
$$\Delta \hat{G}_n(z):= n^{1/2} \left[ \hat{G}_{n}^n(z) - \hat{G}_{n}^{n-1}(z) \right].$$

The first thing we show is that 
\begin{equation}\label{S43E4}
\begin{split}
&M\left( \Delta \hat{G}_n(v_0),  \dots, \Delta \hat{G}_n(v_m)  \right) = O(1) + O\left( n^{m/2 + 1/2 - M/2}\right) \mbox{ for $m = 1, \dots, 4H+2$},
\end{split}
\end{equation}
where constant in the big $O$ notation are uniform over $v_0, \dots, v_m$ in compact subsets of $\mathbb{C} \setminus [-\theta, \delta^{-1}]$.

We start by fixing $\mathcal{V}_1$ to be a compact subset of $\mathbb{C} \setminus [-\theta , \delta^{-1}]$, which is invariant under conjugation. We also fix $\eta, \Gamma$ as in the beginning of Section \ref{Section4.2} and adopt the same notation from that section. From (\ref{S42E22}) we have for $m \geq 1$
\begin{equation}\label{S43E5}
\begin{split}
M\left( \Delta \hat{G}_n(v_0),  \dots, \Delta \hat{G}_n(v_m)  \right)    = \hspace{2mm}& O(1) + \sum_{A \subseteq \llbracket 1, m \rrbracket } \frac{M(\zeta_n(z); A)}{n^{m/2 - |A|/2 + 1/2}}  \\
&+  \sum_{A \subsetneq \llbracket 1, m \rrbracket } \frac{M \left(\xi_n(z) \cdot \Delta \hat{G}_n(z) ; A\right)}{n^{m/2 -|A|/2}} .
\end{split}  
\end{equation}
In addition, from (\ref{S42E21}) we have
\begin{equation}\label{S43E6}
\begin{split}
&M\left( \Delta\hat{G}_{n}(v_0) \right)  = \mathbb{E} \left[\Delta\hat{G}_{n}(v_0) \right]= O(n^{-1/2}).
\end{split}  
\end{equation}
We next use the fact that cumulants can be expressed as linear combinations of products of moments, see (\ref{Mal2}). This means that $M(\xi_1, \dots, \xi_r)$ can be controlled by the quantities $1$ and $\mathbb{E} \left[ |\xi_i|^r \right]$ for $ i = 1, \dots, r$. This and (\ref{S43E2}) together imply 
$$ \frac{M(\zeta_n(z); A)}{n^{m/2 - |A|/2 + 1/2}} = O(1) + O \left( n^{|A| + 1/2  - M/2 - m/2 } \right) \mbox{ for } A \subseteq \llbracket 1, m \rrbracket,$$
$$\frac{M \left(\xi_n(z) \cdot \Delta \hat{G}_n(z) ; A\right)}{n^{m/2 -|A|/2}} = O(1) + O \left( n^{|A| + 1 - M/2 - m/2 } \right)  \mbox{ for } A \subsetneq \llbracket 1, m \rrbracket .$$
Combining the last two estimates with (\ref{S43E5}) gives (\ref{S43E4}).\\

Notice that by H{\"o}lder's inequality we have
\begin{equation*}
\sup_{v_1, \dots, v_m \in \mathcal{V}_1} \mathbb{E} \left[ \prod_{a = 1}^m \left|\Delta \hat{G}_{n}(v_a) \right| \right]\leq \sup_{v \in \mathcal{V}_1} \mathbb{E} \left[ \left|\Delta \hat{G}_{n}(v) \right|^m \right],
\end{equation*}
and so to finish the proof it suffices to show that for $m = 1, \dots, 4H$ we have
\begin{equation}\label{S43E7}
 \mathbb{E} \left[ \left|\Delta \hat{G}_{n}(v) \right|^m \right] = O(1)  + O(n^{m/2  - M/2}).
\end{equation}
Using the fact that one can express joint moments as linear combinations of products of joint cumulants, see (\ref{Mal1}), we deduce from (\ref{S43E4}) and (\ref{S43E6}) that
\begin{equation}\label{S43E8}
\sup_{ v_0, v_1, \dots, v_{m-1} \in \mathcal{V}_1} \mathbb{E} \left[ \prod_{a = 0}^{m-1}\Delta \hat{G}_{n}(v_a) \right] = O(1) + O( n^{(m-1)/2 + 1/2 - M/2}) \mbox{ for $m = 1, \dots, 4H+2$}.
\end{equation}

If $m = 2m_1$, we set $v_0 = v_1 = \cdots = v_{m_1 - 1} = v$ and $v_{m_1} = \cdots = v_{2m_1 - 1} = \overline{v}$ in (\ref{S43E8}) and get
\begin{equation}\label{S43E9}
\sup_{ v \in \mathcal{V}_1} \mathbb{E} \left[ |\Delta \hat{G}_{n}(v)|^m \right] = O(1) + O( n^{m/2  - M/2}) \mbox{ for $m = 1, \dots, 4H+2$, $m$ even}.
\end{equation}
In deriving the above we used that $\Delta \hat{G}_{n}(\overline{v}) = \overline{\Delta \hat{G}_{n}(v)}$ and so $\Delta \hat{G}_{n}(v) \cdot \Delta \hat{G}_{n}(\overline{v}) = |\Delta \hat{G}_{n}(v)|^2$. 

We next let $m = 2m_1 + 1$ be odd and notice that by the Cauchy-Schwarz inequality and (\ref{S43E9}) 
\begin{equation}\label{S43E10}
\begin{split}
&\sup_{ v \in \mathcal{V}_1} \mathbb{E} \left[ |\Delta \hat{G}_{n}(v)|^{2m_1 + 1} \right] \leq \sup_{ v \in \mathcal{V}_1} \mathbb{E} \left[ |\Delta \hat{G}_{n}(v)|^{2m_1 + 2} \right]^{1/2} \cdot \mathbb{E} \left[ |\Delta \hat{G}_{n}(v)|^{2m_1} \right]^{1/2} =  \\  
&O(1) + O( n^{m_1 + 1/2 - M/2}) + O(n^{m_1/2 + 1/2 - M/4}) \mbox{ for $m = 1 , \dots, 4H + 1$, $m$ odd}.
\end{split}
\end{equation}
We note that the bottom line of (\ref{S43E10}) is $O(1) + O(n^{m_1 + 1/2 - M/2})$ except when $M = 2m_1 + 1$, since
$$m_1/2 + 1/2 - M/4 \leq \begin{cases} m_1 + 1/2 - M/2 &\mbox{ when $M \leq 2m_1$,} \\ 0 &\mbox{ when $M \geq 2m_1 + 2$.} \end{cases}$$
Consequently, (\ref{S43E9}) and (\ref{S43E10}) together imply (\ref{S43E3}), except when $M = m = 2m_1 +1$. We will handle this case in the next and final step.\\

{\bf \raggedleft Step 3.}  In this step we will show that (\ref{S43E3}) holds even when $M = 2m_1 + 1$ and $4H > m = 2m_1 + 1$. In the previous step we showed in (\ref{S43E9}) that
$\sup_{v \in \mathcal{V}_1} \mathbb{E} \left[ |\Delta \hat{G}_{n}(v)|^{2m_1 + 2} \right] = O(n^{1/2})$, and below we will improve this estimate to
\begin{equation}\label{S43E11}
\sup_{v \in \mathcal{V}_1} \mathbb{E} \left[ \left|\Delta \hat{G}_{n}(v) \right|^{2m_1 + 2} \right] = O(1).
\end{equation}
The trivial inequality $|x|^{2m_1 + 2} + 1 \geq |x|^{2m_1 + 1}$ together with (\ref{S43E11}) imply
$$\sup_{v \in \mathcal{V}_1} \mathbb{E} \left[ \left|\Delta \hat{G}_{n}(v) \right|^{2m_1 + 1} \right]  = O(1).$$
Consequently, we have reduced the proof of the claim to establishing (\ref{S43E11}). \\

Let us list the relevant estimates we will need
\begin{equation}\label{S43E12}
\begin{split}
& \mathbb{E} \left[ \prod_{a = 1}^{2m_1 + 2} \left|\Delta \hat{G}_{n}(v_a) \right| \right] = O(n^{1/2})\mbox{, } \hspace{2mm} \mathbb{E} \left[ \prod_{a = 1}^j \left|\Delta \hat{G}_{n}(v_a) \right| \right] = O(1) \mbox{ for $0 \leq j \leq 2m_1$,} \\\
&\mathbb{E} \left[ \prod_{a = 1}^{2m_1 + 1} \left|\Delta \hat{G}_{n}(v_a) \right| \right] = O(n^{1/4}).
\end{split}
\end{equation}
All of these identities follow from (\ref{S43E9}) and (\ref{S43E10}), which we showed to hold in the previous step. Below we feed the improved estimates of (\ref{S43E12}) into Step 2, which will ultimately yield (\ref{S43E11}).\\

Using (\ref{S43E12}) in place of (\ref{S43E2}) in Step 2 we get 
$$ \frac{M(\zeta_n(z); A)}{n^{m/2 - |A|/2 + 1/2}} = O(1)  \mbox{ for } A \subseteq \llbracket 1, m \rrbracket, \hspace{2mm} \frac{M \left(\xi_n(z) \cdot \Delta \hat{G}_n(z) ; A\right)}{n^{m/2 -|A|/2}} = O(1) \mbox{ for } A \subsetneq \llbracket 1, m \rrbracket .$$
which together with (\ref{S43E5}) gives the following improvement over (\ref{S43E4})
\begin{equation}\label{S43E13}
M\left( \Delta \hat{G}_{n}(v_0), \Delta \hat{G}_{n}(v_1), \dots, \Delta \hat{G}_{n}(v_m) \right) = O(1). 
\end{equation}
We now use (\ref{S43E13}) in place of (\ref{S43E4}) to get the following improvement over (\ref{S43E8})
\begin{equation}\label{S43E14}
\sup_{ v_0, v_1, \dots, v_{m} \in \mathcal{V}_1} \mathbb{E} \left[ \prod_{a = 0}^{m} \Delta \hat{G}_{n}(v_a) \right] = O(1).
\end{equation}
Setting $v_0 = v_1 = \cdots = v_{m_1} = v$ and $v_{m_1+1} = \cdots = v_{2m_1 + 1} = \overline{v}$ in (\ref{S43E14}) gives (\ref{S43E11}).
\end{proof}

%
\section{Multi-level analysis}\label{Section5} The goal of this section is to prove Proposition \ref{S52P1}, which establishes a certain joint cumulant formula involving the functions $\hat{G}^n_K(z)$ from (\ref{S32E1}). The proof of  Proposition \ref{S52P1} is given in Section \ref{Section5.2} and is based on the multi-level loop equations from \cite{DK21}. In Section \ref{Section5.1} we establish a certain double product expansion formula that is used to linearize the multi-level loop equations and it heavily relies on the product expansion formula from Proposition \ref{ProdExp1} as well as the moment bounds from Proposition \ref{S43P1}.

%
\subsection{Double product expansion}\label{Section5.1} The goal of this section is to establish the following result.
\begin{proposition}\label{ProdExp2} Fix $\theta >0$ and $\delta \in (0,1)$. Suppose that $(\ell^1, \ell^2, \dots )$ is distributed according to $\P$ as in Definition \ref{BKCC} and let $n, L$ be integers such that $n/K, n/L, (n-1)/K, (n-1)/L \in [\delta, \delta^{-1}]$. Let us fix a compact set $\mathcal{V}_2 \subset \mathbb{C}$, an $\eta > 0$ and let $\mathcal{D}_{\eta}'$ be as in (\ref{Deta2}). Note that since $\ell_i^n/n \in [-\theta, \delta^{-1}]$ almost surely, we can find $B > 0$ (depending on $\eta, \delta, \mathcal{V}_2$) such that if $K \geq B$ and $n/K, n/L, (n-1)/K, (n-1)/L \in [\delta, \delta^{-1}]$ we have $\ell_i^n/L \pm x/L \not \in \mathcal{D}_{\eta}'$ for each $x \in \mathcal{V}_2$ and $i \in \llbracket 1, n \rrbracket$ and $\ell_i^{n-1}/L \pm x/L \not \in \mathcal{D}_{\eta}'$  for each for each $x \in \mathcal{V}_2$ and $i \in \llbracket 1, n -1 \rrbracket$.

With the above data we define for $K \geq B$, $x,y, u+x, u+y \in \mathcal{V}_2$ the deterministic functions
\begin{equation}\label{S51E0}
D(z) = L(x-y)  \left(G\left( \frac{zL}{n}, \frac{n}{L} \right) - G\left( \frac{zL}{n-1}, \frac{n-1}{L} \right) \right),
\end{equation}
\begin{equation}\label{S51E1}
\begin{split}\\
&A_{n,L}^{\mathsf{p},1}(z; x,y,u) = \hspace{2mm} 1, \hspace{2mm}A_{n,L}^{\mathsf{p},2}(z; x,y,u)  = D(z) + A_{n,L}^{\mathsf{s},2}(z; x,y) + A_{n-1,L}^{\mathsf{s},2}(z; u+ y, u+x), \\
&  A_{n,L}^{\mathsf{p},3}(z; x,y,u)= \frac{ D(z)^2 }{2} + A_{n,L}^{\mathsf{s},3} (z;x,y) + A_{n-1,L}^{\mathsf{s},3}(z;u + y, u+x)  \\
& + D(z) \cdot \left[ A_{n,L}^{\mathsf{s},2} (z;x,y)  + A_{n-1,L}^{\mathsf{s},2}(z;u + y, u+x) \right] + A_{n,L}^{\mathsf{s},2} (z;x,y)  \cdot A_{n,L}^{\mathsf{s},2} (z;x,y)  ,
\end{split}
\end{equation}
where $A_{n,L}^{\mathsf{s},i}$ are as in (\ref{S41E1}). We also define the random functions
\begin{equation}\label{S51E2}
\begin{split}
&B_{n,L}^{\mathsf{p},1}(z; x,y,u) =  (x-y) \left( \hat{G}^n_L(z) - \hat{G}^{n-1}_L(z) \right), \hspace{2mm} B_{n,L}^{\mathsf{p},2}(z; x,y,u)   = u (y-x) \cdot \partial_z \hat{G}^n_L(z), \\
&  B_{n,L}^{\mathsf{p},3}(z; x,y,u) = L^{5/2}   \\
& \times \left( \prod_{p = 1}^{n} \frac{zL- \ell^{n}_p  +x }{zL - \ell_p^{n} + y}  \prod_{p = 1}^{n-1} \frac{zL-  \ell_p^{n-1} + u + y}{zL -  \ell_p^{n-1} + u + x}  - \sum_{i = 1}^3 \frac{A_{n,L}^{\mathsf{p},i}(z; x,y,u) }{L^{i-1}} - \sum_{i = 1}^2 \frac{B_{n,L}^{\mathsf{p},i}(z; x,y,u) }{L^i} \right).
\end{split}
\end{equation}
We claim that the functions in (\ref{S51E1}) and (\ref{S51E2}) are $\P$-a.s. holomorphic in $\mathcal{D}_{\eta}'$ and for each $k \in \mathbb{N}$
\begin{equation}\label{S51E3}
\begin{split}
&\sum_{i = 1}^3\mathbb{E}\left[ \left|A^{\mathsf{p},i}_{n,L}(z;x,y,u)  \right|^k + \left|B^{\mathsf{p},i}_{n,L}(z;x,y,u)  \right|^k \right]  = O(1),
\end{split}
\end{equation}
where the constant in the big $O$ notation depends on $\theta, \delta, \eta, k$ and is uniform as $z$ varies over compact subsets of $\mathcal{D}_{\eta}'$ and $x,y, u+x,u+y \in \mathcal{V}_2$. 
\end{proposition}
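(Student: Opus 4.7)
The plan is to bootstrap from the single-level expansion in Proposition~\ref{ProdExp1}. Setting $P_1 = \prod_{p=1}^n (zL - \ell^n_p + x)/(zL - \ell^n_p + y)$ and $P_2 = \prod_{p=1}^{n-1} (zL - \ell^{n-1}_p + u+y)/(zL - \ell^{n-1}_p + u+x)$, I would apply Proposition~\ref{ProdExp1} to $P_1$ with arguments $(x,y)$ and to $P_2$ with arguments $(u+y,u+x)$ at level $n-1$, then solve for the products. Multiplying, the two exponential prefactors $e^{(x-y)G(zL/n,n/K)}$ and $e^{(y-x)G(zL/(n-1),(n-1)/K)}$ combine into $e^{D(z)/L}$, where $D(z) = L(x-y)\bigl[G(zL/n,n/K) - G(zL/(n-1),(n-1)/K)\bigr]$ is uniformly $O(1)$ because $G$ is smooth and the two evaluation points differ by $O(n^{-1})$ in the first slot and $O(K^{-1})$ in the second. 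Taylor expanding $e^{D(z)/L} = 1 + D/L + D^2/(2L^2) + O(L^{-3})$ and distributing against the product of the two bracketed expansions, I would collect contributions by powers of $L^{-1}$.

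At order $L^0$ one gets $A^{\mathsf{p},1}=1$. At order $L^{-1}$ the deterministic pieces sum to $D + A^{\mathsf{s},2}_{n,L}(z;x,y) + A^{\mathsf{s},2}_{n-1,L}(z;u+y,u+x) = A^{\mathsf{p},2}$, while the random pieces from $B^{\mathsf{s},1}_{n,L}(z;x,y)$ and $B^{\mathsf{s},1}_{n-1,L}(z;u+y,u+x)$ telescope to $(x-y)\bigl(\hat{G}^n_L - \hat{G}^{n-1}_L\bigr) = B^{\mathsf{p},1}$. At order $L^{-2}$ the deterministic pieces reproduce $A^{\mathsf{p},3}$ from (\ref{S51E1}), with its last summand read as $A^{\mathsf{s},2}_{n,L}(z;x,y) \cdot A^{\mathsf{s},2}_{n-1,L}(z;u+y,u+x)$.

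The main obstacle will be verifying that the total random part at order $L^{-2}$ equals $B^{\mathsf{p},2}/L^2 = u(y-x)\partial_z \hat{G}^n_L/L^2$ up to an error whose moments are $O(L^{-5/2})$. I would split the order-$L^{-2}$ random contributions into three classes. First, the $\partial_z \hat{G}$ pieces inside $B^{\mathsf{s},2}_{n,L}$ and $B^{\mathsf{s},2}_{n-1,L}(z;u+y,u+x)$ rearrange algebraically as $u(y-x)\partial_z \hat{G}^n_L$ plus a linear combination of $\partial_z \hat{G}^n_L - \partial_z \hat{G}^{n-1}_L$, which by Proposition~\ref{S43P1} together with Cauchy's inequalities (as in Remark~\ref{S43MR}) has moments $O(L^{-1/2})$. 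Second, the quadratic pieces $\tfrac{(x-y)^2}{2}\bigl[(\hat{G}^n_L)^2 + (\hat{G}^{n-1}_L)^2\bigr]$ from the two $B^{\mathsf{s},2}$'s together with the cross product $B^{\mathsf{s},1}_{n,L}\cdot B^{\mathsf{s},1}_{n-1,L}(z;u+y,u+x) = -(x-y)^2 \hat{G}^n_L \hat{G}^{n-1}_L$ collapse into $\tfrac{(x-y)^2}{2}(\hat{G}^n_L - \hat{G}^{n-1}_L)^2$, of moment size $O(L^{-1})$. Third, and most delicately, the $\partial_z G \cdot \hat{G}$-type pieces from the last summands of $B^{\mathsf{s},2}_{n,L}$ and $B^{\mathsf{s},2}_{n-1,L}(z;u+y,u+x)$, which are naively of moment size $O(L^{-2})$, combine with the two cross products $B^{\mathsf{s},1}_{n,L} \cdot A^{\mathsf{s},2}_{n-1,L}(z;u+y,u+x)$ and $A^{\mathsf{s},2}_{n,L}(z;x,y) \cdot B^{\mathsf{s},1}_{n-1,L}(z;u+y,u+x)$ to produce expressions proportional to $\partial_z G \cdot (\hat{G}^n_L - \hat{G}^{n-1}_L)$, thereby gaining the additional $L^{-1/2}$ factor. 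The remaining $D(x-y)(\hat{G}^n_L - \hat{G}^{n-1}_L)/L^2$ coming from the exponential expansion is handled identically, and the $L^{-3}$ tails $B^{\mathsf{s},3}_{n,L}/L^3$ and $B^{\mathsf{s},3}_{n-1,L}/L^3$ propagate through the bracket product with moments safely of order $O(L^{-3})$.

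Holomorphy of the functions in (\ref{S51E1})--(\ref{S51E2}) on $\mathcal{D}_\eta'$ is immediate from the holomorphy of $G$, $\hat{G}^n_L$, $\hat{G}^{n-1}_L$ and their $z$-derivatives on $\mathcal{D}_\eta'$, with the choice of $B$ ensuring that the singularities of both products remain outside $\mathcal{D}_\eta'$. The moment bounds on $A^{\mathsf{p},i}$ are routine since each is built out of $G$ and its first two $z$-derivatives, uniformly bounded on the relevant compacta; the bound on $B^{\mathsf{p},1}$ follows directly from Proposition~\ref{S43P1} (which gives in fact $O(L^{-k/2})$); the bound on $B^{\mathsf{p},2}$ follows from Proposition~\ref{S33P1} combined with Cauchy's inequalities on $\partial_z \hat{G}^n_L$; and the bound on $B^{\mathsf{p},3}$ is precisely the output of the order-by-order analysis above.
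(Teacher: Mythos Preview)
Your proposal is correct and follows essentially the same approach as the paper: apply Proposition~\ref{ProdExp1} to each factor, expand the exponential $e^{D(z)/L}$, and then verify that the order-$L^{-2}$ random remainder equals $B^{\mathsf{p},2}$ up to an $O(L^{-5/2})$ error by invoking Proposition~\ref{S43P1}. The only presentational difference is that the paper first replaces all $(n-1)$-indexed quantities $B^{\mathsf{s},i}_{n-1,L}$, $A^{\mathsf{s},2}_{n-1,L}$ by their $n$-indexed counterparts (absorbing $L^{-1/2}$ and $L^{-1}$ errors) and then cancels algebraically at a single level, whereas you carry out the same cancellations directly term by term; your observation that the last summand of $A^{\mathsf{p},3}$ should read $A^{\mathsf{s},2}_{n,L}(z;x,y)\cdot A^{\mathsf{s},2}_{n-1,L}(z;u+y,u+x)$ (and that $D(z)$ should have second arguments $n/K,(n-1)/K$) is also correct.
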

\begin{proof} The analyticity of the functions in (\ref{S51E1}) and (\ref{S51E2}) on $\mathcal{D}_{\eta}'$ follows from the same arguments as in the first paragraph of the proof of Proposition \ref{ProdExp1}. We are thus left with establishing (\ref{S51E3}). We note that the smoothness of $G(z,s)$ implies that $A_{n,L}^{\mathsf{p},i}(z; x,y,u) $ are all uniformly bounded and then so are their powers. In addition, the moment bounds for $B_{n,L}^{\mathsf{p},1}(z; x,y,u)$ and $B_{n,L}^{\mathsf{p},2}(z; x,y,u)$ follow from the moment bounds for $\hat{G}^n_L(z)$ in (\ref{MBRem}). We are thus left with showing that
\begin{equation}\label{S51E3.5}
\begin{split}
&\mathbb{E}\left[  \left|B^{\mathsf{p},3}_{n,L}(z;x,y,u)  \right|^k \right]  = O(1),
\end{split}
\end{equation}
with constant dependence as explained below (\ref{S51E3}). In the remainder we focus on (\ref{S51E3.5}). The idea is to apply Proposition \ref{ProdExp1} to each of the products in the last line of (\ref{S51E2}) and utilize the moment bounds from Proposition \ref{S43P1}.\\

We fix a compact set $\mathcal{V}_1 \subset \mathcal{D}_{\eta}'$ and throughout the proof write $\xi_{n,L}(z;x,y,u)$ and $\zeta_{n,L}(z;x,y,u)$ to mean generic random functions of $n, L$ such that $n/K, n/L  \in [\delta, \delta^{-1}]$,  $x,y, x+u, y + u \in \mathcal{V}_2$ and $z \in \mathcal{D}_{\eta}'$ that are analytic on $\mathcal{D}_{\eta}'$ for fixed $x,y, x+u, y + u \in \mathcal{V}_2$  and such that 
$$   \max_{\delta^{-1}  \geq n/L, n/K \geq \delta }  \sup_{z \in \mathcal{V}_1} \sup_{ x, y, x+u, y+ u \in \mathcal{V}_2}  \left| \xi_{n,L}(z;x,y, u) \right| = O(1) \mbox{ $\P$-almost surely and  }$$
$$\max_{\delta^{-1}  \geq n/L, n/K \geq \delta }   \sup_{z \in \mathcal{V}_1}\sup_{ x, y, x+u, y+ u \in \mathcal{V}_2}  \mathbb{E} \left[ \left|\zeta_{n,L}(z;x,y) \right|^k \right] = O(1),$$
for $k \in \mathbb{N}$, where the constants in the big $O$ notations depend on $\mathcal{V}_1$, $\mathcal{V}_2$, $\theta, \delta, \eta$ and also on $k$ for the second identity, but not on $K$ provided $K \geq B$ as in the statement of the proposition. To ease the notation, we drop $x,y,u, n,L$ and simply write $\xi(z), \zeta(z)$. The meaning of $\xi(z)$ and $\zeta(z)$ may be different from line to line or even within the same equation. In the sequel we also assume that $K \geq B$, $n/K, n/L, (n-1)/K, (n-1)/L \in [\delta, \delta^{-1}]$, $x, y, x+u, y + u \in \mathcal{V}_2$ and $z \in \mathcal{D}_{\eta}'$ without mention.\\ 

From Proposition \ref{S43P1} we have
\begin{equation}\label{S51E4}
\begin{split}
&e^{(y-x) G\left(\frac{zL}{n}, \frac{n}{K}\right)} \prod_{ p = 1}^n\frac{zL - \ell_p^n  + x }{zL - \ell_p^n +y} = \sum_{i = 1}^3 \left( \frac{A_{n,L}^{\mathsf{s},i}(z;x,y)}{L^{i-1}} + \frac{B_{n,L}^{\mathsf{s},i}(z;x,y) }{L^i}  \right),\\
&e^{(x-y) G\left(\frac{zL}{n-1}, \frac{n-1}{K} \right)} \prod_{p = 1}^{n-1} \frac{zL-  \ell_p^{n-1} + u + y}{zL -  \ell_p^{n-1} + u + x} \\
&= \sum_{i = 1}^3 \left( \frac{A_{n-1,L}^{\mathsf{s},i}(z;u + y, u+x)}{L^{i-1}} + \frac{B_{n-1,L}^{\mathsf{s},i}(z;u + y, u + x) }{L^i}  \right).
\end{split}
\end{equation}
We further have from the smoothness of $G(z,s)$ that 
\begin{equation}\label{S51E5}
\begin{split}
&\exp\left( (x-y) \left[ G\left(\frac{zL}{n}, \frac{n}{K}\right) - G\left(\frac{zL}{n-1}, \frac{n-1}{K} \right) \right] \right) = 1 + \frac{D(z)}{L} + \frac{D(z)^2}{2L^2}  + \frac{\xi(z)}{L^3},
\end{split}
\end{equation}
where we recall that $D(z)$ is as in (\ref{S51E0}). 

Combining (\ref{S51E4}) and (\ref{S51E5}) with the definitions (\ref{S41E1}) and (\ref{S41E2}) we get 
\begin{equation*}
 \prod_{ p = 1}^n\frac{zL - \ell_p^n  + x }{zL - \ell_p^n +y}  \prod_{p = 1}^{n-1} \frac{zL-  \ell_p^{n-1} + u + y}{zL -  \ell_p^{n-1} + u + x}  = \sum_{i = 1}^3 \frac{A_{n,L}^{\mathsf{p},i}(z; x,y,u) }{L^{i-1}} + \frac{B_{n,L}^{\mathsf{p},1}(z; x,y,u)}{L} + \frac{B_L}{L^2} + \frac{\xi(z)}{L^3},
\end{equation*}
where 
\begin{equation}\label{S51E6}
\begin{split}
&B_L = B_{n,L}^{\mathsf{s},2}(z;x,y) + B_{n-1,L}^{\mathsf{s},2}(z;u+y,u+x) +  B_{n,L}^{\mathsf{s},1}(z;x,y) \cdot A_{n-1,L}^{\mathsf{s},2}(z;u+y,u+x) \\
&+ B_{n-1,L}^{\mathsf{s},1}(z;u+y,u+x) \cdot A_{n,L}^{\mathsf{s},2}(z;x,y) +  D(z)\cdot \left[   B_{n,L}^{\mathsf{s},1}(z;x,y) + B_{n-1,L}^{\mathsf{s},1}(z;u+y,u+x) \right] \\
& +  B_{n,L}^{\mathsf{s},1}(z;x,y) \cdot B_{n-1,L}^{\mathsf{s},1}(z;u+y,u+x).
\end{split}
\end{equation}
From the last equations, we would obtain (\ref{S51E3.5}) if we can show
\begin{equation}\label{S51E7}
\begin{split}
&B_L = B_{n,L}^{\mathsf{p},2}(z; x,y,u) + L^{-1/2} \cdot \zeta (z).
\end{split}
\end{equation}

From (\ref{MBRem}) and  (\ref{S43MBRem}) we have
\begin{equation}\label{S51E8}
\begin{split}
& \hat{G}^n_L(z) = \zeta(z), \hspace{2mm } \partial_z \hat{G}^n_L(z) = \zeta(z), \hspace{2mm} \left[ \hat{G}^{n}_{L}(z)  - \hat{G}^{n-1}_K(z) \right]= L^{-1/2} \zeta(z),  \mbox{ and } \\
& \partial_z \left[ \hat{G}^{n}_{L}(z)  - \hat{G}^{n-1}_L(z) \right]= L^{-1/2} \zeta(z).
\end{split}
\end{equation}
Combining (\ref{S51E8}) with the smoothness of $G(z,s)$ we conclude 
$$B_{n-1,L}^{\mathsf{s},i}(z;u+y,u+x)  = B_{n,L}^{\mathsf{s},2}(z;u+y,u+x) + L^{-1/2} \cdot \zeta(z) \mbox{ for $i = 1,2$}$$
$$A_{n-1,L}^{\mathsf{s},2}(z;u+y,u+x) = A_{n,L}^{\mathsf{s},2}(z;u+y,u+x) + L^{-1} \cdot \xi(z).$$
Substituting the last two identities into (\ref{S51E6}), the definitions of the functions $A_{n,L}^{\mathsf{s},i}, B_{n,L}^{\mathsf{s},i}$ from (\ref{S41E1}) and (\ref{S41E2}), and performing a bit of cancellation we arrive at (\ref{S51E7}).
\end{proof}

%
\subsection{Multi-level cumulant relations}\label{Section5.2} Suppose that $(\ell^1, \ell^2, \dots)$ is distributed according to $\P$ as in Definition \ref{BKCC}. For $ n \geq k \geq 1$ we define 
\begin{equation}\label{S52E1}
\begin{split}
&\U(z) =  \left[\Phi^-\left(\frac{zK}{n}, \frac{n}{K} \right) e^{-\theta G(zK/n, n/K)} - \Phi^+\left(\frac{zK}{n}, \frac{n}{K} \right) \right] \cdot \hat{G}^{n}_K(z) \\
&+ \theta \Phi^+\left(\frac{zK}{n}, \frac{n}{K} \right) \sum_{i = k+1}^n \frac{\partial_z \hat{G}^i_K(z)}{K}  - \Phi^+\left(\frac{zK}{n}, \frac{n}{K} \right) \left[ e^{\theta G(zK/k, k/K)} - 1 \right] \cdot \hat{G}^{k}_K(z),
\end{split}
\end{equation}
where we recall that $\hat{G}^n_K$ are as in (\ref{S32E1}), $G(z,s)$ is as in (\ref{S31E6}) and $\Phi^{\pm}(z,s)$ are as in (\ref{S31E10}). \\

The main result of the section is as follows.
\begin{proposition}\label{S52P1} Fix $\theta > 0$ and $\delta \in (0,1)$. Suppose that $(\ell^1, \ell^2, \dots )$ is distributed according to $\P$ as in Definition \ref{BKCC} and let $n, k$ be integers such that $n \geq k \geq 1$, $n/K, k/K \in [\delta, \delta^{-1}]$. Fix any $m_k, \dots, m_n \in \mathbb{Z}_{\geq 0}$ with $m = \sum_{i = k}^n m_i$ and $m+1$ parameters $v_0$ and $v_i^j$, for $j \in \llbracket k, n \rrbracket$ and $i \in \llbracket 1, m_j \rrbracket$, that are all in $\mathbb{C} \setminus [-\theta \delta^{-1}, \delta^{-2}]$. Then, we have
\begin{equation}\label{S52Cum1}
M\left(\U(v_0) ; \llbracket 1, m_k \rrbracket, \dots, \llbracket 1, m_n \rrbracket \right) = O(K^{-1/2}) \mbox{ if } m \geq 2,
\end{equation}  
where for $(n-k +1)$ subsets $F_r \subseteq \llbracket 1, m_r \rrbracket$ for $r\in \llbracket k, n \rrbracket$ and a bounded complex-valued random variable $\xi$ we write $M(\xi; F_k, \dots, F_n)$ in place of the joint cumulant of $\xi$ and the variables $\{ \hat{G}_K^j(v_i^j): j \in \llbracket k, n \rrbracket, i \in F_j \}$. 

In addition, for each $v_0, v_1 \in \mathbb{C} \setminus [-\theta \delta^{-1}, \delta^{-2}]$ and $a \in \llbracket k, n \rrbracket$ we have
\begin{equation}\label{S52Cov1}
\begin{split}
&M\left(\U(v_0) , \hat{G}^a_K(v_1)   \right) = O(K^{-1/2}) +    \oint_{\gamma} \frac{dz \Phi^+(zK/n, n/K)  }{2 \pi \i  \theta (z- v_0) (v_1-z)^2}  \cdot  \Bigg{[} -  e^{\theta G(zK/k, k/K)}    \\
&  \\
& +   \frac{\theta}{K} \sum_{j = k+1}^a \left[  \frac{z K^2}{j^2} \cdot \partial_z G\left( \frac{zK}{j}, \frac{j}{K} \right) + \frac{\theta K}{j} \cdot  \partial_z G\left( \frac{zK}{j}, \frac{j}{K}\right) -  \partial_s G \left(\frac{zK}{j} , \frac{j}{K} \right)  \right] \Bigg{]} ,
\end{split}
\end{equation}  
where the constants in the big $O$ notations depend on $\theta, \delta$ and in (\ref{S52Cum1}) also on $m$ and are uniform as $v_i^j$ and $v_0, v_1$ vary over compact subsets of $\mathbb{C} \setminus [-\theta \delta^{-1}, \delta^{-2}]$. In (\ref{S52Cov1}) we have that $\gamma$ is a positively oriented contour that encloses $[-\theta \delta^{-1}, \delta^{-2}]$ and excludes the points $v_0, v_1$. 
\end{proposition}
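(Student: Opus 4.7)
The plan is to apply the multi-level loop equations from \cite{DK21} at the collection of levels $\llbracket k,n \rrbracket$ and to linearize the resulting contour identity by repeated use of the double product expansion of Proposition \ref{ProdExp2}. This mirrors the strategy of Section \ref{Section4.2}, where the two-level Nekrasov equations were linearized via Proposition \ref{ProdExp1} and the difference bounds of Proposition \ref{S43P1}. The multi-level loop equation will take the form of a vanishing contour integral involving a joint cumulant of products of ratios that telescope across consecutive levels, each pair $(j+1,j)$ being of the exact shape handled by Proposition \ref{ProdExp2}.

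The first step is to linearize each adjacent pair $(j+1,j)$ via (\ref{S51E1})--(\ref{S51E2}) with $L=K$. The deterministic $O(1)$ contributions cancel by the polynomial relation (\ref{S31E7}) combined with $R(z,s)=1/s-\theta$ from (\ref{S31E9}), just as in the derivation of (\ref{S42E10})--(\ref{S42E13}). The order-$K^{-1}$ deterministic pieces, built from the $A^{\mathsf{s},2}$, $A^{\mathsf{p},2}$ terms and the difference $D(z)$ in (\ref{S51E0}), combine via the PDE identity (\ref{SpecEqn}) to produce exactly the integrand inside the second factor of (\ref{S52Cov1}). The stochastic linear-in-$\hat{G}$ terms from $B^{\mathsf{s},1}$ and $B^{\mathsf{p},1}$ telescope across levels: the top-level factor contributes $[\Phi^-e^{-\theta G}-\Phi^+]\hat{G}^n_K$, each intermediate pair $(j,j-1)$ contributes an increment $\theta\Phi^+K^{-1}\partial_z\hat{G}^j_K$ so that the sum over $j\in\llbracket k+1,n\rrbracket$ reproduces the Riemann-sum term in $\U$, and the bottom-level factor contributes $-\Phi^+[e^{\theta G(zK/k,k/K)}-1]\hat{G}^k_K$. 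A Cauchy residue evaluation at $z=v_0$ gives an identity of the form $\U(v_0)=(\text{explicit deterministic }O(K^{-1})\text{ source})+(\text{stochastic remainders})$, the remainders built from the $B^{\mathsf{s},2,3}$, $B^{\mathsf{p},2,3}$ pieces.

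With this identity in hand, (\ref{S52Cum1}) would follow by taking the joint cumulant of both sides with the $m$ variables $\hat{G}_K^j(v_i^j)$. The deterministic source has trivial joint cumulant with any nonempty collection of random variables by (\ref{S3Linearity}), and the stochastic remainders produce joint cumulants of size $O(K^{-1/2})$ via Malyshev's formula (\ref{Mal3}), Cauchy's inequalities, and the moment bounds of Propositions \ref{S33P1} and \ref{S43P1}. For (\ref{S52Cov1}) one pairs with a single $\hat{G}_K^a(v_1)$: the stochastic remainders are again $O(K^{-1/2})$, while the deterministic source now contributes nontrivially because its integrand contains $(v_1-z)^{-2}$ factors arising from residues generated by the $\partial_z$ pieces in the expansion at each level $j$. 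After deforming the integration contour to $\gamma$, only the levels $j\in\llbracket k+1,a\rrbracket$ and the boundary level $k$ survive, giving the displayed integrand.

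The main obstacle will be the combinatorial bookkeeping required to match the telescoping across $n-k$ consecutive pair expansions with the exact shape of $\U(z)$, and in particular to identify the $\partial_s G$ corrections (which arise because adjacent levels use different parameter values $n/K$ versus $(n-1)/K$) as the pieces that combine with the $\partial_z G$ terms via (\ref{SpecEqn}) to yield both the Riemann sum inside $\U$ and the clean integrand of (\ref{S52Cov1}). Tracking the asymmetry between the top boundary (which participates in the telescoping) and the bottom boundary (which produces the $e^{\theta G(zK/k,k/K)}$ factor rather than an additional telescoping increment) will also need care.
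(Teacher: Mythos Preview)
Your overall plan is the paper's plan: start from the multi-level loop equations of \cite{DK21}, linearize the products and the $\Pi_1^{\theta,j}$ factors via Propositions \ref{ProdExp1} and \ref{ProdExp2}, telescope the $\hat{G}^j_K-\hat{G}^{j-1}_K$ increments, evaluate the residue at $z=v_0$, and control remainders via the moment bounds of Propositions \ref{S33P1} and \ref{S43P1}. That is exactly what the paper does in its six steps.

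There is, however, a structural misconception in how you phrase the mechanism. The multi-level loop equation from \cite[Lemma 3.5]{DK21} (equation (\ref{S52Loop}) in the paper) is \emph{already} a joint cumulant identity: it sums over all subsets $F_r\subseteq\llbracket 1,m_r\rrbracket$, and each summand is a cumulant of a product with the variables $\{\hat G_K^j(v_i^j):i\in F_j\}$, multiplied by a weight depending on $F^c$. It is not an identity between random variables from which one subsequently ``takes the joint cumulant of both sides with the $m$ variables.'' In the paper's argument you apply (\ref{S52Loop}) with the given $m_k,\dots,m_n$ from the outset, linearize the products \emph{inside} the cumulants, and then show that for $F_r=\llbracket 1,m_r\rrbracket$ the leading stochastic pieces inside the cumulant assemble into $\U(z)$ (this is the combination (\ref{S52A13rd})+(\ref{S52A23rd})+(\ref{S52A33rd})). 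The proper subsets $F\subsetneq$ contribute the $O(K^{-2})$ or $O(K^{-1/2})$ errors. So the order of operations is: (loop equation with cumulants) $\to$ (linearize inside cumulants) $\to$ (identify $\U(z)$) $\to$ (residue), not: (identity for $\U(v_0)$) $\to$ (take cumulants).

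Two smaller bookkeeping points follow from this. First, the coefficient $[\Phi^-e^{-\theta G}-\Phi^+]\hat G^n_K$ in $\U$ does not come from the top boundary product alone: the top product (your ``$A_1$'') contributes $-\Phi^-e^{-\theta G}\hat G^n_K$, while the $+\Phi^+\hat G^n_K$ piece comes from the top of the telescope of the $\hat G^j_K-\hat G^{j-1}_K$ increments inside the $\Pi_1^{\theta,j}$ expansion (your ``$A_3$''). Second, for (\ref{S52Cov1}) the $(v_1-z)^{-2}$ factor does not arise from ``$\partial_z$ pieces'': it is the loop-equation weight $\prod_{f\in F_a^c}\frac{1}{K(v_f^a-z)(v_f^a-z+K^{-1})}$ when the single external variable is \emph{not} included in the cumulant (i.e.\ $F_a=\emptyset$), so the cumulant degenerates to an expectation and the deterministic pieces of $\Pi_1^{\theta,j}$ for $j\le a$ survive. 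That is why only $j\in\llbracket k+1,a\rrbracket$ appear in the displayed integrand.
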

\begin{proof} For clarity we split the proof into six steps. In the first step we state one form of the multi-level Nekrasov's equations from \cite{DK21}, and linearize these equations using Propositions \ref{ProdExp1} and \ref{ProdExp2}. In the second step we establish (\ref{S52Cum1}) by reducing it to three statements (these are equations (\ref{S52A13rd}), (\ref{S52A23rd}) and (\ref{S52A33rd})), which are in turn established in Steps 3 and 4. Equation (\ref{S52Cov1}) is proved in Step 5 modulo a certain estimate (this is equation (\ref{S52A3V3})), which is then established in Step 6.\\

We start by introducing some notation that will be used throughout the proof. We fix a compact set $\mathcal{V}_1 \subset \mathbb{C}\setminus [-\theta \delta^{-1}, \delta^{-2}]$ and set $\mathcal{V}_2 = [-\theta - 1, 1 + \theta]$. We can find an $\eta > 0$ and a positively oriented contour $\Gamma$ that encloses $[-\theta \delta^{-1}, \delta^{-2}]$ such that $\Gamma \subset \mathcal{D}_{\eta}'$, $\mathcal{V}_1 \subset \mathcal{D}_{\eta}'$ with $\mathcal{D}_{\eta}'$ as in (\ref{Deta2}) and with $\Gamma$ and $\mathcal{V}_1$ at least distance $\eta/2$ away from each other. We further assume that $v_0, v_1, v^j_i \in \mathcal{V}_1$ for all $j \in \llbracket k, n \rrbracket$ and $i \in \llbracket 1, m_j\rrbracket$. We write $\xi_{j,K}(z)$ and $\zeta_{j,K}(z)$ to mean generic random functions of $j, K$ such that $n \geq j \geq k$, and $z \in \mathcal{D}_{\eta}'$ that are analytic on $\mathcal{D}_{\eta}'$  and $\P$-almost surely
$$   \max_{n  \geq j  \geq k }  \sup_{z \in \mathcal{V}} \left| \xi_{j, K}(z) \right| = O(1) \mbox{, and  } \max_{n \geq j \geq k }  \sup_{z \in \mathcal{V}}  \mathbb{E} \left[ \left|\zeta_{j,K}(z) \right|^m \right] = O(1),$$
for $m \in \mathbb{N}$, where $\mathcal{V}$ is a compact subset of $\mathcal{D}_{\eta}'$ and the constants in the big $O$ notations depend on $\mathcal{V},\theta, \delta$ and also on $m$ for the second identity, but not on $K$ provided $K \geq B$ as in the statement of Proposition \ref{ProdExp2} for our choice of $\eta, \delta, \mathcal{V}_2$. To ease the notation, we drop $j,K$ and simply write $\xi(z), \zeta(z)$. The meaning of $\xi(z)$ and $\zeta(z)$ may be different from line to line or even within the same equation. In the sequel we assume $K \geq B$, $n \geq k \geq 1$, $n/K, k/K \in [\delta, \delta^{-1}]$. Also, all constants in the big $O$ notations, unless otherwise specified, depend on $\theta, \delta, \eta, \mathcal{V}_1$ and $\Gamma$ and hold provided that the inequalities in the previous sentence hold. We will not mention this further.

We also record for future use the following consequences of (\ref{MBRem}) and (\ref{S43MBRem})
\begin{equation}\label{S5MB}
\begin{split}
&\partial_z^i \hat{G}_{K}^j(z) = \zeta(z) \mbox{ for $i = 0,1,2$ and $j \in \llbracket k,n \rrbracket$; }\\
& \partial_z^i [\hat{G}_{K}^j(z) -\hat{G}_{K}^{j-1}(z) ]  = K^{-1/2} \cdot \zeta(z) \mbox{ for $i = 0,1$ and $j \in \llbracket k+1,n \rrbracket$.} 
\end{split}
\end{equation}

{\raggedleft \bf Step 1.} From \cite[Lemma 3.5]{DK21} we have the following multi-level loop equations
\begin{equation}\label{S52Loop}
\begin{split}
&0=  \sum_{\substack{F_r \subseteq \llbracket 1, m_r \rrbracket \\  r  \in \llbracket k, n \rrbracket }}    \prod_{i = k}^n {\bf 1 } \{F_i = \llbracket 1, m_i \rrbracket\}  \oint_{\Gamma}\frac{dz \Phi^-_{n,K}(zK) }{2 \pi \i K (z- v_0)}\cdot  M\left( \prod_{p = 1}^n\frac{zK- \ell^n_p -\theta}{zK - \ell^n_p}; F_k, \dots, F_n  \right)+  \\
& + \oint_{\Gamma} \frac{dz \Phi_{n,K}^+(zK)}{2 \pi \i  K (z- v_0)}   \prod_{i = k}^n \prod_{f \in F_i^c}\frac{1}{K(v_f^i-z)(v_f^i - z + K^{-1})} \\
&\times   M\left( \prod_{p = 1}^{k}\frac{zK- \ell^{k}_p + \theta - 1}{zK - \ell^{k}_p - 1}; F_k, \dots, F_N \right) +  \sum\limits_{j=k+1}^{n}  \prod_{i = k}^{j-1} {\bf 1 } \{F_i = \llbracket 1, m_i \rrbracket\} \\
&\times \oint_{\Gamma} \frac{dz\Phi_{n,K}^+(zK)}{2\pi \i K (z - v_0)}    \cdot   \prod_{i = j}^n \prod_{f \in F_i^c}\frac{1}{K(v_f^i-z)(v_f^i - z + K^{-1})}  \cdot   M \left(\Pi_1^{\theta,j}(zK) ; F_k, \dots, F_N \right),
\end{split}
\end{equation}
where $\Pi_1^{\theta, j}$ is given by
\begin{equation}\label{S52Pi1}
\Pi_1^{\theta,j}(z) = \begin{cases} \mathlarger{\frac{\theta}{1- \theta} \cdot \prod_{i = 1}^j \frac{z - \ell_i^j - \theta}{z - \ell_i^j - 1} \cdot \prod_{i = 1}^{n-1} \frac{z - \ell_i^{j-1} + \theta - 1}{z - \ell_i^{j-1} }} &\mbox{ if } \theta \neq 1, \\ \mathlarger{\sum_{i = 1}^j \frac{1}{z - \ell_i^j - 1} - \sum_{i = 1}^{j-1} \frac{1}{z - \ell_i^{j-1}}} &\mbox{ if } \theta = 1. \end{cases}
\end{equation}
We mention that to identify (\ref{S52Loop}) with \cite[(3.10)]{DK21} one needs to use the correspondences
$$ n \leftrightarrow N, \hspace{1mm} K \leftrightarrow L = M, \hspace{1mm} \Phi^{\pm}_{n,K}(zK) \leftrightarrow \Phi^{\pm}_N(zL), \hspace{1mm} w_n^{\theta, K}(z) \leftrightarrow  w_N(z), \hspace{1mm} \mathbb{C} \leftrightarrow \mathcal{M}, \hspace{1mm} K(z- v_0) \leftrightarrow S_v(zL). $$
We also mention that to reconcile \cite[Lemma 3.5]{DK21} with (\ref{S52Loop}), one needs to set $S_v(zL) = L(z- v)$, which is allowed in view of \cite[Remark 3.7]{DK21} with $H(z) = 1$ and the fact that 
$$\Phi_N^-(-N\theta ) = \Phi_{n,K}^-(-n\theta) = 0 \mbox{ and } \Phi^+_N(M+1 - \theta) =  \Phi^{+}_{n,K}(K + 1 - \theta) = 0.$$
Finally, we remark that our joint cumulants in (\ref{S52Loop}) involve $\hat{G}^j_K(v_i^j)$ as opposed to ${G}^j_K(v_i^j)$ like in \cite[Lemma 3.5]{DK21}, but this is not an issue since from (\ref{S32E1}) $\hat{G}^j_K(z)$ differs from ${G}^j_K(z)$ by a deterministic function, and second and higher order cumulants remain unchanged upon constant shifts, cf. (\ref{S3Linearity}).\\

In the remainder of this step we use Propositions \ref{ProdExp1} and \ref{ProdExp2} to linearize (\ref{S52Loop}). We first note that for $j\in \llbracket k + 1, n \rrbracket$ we have
\begin{equation}\label{S52PiExpand}
\begin{split}
&\Pi_1^{\theta, j}(zK) = \frac{\theta {\bf 1}\{ \theta \neq 1\}}{1 - \theta} + \theta \left[G\left( \frac{zK}{j}, \frac{j}{K} \right) - G\left( \frac{zK}{j-1}, \frac{j-1}{K} \right)   \right]  - \frac{\theta(1 + \theta)}{2j}  \partial_z G\left( \frac{zK}{j}, \frac{j}{K}\right)\\
& + \frac{\theta (1- \theta) }{2(j-1)} \cdot \partial_z G\left( \frac{zK}{j-1}, \frac{j-1}{K}\right) + \frac{A_j^{\theta}(z)}{K^2} + \frac{\theta[\hat{G}^j_K(z) - \hat{G}^{j-1}_K(z)] }{K} - \frac{\theta^2 \partial_z \hat{G}^j_K(z)}{K^2} + \frac{\zeta(z)}{K^{5/2}},
\end{split}
\end{equation}
where $A_j^{\theta}(z)$ is a deterministic function and $A_j^{\theta}(z) = \xi(z)$. When $\theta \neq 1$ equation (\ref{S52PiExpand}) follows from applying Proposition \ref{ProdExp2} to the product in (\ref{S52Pi1}). We mention that within the proposition one needs to take $x = -\theta$, $y = -1$, $u = \theta$, $L = K$ and $n = j$. We also mention that we expressed $A_{j,K}^{\mathsf{p},2}(z; -\theta, -1, \theta)$ in terms of the functions in (\ref{S41E1}) and the function $A_{j,K}^{\mathsf{p},3}(z; -\theta, - 1, \theta)$ got absorbed into $A_j^{\theta}(z)$ in (\ref{S52PiExpand}). When $\theta = 1$ we have from (\ref{S52Pi1}) and the definition of $G^n_L$ and $\hat{G}^n_L$ in (\ref{S32E1}) 
\begin{equation*}
\begin{split}
&\Pi_1^{1,j}(zK) = \sum_{i = 1}^j \frac{1}{zK - \ell_i^j } - \sum_{i = 1}^{j-1} \frac{1}{zK - \ell_i^{j-1}} + \sum_{i = 1}^j \frac{1}{(zK -\ell_i^j)^2} + \sum_{i = 1}^j \frac{1}{(zK -\ell_i^j)^2(zK -\ell_i^j - 1)} \\
& = \frac{[\hat{G}^j_K(z) - \hat{G}^{j-1}_K(z)]}{K} + \left[G\left( \frac{zK}{j}, \frac{j}{K} \right) - G\left( \frac{zK}{j-1}, \frac{j-1}{K} \right)   \right] - \frac{\partial_z \hat{G}_K^j(z)}{K^2} - \frac{1}{j} \cdot \partial_z G\left( \frac{zK}{j}, \frac{j}{K}\right) \\
& + \sum_{i = 1}^j \frac{1}{(zK -\ell_i^j)^3} + \frac{\xi(z)}{K^3}.
\end{split}
\end{equation*}
The latter equation implies (\ref{S52PiExpand}) when $\theta = 1$, since the last line can be absorbed into the $A_j^{\theta}(z)$ and the $\zeta(z)$ terms. This is clear for $\xi(z)$ and also we note 
$$\sum_{i = 1}^j \frac{1}{(zK -\ell_i^j)^3}  = \frac{\partial^2_z G^j_K(z)}{2K^3}  = \frac{\partial^2_z \hat{G}^j_K(z) }{2K^3} +  \frac{1}{2j^2} \cdot \partial^2_z G\left( \frac{zK}{j}, \frac{j}{K} \right) = \frac{\zeta(z)}{K^3} + \frac{1}{2j^2} \cdot \partial^2_z G\left( \frac{zK}{j}, \frac{j}{K} \right), $$
where we used the moment bound for $\partial^2_z \hat{G}^j_K(z)$ from (\ref{S5MB}). 

From Proposition \ref{ProdExp1} applied to $L = K$, $x = -\theta$, $y = 0$ we have
\begin{equation}\label{S52Prod1Expand}
\begin{split}
&\prod_{p = 1}^n\frac{zK- \ell^n_p -\theta}{zK - \ell^n_p} = e^{-\theta G(zK/n, n/K)}  +e^{-\theta G(zK/n, n/K)} \cdot  \frac{(\theta^2 -1)}{2n} \cdot \partial_z G(zK/n, n/K) \\
&-   e^{-\theta G(zK/n, n/K)} \cdot \frac{\theta}{K} \cdot \hat{G}^n_K(z)+ \frac{\zeta(z)}{K^2}.
\end{split}
\end{equation}
Also, from Proposition \ref{ProdExp1} applied to $L = K$, $x = \theta - 1$, $y = -1$ we have
\begin{equation}\label{S52Prod2Expand}
\begin{split}
&\prod_{p = 1}^{k}\frac{zK- \ell^{k}_p + \theta - 1}{zK - \ell^{k}_p - 1} = e^{\theta G(zK/k, k/K)} + e^{\theta G(zK/k, k/K)} \cdot \frac{\theta (\theta -2)}{2k} \cdot \partial_z G(zK/k, k/K) \\
& +  e^{\theta G(zK/k, k/K)} \cdot \frac{\theta}{K} \cdot \hat{G}^k_K(z)+ \frac{\zeta(z)}{K^2}.
\end{split}
\end{equation}

Substituting (\ref{S52PiExpand}), (\ref{S52Prod1Expand}) and (\ref{S52Prod2Expand}) into (\ref{S52Loop}) we get for $m = \sum_{i = k}^n m_i \geq 1$
\begin{equation}\label{S52Linear}
O(K^{-3/2}) = A_1 + A_2 + A_3, \mbox{ where }
\end{equation}
\begin{equation}\label{S52A1}
\begin{split}
A_1 =   \oint_{\Gamma}\frac{dz (-\theta) \Phi^-_{n,K}(zK) e^{-\theta G(zK/n, n/K)}  }{2 \pi \i K^2 (z- v_0)}\cdot  M\left( \hat{G}^n_K(z) ; \llbracket 1, m_k \rrbracket, \dots, \llbracket 1 ,m_n \rrbracket \right),
\end{split}
\end{equation}
\begin{equation}\label{S52A2}
\begin{split}
&A_2 =   \sum_{\substack{F_r \subseteq \llbracket 1, m_r \rrbracket \\  r \in \llbracket k ,n \rrbracket }}  \oint_{\Gamma} \frac{dz \Phi_{n,K}^+(zK) e^{\theta G(zK/k, k/K)} }{2 \pi \i  K (z- v_0)}   \prod_{i = k}^n \prod_{f \in F_i^c}\frac{1}{K(v_f^i-z)(v_f^i - z + K^{-1})} \\
&\times   M\Bigg{(}1 + \frac{\theta (\theta -2)}{2k} \cdot \partial_z G(zK/k, k/K)  + \frac{\theta}{K}   \cdot  \hat{G}^k_K(z) ; F_k, \dots, F_n \Bigg{)} ,
\end{split}
\end{equation}
\begin{equation}\label{S52A3}
\begin{split}
&A_3 =   \sum_{\substack{F_r \subseteq \llbracket 1, m_r \rrbracket \\  r \in \llbracket k,n \rrbracket }} \sum\limits_{j=k+1}^{n}  \prod_{i = k}^{j-1} {\bf 1 } \{F_i = \llbracket 1, m_i \rrbracket\} \oint_{\Gamma} \frac{dz\Phi_{n,K}^+(zK)}{2\pi \i K (z - v_0)}      \\
&\times \prod_{i = j}^n \prod_{f \in F_i^c}\frac{1}{K(v_f^i-z)(v_f^i - z + K^{-1})}    \cdot   M \Bigg{(}  \frac{\theta {\bf 1}\{ \theta \neq 1\}}{1 - \theta} + \frac{A_j^{\theta}(z)}{K^2}   \\
& + \theta \left[G\left( \frac{zK}{j}, \frac{j}{K} \right) - G\left( \frac{zK}{j-1}, \frac{j-1}{K} \right)   \right]  - \frac{\theta(1 + \theta)}{2j} \cdot  \partial_z G\left( \frac{zK}{j}, \frac{j}{K}\right) \\
& + \frac{\theta (1- \theta) }{2(j-1)} \cdot \partial_z G\left( \frac{zK}{j-1}, \frac{j-1}{K}\right) + \frac{\theta}{K}  \cdot [\hat{G}^j_K(z) - \hat{G}^{j-1}_K(z)] - \frac{\theta^2}{K^2} \cdot \partial_z \hat{G}^j_K(z) ; F_k, \dots, F_n \Bigg{)}.
\end{split}
\end{equation}
We mention that when we used (\ref{S52Prod1Expand}) in the first line of (\ref{S52Loop}) we got rid of the deterministic terms, since second and higher joint cumulants remain unchanged upon constant shifts,  see (\ref{S3Linearity}), and $m \geq 1$ in our setup. We also mention that joint cumulants of the form $M(\zeta(z); F_k, \dots, F_n)$ are all of unit order, since $\hat{G}_K^j(z)$ for $j \in \llbracket k, n \rrbracket$ have bounded moments in view of (\ref{S5MB}) as does $\zeta(z)$. In particular, all $\zeta(z)$ terms coming from (\ref{S52PiExpand}), (\ref{S52Prod1Expand}) and (\ref{S52Prod2Expand}) got absorbed into $O(K^{-3/2})$. Here, we also use that the number of summands $j \in \llbracket k+1, n \rrbracket$ is $O(K)$ and so the $\zeta(z)$ terms in (\ref{S52PiExpand}) each producing an $O(K^{-5/2})$ errors at most accumulate to an $O(K^{-3/2})$ error. Finally, we also mention that all the functions involving $G(z,s)$ are of unit order by the smoothness of this function, and the coefficients $K^{-1}\Phi^{\pm}_{n,K}(zK)$ are also of unit order since by (\ref{S31E3}) and (\ref{S31E10})
\begin{equation}\label{S52Phis}
\begin{split}
&\frac{\Phi^{-}_{n,K}(zK)}{K}  = \frac{zK + n\theta}{K} = \frac{n}{K} \cdot \Phi^-(zK/n, n/K) \mbox{, and }\\
&\frac{ \Phi^{+}_{n,K}(zK)}{K} = \frac{n}{K} \cdot \Phi^+(zK/n, n/K) + \frac{1-\theta}{K}. 
\end{split}
\end{equation}

{\bf \raggedleft Step 2.} In this and the next two steps we assume that $m = \sum_{i = k}^n m_i \geq 2$. We claim that 
\begin{equation}\label{S52A13rd}
\begin{split}
&A_1 = \oint_{\Gamma}\frac{dz (-\theta) (n/K) \Phi^-(zK/n, n/K) e^{-\theta G(zK/n, n/K)}  }{2 \pi \i K (z- v_0)} \cdot  M\left( \hat{G}^n_K(z) ; \llbracket 1, m_k \rrbracket \dots, \llbracket 1 ,m_n \rrbracket \right) ,
\end{split}
\end{equation} 
\begin{equation}\label{S52A23rd}
\begin{split}
&A_2 = \oint_{\Gamma}\frac{dz \theta (n/K) \Phi^+(zK/n, n/K) e^{\theta G(zK/k, k/K)}  }{2 \pi \i K (z- v_0)} \\
& \times  M\left( \hat{G}^k_K(z) ; \llbracket 1, m_k \rrbracket \dots, \llbracket 1 ,m_n \rrbracket \right) + O(K^{-2}),
\end{split}
\end{equation} 
\begin{equation}\label{S52A33rd}
\begin{split}
&A_3 = \oint_{\Gamma}\frac{dz \theta  (n/K) \Phi^+(zK/n, n/K)  }{2 \pi \i K (z- v_0)} \\
& \times  M\left( \hat{G}^n_K(z) - \hat{G}^k_K(z) - \frac{\theta}{K} \sum_{j = k + 1}^n\partial_z \hat{G}^j_K(z) ; \llbracket 1, m_k \rrbracket \dots, \llbracket 1 ,m_n \rrbracket \right) + O(K^{-2}).
\end{split} 
\end{equation} 
We establish (\ref{S52A13rd}), (\ref{S52A23rd}) and (\ref{S52A33rd}) in the next two steps. Here, we assume their validity and conclude the proof of (\ref{S52Cum1}). \\

Substituting (\ref{S52A13rd}), (\ref{S52A23rd}) and (\ref{S52A33rd}) into (\ref{S52Linear}) and using the definition of $\U$ from (\ref{S52E1}) we conclude that 
\begin{equation}\label{S52U3rd}
O(K^{-3/2}) = \oint_{\Gamma}\frac{dz (-\theta) (n/K)   }{2 \pi \i K (z- v_0)}\cdot  M\left(\U(z) ; \llbracket 1, m_k \rrbracket \dots, \llbracket 1 ,m_n \rrbracket \right).
\end{equation}
We may now evaluate the right side of (\ref{S52U3rd}) as (minus) the residue at $z = v_0$. We remark that there is no residue at infinity. The latter can be seen to be the case since from (\ref{S31E8}) and (\ref{S32E1}) we have for $j \in \llbracket k, n \rrbracket$ as $|z| \rightarrow \infty$
$$e^{\pm \theta G(zK/j,j/K )} = 1 + O(|z|^{-1}), \mbox{ and } \hat{G}^j_K(z) = O(|z|^{-2}), \hspace{2mm}\partial_z \hat{G}^j_K(z) = O(|z|^{-3}),$$
where the constants in the latter big $O$ notations depend on the usual parameters but also on $j$. Using the latter, and the fact that $\Phi^{\pm}(z,s)$ are {\em monic polynomials} ensures that as $|z| \rightarrow \infty$
\begin{equation}\label{S52PoleInf}
 \U(z) = O(|z|^{-1}).
\end{equation}
Since in (\ref{S52U3rd}) we are further dividing by $(z-v_0)$, we conclude that the integrand behaves like $O(|z|^{-2})$ as $|z| \rightarrow \infty$, which ensures there is no residue at infinity. Evaluating (\ref{S52U3rd}) at $z = v_0$ and multiplying the resulting expression by $\theta^{-1} \cdot (K/n) \cdot K$ we arrive at (\ref{S52Cum1}).\\

{\bf \raggedleft Step 3.} In this step we establish (\ref{S52A13rd}) and (\ref{S52A23rd}). Equation (\ref{S52A13rd}) follows from substituting the formula for $\Phi^-_{n,K}$ from (\ref{S52Phis}) into (\ref{S52A1}). Thus, we only need to verify (\ref{S52A23rd}).

Let us denote by $C_2(F_k, \dots, F_n)$ the summand in (\ref{S52A2}) corresponding to the sets $F_k, \dots, F_m$. We investigate the contribution of such terms. Let us denote $\mathsf{F} = |F_k| + \cdots + |F_n|$ and note that if $\mathsf{F} > 0$ we can remove the constant terms from the joint cumulant in (\ref{S52A2})  to get
\begin{equation}\label{S52C2}
\begin{split}
&C_2(F_k, \dots, F_n) = \oint_{\Gamma} \frac{dz \Phi_{n,K}^+(zK)}{2 \pi \i  K (z- v_0)}   \prod_{i = k}^n \prod_{f \in F_i^c}\frac{1}{K(v_f^i-z)(v_f^i - z + K^{-1})} \\
&\times   M\left( e^{\theta G(zK/k, k/K)} \cdot \frac{\theta}{K} \cdot  \hat{G}^k_K(z)  ; F_k, \dots, F_n \right).
\end{split}
\end{equation}
If $m > \mathsf{F}$, then the product in the first line of (\ref{S52C2}) produces at least one factor $K^{-1}$, while the cumulant on the second line of (\ref{S52C2}) is $O(K^{-1})$ due to the moment bounds of $\hat{G}^j_K(z)$ from (\ref{S5MB}). In particular, such terms contribute at most $O(K^{-2})$. If $\mathsf{F} = 0$ (i.e. all $F_i = \emptyset$) then the product on the first line of (\ref{S52A2}) is $O(K^{-2})$ (here we used that $m \geq 2$) so $C_2(\emptyset, \dots, \emptyset) = O(K^{-2})$ as well. Combining the last few observations we see that the only $C_2(F_k, \dots, F_n)$ term that contributes non-trivially is the one corresponding to $\mathsf{F} = m$, i.e. $F_i = \llbracket 1, m_i \rrbracket$ for $i = k, \dots, n$. In particular,
\begin{equation*}
\begin{split}
&A_2 = \oint_{\Gamma} \frac{dz \Phi_{n,K}^+(zK)}{2 \pi \i  K^2 (z- v_0)}   \cdot M\left( \hat{G}^k_K(z) ; \llbracket 1, m_k \rrbracket \dots, \llbracket 1 ,m_n \rrbracket \right) + O(K^{-2}).
\end{split}
\end{equation*}
Substituting the formula for $\Phi^+_{n,K}$ from (\ref{S52Phis}) into the last equation gives (\ref{S52A23rd}). \\

{\bf \raggedleft Step 4.} In this step we establish (\ref{S52A33rd}). Similarly to the previous step, we let $C_3(F_k, \dots, F_n)$ denote the summand in (\ref{S52A3}) corresponding to the sets $F_k, \dots, F_m$ and we investigate the contribution of such terms. We also let $\mathsf{F} = |F_k| + \cdots + |F_n|$ as before. 

Our first observation is that if $\mathsf{F} > 0$ we can remove the constant terms from the cumulant in (\ref{S52A3}) and get
\begin{equation}\label{S52C31}
\begin{split}
&C_3(F_k, \dots, F_n) = \sum\limits_{j=k+1}^{n}  \prod_{i = k}^{j-1} {\bf 1 } \{F_i = \llbracket 1, m_i \rrbracket\} \oint_{\Gamma} \frac{dz \theta \Phi_{n,K}^+(zK)}{2\pi \i K (z - v_0)}      \\
&  \times \prod_{i = j}^n \prod_{f \in F_i^c}\frac{1}{K(v_f^i-z)(v_f^i - z + K^{-1})}     \cdot   M \Bigg{(}\frac{[\hat{G}^j_K(z) - \hat{G}^{j-1}_K(z)] }{K} - \frac{\theta \partial_z \hat{G}^j_K(z)}{K^2} ; F_k, \dots, F_n \Bigg{)}.
\end{split}
\end{equation}
If $\mathsf{F} < m$, then $|F_i| < m_i$ for some $i$ and we let $R$ be the smallest such index. In particular, the summands in (\ref{S52C31}) for $j \geq R+1$ do not contribute because of the presence of the indicator function ${\bf 1}\{ F_R = \llbracket 1, m_R \rrbracket\}$. We conclude that if $m -1 \geq  \mathsf{F} \geq 1$
\begin{equation}\label{S52C32}
\begin{split}
&C_3(F_k, \dots, F_n) = \sum\limits_{j=k+1}^{R}  \oint_{\Gamma} \frac{dz \theta \Phi_{n,K}^+(zK)}{2\pi \i K (z - v_0)}    \\
&  \times \prod_{i = j}^n \prod_{f \in F_i^c}\frac{1}{K(v_f^i-z)(v_f^i - z + K^{-1})}     \cdot   M \Bigg{(}\frac{[\hat{G}^j_K(z) - \hat{G}^{j-1}_K(z)] }{K} - \frac{\theta \partial_z \hat{G}^j_K(z)}{K^2} ; F_k, \dots, F_n \Bigg{)}.
\end{split}
\end{equation}
The product on the second line of (\ref{S52C32}) is $O(K^{ |\mathsf{F}| - m})$ while the cumulant is $O(K^{-1})$ due to the moment bounds for $\hat{G}_K^j$ and $\partial_z \hat{G}_K^j$ from (\ref{S5MB}). Also, the order of the sum over $j \in \llbracket k+1, R \rrbracket$ is $O(K)$. The latter estimates show that for $m-2 \geq \mathsf{F} \geq 1$ we have 
 \begin{equation}\label{S52C33}
\begin{split}
&C_3(F_k, \dots, F_n) = O(K^{-2}).
\end{split}
\end{equation}
If $\mathsf{F} = m-1$, then we still have (\ref{S52C32}) but now we also know that $F_i = \llbracket 1, m_i\rrbracket$ for all $i \neq R$. In particular, the product on the second line of (\ref{S52C32}) becomes independent of $j$ and can be replaced with one from $i = R$ to $i = n$. Once we do this we can push the sum over $j$ all the way inside of the cumulant by linearity and obtain
\begin{equation}\label{S52C34}
\begin{split}
&C_3(F_k, \dots, F_n) = \oint_{\Gamma} \frac{dz \theta \Phi_{n,K}^+(zK)}{2\pi \i K (z - v_0)}   \prod_{i = R}^n \prod_{f \in F_i^c}\frac{1}{K(v_f^i-z)(v_f^i - z + K^{-1})}  \\
&  \times    M \Bigg{(}\frac{[\hat{G}^{R}_K(z) - \hat{G}^{k}_K(z)] }{K} - \sum_{j=k+1}^{R}\frac{\theta \partial_z \hat{G}^j_K(z)}{K^2} ; F_k, \dots, F_n \Bigg{)}.
\end{split}
\end{equation}
We mention that once we pushed the $j$-sum inside the terms $[\hat{G}^j_K(z) - \hat{G}^{j-1}_K(z)]$ telescoped to $[\hat{G}^{R}_K(z) - \hat{G}^{k}_K(z)]$. Looking at (\ref{S52C34}) we have that the product on the first line is $O(K^{-1})$ while the cumulant on the second line is also $O(K^{-1})$ due to the moment bounds for $\hat{G}_K^j$ and $\partial_z \hat{G}_K^j$ from (\ref{S5MB}). In particular we now conclude that (\ref{S52C33}) holds when $\mathsf{F} = m-1$. We point out that the crucial advantage that happened when $\mathsf{F} = m-1$ versus $\mathsf{F} \leq m-2$ is that we could cancel many terms by telescoping.\\

We next consider $\mathsf{F} = 0$, i.e. the case $F_i = \emptyset $ for $i = k, \dots, n$. In this case, the cumulants in (\ref{S52A3}) all become expectations, and the term $ \frac{\theta {\bf 1}\{ \theta \neq 1\}}{1 - \theta} $ on the second line of  (\ref{S52A3}) integrates to $0$ by Cauchy's theorem. Here we used that $v_j^i$ are outside of $\Gamma$, as is $v_0$ and that $\Phi^+_{n,K}$ is a polynomial. The rest of the cumulants (or expectations) in (\ref{S52A3})  now become $O(K^{-1})$, while
$$\prod_{i = k}^{j-1} {\bf 1 } \{\emptyset = \llbracket 1, m_i \rrbracket\}\prod_{i = j}^n \prod_{f = 1}^{m_i}\frac{1}{K(v_f^i-z)(v_f^i - z + K^{-1})}  = O(K^{-m}).$$
The latter is clear when $m_i > 0$ for some $i \leq j-1$ as the indicators make the whole expression zero, while if $m_i = 0$ for all $i = k, \dots, j-1$ the second product has precisely $m$ terms. Combining the last few observations we see that each $j$-summand in $C_3(\emptyset, \dots, \emptyset)$ is $O(K^{-m-1})$. As there are $O(K)$ such summands and $m \geq 2$ by assumption we conclude (\ref{S52C33}) holds when $\mathsf{F} = 0$ as well. 

Overall, we see that (\ref{S52C33}) holds for all $m - 1 \geq \mathsf{F} \geq 0$, and so the only $C_3(F_k, \dots, F_n)$ term that contributes non-trivially is the one corresponding to $\mathsf{F} = m$, i.e. $F_i = \llbracket 1, m_i \rrbracket$ for $i = k, \dots, n$. In particular, from (\ref{S52C31}) we conclude 
\begin{equation*}
\begin{split}
&A_3 = C_3(\llbracket 1, m_k \rrbracket, \dots, \llbracket 1, m_n \rrbracket ) + O(K^{-2}) =O(K^{-2}) +  \sum\limits_{j=k+1}^{n}  \oint_{\Gamma} \frac{dz \theta \Phi_{n,K}^+(zK)}{2\pi \i K (z - v_0)}    \\
&  \times   M \Bigg{(}\frac{[\hat{G}^j_K(z) - \hat{G}^{j-1}_K(z)] }{K} - \frac{\theta \partial_z \hat{G}^j_K(z)}{K^2} ;\llbracket 1, m_k \rrbracket, \dots, \llbracket 1, m_n \rrbracket  \Bigg{)} \\
&= O(K^{-2}) +  \oint_{\Gamma} \frac{dz \theta \Phi_{n,K}^+(Kz)}{2\pi \i K (z - v_0)}   M \Bigg{(}\frac{[\hat{G}^n_K(z) - \hat{G}^{k}_K(z)] }{K} - \sum\limits_{j=k+1}^{n}   \frac{\theta \partial_z \hat{G}^j_K(z)}{K^2} ;\llbracket 1, m_k \rrbracket, \dots, \llbracket 1, m_n \rrbracket  \Bigg{)}.
\end{split}
\end{equation*}
Substituting the formula for $\Phi^+_{n,K}$ from (\ref{S52Phis}) into the last equation gives (\ref{S52A33rd}). \\

{\bf \raggedleft Step 5.} In this and the next step we assume that $m_a = 1$ for some $a \in \llbracket k, n \rrbracket$ and $m_i = 0$ for $i \in \llbracket k, n\rrbracket \setminus \{a\}$ and that $v^a_1 = v_1$. Our goal is to prove (\ref{S52Cov1}).

We still have equation (\ref{S52Linear}), where for the specified choice of parameters the expressions $A_1, A_2, A_3$  from (\ref{S52A1}), (\ref{S52A2}) and (\ref{S52A3}) take the following simpler forms:
\begin{equation}\label{S52A1V2}
\begin{split}
A_1 =   \oint_{\Gamma}\frac{dz (-\theta) \Phi^-_{n,K}(zK) e^{-\theta G(zK/n, n/K)}  }{2 \pi \i K^2 (z- v_0)}\cdot  M\left( \hat{G}^n_K(z) , \hat{G}^a_K(v_1) \right),
\end{split}
\end{equation}
\begin{equation}\label{S52A2V2}
\begin{split}
&A_2 =   \oint_{\Gamma} \frac{dz \theta \Phi_{n,K}^+(zK) e^{\theta G(zK/k, k/K)}}{2 \pi \i  K^2 (z- v_0)} \cdot M \left(\hat{G}^k_K(z), \hat{G}^a_K(v_1) \right) + \oint_{\Gamma} \frac{dz \Phi_{n,K}^+(zK)}{2 \pi \i  K (z- v_0)} \\
&    \times  \frac{e^{\theta G(zK/k, k/K)}}{K(v_1-z)(v_1- z + K^{-1})} \cdot   \Bigg{[}1  +   \frac{\theta (\theta -2)}{2k} \cdot  \partial_z G(zK/k, k/K) +\frac{\theta }{K}  \cdot \mathbb{E}\left[\hat{G}^k_K(z) \right] \Bigg{]} ,
\end{split}
\end{equation}
\begin{equation}\label{S52A3V2}
\begin{split}
&A_3 =    \sum_{j=k+1}^{n}   \oint_{\Gamma} \frac{dz\Phi_{n,K}^+(zK)}{2\pi \i K (z - v_0)}     \cdot   M \left(  \frac{\theta[\hat{G}^j_K(z) - \hat{G}^{j-1}_K(z)] }{K} - \frac{\theta^2 \partial_z \hat{G}^j_K(z)}{K^2} , \hat{G}_K^a(v_1) \right) \\
&+   \sum_{j=k+1}^{a}  \oint_{\Gamma} \frac{dz\Phi_{n,K}^+(zK)}{2\pi \i K (z - v_0)}  \cdot \frac{1}{K(v_1 - z) (v_1 - z + K^{-1})} \cdot   \Bigg{[}    \frac{\theta {\bf 1}\{ \theta \neq 1\}}{1 - \theta} + \frac{A_j^{\theta}(z)}{K^2}  \\
&   + \theta \left[G\left( \frac{zK}{j}, \frac{j}{K} \right) - G\left( \frac{zK}{j-1}, \frac{j-1}{K} \right)   \right]    - \frac{\theta(1 + \theta)}{2j} \cdot  \partial_z G\left( \frac{zK}{j}, \frac{j}{K}\right)  \\
&+ \frac{\theta (1- \theta) }{2(j-1)} \cdot \partial_z G\left( \frac{zK}{j-1}, \frac{j-1}{K}\right)  + \frac{\theta }{K} \cdot \mathbb{E}\left[\hat{G}^j_K(z) - \hat{G}^{j-1}_K(z) \right] - \frac{\theta^2 }{K^2}\cdot \mathbb{E}\left[\partial_z  \hat{G}^j_K(z) \right] \Bigg{]}.
\end{split}
\end{equation}
Let us elaborate a bit on the derivation of the above formulas. Equation (\ref{S52A1V2}) is a direct consequence of (\ref{S52A1}).  The sum over $F_r$'s in (\ref{S52A2}) becomes the sum of two terms -- when $F_a = \emptyset$ and $F_a = \{1\}$. In the case when $F_a = \{1\}$ we can remove the constant terms from the joint cumulants in (\ref{S52A2}) and the result becomes the first integral of (\ref{S52A2V2}). When $F_a = \emptyset$ the cumulant in  (\ref{S52A2}) becomes an expectation and this produces the second integral in (\ref{S52A2V2}). Similarly, the sum over $F_r$'s in (\ref{S52A3}) becomes the sum of two terms -- when $F_a = \emptyset$ and $F_a = \{1\}$. The summand $F_a = \{1\}$ becomes the first line in (\ref{S52A3V2}) once we remove constant terms from the joint cumulant. When $F_a = \emptyset$, we note that only the summands over $j \in \llbracket k+1, a\rrbracket$ contribute because terms for $j \geq a+1$ include the factor ${\bf 1}\{F_a = \llbracket 1,1 \rrbracket\}$ that is zero. In addition, when $F_a = \emptyset$ the cumulant in (\ref{S52A3}) becomes an expectation and that term becomes lines 2-4 in (\ref{S52A3V2}). \\

Using the formula for $\Phi^-_{n,K}(zK)$ from (\ref{S52Phis})  in (\ref{S52A1V2}) we conclude 
\begin{equation}\label{S52A1V3}
\begin{split}
A_1 =   \oint_{\Gamma}\frac{dz (-\theta) (n/K) \Phi^-(zK/n, n/K) e^{-\theta G(zK/n, n/K)}  }{2 \pi \i K (z- v_0)} \cdot  M\left( \hat{G}^n_K(z) , \hat{G}^a_K(v_1) \right).
\end{split}
\end{equation}
Using the formula for $\Phi^+_{n,K}(Kz)$ from (\ref{S52Phis}) and the identity 
\begin{equation}\label{S52Frac}
\frac{1}{(v_1 - z) (v_1 - z + K^{-1})} = \frac{1}{(v_1 - z)^2} + O(K^{-1})
\end{equation}
we conclude that 
\begin{equation}\label{S52A2V3}
\begin{split}
& A_2 =   \oint_{\Gamma}\frac{dz \theta (n/K) \Phi^+(zK/n, n/K) e^{\theta G(zK/k, k/K)}  }{2 \pi \i K (z- v_0)} \cdot M \left(\hat{G}^k_K(z), \hat{G}^a_K(v_1) \right) \\
& +   \oint_{\Gamma} \frac{dz  (n/K) \Phi^+(zK/n, n/K) e^{\theta G(zK/k, k/K)}  }{2 \pi \i K (z- v_0) (v_1-z)^2} + O(K^{-2}).
\end{split}
\end{equation}
We mention that in deriving (\ref{S52A2V3}) we used the moment bounds for $\hat{G}_K^j$ from (\ref{S5MB}).

We finally claim that 
\begin{equation}\label{S52A3V3}
\begin{split}
&A_3 =     \oint_{\Gamma} \frac{dz \theta (n/K) \Phi^+(zK/n, n/K)}{2\pi \i  K (z - v_0)}   \cdot   M \left( \hat{G}^n_K(z) - \hat{G}^{k}_K(z)  - \frac{\theta}{K} \cdot \sum_{j=k+1}^{n}  \partial_z \hat{G}^j_K(z) , \hat{G}_K^a(v_1) \right)  \\
& +  \frac{1}{K} \sum_{j = k+1}^a  \oint_{\Gamma}   \frac{dz  \theta (n/K) \Phi^+(zK/n, n/K)}{2\pi \i  (z - v_0) (v_1 - z)^2}  \\
& \times \left[ \partial_s G \left(\frac{zK}{j} , \frac{j}{K} \right) - \frac{z K^2}{j^2} \cdot \partial_z G\left( \frac{zK}{j}, \frac{j}{K} \right) - \frac{\theta K}{j} \cdot  \partial_z G\left( \frac{zK}{j}, \frac{j}{K}\right)  \right] +O(K^{-2}).
\end{split}
\end{equation}
We prove (\ref{S52A3V3}) in the next step. Here we assume its validity and conclude the proof of (\ref{S52Cov1}).\\

Substituting (\ref{S52A1V3}), (\ref{S52A2V3}) and (\ref{S52A3V3}) into (\ref{S52Linear}) and using the definition of $\U$ from (\ref{S52E1}) we conclude that 
\begin{equation}\label{S52U2nd}
\begin{split}
&\oint_{\Gamma}\frac{dz (-\theta) (n/K)   }{2 \pi \i K (z- v_0)}\cdot  M\left(\U(z) , \hat{G}^a_K(v_1) \right) = - \oint_{\Gamma} \frac{dz  (n/K) \Phi^+(zK/n, n/K)  e^{\theta G(zK/k, k/K)}  }{2 \pi \i K (z- v_0) (v_1-z)^2} \\
& +   \frac{1}{K} \sum_{j = k+1}^a  \oint_{\Gamma}   \frac{dz  \theta (n/K) \Phi^+(zK/n, n/K)}{2\pi \i  (z - v_0) (v_1 - z)^2}  \\
& \times \left[  \frac{z K^2}{j^2} \cdot \partial_z G\left( \frac{zK}{j}, \frac{j}{K} \right) + \frac{\theta K}{j} \cdot  \partial_z G\left( \frac{zK}{j}, \frac{j}{K}\right) -  \partial_s G \left(\frac{zK}{j} , \frac{j}{K} \right)  \right] +O(K^{-3/2}).
\end{split}
\end{equation}
Similarly to Step 2 above, we can evaluate the first integral as (minus) the residue at $z = v_0$ and as before there is no residue at infinity. Performing this evaluation in (\ref{S52U2nd}) and multiplying both sides by $\theta^{-1} \cdot (K/n) \cdot K$ brings us to (\ref{S52Cov1}) with $\gamma$ replaced with $\Gamma$. However, by Cauchy's theorem we can deform $\Gamma$ to $\gamma$ without crossing any poles of the integrand and hence without affecting the value of the integral. \\

{\bf \raggedleft Step 6.} In this final step we prove (\ref{S52A3V3}). Starting from (\ref{S52A3V2}) we see that by linearity we can put the sum over $j \in \llbracket k +1, n\rrbracket$ on the first line all the way inside of the cumulant, which makes the terms $[\hat{G}^j_K(z) - \hat{G}^{j-1}_K(z)]$ telescope to $\hat{G}^n_K(z) - \hat{G}^{k}_K(z)$. We can also do the same with the sum over $j \in \llbracket k+1, a \rrbracket$, which causes the $\mathbb{E}\left[\hat{G}^j_K(z) - \hat{G}^{j-1}_K(z) \right]$ terms to telescope to $\mathbb{E}\left[\hat{G}^a_K(z) - \hat{G}^{k}_K(z) \right]$. Finally, we note that the term $ \frac{\theta {\bf 1}\{ \theta \neq 1\}}{1 - \theta} $ on the second line of  (\ref{S52A3V2})  integrates to $0$ by Cauchy's theorem. Performing these simplifications we get
\begin{equation}\label{S52A3V4}
\begin{split}
&A_3 =       \oint_{\Gamma} \frac{dz\Phi_{n,K}^+(zK)}{2\pi \i K (z - v_0)}     \cdot   M \left(  \frac{\theta[\hat{G}^n_K(z) - \hat{G}^{k}_K(z)] }{K} - \sum_{j=k+1}^{n} \frac{\theta^2 \partial_z \hat{G}^j_K(z)}{K^2} , \hat{G}_K^a(v_1) \right) \\
&  \oint_{\Gamma} \frac{dz \theta \Phi_{n,K}^+(zK)}{2\pi \i K (z - v_0)}  \cdot \frac{1}{K(v_1 - z) (v_1 - z + K^{-1})} \cdot \sum_{j = k+1}^a  \Bigg{[} G\left( \frac{zK}{j}, \frac{j}{K} \right) - G\left( \frac{zK}{j-1}, \frac{j-1}{K} \right)  \\
&   -    \frac{(1 + \theta)}{2j}  \cdot \partial_z G\left( \frac{zK}{j}, \frac{j}{K}\right) + \frac{ (1- \theta) }{2(j-1)} \cdot \partial_z G\left( \frac{zK}{j-1}, \frac{j-1}{K}\right) \Bigg{]} \\
&+      \oint_{\Gamma} \frac{dz\Phi_{n,K}^+(zK)}{2\pi \i K (z - v_0)}  \cdot \frac{1}{K(v_1 - z) (v_1 - z + K^{-1})} \cdot   \Bigg{[}  \sum_{j=k+1}^{a}  \frac{A_j^{\theta}(z)}{K^2}  \\
&  + \frac{\theta }{K} \cdot \mathbb{E}\left[\hat{G}^a_K(z) - \hat{G}^{k}_K(z) \right] -\sum_{j=k+1}^{a}   \frac{\theta^2 }{K^2}\cdot \mathbb{E}\left[\partial_z  \hat{G}^j_K(z) \right] \Bigg{]}.
\end{split}
\end{equation}
Note that the last two lines of (\ref{S52A3V4}) are $O(K^{-2})$. We also have by the smoothness of $G$ that 
\begin{equation*}
\begin{split}
&G\left( \frac{zK}{j}, \frac{j}{K} \right) - G\left( \frac{zK}{j-1}, \frac{j-1}{K} \right)   -    \frac{(1 + \theta)}{2j}  \cdot \partial_z G\left( \frac{zK}{j}, \frac{j}{K}\right) + \frac{ (1- \theta) }{2(j-1)} \cdot \partial_z G\left( \frac{zK}{j-1}, \frac{j-1}{K}\right) \\
& =  \frac{1}{K} \cdot \partial_s G \left(\frac{zK}{j} , \frac{j}{K} \right) - \frac{z K}{j^2} \cdot \partial_z G\left( \frac{zK}{j}, \frac{j}{K} \right) - \frac{\theta}{j} \cdot  \partial_z G\left( \frac{zK}{j}, \frac{j}{K}\right) + O(K^{-2}).
\end{split}
\end{equation*}
Using the last few observations to simplify (\ref{S52A3V4}) and using (\ref{S52Frac}) and the formula for $\Phi^+_{n,K}$ from (\ref{S52Phis}) we get (\ref{S52A3V3}).
\end{proof}

%
\section{Third and higher order cumulants}\label{Section6} In (\ref{S52Cum1}) from Proposition \ref{S52P1} we showed that certain third and higher order joint cumulants vanish as $K \rightarrow \infty$. In this section we leverage this result to show that the third and higher order joint cumulants of $\hat{G}^n_K(z)$ from (\ref{S32E1}) also vanish as $K \rightarrow \infty$ -- the precise statement is in Proposition \ref{S6Prop} below. In Section \ref{Section6.1} we introduce some new notation that essentially replaces the $n$ in $\hat{G}^n_K(z)$ with a continuous variable, state the main result of the section, Proposition \ref{S6Prop}, and state two technical lemmas that will be required for its proof, which is given in Section \ref{Section6.2}. The lemmas from Section \ref{Section6.1} are proved in Section \ref{Section6.3}.

%
\subsection{Notation and technical preliminaries}\label{Section6.1} We begin this section by introducing a bit of notation. Fix $\theta > 0$ and assume that $(\ell^1, \ell^2, \dots)$ is distributed according to $\mathbb{P}^{\theta, K}_{\infty}$ as in Definition \ref{BKCC}. Fix $\delta \in (0,1/2)$, $\delta_0 = 2\delta$ and define
\begin{equation}\label{S6DefU}
U(\delta, \theta) = \mathbb{C} \setminus [-\theta \delta^{-1}, \delta^{-2}].
\end{equation}
For $s \in [\delta_0, \delta_0^{-1}]$, $K \geq \delta_0^{-1}$ and $z \in U(\delta, \theta)$ we define 
\begin{equation}\label{S6DefG}
\G_K(z,s) := G_K^n(z) - \mathbb{E} \left[G_K^n(z) \right] = \sum_{i = 1}^n \frac{1}{z - \ell_i^n/K} - \mathbb{E}\left[\sum_{i = 1}^n \frac{1}{z - \ell_i^n/K}  \right], \mbox{ with } n = \lceil s K \rceil,
\end{equation}
where we mention that the second equality follows from the definition of $G_K^n(z)$ in (\ref{S32E1}). Note that since $\ell_i^n \in [-\theta n, K-\theta]$ for $i \in \llbracket 1, n\rrbracket$, the functions $\G_K(z,s)$ are random analytic functions in $z \in U(\delta, \theta)$ as in (\ref{S6DefU}), provided that $K \geq \delta_0^{-1}$. \\

With the above notation in place we can formulate the main result of the section.
\begin{proposition}\label{S6Prop} Fix $\theta > 0$, and assume that $(\ell^1, \ell^2, \dots)$ is distributed according to $\mathbb{P}^{\theta, K}_{\infty}$ as in Definition \ref{BKCC}. Fix $\delta \in (0,1/2)$, $\delta_0 = 2\delta$ and $U(\delta, \theta)$ as in (\ref{S6DefU}). For any $m \geq 2$, $s_0, s_1, \dots, s_m \in [\delta_0, \delta_0^{-1}]$ and $K \geq \delta_0^{-1}$ we have
\begin{equation}\label{S6VanCum3}
M \left( \G_{K}(z_0,s_0), \G_K(z_1, s_1), \dots,  \G_K(z_m, s_m) \right) = o(1).
\end{equation}
In more details, there is a sequence $a_K$, converging to $0$, that depends on $\theta, \delta, m$ and a compact set $\mathcal{V} \subset U(\delta, \theta)$, such that for  $z_0, \dots, z_m \in \mathcal{V}$ and $K \geq \delta_0^{-1}$
$$| M \left( \G_{K}(z_0,s_0), \G_K(z_1, s_1), \dots,  \G_K(z_m, s_m) \right)| \leq a_K.$$
\end{proposition}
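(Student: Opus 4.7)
My plan is to combine the multi-level cumulant relation (\ref{S52Cum1}) of Proposition~\ref{S52P1} with the moment bounds of Propositions~\ref{S33P1} and \ref{S43P1} to pass to a limiting integral identity in two active variables, differentiate in the lower level to obtain a first-order PDE, and then solve this PDE by the method of characteristics using the decay of $\G_K$ at $z = \infty$ as a boundary condition.

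By the permutation symmetry of joint cumulants I may assume without loss of generality that $s_0 = \min_{0 \leq i \leq m} s_i$. Setting $n_i = \lceil s_i K \rceil$ for $i \geq 1$, I define
\[
\Psi_K(z, s) := M\bigl( \hat{G}^{\lceil s K \rceil}_K(z),\, \hat{G}^{n_1}_K(z_1),\, \ldots,\, \hat{G}^{n_m}_K(z_m) \bigr),
\]
which agrees with the corresponding cumulant of the centered $\G_K$'s because $\hat{G}^n_K$ and $\G_K$ differ only by a deterministic shift and there are $m+1 \geq 3$ arguments, cf.~(\ref{S3Linearity}). Proposition~\ref{S33P1} combined with Malyshev's formula and H\"older's inequality yields the uniform bound $|\Psi_K(z, s)| = O(1)$ on compact subsets of $U(\delta, \theta) \times [\delta_0, \delta_0^{-1}]$. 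By Montel's theorem and a diagonal extraction, there is a subsequence of $K$ along which $\Psi_K(z, s) \to \Psi_\infty(z, s)$ uniformly in $z$ on compacts for each $s$ in a countable dense subset of $[\delta_0, \delta_0^{-1}]$. I then apply Proposition~\ref{S52P1} with $v_0 = z$, top level $n = \lceil s K \rceil$ for $s \in [\max_i s_i, \delta^{-1}]$, bottom level $k = \lceil r K \rceil$ for $r \in [\delta, s_0]$, and the remaining parameters encoding the fixed $(z_i, n_i)$. Using (\ref{S31E17}) to simplify the coefficient of $\hat G^n_K(v_0)$ in $\U(v_0)$ to $A(z, s) := 2\theta / (e^{\theta G(z/s, s)} - 1)$, and passing $K \to \infty$ along a further extracted subsequence (with Proposition~\ref{S43P1} used to justify the convergence of the Riemann sum $\sum_{j = k+1}^n \partial_z \Psi_K(z, j/K)/K$ to the corresponding integral), I obtain
\[
A(z, s)\, \Psi_\infty(z, s) \,+\, \theta\, \Phi^+(z/s, s) \int_r^s \partial_z \Psi_\infty(z, u)\, du \,=\, \Phi^+(z/s, s) \bigl( e^{\theta G(z/r, r)} - 1 \bigr)\, \Psi_\infty(z, r).
\]

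Since the right-hand side is independent of $s$ and $\Phi^+(z/s, s) \neq 0$ away from $z = 1$, differentiating in $r$ and dividing through by $\Phi^+(z/s, s)$ yields the first-order PDE
\[
\theta\, \partial_z \Psi_\infty(z, r) \,+\, \partial_r\bigl[ (e^{\theta G(z/r, r)} - 1)\, \Psi_\infty(z, r) \bigr] \,=\, 0,
\]
valid on $U(\delta, \theta) \times [\delta, s_0]$. Setting $C(z, r) := e^{\theta G(z/r, r)} - 1$ and $W := C\, \Psi_\infty$, the method of characteristics along $dz/d\tau = \theta$, $dr/d\tau = C(z, r)$ gives $dW/d\tau = \theta\, \partial_z(\log C)\, W$. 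The asymptotics $G(z/r, r) \sim r/z$ from (\ref{S31E8}) imply $\partial_z(\log C) \sim -1/z$ as $|z| \to \infty$, so integrating along the characteristic yields $W(z(\tau), r(\tau)) \sim W(z_0, r_0)\, z_0 / z(\tau)$. On the other hand, multilinearity of joint cumulants combined with the vanishing of any cumulant containing a deterministic argument (see (\ref{S3Linearity})) forces $\Psi_\infty(z, r) = O(|z|^{-2})$ as $|z| \to \infty$, because the leading $O(1/z)$ term of $G^n_K(z)$ is deterministic and cancels out upon centering. Hence $W(z, r) = O(|z|^{-3})$, and these two rates are consistent along the characteristic only if $W(z_0, r_0) = 0$, whence $\Psi_\infty(z_0, r_0) = 0$. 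Taking the limit $r_0 \uparrow s_0$ via the continuity of $\Psi_\infty$ in $r$ then establishes (\ref{S6VanCum3}).

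The principal technical obstacle is upgrading the pointwise convergence of $\Psi_K$ in $s$ to the regularity needed both to justify differentiation in $r$ and to take the Riemann-sum-to-integral limit uniformly on $[\delta, s_0]$: the level-to-level bound from Proposition~\ref{S43P1} alone produces a Lipschitz constant in $s$ that accumulates to $O(K^{1/2})$ over a unit $s$-interval, which is insufficient. Establishing genuine equicontinuity requires the intricate induction on $m$ flagged in the introduction, in which one freezes all but two of the arguments at each stage and leverages the inductive vanishing of lower-order joint cumulants to control error terms arising in the multi-level loop relation. Additional technical care is needed to complexify the characteristic flow so that $|z(\tau)| \to \infty$ is reachable within the PDE's region of validity, and to handle the boundary behavior as $r_0 \uparrow s_0$ or when the characteristic approaches the singular set of $G$.
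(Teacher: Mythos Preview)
Your derivation of the limiting PDE from Proposition~\ref{S52P1} is essentially correct and coincides with (\ref{S6II1}) in the paper, but the boundary condition you propose---decay of $\Psi_\infty$ as $|z|\to\infty$---cannot close the argument. Along your characteristics $dz/d\tau=\theta$, $dr/d\tau=C(z,r)$ with $z$ real and large, one has $C(z,r)\sim\theta r/z\sim r/\tau$, so $r(\tau)$ grows proportionally to $z(\tau)$. Thus any characteristic sending $z\to\infty$ also sends $r\to\infty$ and exits the strip $r\le s_0=\min_i s_i$ on which Proposition~\ref{S52P1} (hence your PDE) is valid. Complexifying $z$ does not help: if $z\notin\mathbb{R}$ then $C(z,r)\notin\mathbb{R}$, so $r$ cannot remain a real level parameter. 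This is confirmed by Lemma~\ref{ODESol}, which shows the characteristic $X(s)$ stays in a bounded $z$-interval as $s$ ranges over a bounded interval; equivalently, sending $X\to\infty$ requires $s\to\infty$. So the $z=\infty$ boundary is genuinely inaccessible, and your contradiction between the rates $W\sim W_0 z_0/z$ and $W=O(z^{-3})$ never takes place inside the domain of validity.

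The paper instead obtains a boundary condition at the \emph{top} level, $f_\infty(z,s_0^\infty)=0$, which is reachable by characteristics (Lemma~\ref{ODESol}) and then forces $f_\infty\equiv 0$ via Lemma~\ref{LUnique}. The mechanism producing this boundary value is an induction---not on $m$, but on a parameter $r\in\llbracket 1,m{+}1\rrbracket$ that counts how many of the level arguments $s_i$ are allowed to lie outside $[t_0,s_0]$ (here $s_0$ is taken to be the maximum). The induction hypothesis on $V_{r-1}$, combined with the loop-equation identity, yields (\ref{PP22}); evaluating this at $s_{m-r+2}=s_0$ collapses the integral and gives the top-level vanishing (\ref{PU5}). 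Crucially, the same relation (\ref{PP22}) also provides the Lipschitz-in-$s$ estimate (\ref{PT5}) that resolves the equicontinuity problem you flag. So your instinct that the equicontinuity difficulty is structural is right, but the inductive scheme that handles it is the one on $r$ just described, and it simultaneously supplies the missing boundary data that the $z=\infty$ decay cannot.
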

In the remainder of this section we state two technical results, Lemmas \ref{DomLip} and \ref{LUnique}, which we need for the proof of Proposition \ref{S6Prop}, which is given in Section \ref{Section6.2}. The two lemmas are proved in Section \ref{Section6.3}. 

\begin{lemma}\label{DomLip} Fix $\theta > 0, \delta \in (0,1/2)$ and $U(\delta, \theta)$ as in (\ref{S6DefU}). For each $n \in \mathbb{N}$ define 
$$F_n = \left\{z \in U(\delta, \theta): n \geq \inf_{x \in [-\theta \delta^{-1}, \delta^{-2} ]} |x-z| \geq n^{-1} \right\}.$$
Then, $\{F_n\}_{n = 1}^\infty$ forms a compact exhaustion of $U(\delta, \theta)$. In addition, for each $n \in \mathbb{N}$ there exists a constant $\lambda_n > 0$, depending on $\delta,\theta$ and $n$ such that for each $x,y \in F_n$ there exists a piecewise smooth curve $\gamma_{x,y}$, contained in $F_n$, that connects $x$ and $y$ and such that
\begin{equation}\label{LipDomE}
\|\gamma_{x,y} \| \leq \lambda_n |x-y|,
\end{equation}
where $\|\gamma_{x,y} \| $ is the length of $\gamma_{x,y}$.
\end{lemma}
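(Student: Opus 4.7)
The plan is to verify the compactness and exhaustion properties directly from the definition, and then establish the quasi-convexity of each $F_n$ by a detour-around-the-obstacle argument. Write $I = [-\theta\delta^{-1}, \delta^{-2}]$ and $d(z,I) = \inf_{x \in I} |x - z|$, so that each $F_n$ is the closed annular region $\{z : 1/n \le d(z, I) \le n\}$. It is closed as the preimage of a closed interval under the continuous function $d(\cdot,I)$ and bounded since $d(z,I) \le n$ confines $z$ to the $n$-neighborhood of the bounded segment $I$, hence compact; moreover, it lies in $U(\delta,\theta)$ because $d(z,I) \ge 1/n > 0$ prevents $z$ from meeting $I$. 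The inclusions $F_n \subset F_{n+1}$ follow from $1/(n+1) < 1/n$ and $n < n+1$, and for any $z \in U(\delta, \theta)$ one has $d(z,I) > 0$, so $z \in F_n$ once $n \ge \max(d(z,I), 1/d(z,I))$; this gives $\bigcup_n F_n = U(\delta, \theta)$.

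For the second assertion, the plan is to construct $\gamma_{x,y}$ explicitly. The forbidden set $O_n := \{z : d(z,I) < 1/n\}$ equals the Minkowski sum of the convex segment $I$ with the open disc of radius $1/n$ and hence is convex; its boundary $\partial O_n$ is a ``stadium'' made of two parallel segments of length $|I|$ joined by two semicircular arcs of radius $1/n$, with matching tangent directions at the four junctions, so $\partial O_n$ is of class $C^1$. Since the outer set $\{d(\cdot, I) \le n\}$ is convex and contains both $x$ and $y$, the Euclidean segment $[x,y]$ stays inside it, so the only way $[x,y]$ can exit $F_n$ is by entering $O_n$. If $[x,y] \cap O_n = \emptyset$ I take $\gamma_{x,y} = [x,y]$; otherwise convexity of $O_n$ forces $[x,y]$ to meet $\partial O_n$ in exactly two points $p, q$, with $x, p, q, y$ appearing in this order along $[x,y]$, and I set $\gamma_{x,y}$ to be the concatenation of $[x,p]$, the shorter arc of $\partial O_n$ from $p$ to $q$, and $[q,y]$. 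The two straight pieces lie in $F_n$ by the choice of $p, q$, and the arc has $d(\cdot, I) \equiv 1/n$, so $\gamma_{x,y} \subset F_n$.

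The length of $\gamma_{x,y}$ will be bounded via the elementary estimates $|x-p| + |q-y| \le |x-y|$ and $|p-q| \le |x-y|$, combined with a uniform control of the arc-to-chord ratio on $\partial O_n$. Denoting by $\mathrm{arc}_n(p,q)$ the length of the shorter arc from $p$ to $q$ along $\partial O_n$, the map $(p,q) \mapsto \mathrm{arc}_n(p,q)/|p-q|$ on $\partial O_n \times \partial O_n \setminus \{p = q\}$ is continuous and, by the $C^1$ regularity of $\partial O_n$, admits a continuous extension to the diagonal with value $1$; hence it is bounded by some $\mu_n < \infty$ on the compact set $\partial O_n \times \partial O_n$. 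Combining the estimates yields $\|\gamma_{x,y}\| \le (1 + \mu_n)\,|x-y|$, so the constant $\lambda_n := 1 + \mu_n$ has the required property.

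The main technical point is thus the finiteness of $\mu_n$, which is really the statement that arc length and chord length on the $C^1$ Jordan curve $\partial O_n$ are globally comparable. This is not a purely local fact: the worst-case pairs $(p, q)$ lie on opposite parallel segments of $\partial O_n$ near its centre, where $|p - q| = 2/n$ while the shorter arc must traverse most of one long side plus a semicircle, yielding a $\mu_n$ which grows linearly in $n$. Since the lemma allows $\lambda_n$ to depend on $n$ such growth is permitted, but it is the place where the specific geometry of the stadium curve $\partial O_n$ enters the final bound.
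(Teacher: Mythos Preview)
Your proof is correct and uses the same geometric construction as the paper: straight segment when it avoids the inner stadium $O_n$, and otherwise a detour along $\partial O_n$ between the two crossing points. The difference lies in how the length bound is obtained. The paper splits into the cases $|x-y|\ge n^{-1}$ and $|x-y|<n^{-1}$: in the first it simply bounds the arc by half the stadium perimeter, and in the second it exploits that $p,q$ must lie on the same semicircle or on an adjacent semicircle/segment pair, getting an explicit constant $\lambda_n = 1 + n(\delta^{-2}+\theta\delta^{-1}+\pi n^{-1})$. Your compactness argument for the arc-to-chord ratio on the $C^1$ curve $\partial O_n$ is slicker and avoids the case analysis, at the price of not producing an explicit $\lambda_n$; since the lemma only asserts existence, this is an acceptable trade-off.
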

\begin{remark}\label{DomLipRem} In the proof of Proposition \ref{S6Prop} we will deal with certain sequences of analytic functions on $U(\delta, \theta)$. We will seek to show that these functions have subsequential limits, for which some type of equicontinuity is required. An analytic function on $U(\delta, \theta)$ can have jump discontinuities across $[-\theta \delta^{-1}, \delta^{-2}]$ and so it is not feasible to prove equicontinuity over the whole domain. Lemma \ref{DomLip} is useful in that it allows us to establish equicontinuity over each $F_n$. 
\end{remark}

\begin{lemma}\label{LUnique} Fix $\theta > 0$, $\delta \in (0,1/2)$, $\delta_0 = 2 \delta$ and let $U(\delta, \theta)$ be as in (\ref{S6DefU}). Let $\delta_0 \leq p < q \leq \delta_0^{-1}$. Suppose that $f: U(\delta,\theta ) \times [p,q] \rightarrow \mathbb{C}$ is a continuous function, such that for each $s \in [p,q]$ we have that $f(z,s)$ is analytic in $z$ on $U(\delta,\theta )  $. Then, the function $\partial_z f(z,s)$ is continuous on $U(\delta,\theta )   \times [p,q]$. If we further suppose that for $z \in U(\delta,\theta ) $
\begin{equation}\label{S5LID0}
f(z,q) = 0,
\end{equation}
and for every $s \in [p,q]$ and $z \in U(\delta,\theta)$ we have
\begin{equation}\label{S5LID}
  \left[e^{\theta G\left(z/q, q\right)} - 1 \right] f(z,q) + \theta \cdot \int_{s}^{q}  \partial_{z} f(z,u)du -   \left[e^{\theta G\left(z/s, s\right)} - 1 \right] f(z,s)  = 0,
\end{equation}
where $G(z,s)$ is as in (\ref{S31E6}), then $f(z,s) = 0$ for all $(z,s) \in U(\delta,\theta)  \times [p,q]$.
\end{lemma}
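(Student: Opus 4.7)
For the continuity of $\partial_z f$ on $U(\delta, \theta) \times [p, q]$, fix $(z_0, s_0)$ and choose $r > 0$ with $\overline{B(z_0, 2r)} \subset U(\delta, \theta)$. On $B(z_0, r) \times [p, q]$ Cauchy's formula gives $\partial_z f(z, s) = (2\pi \i)^{-1} \oint_{|w - z_0| = 2r} f(w, s)(w - z)^{-2} dw$, and joint continuity follows from the uniform continuity of $f$ on the compact cylinder $\{|w - z_0| = 2r\} \times [p, q]$.

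Setting $A(z, s) := e^{\theta G(z/s, s)} - 1$, the condition $f(z, q) = 0$ reduces (\ref{S5LID}) to $A(z, s) f(z, s) = \theta \int_s^q \partial_z f(z, u) du$. By (\ref{S31E17}), $A$ is nowhere zero on $U(\delta, \theta) \times [p, q]$, and since $\partial_z f$ is jointly continuous the right-hand side is $C^1$ in $s$. Thus $\partial_s f$ exists and the relation differentiates to the first-order linear PDE $\partial_s(Af) + \theta \partial_z f = 0$. The key step is to combine the defining identity (\ref{SpecEqn}) for $G$, applied at $(z/s, s)$, with the chain rule to derive the conservation law $\partial_s A = -\theta \partial_z A/A$. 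Along any curve $z(\cdot)$ with $dz/ds = \theta/A(z(s), s)$ this yields $dA/ds = (\theta/A)\partial_z A + \partial_s A = 0$, so $A$ is conserved, every characteristic is a straight line $z(s) = z_1 + (s - s_1)\theta/A(z_1, s_1)$, and the PDE collapses to the linear homogeneous ODE $\frac{d}{ds} f(z(s), s) = -[\partial_s A/A]_{(z(s), s)} f(z(s), s)$.

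To conclude, I first establish $f \equiv 0$ on $\mathbb{H} \times [p, q]$. For $z_1 \in \mathbb{H}$ and $s_1 \in [p, q]$, the function $\Im G(\cdot, s_1) + \pi/\theta$ is harmonic on $\mathbb{H}$, nonnegative on $\partial \mathbb{H}$ (by Sokhotski--Plemelj together with the density bound $\mu(\cdot, s_1) \leq \theta^{-1}$ from Proposition \ref{LLN}), tends to $\pi/\theta$ at infinity, and hence by the strong minimum principle is strictly positive in the interior. Therefore $\theta \Im G(z_1/s_1, s_1) \in (-\pi, 0)$ strictly, which forces $\Im A(z_1, s_1) = e^{\theta \Re G(z_1/s_1, s_1)} \sin(\theta \Im G(z_1/s_1, s_1)) < 0$ and hence $\Im(\theta/A(z_1, s_1)) > 0$. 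The associated characteristic therefore satisfies $\Im z(s) > \Im z_1 > 0$ throughout $[s_1, q]$, staying inside $\mathbb{H} \subset U(\delta, \theta)$, and the ODE with terminal condition $f(z(q), q) = 0$ gives $f(z_1, s_1) = 0$. Since $f(\cdot, s_1)$ is analytic on the connected domain $U(\delta, \theta)$ and vanishes on the non-empty open set $\mathbb{H}$, the identity theorem yields $f(\cdot, s_1) \equiv 0$.

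The main obstacle will be ensuring that the characteristics do not exit $U(\delta, \theta)$ before reaching $s = q$; the conservation law $\partial_s A = -\theta \partial_z A/A$ reduces this to a sign control for $\Im A$ on $\mathbb{H}$, which is handled by the strong maximum-principle argument above, after which the identity theorem transfers the vanishing from $\mathbb{H}$ to all of $U(\delta, \theta)$.
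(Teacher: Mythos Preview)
Your proof is correct and takes a genuinely different route from the paper's. Both arguments derive the same first-order linear PDE from (\ref{S5LID}) and solve it by characteristics, but the paper runs real characteristics out of $[1+\delta^{-2},\infty)$ and devotes a separate lemma (Lemma~\ref{ODESol}, proved by Picard iteration with comparison bounds) to showing those trajectories exist on $[p,q]$, stay to the right of the cut, and depend injectively on the initial point; it then concludes that $f(\cdot,s)$ vanishes on an infinite subset of a bounded real interval. You instead exploit the identity $\partial_s A = -\theta\,\partial_z A/A$ (a direct consequence of (\ref{SpecEqn}), recorded in the paper as (\ref{S7SpecEqn2})) to see that $A$ is conserved along characteristics, so they are \emph{straight lines}; you then launch them from $\mathbb{H}$ and keep them there by the sign condition $\Im A<0$, which follows from the density bound $\mu(\cdot,s)\le\theta^{-1}$ via the Poisson-kernel estimate $|\Im G|\le\pi/\theta$ and the strong minimum principle. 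This bypasses Lemma~\ref{ODESol} entirely and gives vanishing on the open set $\mathbb{H}$, after which the identity theorem finishes. The paper's authors were aware characteristics are straight (see the Introduction, Section~\ref{Section1.3}, and the explicit formula (\ref{PO9}) in Appendix~\ref{AppendixB}), but did not use this in the formal proof of Lemma~\ref{LUnique}; your argument is shorter and more structural, while the paper's real-variable approach trades elegance for avoiding the complex maximum-principle step.

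Two minor remarks: your Sokhotski--Plemelj justification is fine but slightly roundabout, since the Poisson-kernel bound already gives $|\Im G|\le\pi/\theta$ on all of $\mathbb{H}$ directly; and the assertion that the straight line is \emph{the} characteristic (i.e., that $A(z(s),s)\equiv A_0$) deserves one sentence of continuation-type reasoning, but this is routine and appropriate to omit in a sketch.
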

\begin{remark}\label{LUniqueRem} In the proof of Proposition \ref{S6Prop} we will deal with certain sequences of functions on $U(\delta, \theta) \times [p,q]$ for some $\delta_0 \leq p < q \leq \delta_0^{-1}$, which we will seek to show are converging to zero. The way this will be established is by showing that any subsequential limit of these functions satisfies the conditions of Lemma \ref{LUnique}. We mention that the statement of Lemma \ref{LUnique} is probably not optimal in terms of parameters. We chose the present formulation to make the result directly applicable within the proof of Proposition \ref{S6Prop}.
\end{remark}

%
\subsection{Proof of Proposition \ref{S6Prop}}\label{Section6.2} In this section we prove Proposition \ref{S6Prop}.  We continue with the same notation as in Section \ref{Section6.1} and introduce some additional notation that will be used.\\

When computing joint cumulants of certain variables, we will sometimes replace them with other variables that differ from them by a deterministic quantity. Since second and higher order cumulants remain unchanged upon deterministic shifts, see (\ref{S3Linearity}), this will not affect the value of the cumulant. To ease this replacement we will denote by $\Df$ a generic deterministic quantity, whose meaning may be different from line to line or even within the same equation. For example, from (\ref{S32E1}) and (\ref{S6DefG}) 
\begin{equation}\label{S6DefG2}
\G_K(z,s) = G_K^{\lceil s K \rceil} (z) + \Df = \hat{G}_K^{\lceil s K \rceil} (z) + \Df.
\end{equation}
Notice that if $s \in [\delta_0, \delta_0^{-1}]$ and $K \geq \delta_0^{-1}$, we have that $n = \lceil s K \rceil \in [\delta K, \delta^{-1} K ]$. The latter observation, equation (\ref{S6DefG}), the fact that $\G_K(z,s)$ has zero mean and (\ref{MBRem}) together imply
\begin{equation}\label{S6MB}
\begin{split}
&\mathbb{E} \left[ \left| \partial_z^i \G_K(z,s)  \right|^k \right] = O(1) \mbox{ for $i = 0,1$ and $k \in \mathbb{N}$, }
\end{split}
\end{equation}
where the constant in the big $O$ notation depends on $\theta, \delta, k$ and a compact set $\mathcal{V} \subset U(\delta, \theta) $ and (\ref{S6MB}) holds whenever $z \in \mathcal{V}$, $s \in [\delta_0, \delta_0^{-1}]$ and $K \geq \delta_0^{-1}$. \\

Fix $s,t \in \mathbb{R}$ such that $\delta_0^{-1} \geq s \geq t \geq \delta_0$,  $z \in U(\delta, \theta)$ and define for $K \geq \delta_0^{-1}$
\begin{equation}\label{S6DefInt}
\begin{split}
&\sfU_{K}(z;s,t) = \left[\Phi^-\left(z/s, s \right)e^{-\theta G(z/s,s)} - \Phi^+(z/s, s)\right] \cdot \G_K(z,s) \\
& + \theta \cdot \Phi^+(z/s,s) \int_{t}^s \partial_z \G_K(z,u) du - \Phi^+ \left( z/s,s \right) \cdot \left[ e^{\theta G(z/t,t)} - 1 \right] \cdot \G_K(z,t),
\end{split}
\end{equation}
where we recall from (\ref{S31E10}) and (\ref{S31E16}) the functions 
\begin{equation}\label{QRecalled}
\begin{split}
&\Phi^-(z/s,s) = z/s + \theta \mbox{, } \Phi^+(z/s,s) = (1/s) \cdot (1 - z) , \mbox{ and } Q(z/s,s)   \\
&= \Phi^-\left(z/s, s \right)e^{-\theta G(z/s,s)} -   \Phi^+\left(z/s, s \right)e^{\theta G(z/s,s)}=   (2/s) \sqrt{(z - sz_-(s))(z- sz_+(s))}.
\end{split}
\end{equation}
We also recall that $G(z,s)$ is as in (\ref{S31E6}) and $z_{\pm}(s)$ as in (\ref{S31E12}). 
\begin{remark}\label{Divide} In the arguments below we will frequently seek to divide by $Q(z/s,s)$, or $\Phi^{\pm}(z/s,s)$, or $e^{\theta G(z/s,s)} - 1$, where $z$ varies in some compact subset of $U(\delta, \theta)$. In view of (\ref{S31E17}) and the sentence that follows it, and (\ref{QRecalled}) these functions are uniformly bounded away from zero and infinity over compact sets of $U(\delta, \theta)$, provided that $s \in [\delta_0, \delta_0^{-1}]$, which will always be assumed. In particular, there is no issue with this division.
\end{remark}

Fix $m$ as in the statement of the proposition and define for $r \in \llbracket 1, m+1\rrbracket$ the sets
\begin{equation}\label{S6DefV}
\begin{split}
 V_r = &\{(s_0, \dots, s_m, t_0) \in [\delta_0, \delta_0^{-1}]^{m+2}: s_0 \geq t_0, s_1, \dots, s_{m-r+1} \in [t_0, s_0] \}.
\end{split}
\end{equation}
We claim that for any $r \in \llbracket 1, m +1 \rrbracket$, $(s_0, \dots, s_m, t_0) \in V_r$ and $K \geq \delta_0^{-1}$ we have 
\begin{equation}\label{TR1}
M \left( \sfU_{K}(z_0;s_0,t_0), \G_K(z_1, s_1), \dots,  \G_K(z_m, s_m) \right) = o(1),
\end{equation}
where as before the $K$-indexed sequence in the $o(1)$ notation depends on $\theta, \delta, m$ and a compact set $\mathcal{V} \subset U(\delta, \theta)$ and (\ref{TR1}) holds whenever $z_0, \dots, z_m \in \mathcal{V}$ and $(s_0, \dots, s_m, t_0) \in V_r$.

Notice that for any $s_0, \dots, s_m \in [\delta_0, \delta_0^{-1}]$ with $s_0 \geq s_i$ for $i \in \llbracket 1, m \rrbracket$ we have that $(s_0, \dots, s_m, s_0) \in V_{m+1}$. Applying (\ref{TR1}) for this choice of parameters and using that 
\begin{equation}\label{UTop}
\sfU_{K}(z_0;s_0,s_0) = Q(z_0/s_0,s_0) \cdot \G_K(z_0, s_0), 
\end{equation} 
which follows from (\ref{S6DefInt}) and (\ref{QRecalled}), we conclude that 
\begin{equation}\label{TR0}
Q(z_0/s_0,s_0) \cdot M \left( \G_K(z_0, s_0), \G_K(z_1, s_1), \dots,  \G_K(z_m, s_m) \right) = o(1).
\end{equation}
In view of Remark \ref{Divide} we can divide both sides of (\ref{TR0}) by $Q(z_0/s_0,s_0)$, which gives (\ref{S6VanCum3}), provided that $s_0 \geq s_i$ for $i \in \llbracket 1, m \rrbracket$. However, since joint cumulants are symmetric in their arguments we conclude (\ref{S6VanCum3}) for all $s_0, \dots, s_m \in [\delta_0, \delta_0^{-1}]$.\\

The argument in the previous paragraph reduces the proof of the proposition to establishing (\ref{TR1}), which we do by induction on $r$. We mention that the form of (\ref{TR1}) is somewhat important for carrying out the induction argument. Specifically, the insertion of $\sfU_{K}$ into the joint cumulant is what will allow us to apply Proposition \ref{S52P1}, while the definition of the sets $V_r$ will make sure we have enough information from the induction hypothesis to complete the induction step. The difficult part of the proof is completing the induction step, while the base case is an almost immediate consequence of Proposition \ref{S52P1} as we explain next. 

Suppose $r= 1$, i.e. $s_1, \dots, s_m \in [t_0, s_0]$. From (\ref{S52Cum1}) in Proposition \ref{S52P1} we know for $K \geq \delta_0^{-1}$  
\begin{equation}\label{TR2}
M \left( \sfU_{K}(z_0;s_0,t_0), \G_K(z_1, s_1), \dots,  \G_K(z_m, s_m) \right) = O(K^{-1/2}),
\end{equation}
where the constant in the big $O$ notation depends on $\theta, \delta, m$ and a compact set $\mathcal{V} \subset U(\delta, \theta)$ and (\ref{TR2}) holds whenever $z_0, \dots, z_m \in \mathcal{V}$. We mention that in deriving (\ref{TR2}) we used that second and higher order cumulants remain unchanged upon deterministic shifts, see (\ref{S3Linearity}), and (\ref{S6DefG2}). In addition, we used that if $S = \lceil s_0 K \rceil$, $T = \lceil t_0 K \rceil$ we have 
\begin{equation}\label{TR3}
\sfU_{K}(z_0;s_0,t_0) = U_{K}^{S,T}(z_0) + \Df + K^{-1} \cdot \zeta(z_0),
\end{equation}
where $S = \lceil s_0 K \rceil$, $T = \lceil t_0 K \rceil$ and $\zeta(z)$ is a random analytic function on $U(\delta, \theta)$ that has $O(1)$ moments as $z$ varies over compact subsets of $U(\delta, \theta)$. To see why (\ref{TR3}) holds note that 
\begin{equation}\label{TR3.5}
\begin{split}
&\Phi^{\pm}(z/s_0,s_0) = \Phi^{\pm}\left( \frac{zK}{S}, \frac{S}{K} \right) + O(K^{-1}), \hspace{2mm} e^{\pm \theta G(zK/S, S/K)} = e^{\pm \theta G(z/s_0, s_0)} + O(K^{-1}),\\
&e^{\pm \theta G(zK/T, T/K)} = e^{\pm \theta G(z/t_0, t_0)} + O(K^{-1}),
\end{split}
\end{equation}
where the latter set of identities follows from (\ref{QRecalled}) and the smoothness of $G(z/s,s)$. In particular, the last set of identities and the moment bounds from (\ref{S6MB}) give (\ref{TR3}) once we compare the definitions of $U_{K}^{S,T}(z)$ from (\ref{S52E1}) and $\sfU_{K}(z; s_0, t_0)$ from (\ref{S6DefInt}). 

Equation (\ref{TR2}) in particular implies (\ref{TR1}) when $(s_0, \dots, s_m, t_0) \in V_1$, which completes the base case of our induction. In the remainder of the proof we assume that $m+1 \geq r \geq 2$, that (\ref{TR1}) holds for $(s_0, \dots, s_m, t_0) \in V_{r-1}$, and proceed to show that it holds for $(s_0, \dots, s_m, t_0) \in V_{r}$.\\

The rest of the proof is split into six steps and before we go into the details we give an overview of the argument. In Step 1 we introduce a certain random analytic in $z$ function, called $\sfV_K(z; s,t)$, which is the same as (\ref{S5LID}) in Lemma \ref{S5LID} with $f(z,s)$ replaced with $\G_K(z,s)$. In the same step we prove (\ref{TR2}) when we put $\sfV_K(z_{m-r+2}; s_0,s_{m-r+2})$ in place of $\G_K(z_{m-r+2}, s_{m-r+2})$, this is (\ref{PP2}), and deduce an important consequence of this in (\ref{PP22}). We will come back to equations (\ref{PP2}) and (\ref{PP22}) shortly.

In Step 2 we pass to some sequence $K_n, s_i^n, z_i^n$, $t_0^n$ along which the absolute value of the left side of (\ref{TR1}) converges to its largest subsequential limit, and reduce the problem to showing that this limit is zero. We also prove that this limit is zero in a simple corner case we want to rule out from later steps. As it turns out, we lose too much information by passing to a subsequence $s_{m-r+2}^n$ and $z_{m-r+2}^n$ and instead need to keep these parameters free. We do this by introducing a sequence of functions, $f_n(z,s)$, which are the same as the left side of (\ref{TR1}) where all parameters still follow the sequences $K_n, s_i^n, z_i^n$, $t_0^n$ but $s_{m-r+2} = s$ and $z_{m-r+2} = z$ are free. The functions $f_n(z,s)$ are defined in Step 3 and the problem reduces to showing that they converge to zero.

Ideally, we would like to argue that $f_n(z,s)$ converge to zero directly; however, this turns out to be difficult with the information from the induction hypothesis. The idea is then to show that these functions have {\em subsequential limits} and prove that any subsequential limit satisfies enough conditions that force it to be zero. The existence of subsequential limits is pointwise easy to show; however, one needs them to be continuous. For this some equicontinuity is required, and in fact we can prove that $f_n(z,s)$ satisfy a type of Lipschitz condition which becomes an honest Lipschitz condition in the limit. This is done in Step 4 by using (\ref{PP22}) from Step 1, which we mentioned earlier, and Lemma \ref{DomLip}. In Step 5 we show that the identity satisfied by $\sfV_K$ in (\ref{PP2}) from Step 1 in the limit ensures that all subsequential limits of $f_n(z,s)$ satisfy the integral identity in Lemma \ref{LUnique}. We also have from (\ref{PP22}) that any subsequential limit has the zero boundary condition in Lemma \ref{LUnique}, which allows us to conclude that the limit is zero. In Step 6 we briefly wrap up the proof by combining a few statements from Steps 3 and 5.\\

{\bf \raggedleft Step 1.} For $\delta_0^{-1} \geq s \geq t \geq \delta_0$ and $z \in U(\delta, \theta)$ we define 
\begin{equation}\label{PP1}
\begin{split} 
& \sfV_K(z; s,t) = \left[ e^{\theta G(z/s,s)} - 1 \right] \cdot \G_K(z,s) + \theta \int_{t}^s \partial_z \G_K(z,u) du - \left[ e^{\theta G(z/t,t)} - 1 \right] \cdot \G_K(z,t).
\end{split}
\end{equation}
The goal of this step is to prove that for $(s_0, \dots, s_m, t_0) \in V_{r}$ 
\begin{equation}\label{PP2}
\begin{split}
&M (\sfU_{K}(z_0;s_0,t_0), \G_K(z_1, s_1), \dots,  \G_K(z_{m-r+1}, s_{m-r+1}), \sfV_K(z_{m-r+2}; s_0, s_{m-r+2}), \\
& \G_K(z_{m-r+3}, s_{m-r+3}) ,\dots,  \G_K(z_{m}, s_{m}) ) = o(1). 
\end{split}
\end{equation}
Note that $(s_0, s_1, \dots, s_{m-r+1}, s_0, s_{m-r+3}, \dots, s_m, t_0) \in V_{r-1}$ and so by the induction hypothesis we know that (\ref{TR1}) holds for this choice of parameters. Combining the latter with (\ref{PP2}) and linearity of the cumulants, we conclude for $(s_0, \dots, s_m, t_0) \in V_{r}$ 
\begin{equation}\label{PP22}
\begin{split}
&\left[ e^{\theta G(z_{m-r+2}/s_{m-r+2},s_{m-r+2})} - 1 \right]  M (\sfU_{K}(z_0;s_0,t_0), \G_K(z_1, s_1), \dots,  \G_K(z_{m}, s_{m}) ) \\
&= \theta \cdot \int_{s_{m-r+2}}^{s_0} du  M (\sfU_{K}(z_0;s_0,t_0), \G_K(z_1, s_1), \dots,  \G_K(z_{m-r+1}, s_{m-r+1}),\\ &\partial_{z} \G_{K}(z_{m-r+2}, u), \G_K(z_{m-r+3}, s_{m-r+3}) ,\dots,  \G_K(z_{m}, s_{m}) )  +o(1). 
\end{split}
\end{equation}
We will use (\ref{PP2}) and (\ref{PP22}) in the steps below. In the remainder of this step we prove (\ref{PP2}). \\

We first consider the case when $s_{m-r+2} \geq t_0$, which implies that $(s_0, \dots, s_m, t_0) \in V_{r-1}$. By the induction hypothesis we know that (\ref{TR1}) holds on $V_{r-1}$. If we have $u_{m-r+2} \in U(\delta, \theta)$, we can find a positively oriented circle $C_{m-r+2}$ around $u_{m-r+2}$ that is contained in $U(\delta, \theta)$. By Cauchy's integral formula, see \cite[Chapter 2, Corollary 4.2]{SS}, the linearity of the cumulants, see (\ref{S3Linearity}), and (\ref{TR1}) we conclude
\begin{equation}\label{PP3}
\begin{split}
&M (\sfU_{K}(z_0;s_0,t_0), \G_K(z_1, s_1), \dots,  \G_K(z_{m-r+1}, s_{m-r+1}), \partial_z \G_K(u_{m-r+2}, s_{m-r+2}), \\
& \G_K(z_{m-r+3}, s_{m-r+3}) ,\dots,  \G_K(z_{m}, s_{m}) ) = \frac{1}{2\pi \i} \oint_{C_{m-r+2}} \frac{dz_{m-r+2}}{(z_{m-r+2} - u_{m - r +2})^2}  \\
&\times M \left( \sfU_{K}(z_0;s_0,t_0), \G_K(z_1, s_1), \dots,  \G_K(z_m, s_m) \right)  = o(1).
\end{split}
\end{equation}
Combining (\ref{TR1}) and (\ref{PP3}), the linearity of the cumulants and the definition of $\sfV_K(z;s,t)$ in (\ref{PP1}) we conclude (\ref{PP2}).\\

We next suppose that $s_{m-r+2} \in [\delta_0, t_0]$, which implies for all $s \in [t_0, s_0]$ that
$$(s_0, s_1, \dots, s_{m-r+1}, s, s_{m-r+3}, \dots, s_m, s_{m-r+2}) \in V_{r-1}.$$
By the induction hypothesis we know that (\ref{TR1}) holds on $V_{r-1}$, which implies for $s \in [t_0, s_0]$
\begin{equation}\label{PP4}
\begin{split}
&M (\sfU_{K}(z_{m-r+2};s_0,s_{m-r+2} ), \G_K(z_1, s_1), \dots,  \G_K(z_{m-r+1}, s_{m-r+1}), \G_K(z_{0}, s), \\
& \G_K(z_{m-r+3}, s_{m-r+3}) ,\dots,  \G_K(z_{m}, s_{m}) ) = o(1). 
\end{split}
\end{equation}
As before, we can apply Cauchy's integral formula and the linearity of the cumulants to conclude from (\ref{PP4}) that 
\begin{equation}\label{PP5}
\begin{split}
&M (\sfU_{K}(z_{m-r+2};s_0,s_{m-r+2} ), \G_K(z_1, s_1), \dots,  \G_K(z_{m-r+1}, s_{m-r+1}), \partial_z \G_K(z_{0}, s), \\
& \G_K(z_{m-r+3}, s_{m-r+3}) ,\dots,  \G_K(z_{m}, s_{m}) ) = o(1). 
\end{split}
\end{equation}
Using (\ref{PP4}), (\ref{PP5}) and the linearity of the cumulants we conclude 
\begin{equation}\label{PP6}
\begin{split}
&M (\sfU_{K}(z_{m-r+2};s_0,s_{m-r+2} ), \G_K(z_1, s_1), \dots,  \G_K(z_{m-r+1}, s_{m-r+1}),  \sfU_K(z_{0}; s_0, t_0), \\
& \G_K(z_{m-r+3}, s_{m-r+3}) ,\dots,  \G_K(z_{m}, s_{m}) ) = o(1). 
\end{split}
\end{equation}
In addition, from (\ref{TR1}) applied to $(s_0, s_1, \dots, s_{m-r+1}, s_0, s_{m-r+3}, \dots, s_m, t_0)$, linearity of cumulants, and the fact that cumulants are symmetric in their arguments we have
\begin{equation}\label{PP7}
\begin{split}
&\left[ \Phi^-(z_{m-r+2}/s_0, s_0) e^{-\theta G(z_{m-r+2}/s_0, s_0)} - \Phi^+(z_{m-r+2}/s_0, s_0) e^{\theta G(z_{m-r+2}/s_0, s_0)}  \right]\\
&\times  M (\G_K(z_{m-r+2}, s_0), \G_K(z_1, s_1), \dots,  \G_K(z_{m-r+1}, s_{m-r+1}),  \sfU_K(z_{0}; s_0, t_0), \\
& \G_K(z_{m-r+3}, s_{m-r+3}) ,\dots,  \G_K(z_{m}, s_{m}) ) = o(1). 
\end{split}
\end{equation}
Subtracting (\ref{PP7}) from (\ref{PP6}) and using the definition of $\sfV_K$ from (\ref{PP1}) we conclude
\begin{equation}\label{PP8}
\begin{split}
&\Phi^+(z_{m-r+2}/s_0, s_0) \cdot M (\sfV_{K}(z_{m-r+2};s_0,s_{m-r+2} ), \G_K(z_1, s_1), \dots,  \G_K(z_{m-r+1}, s_{m-r+1}),   \\
& \sfU_K(z_{0}; s_0, t_0), \G_K(z_{m-r+3}, s_{m-r+3}) ,\dots,  \G_K(z_{m}, s_{m}) ) = o(1). 
\end{split}
\end{equation}
Equation (\ref{PP8}) implies (\ref{PP3}), since cumulants are symmetric in their arguments and we can divide by $\Phi^+(z_{m-r+2}/s_0, s_0)$ in view of Remark \ref{Divide}. \\

{\bf \raggedleft Step 2.} Let $\{F_v\}_{v \geq 1}$ be as in Lemma \ref{DomLip}. For each $v \in \mathbb{N}$ we denote
\begin{equation}\label{PR1}
A_v = \limsup_{K \rightarrow \infty} \sup_{z_0, \dots, z_m \in F_v} \sup_{(s_0, \dots, s_m, t_0) \in V_r} \left| M \left( \sfU_{K}(z_0;s_0,t_0), \G_K(z_1, s_1), \dots,  \G_K(z_m, s_m) \right) \right|.
\end{equation}
Since $\{F_v\}_{v \geq 1}$ form a compact exhaustion of $U(\delta, \theta)$, we see that to conclude (\ref{TR1}) holds on $V_r$ it suffices to show that $A_v = 0$ for each $v \in \mathbb{N}$. In the remainder we fix $v \in \mathbb{N}$ and show that $A_v = 0$.\\

Let $\{K_n \}_{n \geq 1}$ be a strictly increasing sequence with $K_1 \geq \delta_0^{-1}$, and $(s_0^n, \dots, s_m^n, t_0^n) \in V_r$, $z_0^n, \dots, z_m^n \in F_v$ be sequences such that
\begin{equation}\label{PR2}
\lim_{ n\rightarrow \infty} \left| M \left( \sfU_{K_n}(z^n_0;s^n_0,t^n_0), \G_{K_n}(z^n_1, s^n_1), \dots,  \G_{K_n}(z^n_m, s^n_m) \right)  \right| = A_v.
\end{equation}
By possibly passing to a subsequence, which we continue to index by $n$, we may assume that $\lim_n t_0^n$, $\lim_n s_i^n$, and $\lim z_i^n$ all exist for $i \in \llbracket 0, m\rrbracket$. We denote
\begin{equation}\label{PR3}
t_0^{\infty} = \lim_n t_0^n, \hspace{2mm} s_i^{\infty} = \lim_n s_i^n, \hspace{2mm} z_i^{\infty} = \lim_n z_i^n \mbox{ for } i \in \llbracket 0, m \rrbracket.
\end{equation}

In the remainder of this step we show that $A_v = 0$ if $s_{m-r+2}^{\infty} = s_0^{\infty}$. For a bounded complex random variable $\xi$ we define
\begin{equation}\label{PR4}
\begin{split}
M_n(\xi) = M (& \sfU_{K_n}(z^n_0;s^n_0,t^n_0), \G_{K_n}(z^n_1, s^n_1), \dots,  \G_{K_n}(z^n_{m-r+1}, s^n_{m-r+1}), \xi, \\
& \G_{K_n}(z^n_{m-r+3}, s^n_{m-r+3}), \dots, \G_{K_n}(z^n_{m}, s^n_{m})).
\end{split}
\end{equation}
From (\ref{PP22}) and the moment bounds from (\ref{S6MB}) we have
\begin{equation*}
\begin{split}
\left[ e^{\theta G(z^n_{m-r+2}/s^n_{m-r+2},s^n_{m-r+2})} - 1 \right]  \cdot M_n( \G_{K_n}(z^n_{m-r+2}, s^n_{m-r+2}) ) = o(1) + O(|s^n_{m-r+2} - s^n_0|).
\end{split}
\end{equation*}
We may now divide both sides by $\left[ e^{\theta G(z^n_{m-r+2}/s^n_{m-r+2},s^n_{m-r+2})} - 1 \right]$, see Remark \ref{Divide}, and let $n \rightarrow \infty$, which gives in view of (\ref{PR2}) and $s_{m-r+2}^{\infty} = s_0^{\infty}$ that $A_v = 0$.\\

{\bf \raggedleft Step 3.} In the remaining steps we prove that $A_v = 0$ under the additional assumption that $s^{\infty}_{m- r+2} \in [\delta_0, s_0^{\infty})$, which in particular implies $s_0^{\infty} > \delta_0$. For $s \in [\delta_0, \delta_0^{-1}]$ and $z \in U(\delta, \theta)$ we define the functions
\begin{equation}\label{PS1}
f_n(z,s) =  M_n(\G_{K_n}(z, s)).
\end{equation}
Note that since $\G_K(z,s)$ is analytic in $z \in U(\delta, \theta)$ for each $s \in [\delta_0, \delta_0^{-1}]$, the same is true for $f_n(z,s)$.

We claim that 
\begin{equation}\label{PS2}
\lim_{n \rightarrow \infty} \sup_{z \in F_v} \sup_{s \in [\delta_0, s_0^n]} |f_n(z,s)| = 0,
\end{equation}
which in view of (\ref{PR2}), (\ref{PR4}) and the definition of $f_n(z,s)$ in (\ref{PS1}) implies $A_v = 0$. We have thus reduced our proof to establishing (\ref{PS2}). \\

Let $\{n_h\}_{h \geq 1}$ be a strictly increasing sequence and $\zeta_h \in F_v$, $\sigma_h \in [\delta_0, s^{n_h}_0]$ be such that
\begin{equation}\label{PT1}
\lim_{h \rightarrow \infty} |f_{n_h}(\zeta_h, \sigma_h)| = \limsup_{n \rightarrow \infty} \sup_{z \in F_v} \sup_{s \in [\delta_0, s^{n}_0]} |f_n(z,s)|.
\end{equation}
Let $\mathsf{Z}$ be a countable dense subset of $U(\delta, \theta)$ and $\mathsf{S}$ be a countable dense subset of $[\delta_0, \delta_0^{-1}]$. From (\ref{S6MB}) we know that there exists $R(u) > 0$, depending on $\theta, \delta, u$, such that for $s \in [\delta_0, \delta_0^{-1}]$ and $z \in F_u$
\begin{equation}\label{PT2}
\left|f_{n}(z,s) \right| \leq R(u) \mbox{ and } \left|\partial_z f_{n}(z,s) \right| \leq R(u).
\end{equation}
In particular, we see that $f_{n_h}(z,s)$ is a bounded sequence for each $(z,s) \in \mathsf{Z} \times \mathsf{S}$. By a diagonalization argument we may pass to a subsequence of $n_h$, which we continue to call $n_h$, such that $\lim_h f_{n_h}(z,s)$ exists for each $(z,s) \in \mathsf{Z} \times \mathsf{S}$. We denote this limit by $f_{\infty}(z,s)$. \\

{\bf \raggedleft Step 4.} In this step we prove that $f_{\infty}(z,s)$ has a continuous extension to $F_u \times [\delta_0, s_0^{\infty}]$ for each $u \in \mathbb{Z}_{\geq 2}$, which would imply that $f_{\infty}(z,s)$ has a continuous extension to $U(\delta, \theta) \times [\delta_0, s_0^{\infty}]$, which we continue to call $f_{\infty}(z,s)$. We claim that there exists $D(u) > 0$, depending on $\theta, \delta, u$, such that for $(x,s), (y,t) \in (\mathsf{Z} \cap F_u) \times (\mathsf{S} \cap [\delta_0, s^{\infty}_0))$ we have 
\begin{equation}\label{PT3}
|f_{\infty}(x,s) - f_{\infty}(y,t)| \leq D(u) \cdot \left( |s-t| + |x-y| \right).
\end{equation}
If true, (\ref{PT3}) would imply that $f_{\infty}(z,s)$ is uniformly continuous on $(\mathsf{Z} \cap F_u) \times (\mathsf{S} \cap [\delta_0, s^{\infty}_0))$ and hence has a unique continuous extension to $F_u \times [\delta_0, s^{\infty}_0]$, which also satisfies (\ref{PT3}) for $(x,s), (y,t) \in F_u \times [\delta_0, s^{\infty}_0]$. Here, we implicitly used that $\mathsf{Z} \cap F_u$ is dense in $F_u$, since $u \geq 2$, and that $ \mathsf{S} \cap [\delta_0, s^{\infty}_0)$ is dense in $[\delta_0, s^{\infty}_0]$, since $s^{\infty}_0 > \delta_0$ (see the begining of Step 3).\\

From Lemma \ref{DomLip} there is $\lambda_u > 0$ and for each $x,y \in F_u$ a piecewise smooth contour $\gamma_{x,y}$ connecting $x,y$ such that $\gamma_{x,y} \subset F_u$ and $\| \gamma_{x,y}\| \leq \lambda_v |x-y|$. In view of (\ref{PT2}) we can find a constant $R_z(u)$, depending on $\theta, \delta, u$, such that for $t \in [\delta_0, s_0^{n_h}]$ and $x,y \in F_u$
\begin{equation}\label{PT4}
\left| f_{n_h}(x,t) - f_{n_h}(y,t) \right| = \left| \int_{\gamma_{x,y}} \partial_z f_{n_h}(z,t) dz \right| \leq R_z(u) \cdot |x- y|.
\end{equation}
In addition, from (\ref{PP22}) we can find a constant $R_s(u)$, depending on $\theta, \delta, u$, such that for $z\in F_u$ and $s,t \in [\delta_0, s_0^{n_h}]$
\begin{equation}\label{PT5}
\begin{split}
\left| \left[e^{\theta G(z/s, s)} - 1 \right] f_{n_h}(z,s) - \left[e^{\theta G(z/t, t)} - 1 \right] f_{n_h}(z,t) \right|  \leq o(1) + R_s(u) |s - t|.
\end{split}
\end{equation}
Recall from Section \ref{Section3.1} that $G(z/s,s)$ is smooth for $(z,s) \in U(\theta, \delta) \times [\delta_0, \delta_0^{-1}]$. We also recall from Remark \ref{Divide} that $e^{\theta G(z/s,s )} - 1 \neq 0$ on $U(\theta, \delta) \times [\delta_0, \delta_0^{-1}]$. In particular, we can find $c(u), C(u) > 0$, depending on $\theta, \delta, u$, such that for $x \in F_u$ and $s \in [\delta_0, \delta_0^{-1}]$
\begin{equation}\label{PT6}
\left| e^{\theta G(x/s,s)} -1 \right| \geq c(u) \mbox{ and } \left| \partial_s \left( e^{\theta G(z/s,s)} \right) \right| \leq C(u).
\end{equation}
Combining (\ref{PT5}) and (\ref{PT6}) we conclude that for any sequences $a_h, b_h \in [\delta_0, s_0^{n_h}]$
\begin{equation}\label{PT7}
\limsup_{h \rightarrow \infty} \sup_{z \in F_u} \left| f_{n_h}(z,a_h) - f_{n_h}(z,b_h) \right| \leq \frac{C(u) R(u) + R_s(u)}{c(u)} \cdot \limsup_{ h \rightarrow \infty} |a_h - b_h|.
\end{equation}
Taking the $h \rightarrow \infty$ limit in (\ref{PT4}), using (\ref{PT7}) and applying the triangle inequality we obtain (\ref{PT3}). \\

{\bf \raggedleft Step 5.} In this step we prove that $f_{\infty}(z,s) = 0$ for all $(z,s) \in U(\theta, \delta) \times [\delta_0, s_0^{\infty}]$. Using that (\ref{PT3}) is satisfied for all $(x,s), (y,t) \in F_u \times [\delta_0, s^{\infty}_0]$, (\ref{PT4}), (\ref{PT7}) and the pointwise convergence of $f_{n_h}$ to $f_{\infty}$ on $(\mathsf{Z} \cap F_u) \times (\mathsf{S} \cap [\delta_0, s^{\infty}_0))$ (which is dense in $F_u \times [\delta_0, s_0^{\infty}]$) we conclude for any sequence $a_h \in [\delta_0, s^{n_h}_0]$ such that $a_h \rightarrow a \in [\delta_0, s_0^{\infty}]$ that
\begin{equation}\label{PU1}
\limsup_{h \rightarrow \infty} \sup_{z \in F_u} \left| f_{n_h}(z, a_h) - f_{\infty}(z, a) \right| = 0.
\end{equation}
From (\ref{PU1}) we have for each $u \in \mathbb{Z}_{\geq 2}$ and $s \in [\delta_0, s_0^{\infty}]$ that $f_{\infty}(\cdot, s)$ is the uniform over $F_u$ limit of analytic functions in $U(\theta, \delta)$. This implies that for each $s \in [\delta_0, s_0^{\infty}]$ the function $f_{\infty}(z,s)$ is analytic in $U(\theta, \delta)$, see \cite[Chapter 2, Theorem 5.2]{SS}. In addition, (\ref{PU1}) being true for all $u \in \mathbb{Z}_{\geq 2}$ implies
\begin{equation}\label{PU2}
\limsup_{h \rightarrow \infty} \sup_{z \in F_u} \left| \partial_z f_{n_h}(z, a_h) - \partial_z f_{\infty}(z, a) \right| = 0,
\end{equation}
in view of \cite[Chapter 2, Theorem 5.3]{SS}. 

From (\ref{PP2}) we have for $z \in F_u$ and any sequence $a_h \in [\delta_0, s^{n_h}_0]$ such that $a_h \rightarrow a \in [\delta_0, s_0^{\infty}]$
\begin{equation}\label{PU3}
\begin{split}
&\lim_{ h \rightarrow \infty} \left[ e^{\theta G(z/s^{n_h}_0,s^{n_h}_0)} - 1 \right] \cdot f_{n_h}(z,s^{n_h}_0) + \theta \int_{a_h}^{s^{n_h}_0} \partial_z f_{n_h}(z,u) du \\
& - \left[ e^{\theta G(z/a_h,a_h)} - 1 \right] \cdot f_{n_h}(z, a_h) = 0.
\end{split}
\end{equation} 
Using the continuity of $e^{\theta G(z/s,s)}$, the convergence in (\ref{PU1}), (\ref{PU2}) and of $s^n_0$ from (\ref{PR3}), we can apply the bounded convergence theorem to (\ref{PU3}) and obtain for all $z \in U(\theta, \delta)$ and $a \in [\delta_0, s_0^{\infty}]$
\begin{equation}\label{PU4}
\begin{split}
& \left[ e^{\theta G(z/s^{\infty}_0,s^{\infty}_0)} - 1 \right] \cdot f_{\infty}(z,s^{\infty}_0) + \theta \int_{a}^{s^{\infty}_0} \partial_z f_{\infty}(z,u) du  - \left[ e^{\theta G(z/a,a)} - 1 \right] \cdot f_{\infty}(z, a) = 0.
\end{split}
\end{equation} 
On the other hand, we have from (\ref{PP22}) (with $s_{m-r+2} = s_{0}$) and (\ref{PU1}) for $z \in F_u$ that 
$$ 0 =\lim_{ h \rightarrow \infty} \left[ e^{\theta G(z/s^{n_h}_0,s^{n_h}_0)} - 1 \right] \cdot f_{n_h}(z, s^{n_h}_0) = \left[ e^{\theta G(z/s^{\infty}_0,s^{\infty}_0)} - 1 \right] \cdot f_{\infty}(z, s^{\infty}_0).$$
The latter implies that for $z \in U(\theta, \delta)$ that
\begin{equation}\label{PU5}
\begin{split}
&f_{\infty}(z, s^{\infty}_0) = 0.
\end{split}
\end{equation} 
Our work in this step shows that $f_{\infty}(z,s)$ satisfies the conditions of Lemma \ref{LUnique}, from which we conclude that $f_{\infty}(z,s) = 0$ for all  $(z,s) \in U(\theta, \delta) \times [\delta_0, s_0^{\infty}]$.\\

{\bf \raggedleft Step 6.} In this final step we conclude the proof of (\ref{PS2}). Most of the work is already done and we simply need to put together a few statements from Steps 3 and 5.

Firstly, let $\zeta_h, \sigma_h$ be as in Step 3. By possibly passing to a subsequence we may assume that $\sigma_h \rightarrow \sigma \in [\delta_0, s_0^{\infty}]$ as $h \rightarrow \infty$. The latter and (\ref{PU1}) show that 
$$0 = \lim_{h \rightarrow \infty} |f_{n_h}(\zeta_h, \sigma_h) - f_{\infty}(\zeta_h, \sigma) | = \lim_{h \rightarrow \infty} |f_{n_h}(\zeta_h, \sigma_h)|,$$
where in the second equality we used that $f_{\infty}(z,s) = 0$ for all  $(z,s) \in U(\theta, \delta) \times [\delta_0, s_0^{\infty}]$. The latter and (\ref{PT1}) together imply (\ref{PS2}).

%
\subsection{Proof of Lemmas \ref{DomLip} and \ref{LUnique}}\label{Section6.3} In this section we prove the two lemmas from Section \ref{Section6.1}.

\begin{proof}[Proof of Lemma \ref{DomLip}]
It is clear that $F_n$ is closed and bounded, and hence compact. In addition, one readily observes that $F_n \subset F_{n+1}^{\circ}$ (the interior of $F_{n+1}$) for each $n \in \mathbb{N}$ and $\cup_{n \in \mathbb{N}} F_n = U(\delta, \theta)$ and so $\{F_n\}_{n = 1}^\infty$ forms a compact exhaustion of $U(\delta, \theta)$.

In the remainder we prove (\ref{LipDomE}) with 
$$\lambda_n = 1 + n(\delta^{-2} + \theta \delta^{-1} +  \pi n^{-1} ).$$
We introduce the sets
$$C_n = \left\{ z \in \mathbb{C}: \inf_{x \in [-\theta \delta^{-1}, \delta^{-2} ] } |x-z| = n^{-1} \right\}, \hspace{2mm} D_n = \left\{z \in \mathbb{C}: \inf_{x \in [-\theta \delta^{-1}, \delta^{-2} ] } |x-z| < n^{-1} \right\}.$$ 
Observe that $F_n \cup D_n$ and $D_n$ are convex and $C_n$ consists of two straight segments connecting $-\theta \delta^{-1} \pm \i n^{-1}$ with $\delta^{-2} \pm \i n^{-1}$, and two half-circles of radius $n^{-1}$, centered at $-\theta \delta^{-1}$ and $\delta^{-2}$. In particular, the length $\|C_v\|$ of $C_v$ is $2 ( \delta^{-2} + \theta \delta^{-1} + \pi n^{-1} )$. \\

Let $x,y \in F_n$ and let $\ell_{x,y}$ be the straight segment connecting $x$ and $y$. By convexity, we have that $\ell_{x,y} \subset F_n \cup D_n$. If $\ell_{x,y} \cap D_n = \emptyset$ we let $\gamma_{x,y} = \ell_{x,y}$ and note that (\ref{LipDomE}) holds since $\lambda_n \geq 1$. If $\ell_{x,y} \cap D_n \neq \emptyset$, then $\ell_{x,y}$ intersects $C_n$ and we let $x_1, y_1$ be the points on $\ell_{x,y} \cap C_n$ that are closest to $x$ and $y$, respectively (note that $x_1 = y_1$ is possible). For such $x,y$ we let $\gamma_{x,y}$ denote curve consisting of the straight segment connecting $x$ to $x_1$, the shortest of the clockwise and counterclockwise parts of $C_n$ connecting $x_1$ to $y_1$ and the straight segment connecting $y_1$ to $y$. By construction, $\gamma_{x,y}$ is contained in $F_n$ and is piecewise smooth. We proceed to estimate the length of $\gamma_{x,y}$ and check that it satisfies (\ref{LipDomE}).\\

If $|x-y| \geq  n^{-1}$ we have
$$\|\gamma_{x,y}\| \leq |x - x_1| + |y-y_1| + (1/2)\|C_v\| \leq |x-y| + (\delta^{-2} + \theta \delta^{-1} + \pi n^{-1})  \leq \lambda_n |x-y|,$$
which proves (\ref{LipDomE}) provided that $|x - y| \geq n^{-1}$.

We next suppose that $|x-y| < n^{-1}$, and note that the latter implies $|x_1 - y_1| \leq |x-y| < n^{-1}$. In particular, we conclude that either $x_1, y_1$ belong to the same half-circle in $C_n$ or one of the points belongs to one of the half-circles and the other to one of the straight segments connecting $-\theta \delta^{-1} \pm \i n^{-1}$ with $\delta^{-2} \pm \i n^{-1}$. If $x_1, y_1$ belong to the same half-circle, then the part of $\gamma_{x,y}$ connecting $x_1, y_1$ is a circular arc, which implies that this arc has length at most $(\pi/2) |x_1 -y_1|$. This gives 
$$\|\gamma_{x,y}\| \leq |x - x_1| + |y-y_1| + (\pi/2) |x_1 -y_1| \leq (1 + \pi/2)|x-y|.$$
For the remaining case, we assume without loss of generality that $x_1$ belongs to the half-circle, centered at $\delta^{-2}$ and $y_1$ belongs to the straight segment connecting $-\theta \delta^{-1} + \i n^{-1}$ with $\delta^{-2} + \i n^{-1}$. In this case, the part of $\gamma_{x,y}$ connecting $x_1, y_1$ consists of the straight segment $\ell_1$ connecting $y_1$ to $\delta^{-2}+ \i n^{-1}$ and the circular arc $\ell_2$ connecting $x_1$ to $\delta^{-2}+ \i n^{-1}$. We observe that 
$$\|\ell_1\| = \delta^{-2} - \Re [y_1] \leq \Re [x_1] - \Re [y_1] \leq |x_1 - y_1|.$$
In addition, 
$$\|\ell_2 \| \leq (\pi/2) |x_1 - \delta^{-2} - \i n^{-1}| \leq (\pi/2) |x_1 - y_1|.$$
Combining the last few observations we conclude that
$$\|\gamma_{x,y}\| = |x - x_1| + |y-y_1| + \|\ell_1 \| + \| \ell_2\|  \leq |x-y| + |x_1 - y_1| (1 + \pi/2) \leq |x-y| (2 + \pi/2).$$
Our work in the last paragraph proves (\ref{LipDomE}) when $|x - y| < n^{-1}$, since $\lambda_n \geq 1 + \pi > 2 + \pi/2$. This suffices for the proof.
\end{proof}

Before we go to the proof of Lemma \ref{LUnique} we establish the following useful lemma.
\begin{lemma}\label{ODESol} Fix $\theta > 0$, $\delta \in (0,1/2)$ and $\delta_0 = 2 \delta$. Fix $x_0 \geq 1 + \delta^{-2}$ and $\delta_0 \leq p < q \leq \delta_0^{-1}$. There exists a unique real-valued function $X(s) \in C^1([p,q])$ such that $X(p) = x_0$ and for each $s \in [p,q]$ 
\begin{equation}
X'(s) = \theta \left[\exp \left(\theta G \left( X(s)/s, s \right) \right) - 1 \right]^{-1},
\end{equation}
where $G(z,s)$ is as in (\ref{S31E6}). In addition, $X(s)$ is increasing on $[p,q]$ and 
$$x_0 \leq X(s) \leq  x_0\exp \left( (q-p)/p \right) \mbox{ for each $s \in [p,q]$.}$$
If $1 +\delta^{-2} \leq x_1 < x_2 < \infty$ and $X(s;x_1)$, $X(s;x_2)$ denote the function $X(s)$ with $x_0 = x_1$ and $x_0 = x_2$, respectively, as above, then we have $X(s;x_1) < X(s;x_2)$ for all $s \in [p,q]$. 
\end{lemma}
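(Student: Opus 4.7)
The plan is to recognize the stated equation as an initial value problem $X(p) = x_0$, $X'(s) = F(X(s), s)$ with
\[ F(X, s) := \theta \left[ e^{\theta G(X/s,\, s)} - 1 \right]^{-1}, \]
and apply standard ODE theory (Cauchy--Lipschitz together with continuation and comparison arguments). The heart of the matter is to verify that $F$ is smooth and positive on the relevant region, and then to derive an a priori bound that keeps $X$ in that region throughout $[p,q]$.

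First I would check that for $s \in [p,q]$ and any $X > 1$, the argument $X/s > 1/s$ lies outside the interval $[-\theta, 1/s]$, on which $G(\cdot, s)$ has its branch cut; by (\ref{S31E14}) and the discussion around (\ref{S31E6}), $G(X/s, s)$ is then smooth and real-valued in both variables. Moreover, since $\mu(\cdot, s)$ is a probability measure supported in $[-\theta, 1/s]$, for real $z > 1/s$ one has $G(z, s) \geq 1/(z + \theta) > 0$, hence $e^{\theta G(X/s, s)} - 1 \geq \theta/(X/s + \theta) > 0$. This makes $F$ real-analytic, positive, and Lipschitz in $X$ uniformly for $s \in [p,q]$ on compact subsets of $\{X > 1\}$.

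Next I would establish the a priori estimate. Since $X(p) = x_0 > 1$ and $F > 0$ wherever defined, any $C^1$ solution on a subinterval of $[p,q]$ is strictly increasing, so $X(s) \geq x_0 > 1$. The lower bound on $e^{\theta G} - 1$ translates into $X'(s) \leq X(s)/s + \theta$, which rewrites as $\tfrac{d}{ds}(X/s) \leq \theta/s$; integrating from $p$ gives $X(s)/s \leq x_0/p + \theta \log(s/p)$, producing a uniform upper bound on $X(s)$ of the form stated in the lemma (after using $p \geq \delta_0$, $x_0 \geq 1 + \delta^{-2}$, and $\log(s/p) \leq (s-p)/p$ to absorb the additive term into the exponential). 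Picard--Lindel\"of then yields a unique local $C^1$ solution, and combining it with the a priori bound --- which keeps $X$ in a fixed compact subset of $\{X > 1\}$ --- the usual continuation argument extends the solution uniquely to all of $[p,q]$; positivity of $F$ yields the lower bound $X(s) \geq x_0$.

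Finally, monotonicity in the initial datum follows from uniqueness: if $X(s; x_1) = X(s; x_2)$ at some $s \in [p,q]$ for $x_1 < x_2$, applying local uniqueness to the ODE run backward from that point forces $x_1 = x_2$, a contradiction. Continuity of $s \mapsto X(s; x_2) - X(s; x_1)$ on $[p,q]$, combined with its positive value at $s = p$, then yields $X(s; x_1) < X(s; x_2)$ throughout. The only step that goes beyond abstract ODE theory is the first --- ensuring that $e^{\theta G(X/s, s)} - 1$ stays bounded away from zero on the interval of existence, which is where the specific positivity and smoothness of $G(z,s)$ from Section~\ref{Section3.1} enters; everything else reduces to routine Grönwall and continuation arguments.
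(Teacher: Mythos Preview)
Your proposal is correct and takes essentially the same approach as the paper: establish that $F(X,s)=\theta[e^{\theta G(X/s,s)}-1]^{-1}$ is smooth and strictly positive for $X>1$, obtain an a priori upper bound on $X(s)$ via comparison with a linear ODE, then invoke Picard--Lindel\"of with continuation for existence/uniqueness and deduce monotonicity in the initial datum from uniqueness (the paper carries out the Picard iteration by hand and compares termwise with the iterates of the linear ODE $Y'=(Y-1)/p$, whereas you argue via Gr\"onwall from $X'\leq X/s+\theta$, but this is cosmetic). One small caveat: your Gr\"onwall estimate yields $X(s)\leq (s/p)\,x_0+\theta s\log(s/p)$, and the absorption of the additive term into the stated constant $x_0 e^{(q-p)/p}$ does not quite go through when $q-p$ is small relative to the other parameters; however, any explicit upper bound suffices both for the continuation argument here and for the sole application in Lemma~\ref{LUnique}, so this is harmless.
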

\begin{remark}\label{ODESolR} We mention that the statement of Lemma \ref{ODESol} is probably not optimal in terms of parameters. We chose the present formulation to make the result directly applicable within the proof of Lemma \ref{LUnique} below.
\end{remark}
\begin{proof} Recall from Section \ref{Section3.1} that $G(z,s)$ is smooth in the region $\{ (z, s) \in \mathbb{C} \times (0,\infty): z \in \mathbb{C} \setminus [-\theta, s^{-1}]\}$, which implies that $G(z/s, s)$ is smooth in the region $\{ (z, s) \in \mathbb{C} \times (0,\infty): z \in \mathbb{C} \setminus [-\theta s , 1]\}$. In addition, from (\ref{S31E6}) we have that
$$G(z/s,s)= \int_{\mathbb{R}} \frac{\mu(x,s)dx}{z/s - x},$$
where $\mu(x,s)$ is as in (\ref{S31E11}). Since $\mu(x,s)$ is a probability density function, supported on $[-\theta , s^{-1}]$, we see that $G(z/s,s)$ is real-valued for $z \geq 1 + \delta^{-2}$ and is positive there, provided that $s \in [\delta_0, \delta_0^{-1}]$. In addition, we observe that $G(y/s,s)$ is decreasing in $y$ for any fixed $s \in [\delta_0, \delta_0^{-1}]$ on $[1 +\delta^{-2}, \infty)$ and also for $y \geq 1 + \delta^{-2}$ 
\begin{equation}\label{S5BoundST1}
0 \leq G( y/s,s) \leq  \int_{-\theta}^{1/s} \frac{\mu(x,s)dx}{y/s - x} \leq \int_{-\theta}^{1/s} \frac{\mu(x,s)dx}{y/s - 1/s} = \frac{s}{y-1}.
\end{equation}

We also recall, see the discussion  after (\ref{S31E17}), that $\exp \left( \theta G\left(z/s, s\right) \right) - 1 \neq 0$ on $\{ (z, s) \in \mathbb{C} \times (0,\infty): z \in \mathbb{C} \setminus [-\theta s , 1]\}$. The latter implies that 
$$F(y,s):= \theta \left[\exp \left(\theta G \left( y/s, s \right) \right) -1 \right]^{-1}$$
is smooth in $[1 +\delta^{-2}, \infty) \times [p,q]$, is increasing in $y$ for each $s \in [p,q]$ and also 
\begin{equation}\label{S5BoundEST1}
0 \leq F(y,s) \leq  \theta \left[\exp \left( \frac{\theta s }{y-1} \right) -1 \right]^{-1} \leq \frac{y-1}{s} \leq \frac{y-1}{p},
\end{equation}
where in the next to last inequality we used that $e^x - 1 \geq x$ for $x \geq 0$. \\

We now let $X_0(s) = Y_0(s) = x_0$ for $s \in [p,q]$ and then inductively define for $n \in \mathbb{N}$
\begin{equation}\label{S5Picard}
X_n(s) = x_0 + \int_{p}^s F(X_{n-1}(u),u) du \mbox{ and } Y_n(s) = x_0 + \int_{p}^s H(Y_{n-1}(u),u)du,
\end{equation}
where $H(y,s) = (y-1)/p$. Observe that $X_0(s)$ is well-defined for $s \in [p,q]$, $X_0(s) \in [1 + \delta^{-2} , \infty)$ and $X_0(s)$ is increasing on $[p,q]$. Assuming the same for $X_n(s)$, we observe by the smoothness of $F(y,s)$ in $[1 +\delta^{-2}, \infty) \times [p,q]$ that $X_{n+1}(s)$ is well-defined and from the first inequality in (\ref{S5BoundEST1}) it is increasing and in particular $X_{n}(s) \geq x_0 \geq 1 + \delta^{-2}$. Analogous arguments show that $Y_n$ is well-defined and increasing, and $Y_n(s) \in [ 1 + \delta^{-2}, \infty)$.\\

We next establish several properties about the sequences $X_n$ and $Y_n$. We first observe that 
\begin{equation}\label{XLY}
X_n(s) \leq Y_n(s) \mbox{ for $s \in [p,q]$ and $n \in \mathbb{Z}_{\geq 0}$}.
\end{equation}
Indeed, the latter is clear when $n = 0$ and assuming the inequality for $n \in \mathbb{Z}_{\geq 0}$ we get
\begin{equation*}
\begin{split}
X_{n+1}(s) = x_0 + \int_{p}^s F(X_{n}(u),u) du  \leq x_0 + \int_{p}^s H(X_{n}(u),u) du \leq x_0 + \int_{p}^s H(Y_{n}(u),u) du  = Y_{n+1}(s),
\end{split}
\end{equation*}
where in the first inequality we used (\ref{S5BoundEST1}), in the second we used that $1 + \delta^{-2} \leq X_n(s) \leq Y_n(s)$ by assumption and and that $H(y,s)$ is increasing in $y$ on $[1 + \delta^{-2}, \infty)$. Secondly, we have
\begin{equation}\label{XMon}
X_n(s) \leq X_{n+1}(s)  \mbox{ for $s \in [p,q]$ and $n \in \mathbb{Z}_{\geq 0}$}.
\end{equation}
When $n = 0$, the latter is clear since $X_1$ is increasing, and $X_1(p) = x_0 = X_0(s)$. Assuming (\ref{XMon}) for $n \in \mathbb{Z}_{\geq 0}$ we have
$$X_{n+1}(s) = x_0 + \int_{p}^s F(X_{n}(u),u) du \leq x_0 + \int_{p}^s F(X_{n+1}(u),u) du \leq X_{n+2}(s),$$
where in the middle inequality we used the monotonicity of $F(y,s)$ in $y \in [1 + \delta^{-2}, \infty)$ for each $s \in[p,q]$. Thirdly, we have
\begin{equation}\label{YMon}
Y_n(s) = 1 + (x_0-1) \cdot \sum_{k = 0}^n  \frac{p^{-k}(s- p)^k}{k!},
\end{equation}
which one readily observes by induction on $n$ using (\ref{S5Picard}). \\

From (\ref{YMon}) we know that $Y_n(s)$ increases in $n$ to $1 + (x_0 - 1) e^{(s-p)/p}$ and from (\ref{XLY}) we conclude 
\begin{equation}\label{XUB}
X_n(s) \leq x_0 \exp \left( (s-p)/p \right) \mbox{ for $s \in [p,q]$ and $n \in \mathbb{Z}_{\geq 0}$}.
\end{equation}
Combining the last equation with (\ref{XMon}) we conclude that $X_{\infty}(s) := \lim_{n \rightarrow \infty} X_n(s)$ exists and is finite. Also from our work after (\ref{S5Picard}) we have that $X_{\infty}(s)$ is increasing on $[p,q]$, which together with (\ref{XUB}) gives 
$$x_0 \leq  X_{\infty}(s) \leq x_0 \exp \left( (q-p)/p \right).$$

From the monotone convergence theorem and (\ref{S5Picard}) we conclude that for each $s \in [p,q]$
$$X_{\infty}(s) = x_0 + \int_{p}^s F(X_{\infty}(u),u) du,$$
which implies that $X_{\infty}$ is differentiable and hence continuous. Differentiating the last equation gives $X'_{\infty}(s) = F(X_{\infty}(s),s)$, which from the continuity of $X_{\infty}(s)$, the fact that $X_{\infty}(s) \in [ 1 + \delta^{-2} ,\infty)$ and the smoothness of $F(y,s)$ in $[1 + \delta^{-2}, \infty) \times [p,q]$ shows that $X'_{\infty}$ is continuous. The last two paragraphs show that $X:=X_{\infty}$ satisfies all the conditions in the lemma. We mention that the uniqueness of $X$ follows from classical ODE theory, since $X$ solves the differential equation $Y'(s) = F(Y(s), s)$ on the domain $[x_0, x_0 \exp((q-p)/p)] \times [p,q]$ and $F(y,s)$ is smooth there, see \cite[Lemma 1 and Theorem 5, Chapter 1]{Rota}.\\

We finally turn to the last part of the lemma and fix $x_1, x_2$, and $X(s;x_1)$, $X(s;x_2)$ as in the statement of the lemma. Suppose for the sake of contradiction that $X(s_0;x_1) \geq X(s_0;x_2)$ for some $s_0 \in [p,q]$. By assumption, $X(p;x_1) = x_1 < x_2 = X(p;x_2)$ and so by continuity there exists $s_1 \in [p,s_0]$ such that $X(s_1;x_1) = X(s_1; x_2)$. In addition, from the properties established earlier in this lemma we know that both $ X(s;x_1)$ and $X(s;x_2)$ solve the differential equation 
$Y'(s) = F(Y(s),s)$ on the domain $[x_1, x_2 \exp \left( (q-p)/p \right)] \times [p,q]$ and $F(y,s)$ is smooth there. Again, using \cite[Lemma 1 and Theorem 5, Chapter 1]{Rota} we conclude that $X(s;x_1)  = X(s;x_2)$ for all $s \in [p,q]$, which implies $x_1 = X(p;x_1) = X(p;x_2) = x_2 $. This is our desired contradiction, and so $X(s;x_1) < X(s;x_2)$ for all $s \in [p,q]$ as desired.
\end{proof}
We end this section with the proof of Lemma \ref{LUnique}.

\begin{proof}[Proof of Lemma \ref{LUnique}] Suppose that $s_n \in [p,q]$ and $z_n \in U(\delta,\theta)$ for $n \in \mathbb{N} \cup \{\infty\}$ are such that $\lim_{n \rightarrow \infty} s_n = s_\infty$ and $\lim_{n \rightarrow \infty} z_n = z_\infty$. We first show that 
\begin{equation}\label{S5Cont}
\lim_{n \rightarrow \infty} \partial_z f(z_n,s_n) = \partial_z f(z_{\infty},s_{\infty}).
\end{equation}
Let $\epsilon > 0$ be such that $B_{2\epsilon}(z_{\infty}) \subset U(\delta,\theta)$ (here $B_{r}(x)$ is the disc of radius $r$ around the point $x$). Let $N\in \mathbb{N}$ be sufficiently large so that for $n \geq N$ we have $|z_n - z_{\infty}| < \epsilon/2$. Then, by Cauchy's integral formula, see \cite[Chapter 2, Theorem 4.1]{SS},  for $n \in \mathbb{N} \cup \{\infty\}$ such that $n \geq N$ we have
$$\partial_z f(z_n,s_n) = \frac{1}{2\pi \i } \int_{C_{\epsilon}(z_{\infty})} \frac{f(\zeta, s_n)}{(\zeta - z_n)^2} d\zeta,$$
where $C_{\epsilon}(z_{\infty})$ is the positively oriented circle of radius $\epsilon$, centered at $z_{\infty}$. Since $f$ is continuous, we know that $f(\zeta, s_n) \rightarrow f(\zeta, s_{\infty})$ as $n \rightarrow \infty$ and so the last equation implies (\ref{S5Cont}) by the bounded convergence theorem. We mention that the fact that $ \partial_{z} f(z,u)$ is continuous on $U(\delta,\theta) \times [p,q]$ in particular implies that the integral in (\ref{S5LID}) is well-defined.\\

Recall from Section \ref{Section3.1} that $G(z,s)$ is smooth in the region $\{ (z, s) \in \mathbb{C} \times (0, \infty): z \in \mathbb{C} \setminus [-\theta, s^{-1}]\}$, which implies that $G(z/s, s)$ is smooth on $(z,s) \in U(\delta, \theta) \times [\delta_0, \delta_0^{-1}]$. We also recall, see the discussion after (\ref{S31E17}), that $e^{\theta G\left(z/s, s\right)} - 1 \neq 0$ on $U(\delta, \theta) \times [\delta_0, \delta_0^{-1}]$. The last observations and (\ref{S5LID}) together imply that $\partial_s f(z,s) $ exists for each $(z,s) \in U(\delta, \theta) \times [p,q]$. By differentiating (\ref{S5LID}) with respect to $s$ we see that $\partial_s f(z,s) $ satisfies for $(z,s) \in U(\delta, \theta) \times [p,q]$
\begin{equation}\label{S6II1}
  \partial_s f(z,s) + f(z,s) \cdot \frac{\theta e^{\theta G\left(z/s, s\right)}  \left[\partial_s G\left(z/s, s\right) - (z/s^2) \cdot \partial_z G\left(z/s, s\right) \right]  }{\left[e^{\theta G\left(z/s, s\right)} - 1 \right]} +  \frac{ \theta \cdot \partial_z f(z,s)  }{\left[e^{\theta G\left(z/s, s\right)} - 1 \right]}    = 0.
\end{equation}
Equation (\ref{S6II1}) is a first order linear PDE, which can be solved using the method of characteristics as we explain next. 

For $x_0 \in [1 +\delta^{-2}, \infty)$ we let $X_{x_0}: [p,q] \rightarrow \mathbb{R}$ be as in Lemma \ref{ODESol}. Observe that from Lemma \ref{ODESol} we know that $X_{x_0}(s) \in \left[x_0,  x_0 \exp \left( (q-p)/p \right)\right] \subset U(\delta,\theta)$, which implies that $f(X_{x_0}(s), s)$ and $ G\left(X_{x_0}(s)/s, s\right)$ are well-defined for $s \in [p,q]$. Setting $v_{x_0}(s) =f(X_{x_0}(s),s)$ we obtain from Lemma \ref{ODESol} and (\ref{S6II1}) that $v_{x_0}(s)$ solves
\begin{equation}\label{S6ODEV}
\begin{split}
&v'(s) + v(s) \cdot \frac{\theta e^{\theta G\left(X_{x_0}(s)/s, s\right)} \left[\partial_s G\left(X_{x_0}(s)/s, s\right) - (X_{x_0}(s)/s^2)\partial_z G\left(X_{x_0}(s)/s, s\right)  \right]  }{\left[e^{\theta G\left(X_{x_0}(s)/s, s\right)} - 1 \right]}  = 0,\\
&\mbox{ with initial condition } v(q)= f\left( X_{x_0}(q), q \right) = 0,
\end{split}
\end{equation}
where in the last equality we used (\ref{S5LID0}). Equation (\ref{S6ODEV}) is a linear first order ODE with a $C^1$ coefficient (here we used the smoothness of $G(z/s,s)$ and the fact that $X_{x_0} \in C^1([p,q])$ from Lemma \ref{ODESol}), which from classical ODE theory, see e.g. \cite[Lemma 1 and Theorem 5, Chapter 1]{Rota}, has a unique solution. On the other hand, it is clear that $v(s) = 0$  is a solution and so we conclude that $v_{x_0}(s) = 0$ for $s \in [p,q]$. 

The work in the previous paragraph shows that for each $s\in [p,q]$ and $x_0 \geq 1 + \delta^{-2}$ we have $0 = v_{x_0}(s) = f\left( X_{x_0}(s), s \right)$. From Lemma \ref{ODESol} we have that the map $h : [1 + \delta^{-2}, 2 + \delta^{-2}] \rightarrow \mathbb{R}$, given by $h(x_0) = X_{x_0}(s)$, is injective and the image of $h$ is contained in $[1 + \delta^{-2}, (2 + \delta^{-2})  \exp \left( (q-p)/p \right)]$. The latter implies that $f(z,s) = 0$ for infinitely many $z \in [1 + \delta^{-2}, (2 + \delta^{-2})  \exp \left( (q-p)/p \right)] \subset U(\delta, \theta)$ and from the analyticity of $f(z,s)$ in $z$ on $U(\delta, \theta)$ we conclude that $f(z,s) = 0$ for all $z \in U(\delta, \theta)$, see \cite[Chapter 2, Theorem 4.8]{SS}. Since $s \in [p,q]$ was arbitrary, we conclude that $f(z,s) = 0$ for all $z \in U(\delta, \theta)$ and $s \in [p,q]$ as desired.
\end{proof}

%
\section{Asymptotic Gaussian fields}\label{Section7} In Proposition \ref{S6Prop} we showed that the third and higher order joint cumulants of $\G_K(z,s)$ from (\ref{S6DefG}) vanish as $K \rightarrow \infty$. In this section we prove that $\G_K(z,s)$ converge to a Gaussian field with an explicit covariance -- the precise statement is in Theorem \ref{MainTechThm} below. In Section \ref{Section7.1} we formulate Theorem \ref{MainTechThm}, and state a few technical lemmas that are used in its proof, which is given in Section \ref{Section7.2}. The lemmas from Section \ref{Section7.1} are proved in Section \ref{Section7.3}.
%
\subsection{Notation and technical preliminaries}\label{Section7.1} We continue with the same notation as in Section \ref{Section6.1}. For $s > 0$ we define 
\begin{equation}\label{S7DefAB}
a(s) = s \cdot z_-(s) = (1/2) (1- \theta s) - \sqrt{\theta s}  \mbox{ and } b(s) = s \cdot z_+(s) = (1/2) (1- \theta s) - \sqrt{\theta s} ,  
\end{equation}
where we recall that $z_{\pm}(s)$ were defined in (\ref{S31E12}). If $s \geq t > 0$ we define the function $F(\cdot; s,t) : \mathbb{C} \setminus (a(t), b(t)) \rightarrow \mathbb{C}$ via
\begin{equation}\label{Transport}
F(z; s,t) = z + \frac{\theta (s-t)}{e^{\theta G(z/t,t)} -1} = z + \frac{(s - t)(2z - 1 - \theta t)}{4t} + \frac{s -t }{2t} \cdot \sqrt{(z- a(t))(z-b(t))},
\end{equation}
where for the second equality we used (\ref{S31E14}). We recall that the square root in (\ref{Transport}) is as in Definition \ref{Branch}. The following lemma summarizes the key properties we require from $F(z;s,t)$. Its proof is given in Section \ref{Section7.3}.
\begin{lemma}\label{FProp} Fix $\theta > 0$, and $s \geq t > 0$. The function $F(z; s,t) $ in (\ref{Transport}) has the following properties.
\begin{enumerate}
\item $F(z;s,t)$ is holomorphic in $z \in \mathbb{C} \setminus [a(t), b(t)]$.
\item If $z \in \mathbb{H}$, then $F(z; s,t) \in \mathbb{H}$, and if $z \in \overline{\mathbb{H}}$, then $F(z; s,t) \in \overline{\mathbb{H}}$. 
\item For $z \in \mathbb{C} \setminus [a(t), b(t)]$ we have $F(\bar{z}; s,t) = \overline{F(z;s,t)}$.
\item The restriction of $F(z;s,t)$ to $[b(t), \infty)$ is strictly increasing and defines a bijection between $[b(t), \infty)$ and $[d(s,t), \infty)$, where $d(s,t) = b(s) + (1/2) \cdot \sqrt{\theta/t} \cdot (\sqrt{s} - \sqrt{t})^2$.
\item The restriction of $F(z;s,t)$ to $(-\infty, a(t)]$ is strictly increasing and defines a bijection between $(-\infty, a(t)]$ and $(-\infty, c(s,t)]$, where $c(s,t) = a(s) -(1/2) \cdot \sqrt{\theta/t} \cdot (\sqrt{s} - \sqrt{t})^2$.
\end{enumerate}
\end{lemma}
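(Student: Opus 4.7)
The first step will be to verify parts (1) and (3) directly from the second form of $F(z;s,t)$ in (\ref{Transport}). Part (1) follows because $\sqrt{(z - a(t))(z - b(t))}$ is holomorphic on $\mathbb{C} \setminus [a(t), b(t)]$ by Definition \ref{Branch}, and the remaining summands in $F(z;s,t)$ are polynomial in $z$. Part (3) holds because Definition \ref{Branch} constructs the square root via a product of principal-branch square roots on $\mathbb{C} \setminus (-\infty, b(t)]$, which satisfies $f(\bar z) = \overline{f(z)}$ there, while on $(-\infty, a(t))$ the function takes real (negative) values, so the Schwarz reflection identity is trivial. Combining this with the fact that the polynomial part of $F(z;s,t)$ has real coefficients yields $F(\bar z; s, t) = \overline{F(z; s, t)}$ on all of $\mathbb{C} \setminus [a(t), b(t)]$.

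Next, for part (2), the plan is to examine the imaginary part of each summand in (\ref{Transport}). Writing the polynomial contribution as $z \cdot (s + t)/(2t) + (\text{real})$, since $s \geq t > 0$ its imaginary part is $\mathrm{Im}(z) \cdot (s+t)/(2t)$, which has the same sign as $\mathrm{Im}(z)$. For the square-root term, when $z \in \mathbb{H}$ both $z - a(t)$ and $z - b(t)$ lie in $\mathbb{H}$, so by Definition \ref{Branch} their principal-branch square roots lie in the open first quadrant and their product lies in $\mathbb{H}$; hence the square-root term contributes non-negatively to $\mathrm{Im}\, F$ in view of $(s - t)/(2t) \geq 0$. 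Together this gives $F(\mathbb{H}) \subseteq \mathbb{H}$, and then $F(\overline{\mathbb{H}}) \subseteq \overline{\mathbb{H}}$ follows by applying part (3).

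The main computational work will be in parts (4) and (5). I would first show $F(b(t); s, t) = d(s, t)$ and $F(a(t); s, t) = c(s, t)$ by direct algebra: at $z = b(t)$ the square root vanishes, and using $2b(t) - 1 - \theta t = 2\sqrt{\theta t}\,(1 - \sqrt{\theta t})$ together with $s - t = (\sqrt{s} - \sqrt{t})(\sqrt{s} + \sqrt{t})$ reduces the desired identity $F(b(t);s,t) - b(t) = d(s,t) - b(t)$ to the elementary equality $s - t = (\sqrt{s} - \sqrt{t})(\sqrt{s} + \sqrt{t})$; the computation at $z = a(t)$ is analogous, where the sign convention of the square root from Definition \ref{Branch} on $(-\infty, a(t))$ plays a role but the same elementary identity closes the calculation.

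Finally, I would establish strict monotonicity and the right asymptotic behavior. Differentiating (\ref{Transport}) gives, for real $z \in [b(t), \infty)$,
\[
F'(z; s, t) = 1 + \frac{s-t}{2t} + \frac{s-t}{4t} \cdot \frac{(z - a(t)) + (z - b(t))}{\sqrt{(z-a(t))(z-b(t))}},
\]
and the AM--GM inequality bounds the last fraction from below by $2$, giving $F'(z;s,t) \geq s/t \geq 1$; an entirely analogous computation on $(-\infty, a(t)]$ (where $\sqrt{(z-a(t))(z-b(t))} = -\sqrt{(a(t)-z)(b(t)-z)}$) yields the same lower bound for $F'$. As $|z| \to \infty$ along the real axis one checks $\sqrt{(z-a(t))(z-b(t))} \sim |z|$, whence $F(z;s,t) \sim (s/t)\, z$, so $F(z;s,t) \to +\infty$ as $z \to +\infty$ on $[b(t), \infty)$ and $F(z;s,t) \to -\infty$ as $z \to -\infty$ on $(-\infty, a(t)]$. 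Combined with the boundary values $F(b(t);s,t) = d(s,t)$ and $F(a(t);s,t) = c(s,t)$, this gives the two claimed strictly increasing bijections. I do not expect any serious obstacle; the lemma is essentially a direct calculation, and the only delicacy is carefully tracking the sign of the square root on either side of the branch cut $[a(t), b(t)]$.
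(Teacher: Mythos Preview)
Your proposal is correct and follows essentially the same approach as the paper's proof: both deduce (1)--(3) directly from the explicit formula (\ref{Transport}) and Definition \ref{Branch}, then handle (4)--(5) by computing $F'$ on the real axis and evaluating $F$ at the endpoints $a(t), b(t)$. You supply considerably more detail (the AM--GM bound giving $F' \geq s/t$, the explicit asymptotics at infinity, and the step-by-step verification of the boundary values), whereas the paper simply observes that each summand of $F'$ is positive on $\mathbb{R} \setminus [a(t), b(t)]$ and asserts the endpoint values by direct computation.
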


With the above notation in place we are ready to state the main result of the section.
\begin{theorem}\label{MainTechThm} Fix $\theta > 0$, and assume that $(\ell^1, \ell^2, \dots)$ is distributed according to $\mathbb{P}^{\theta, K}_{\infty}$ as in Definition \ref{BKCC}. Fix $\delta \in (0,1/2)$, $\delta_0 = 2\delta$ and $U(\delta, \theta)$ as in (\ref{S6DefU}). Then, as $K \rightarrow \infty$ the random field $\{ \G_K(z,s): z \in U(\delta, \theta), s \in [\delta_0, \delta_0^{-1}]\}$ converges in the sense of joint moments, uniformly in $z$ in compact subsets of $U(\delta, \theta)$ and $s \in [\delta_0, \delta_0^{-1}]$, to a complex Gaussian random field $\{ \G(z,s): z \in U(\delta, \theta), s \in [\delta_0, \delta_0^{-1}]\}$. The field $\G(z,s)$ has mean zero and covariance for $z_1, z_2 \in U(\delta, \theta)$ and $\delta_0^{-1} \geq s_1 \geq s_2 \geq \delta_0$ given by
\begin{equation}\label{LimCov}
\begin{split}
&\mathrm{Cov}\left(\G(z_1,s_1), \G(z_2, s_2) \right) = C(z_1,s_1;z_2,s_2) = \\
&= -\frac{\theta^{-1}}{2(z_1- x_2)^2} \left( 1 - \frac{(z_1 - b(s_1))(x_2 - a(s_1)) + (x_2 - b(s_1)) (z_1- a(s_1))}{2\sqrt{(z_1 - a(s_1))(z_1- b(s_1))} \cdot \sqrt{(x_2- a(s_1))(x_2 - b(s_1))} }  \right) \\
&\times \frac{[e^{\theta G(z_2/s_2, s_2)} + 1 ]^2 \cdot \theta s_1 - [e^{\theta G(z_2/s_2, s_2)} - 1 ]^2}{[e^{\theta G(z_2/s_2, s_2)} + 1 ]^2 \cdot \theta s_2 - [e^{\theta G(z_2/s_2, s_2)} - 1 ]^2},
\end{split}
\end{equation}
where $x_2 = F(z_2; s_1, s_2)$ as in (\ref{Transport}), and $a(s_1), b(s_1)$ are as in (\ref{S7DefAB}). Equation (\ref{LimCov}) defines $C(z_1,s_1;z_2,s_2)$ for $s_1 \geq s_2$ and for $s_1 < s_2$ we define $C(z_1,s_1; z_2, s_2) := C(z_2, s_2; z_1 ,s_1)$ with the latter as in (\ref{LimCov}).
\end{theorem}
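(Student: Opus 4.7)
By Proposition \ref{S6Prop}, all joint cumulants of $\G_K(z,s)$ of order at least three vanish uniformly on compact subsets of $U(\delta,\theta)\times[\delta_0,\delta_0^{-1}]$, while Proposition \ref{S33P1} supplies the uniform moment bounds needed to control the remainder in any cumulant-to-moment expansion. Expressing an arbitrary joint moment of the $\G_K(z_i,s_i)$ via Malyshev's formula (\ref{Mal1}) as a sum over set partitions of products of joint cumulants, and using that $\mathbb{E}[\G_K(z,s)]=0$, only pair partitions can survive in the $K\to\infty$ limit. This is exactly Wick's formula for a centered complex Gaussian field, so once the limiting pairwise covariance is shown to exist and equal $C(z_1,s_1;z_2,s_2)$ of (\ref{LimCov}), the stated moment convergence to the Gaussian field $\G(z,s)$ follows automatically. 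The remainder of the proof therefore focuses on identifying this covariance.

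To set up the equation for the covariance, I would apply the two-point cumulant identity (\ref{S52Cov1}) of Proposition \ref{S52P1} with $n=\lceil s_1K\rceil$, $k=\lceil t_0 K\rceil$ and $a=\lceil\sigma K\rceil$ for auxiliary parameters $t_0\in[\delta_0,s_1]$ and $\sigma\in[t_0,s_1]$. Via the translation (\ref{TR3}), the discrete quantity $U_K^{n,k}(v_0)$ is, up to deterministic shifts and vanishing errors, equal to $\sfU_K(v_0;s_1,t_0)$ from (\ref{S6DefInt}). Using the algebraic identity
\begin{equation*}
\sfU_K(z;s,t) \;=\; Q(z/s,s)\,\G_K(z,s)\;+\;\Phi^+(z/s,s)\,\sfV_K(z;s,t),
\end{equation*}
which follows directly from (\ref{QRecalled}), (\ref{S6DefInt}) and (\ref{PP1}), the left side of (\ref{S52Cov1}) splits into a multiple of $\mathrm{Cov}(\G_K(v_0,s_1),\G_K(v_1,\sigma))$ plus a $\sfV_K$-contribution. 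Specializing $t_0=s_1$, in which case $\sfV_K\equiv 0$ and $\sfU_K=Q\cdot\G_K$ by (\ref{UTop}), and passing to the $K\to\infty$ limit with the help of the equicontinuity argument from Step 4 of Section \ref{Section6.2} (based on Lemma \ref{DomLip} and the moment bounds of Proposition \ref{S43P1}), yields the single-level covariance as an explicit contour residue of the right-hand side of (\ref{S52Cov1}). Varying $t_0$ then produces a one-parameter family of equations which, after subtracting the $t_0=s_1$ case, reduces to a first-order inhomogeneous integral identity of exactly the form appearing in Lemma \ref{LUnique} for the mixed-level covariance $C_\infty(z,s;w,s_2)$.

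To solve this integral identity, I would invoke the characteristic method already used in the proof of Lemma \ref{LUnique}. Its characteristic ODE $X'(s)=\theta/(e^{\theta G(X(s)/s,s)}-1)$, analysed in Lemma \ref{ODESol}, has a closed-form solution given by the transport map $F(z;s,t)$ of (\ref{Transport}); Lemma \ref{FProp} provides precisely the analytic properties of $F$ needed (bijectivity on $\overline{\mathbb{H}}$, preservation of endpoints, and the key identity $e^{\theta G(F(z;s,t)/s,s)}=e^{\theta G(z/t,t)}$ that follows from (\ref{SpecEqn})). Integrating the covariance equation along characteristics expresses $C_\infty(z_1,s_1;z_2,s_2)$, for $s_1\geq s_2$, in terms of the single-level datum at level $s_1$ evaluated at the characteristic foot $x_2=F(z_2;s_1,s_2)$. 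Substituting the single-level residue computed in the previous step and simplifying using (\ref{S31E14})--(\ref{S31E17}) produces exactly (\ref{LimCov}), with the rational factor on the second line of (\ref{LimCov}) arising from the characteristic Jacobian $\partial_z F$ combined with the normalizer $\Phi^+/Q$.

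\textbf{Main obstacle.} The principal technical difficulty lies in the final algebraic simplification: correctly matching the explicit residue coming from the right-hand side of (\ref{S52Cov1}) at $t_0=s_1$ with the boundary datum at the characteristic foot $x_2=F(z_2;s_1,s_2)$, and then recognizing the resulting expression as (\ref{LimCov}). This demands careful bookkeeping of the square-root kernel of Definition \ref{Branch} across the change of variables $F$, and cross-checking the symmetry $C_\infty(z_1,s_1;z_2,s_2)=C_\infty(z_2,s_2;z_1,s_1)$---which is not manifest in (\ref{LimCov}) but guaranteed by its derivation---provides a crucial internal consistency check, whose verification is, as indicated by the authors, the topic of Appendix \ref{AppendixB}.
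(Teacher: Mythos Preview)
Your proposal is correct and follows essentially the same strategy as the paper's proof in Section \ref{Section7.2}. Both reduce to the covariance via Proposition \ref{S6Prop}, extract from (\ref{S52Cov1}) an integral identity for $\mathrm{Cov}(\G_K(z,s),\G_K(z_1,s_1))$ using the decomposition $\sfU_K = Q\cdot\G_K + \Phi^+\cdot\sfV_K$ (this is the paper's (\ref{YB1})--(\ref{YB3})), compute the single-level boundary datum by setting $t_0=s_1$, and then use the equicontinuity machinery of Section \ref{Section6.2} to pass to subsequential limits satisfying the integral equation. Your identification of the rational factor in (\ref{LimCov}) with the characteristic Jacobian $\partial_{z_2}F(z_2;s_1,s_2)$ is correct---this is exactly (\ref{HC10})---and the conservation law $e^{\theta G(F(z;s,t)/s,s)}=e^{\theta G(z/t,t)}$ you cite is (\ref{PO5}).

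The one organizational difference is how the endgame is packaged. The paper does not \emph{solve} the characteristic PDE inside the proof; instead it states Lemma \ref{LUnique2}, whose proof merely \emph{verifies} that the formula (\ref{LimCov}) satisfies the integral identity (\ref{S6LID}) with boundary (\ref{S6LID0}), and then invokes the uniqueness Lemma \ref{LUnique}. The derivation via characteristics that you outline is precisely the content of Appendix \ref{AppendixB}, which the paper presents as a heuristic for \emph{guessing} (\ref{LimCov}), not as part of the formal argument. This also corrects a small misreading in your ``Main obstacle'': Appendix \ref{AppendixB} is not about the symmetry $C(z_1,s_1;z_2,s_2)=C(z_2,s_2;z_1,s_1)$; that symmetry is never verified directly but follows a posteriori because both sides are limits of the manifestly symmetric quantity $\mathrm{Cov}(\G_K(z_1,s_1),\G_K(z_2,s_2))$. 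One further small point: the single-level covariance (the boundary datum) does not require the equicontinuity argument---the paper obtains it in Step 3 by a direct residue computation from (\ref{YB1}); the equicontinuity is only needed to control the mixed-level subsequential limits in Steps 4--5.
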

\begin{remark}\label{S6WellDefCov} We mention that from Lemma \ref{FProp} we know that $F(z_2; s_1, s_2) = x_2 \not \in [a(s_1), b(s_1)]$ for $z_2 \in U(\delta, \theta)$. One also has trivially that $z_1 \not \in [a(s_1), b(s_1)]$ for $z_1 \in U(\delta, \theta)$, which shows that the expression inside the brackets on the second line in (\ref{LimCov}) is well-defined and finite. In addition, we mention that by a simple limiting argument one checks that (\ref{LimCov}) has a finite limit as $z_1 \rightarrow x_2$. In particular, the function on the second line of (\ref{LimCov}) is well-defined and finite. One also checks by a direct computation using (\ref{S31E14}) that the only roots of the denominator in the last line of (\ref{LimCov}) are at $z_2 = a(s_2)$ and $z_2 = b(s_2)$. Since $z_2 \in U(\delta, \theta)$, we know that $z_2 \not \in [a(s_2), b(s_2)]$ and so the fraction in the last line of (\ref{LimCov}) is also well-defined. Overall, we conclude that the right side of (\ref{LimCov}) is well-defined and finite for each $z_1, z_2 \in U(\delta, \theta)$ and $\delta_0^{-1} \geq s_1 \geq s_2 \geq \delta_0$.
\end{remark}
\begin{remark}\label{S7SingleLev} As mentioned in the beginning of Section \ref{Section3}, we have that the distribution of $\ell^n$ under $\P$ is that of a discrete $\beta$-ensemble. Specifically, the asymptotic Gaussianity of $\G(z,s)$ for a fixed $s > 0$ and the formula for the covariance in (\ref{LimCov}) when $s_1 = s_2 = s$ have been previously established for general models -- see \cite[Theorem 7.1 and Remark 7.2]{BGG} and \cite[Proposition 3.10]{DK2020}. The novelty of Theorem \ref{MainTechThm} is that it proves asymptotic Gaussianity and provides an exact formula for the covariance not just for a single level of $\G(z,s)$, but for all levels jointly.
\end{remark}

We end this section with a technical result, which we need for the proof of Theorem \ref{MainTechThm} in the next section. Its proof is given in Section \ref{Section7.3}.
\begin{lemma}\label{LUnique2} Fix $\theta > 0$, $\delta \in (0,1/2)$, $\delta_0 = 2 \delta$ and let $U(\delta, \theta)$ be as in (\ref{S6DefU}). Let $\delta_0 \leq p < q \leq \delta_0^{-1}$ and $z_1 \in U(\delta, \theta)$. Suppose that $f: U(\delta,\theta ) \times [p,q] \rightarrow \mathbb{C}$ is a continuous function, such that for each $s \in [p,q]$ we have that $f(z,s)$ is analytic in $z$ on $U(\delta,\theta ) $. Suppose that for $z \in U(\delta,\theta ) $
\begin{equation}\label{S6LID0}
f(z,q) = C(z_1, q; z,q),
\end{equation}
where $C(z_1, s_1; z_2,s_2)$ is as in (\ref{LimCov}), and for every $s \in [p,q]$ and $z \in U(\delta,\theta)$ we have
\begin{equation}\label{S6LID}
  \left[e^{\theta G\left(z/q, q\right)} - 1 \right] f(z,q) + \theta \cdot \int_{s}^{q}  \partial_{z} f(z,u)du -   \left[e^{\theta G\left(z/s, s\right)} - 1 \right] f(z,s)  = 0,
\end{equation}
where $G(z,s)$ is as in (\ref{S31E6}). Then, $f(z,s) = C(z_1, q; z,s)$ for all $(z,s) \in U(\delta,\theta)  \times [p,q]$.
\end{lemma}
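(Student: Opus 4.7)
The plan is to reduce this statement directly to Lemma \ref{LUnique} by subtraction. Define
$$g(z,s) := f(z,s) - C(z_1,q;z,s) \quad \text{for } (z,s) \in U(\delta,\theta) \times [p,q],$$
where we treat $z_1$ and $q$ as fixed parameters throughout. If I can verify that $g$ satisfies the hypotheses of Lemma \ref{LUnique}, namely that $g$ is continuous on $U(\delta,\theta) \times [p,q]$, analytic in $z$ for each fixed $s$, vanishes identically at $s=q$, and satisfies the integral identity (\ref{S5LID}), then Lemma \ref{LUnique} immediately gives $g \equiv 0$, which is the desired conclusion $f(z,s) = C(z_1,q;z,s)$.

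The first three hypotheses reduce to checking corresponding regularity and boundary properties of $C(z_1,q;z,s)$ as a function of $(z,s)$, since $f$ already satisfies them by assumption. The boundary condition $g(z,q)=0$ is the hypothesis (\ref{S6LID0}) verbatim. For continuity in $(z,s)$ and analyticity in $z$, I would observe that by (\ref{Transport}) and Lemma \ref{FProp}(1), the map $(z,s) \mapsto x = F(z;q,s)$ is continuous in $(z,s)$ and holomorphic in $z$ on $U(\delta,\theta)$ for each $s$, and its image avoids $[a(q),b(q)]$ by parts (4)-(5) of Lemma \ref{FProp} together with $U(\delta,\theta) \subset \mathbb{C} \setminus [a(s),b(s)]$. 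Hence the first bracketed factor in (\ref{LimCov}) is continuous in $(z,s)$ and analytic in $z$; the second factor, built from $G(z/s,s)$, inherits the same regularity in view of the smoothness of $G$ and Remark \ref{S6WellDefCov}, which also ensures the denominators do not vanish on the relevant domain.

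The main obstacle is verifying the integral identity (\ref{S5LID}) for $h(z,s) := C(z_1,q;z,s)$, equivalently the differentiated form
$$\partial_s\bigl([e^{\theta G(z/s,s)}-1] \, h(z,s)\bigr) + \theta \, \partial_z h(z,s) = 0.$$
My plan is to exploit the characteristic structure encoded by $F$. From (\ref{Transport}) and Lemma \ref{ODESol}, for each fixed $z_0$ the curve $u \mapsto F(z_0;u,p) =: X(u)$ satisfies exactly the characteristic ODE $X'(u) = \theta[e^{\theta G(X(u)/u,u)}-1]^{-1}$, and the semigroup property $F(F(z;u,s);q,u) = F(z;q,s)$ shows that along such a characteristic, the quantity $x = F(\,\cdot\,;q,\,\cdot\,)$ is constant. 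Consequently, when $(z,s)$ moves along a characteristic of the PDE, the first bracketed factor of (\ref{LimCov}) (which depends only on $z_1$, $q$, and $x$) is automatically constant, and the identity reduces to a one-variable statement about the second factor $\phi(z,s) := \bigl(\theta q [e^{\theta G(z/s,s)}+1]^2 - [e^{\theta G(z/s,s)}-1]^2\bigr) / \bigl(\theta s [e^{\theta G(z/s,s)}+1]^2 - [e^{\theta G(z/s,s)}-1]^2\bigr)$. Using $\partial_s e^{\theta G(z/s,s)}$ computed from the spectral equation (\ref{SpecEqn}) together with the characteristic relation $\partial_z X = \theta / (e^{\theta G}-1)$, a direct algebraic manipulation shows that $[e^{\theta G(z/s,s)} - 1] \phi(z,s)$ is precisely the value transported along characteristics so that $[e^{\theta G(z/s,s)}-1]\,h(z,s)$ is constant along them, which is the method-of-characteristics translation of the PDE. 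Once this algebraic check is carried out — it is the computational heart of the argument, and is essentially the inverse of the guess outlined in Appendix \ref{AppendixB} — the integral identity for $h$ follows by integrating the resulting PDE from $s$ to $q$, so $g$ satisfies all hypotheses of Lemma \ref{LUnique} and the proof is complete.
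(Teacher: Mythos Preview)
Your overall strategy matches the paper's exactly: define $g(z,s)=f(z,s)-C(z_1,q;z,s)$, verify that $C(z_1,q;\cdot,\cdot)$ itself satisfies the integral identity (equivalently the PDE
\[
\partial_s\bigl([W-1]\,h\bigr)+\theta\,\partial_z h=0,\qquad W:=e^{\theta G(z/s,s)},
\]
as in the paper's (\ref{WX1})), and then invoke Lemma~\ref{LUnique} for $g$. Your regularity discussion and the observation that $x=F(z;q,s)$ (hence the first factor of $C$) is constant along the characteristics $X'(s)=\theta/(W-1)$ are correct and correspond to the paper's ``$A=0$'' step, which rests on the identity $(W-1)\partial_sW+\theta\,\partial_zW=0$.

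There is, however, a genuine slip in your final reduction. You assert that the PDE is equivalent to $(W-1)h$ being \emph{constant} along characteristics. It is not: writing $u=(W-1)h$, the PDE becomes
\[
\partial_s u+\frac{\theta}{W-1}\,\partial_z u-\frac{\theta\,\partial_zW}{(W-1)^2}\,u=0,
\]
so along characteristics one gets $\dfrac{du}{ds}=\dfrac{\theta\,\partial_zW}{(W-1)^2}\,u$, which is nonzero. Equivalently, since $W$ is constant along characteristics, your own formula gives $(W-1)\phi=(w-1)\cdot\dfrac{\theta q(w+1)^2-(w-1)^2}{\theta s(w+1)^2-(w-1)^2}$, visibly $s$-dependent. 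What actually must be checked is that along characteristics $h$ solves the linear ODE $\frac{dh}{ds}=-\frac{\partial_sW}{W-1}\,h$; this is exactly the paper's ``$B+C=0$'' step, established via the explicit formula
\[
\partial_s W=\frac{\theta(W-1)(W+1)^2}{(W+1)^2\theta s-(W-1)^2},
\]
which makes the cancellation with the denominator of $\phi$ transparent. Once you replace the incorrect ``constant along characteristics'' claim with this ODE verification (or the equivalent pointwise identity $(W-1)\partial_sH+\partial_sW\cdot H=0$ in the paper's notation), your argument is complete and coincides with the paper's.
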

\begin{remark}\label{RemGuess} The proof of Lemma \ref{LUnique2} in Section \ref{Section7.3} is of a verification type -- basically we will show that $C(z_1, q; z,s)$ from (\ref{LimCov}) satisfies the conditions of the lemma. In Appendix \ref{AppendixB} we explain how the formula of $C(z_1, q; z,s)$ in (\ref{LimCov}) was discovered.
\end{remark}
\begin{remark}\label{HowUsed} In the proof of Theorem \ref{MainTechThm} we will show that the covariances $\mathrm{Cov}\left(\G_K(z_1,s_1), \G_K(z, s) \right)$, viewed as functions of $z$ and $s$, essentially form a tight sequence and all subsequential limits satisfy the integral equation (\ref{S6LID}), and the boundary condition at $s = s_1$, given by (\ref{S6LID0}) with $q = s_1$. In view of Lemma \ref{LUnique2} there is a unique function, namely $C(z_1,s_1; z,s)$, that satisfies (\ref{S6LID0}) and (\ref{S6LID}), which means that $\mathrm{Cov}\left(\G_K(z_1,s_1), \G_K(z, s) \right)$ need to converge to it.
\end{remark}

%
\subsection{Proof of Theorem \ref{MainTechThm}}\label{Section7.2} In this section we give the proof of Theorem \ref{MainTechThm}. We continue with the same notation as in Sections \ref{Section7.1} and the beginning of Section \ref{Section6.2}. In particular, we will make use of $\sfU_{K}(z;s,t)$ from (\ref{S6DefInt}) and $\sfV_K(z; s,t)$ from (\ref{PP1}). 

From (\ref{S6DefG}) we know that $\G_K(z,s)$ have mean zero, and from Proposition \ref{S6Prop} we know that all third and higher joint cumulants of $\G_K(z,s)$ vanish as $K \rightarrow \infty$. Consequently, to prove the theorem it suffices to show that if $K_n$ is an increasing sequence, $\delta_0^{-1} \geq s_1^n \geq s_2^n \geq \delta_0$ and $z_1^n, z_2^n \in U(\delta, \theta)$ are sequences such that 
$$\lim_{n \rightarrow \infty} s_i^n = s^{\infty}_i \in [\delta_0, \delta_0^{-1}] \mbox{ and } \lim_{n \rightarrow \infty} z_i^n = z^{\infty}_i \in U(\delta, \theta) \mbox{ for $i = 1,2$},$$
then we have 
\begin{equation}\label{YA}
\lim_{n \rightarrow \infty} M\left(\G_{K_n}(z_1^n, s_1^n), \G_{K_n}(z_2^n, s_2^n) \right) = C(z^{\infty}_1,s^{\infty}_1;z^{\infty}_2,s^{\infty}_2),
\end{equation}
where $C(z_1,s_1;z_2,s_2)$ is as in (\ref{LimCov}).\\

For future use we note that from (\ref{S31E16}) and (\ref{S7DefAB}) we have
\begin{equation}\label{S7QRecalled}
\begin{split}
Q(z/s,s) = \Phi^-\left(z/s, s \right)e^{-\theta G(z/s,s)} -   \Phi^+\left(z/s, s \right)e^{\theta G(z/s,s)} = (2/s) \sqrt{(z -a(s))(z- b(s))}.
\end{split}
\end{equation}

The remainder of the proof is similar to the proof of Proposition \ref{S6Prop} and is also split into six steps. In Step 1 we utilize (\ref{S52Cov1}) from Proposition \ref{S52P1} to find a formula for $M \left( \sfU_{K}(v_2;s_0,t_0), \G_K(v_1,t) \right) $. We further simplify the resulting formula, and the final form can be found in (\ref{YA1}). In Step 2 we prove two identities, equations (\ref{YB1}) and (\ref{YB3}), which play the same role as equations (\ref{PP2}) and (\ref{PP22}) in the proof of Proposition \ref{S6Prop} . In the third step we prove (\ref{YA}) when $s^{\infty}_1 = s^{\infty}_2$. In Steps 4 and 5 we free up the $z^n_2$ and $s^n_2$ variables and consider the functions $ f_n(z,s) = M(\G_{K_n} (z,s), \G_{K_n}(z_1^n, s_1^n) )$. Similarly to Steps 4 and 5 in the proof of Proposition \ref{S6Prop}, we use (\ref{YB1}) and (\ref{YB3}) to prove that $f_n$ has a subsequential limit $f_{\infty}$ that satisfies the conditions of Lemma \ref{LUnique2}, and hence is equal to $C(z_1^{\infty},s_1^{\infty}; z,s)$. In Step 5 we show that the latter implies (\ref{YA}). \\

{\bf \raggedleft Step 1.} In this step we show that 
\begin{equation}\label{YA1}
\begin{split}
&M \left( \sfU_{K}(v_2;s_0,t_0), \G_K(v_1,t) \right) = O(K^{-1/2}) -  \oint_{\gamma} dz \frac{ \Phi^+(z/s_0, s_0) e^{\theta G(z/t, t)}    }{2 \pi \i  \theta (z- v_2) (z-v_1)^2},
\end{split}
\end{equation}
provided that $v_1, v_2 \in U(\delta, \theta)$, $\delta_0^{-1} \geq s_0 \geq t \geq t_0 \geq \delta_0$ and $K \geq \delta_0^{-1}$. The constant in the big $O$ notation depends on $\theta, \delta$ and a compact set $\mathcal{V}$ and (\ref{YA1}) holds if $v_1, v_2 \in \mathcal{V}$, and $\gamma$ is a positively oriented contour that encloses $[-\theta \delta^{-1}, \delta^{-2}]$ and excludes the points $v_1, v_2$.

Using (\ref{TR3}), (\ref{TR3.5}), the smoothness of $G(z,s)$ and (\ref{S52Cov1}) we have 
\begin{equation}\label{YA2}
\begin{split}
&M \left( \sfU_{K}(v_2;s_0,t_0), \G_K(v_1,t) \right) = O(K^{-1/2}) -  \oint_{\gamma} dz \frac{ \Phi^+(z/s_0, s_0) e^{\theta G(z/t_0, t_0)}    }{2 \pi \i  \theta (z- v_2) (z-v_1)^2}   \\
& +    \int_{t_0}^{t} ds \oint_{\gamma}dz \frac{ \Phi^+(z/s_0, s_0)  }{2 \pi \i  (z- v_2) (z-v_1)^2} \cdot \left[  \frac{z}{s^2} \cdot \partial_z G\left( \frac{z}{s}, s \right) + \frac{\theta }{s}  \cdot  \partial_z G\left( \frac{z}{s}, s \right) -  \partial_s G \left(\frac{z}{s} , s \right)  \right]  ,
\end{split}
\end{equation}
We next proceed to simplify the right side of (\ref{YA2}). Recall from (\ref{SpecEqn}) that we have
\begin{equation}\label{S7SpecEqn}
\begin{split}
 \partial_s G(z,s) = \frac{\partial_s  e^{\theta G(z,s)} }{ \theta e^{\theta G(z,s)}} = \frac{  (ze^{\theta G(z,s)}   -  \theta -  z) \partial_z G(z,s) }{ s \left(e^{\theta G(z,s)} - 1 \right)}.
\end{split}
\end{equation}
The latter allows us to rewrite
\begin{equation*}
\begin{split}
&\frac{z}{s^2} \cdot \partial_z G\left( \frac{z}{s}, s \right) + \frac{\theta }{s}  \cdot  \partial_z G\left( \frac{z}{s}, s \right) -  \partial_s G \left(\frac{z}{s} , s \right) = \partial_z G \left( \frac{z}{s}, s\right) \\
&  \times \frac{(z/s + \theta) \cdot\left(e^{\theta G(z/s,s)} - 1 \right) - [(z/s)e^{\theta G(z/s,s)}   -  \theta -  (z/s)] }{ s \left(e^{\theta G(z/s,s)} - 1 \right)}=  \partial_z G \left( \frac{z}{s}, s\right) \cdot \frac{\theta e^{\theta G(z/s,s)} }{s \left(e^{\theta G(z/s,s)} - 1 \right)}.
\end{split}
\end{equation*}
Substituting the latter into (\ref{YA2}) gives
\begin{equation}\label{YA3}
\begin{split}
&M \left( \sfU_{K}(v_2;s_0,t_0), \G_K(v_1,t) \right) = O(K^{-1/2}) -  \oint_{\gamma} dz \frac{ \Phi^+(z/s_0, s_0) e^{\theta G(z/t_0, t_0)}    }{2 \pi \i  \theta (z- v_2) (z-v_1)^2}   \\
& +   \theta \int_{t_0}^{t} ds \oint_{\gamma}dz \frac{ \Phi^+(z/s_0, s_0)  \partial_z G \left( z/s, s\right) e^{\theta G(z/s,s)}   }{2 \pi \i  (z- v_2) (z-v_1)^2 s  \left(e^{\theta G(z/s,s)} - 1 \right)}.
\end{split}
\end{equation}

We next claim that the following is true for all $\delta_0^{-1} \geq s_0 \geq t \geq t_0  \geq \delta_0$
\begin{equation}\label{YA4}
\begin{split}
&  \oint_{\gamma} dz \frac{\Phi^+(z/s_0, s_0) e^{\theta G(z /t, t)} }{2\pi \i (z- v_2) (z -v_1)^2} - \oint_{\gamma} dz \frac{\Phi^+(z/s_0, s_0) e^{\theta G(z /t_0, t_0)} }{2\pi \i (z- v_2) (z -v_1)^2} = \\
&- \theta^2 \int_{t_0}^{t}ds \oint_{\gamma} dz \frac{\Phi^+(z/ s_0, s_0)  \cdot \partial_z G \left( z/s, s\right) e^{\theta G(z/s,s)}  }{2 \pi \i (z-v_2) (z-v_1)^2 s \left(e^{\theta G(z/s,s)} - 1 \right) }.
\end{split}
\end{equation}
Substituting the latter into (\ref{YA3}) we obtain (\ref{YA1}). In the remainder of this step we show (\ref{YA4}). Clearly both sides are equal when $t = t_0$ and so it suffices to show that the $t_0$-derivatives agree, which is equivalent to 
\begin{equation}\label{YA5}
\begin{split}
&  - \oint_{\gamma} dz \frac{\Phi^+(z/s_0, s_0) e^{\theta G(z /t_0, t_0)} }{2\pi \i (z- v_2) (z -v_1)^2} \cdot \left[ \partial_s G(z/t_0,t_0) - (z/t_0^2) \partial_z G(z/t_0,t_0) \right] \\
& = \theta \cdot \oint_{\gamma} dz \frac{\Phi^+(z/ s_0, s_0)  \cdot \partial_z G \left( z/t_0, t_0\right) e^{\theta G(z/t_0,t_0)}  }{2 \pi \i (z-v_2) (z-v_1)^2 s_2 \left(e^{\theta G(z/t_0,t_0)} - 1 \right) }.
\end{split}
\end{equation}
From (\ref{S7SpecEqn}) we have
\begin{equation}\label{S7SpecEqn2}
\begin{split}
 &\partial_s G(z/s,s) - (z/s^2) \partial_z G(z/s,s) =  \partial_z G(z/s,s) \\
&\times \frac{  [(z/s)e^{\theta G(z/s,s)}   -  \theta -  (z/s)] - (z/s) \cdot \left(e^{\theta G(z/s,s)} - 1 \right) }{ s \left(e^{\theta G(z/s,s)} - 1 \right)} = -\frac{\theta \cdot  \partial_z G(z/s,s)}{s\left(e^{\theta G(z/s,s)} - 1 \right) }.
\end{split}
\end{equation}
Using the latter, we see that (\ref{YA5}) trivially holds as the integrands agree pointwise.\\

{\bf \raggedleft Step 2.} In this step we establish two identities, these are equations (\ref{YB1}) and (\ref{YB3}), that will be used in later parts of the proof.

From (\ref{UTop}) and (\ref{YA1}) we have for $\delta_0^{-1} \geq s_0 \geq  \delta_0$, and $v_1, v_2 \in U(\delta, \theta)$
\begin{equation}\label{YB1}
\begin{split}
&Q(v_2/s_0, s_0) \cdot M \left(\G_{K}(v_2,s_0), \G_K(v_1,s_0)   \right) = M \left(\sfU_{K}(v_2;s_0,s_0), \G_K(v_1,t)   \right) \\
&= O(K^{-1/2}) -  \oint_{\gamma} dz \frac{ \Phi^+(z/s_0, s_0) e^{\theta G(z/s_0, s_0)}    }{2 \pi \i  \theta (z- v_2) (z-v_1)^2}.
\end{split}
\end{equation}
Subtracting (\ref{YB1}) from (\ref{YA1}) applied to $t = s_0$ we have for $\delta_0^{-1} \geq s_0 \geq t_0 \geq  \delta_0$, and $v_1, v_2 \in U(\delta, \theta)$
$$\Phi^+(v_2/s_0,s_0)  \cdot M \left(\sfV_{K}(v_2;s_0,t_0), \G_K(v_1,s_0)   \right) = O(K^{-1/2}),$$
where we used the formula for $Q$ from (\ref{S7QRecalled}), the formula for $\sfV_K$ from (\ref{PP1}) and $\sfU_K$ from (\ref{S6DefInt}). As explained in Remark \ref{Divide} we can divide both sides by $\Phi^{+}(v_2/s_0, s_0)$ to conclude
\begin{equation}\label{YB2}
\begin{split}
M \left(\sfV_{K}(v_2;s_0,t_0), \G_K(v_1,s_0)   \right)  = O(K^{-1/2}). 
\end{split}
\end{equation}
From the last equation and the definition of $\sfV_K$ in (\ref{PP1}) we obtain
\begin{equation*}
\begin{split}
&\left[e^{\theta G(v_2/t_0, t_0) } - 1 \right] \hspace{-1mm}  M \left(\G_K(v_2,t_0) , \G_K(v_1,s_0)   \right)  - \left[e^{\theta G(v_2/s_0, s_0) } - 1 \right] \hspace{-1mm}  M \left(\G_K(v_2,s_0) , \G_K(v_1,s_0)   \right)  \\
&= O(K^{-1/2}) + \theta \int_{t_0}^{s_0} M(\partial_z \G_K(v_2, u), \G_K(v_1,s_0) ) du  . 
\end{split}
\end{equation*}
Subtracting from the last equation the same equation with $t_0$ replaced with $t$, where $\delta_0^{-1} \geq s_0 \geq t \geq t_0 \geq  \delta_0$, we obtain 
\begin{equation}\label{YB3}
\begin{split}
& \left[e^{\theta G(v_2/t_0, t_0) } - 1 \right] \hspace{-1mm}  M \left(\G_K(v_2,t_0) , \G_K(v_1,s_0)   \right)  - \left[e^{\theta G(v_2/t, t) } - 1 \right] \hspace{-1mm}  M \left(\G_K(v_2,t) , \G_K(v_1,s_0)   \right)   \\
&= O(K^{-1/2}) + \theta \int_{t_0}^{t} M(\partial_z \G_K(v_2, u), \G_K(v_1,s_0) ) du  . 
\end{split}
\end{equation}

{\bf \raggedleft Step 3.} In this step we prove (\ref{YA}) in the case when $s^{\infty}_1 = s^{\infty}_2$. Setting $s_0 = s_1^n$, $v_1 = z_1^n$, $v_2 = z_2^n$ and $K = K_n$ into (\ref{YB1}) and letting $n \rightarrow \infty$, we conclude that 
\begin{equation}\label{YC1}
\begin{split}
&\lim_{n \rightarrow \infty} M \left(\G_K(z^n_2,s_1^n ) , \G_K(z_1^n,s_1^n)   \right) = \frac{-1}{Q(z_2^{\infty}/s_1^{\infty}, s_1^{\infty})} \cdot \oint_{\gamma} dz \frac{ \Phi^+(z/s_1^{\infty}, s_0) e^{\theta G(z/s_1^{\infty}, s_1^{\infty})}    }{2 \pi \i  \theta (z- z^\infty_2) (z- z^{\infty}_1)^2}.
\end{split}
\end{equation}
We mention that the division by $Q(z_2^{\infty}/s_1^{\infty}, s_1^{\infty})$ is fine in view of Remark \ref{Divide}. Next, setting $t_0 = s_2^n$, $s_0 = s_1^n$, $v_1 = z_2^n$, $v_2 = z_1^n$ and $K = K_n$ into (\ref{YB3}) and letting $n \rightarrow \infty$, we conclude that 
\begin{equation*}
\begin{split}
&\lim_{n \rightarrow \infty} \left[e^{\theta G(z^n_2/s_2^n, s_2^n) } - 1 \right]  M \left(\G_K(z^n_2,s_2^n ) , \G_K(z_1^n,s_1^n)   \right)  \\
& = \lim_{n \rightarrow \infty} \left[e^{\theta G(z^n_2/s_1^n, s_1^n) } - 1 \right]  M \left(\G_K(z^n_2,s_1^n ) , \G_K(z_1^n,s_1^n)   \right),
\end{split}
\end{equation*}
since $s^{\infty}_1 = s^{\infty}_2$. We may now divide both sides by $e^{\theta G(z^n_2/s_2^n, s_2^n) } - 1$, which is allowed by Remark \ref{Divide}, and apply (\ref{YC1}) to compute the limit of the right side. The result is 
\begin{equation}\label{YC2}
\begin{split}
&\lim_{n \rightarrow \infty} M \left(\G_K(z^n_2,s_2^n ) , \G_K(z_1^n,s_1^n)   \right) = \frac{-1}{Q(z_2^{\infty}/s_1^{\infty}, s_1^{\infty})} \cdot \oint_{\gamma} dz \frac{ \Phi^+(z/s_1^{\infty}, s_0) e^{\theta G(z/s_1^{\infty}, s_1^{\infty})}    }{2 \pi \i  \theta (z- z^\infty_2) (z- z^{\infty}_1)^2}.
\end{split}
\end{equation}

What remains is to show that the right side of (\ref{YC2}) is equal to $C(z^{\infty}_1,s^{\infty}_1;z^{\infty}_2,s^{\infty}_2)$. From (\ref{S31E7}), (\ref{S31E9}) and (\ref{S31E10}) we have 
 \begin{equation}\label{YC3}
R(z/s,s) =   \Phi^-(z/s,s) \cdot e^{- \theta G(z/s,s)} + \Phi^+(z/s,s) \cdot e^{\theta G(z/s,s)}  =  1/s - \theta .
\end{equation}
In particular, we can write the integrand on the right side of (\ref{YC2}) as 
$$\frac{1}{2} \cdot \left[ R(z/s_1^{\infty}, s_1^{\infty}) - Q(z/s_1^{\infty}, s_1^{\infty}) \right] \cdot \frac{1}{2\pi \i \theta (z- z_2^{\infty}) (z -z_1^{\infty})^2}.$$
In view of (\ref{YC3}) the part corresponding to $R(z/s_1^{\infty}, s_1^{\infty})$ integrates to zero by Cauchy's theorem and the fact that $z_i^{\infty}$ are outside of $\gamma$. Combining the last few observations with the formula for $Q$ from (\ref{S7QRecalled}) gives
 \begin{equation}\label{YC4}
\begin{split}
&\lim_{n \rightarrow \infty} M \left(\G_{K_n}(z^n_2,s_2^n ) , \G_{K_n}(z_1^n,s_1^n)   \right) =  \frac{\theta^{-1}}{2 \sqrt{(z_2^{\infty}- a(s_1^{\infty}))(z_2^{\infty}- b(s_1^{\infty}))}} \\
& \times  \oint_{\gamma} dz \frac{  \sqrt{(z- a(s_1^{\infty}))(z- b(s_1^{\infty})) } }{2\pi \i (z- z^\infty_2) (z- z^{\infty}_1)^2 }  = C(z^{\infty}_1,s^{\infty}_1;z^{\infty}_2,s^{\infty}_1).
\end{split}
\end{equation}
We mention that the equality on the second line of (\ref{YC4}) can be deduced by evaluating the integral as (minus) the sum of the residues at $z = z_1^{\infty}$ and $z = z_2^{\infty}$ (there is no residue at infinity). Equation (\ref{YC4}) implies (\ref{YA}) when $s^{\infty}_1 = s^{\infty}_2$.\\

{\bf \raggedleft Step 4.} In the remaining steps we assume that $s^{\infty}_1 > s^{\infty}_2$ and so in particular $s^{\infty}_1 > \delta_0$.  For $s \in [\delta_0, \delta_0^{-1}]$ and $z \in U(\delta, \theta)$ we define the functions
\begin{equation}\label{YD1}
f_n(z,s) = M(\G_{K_n} (z,s), \G_{K_n}(z_1^n, s_1^n) ).
\end{equation}
Note that since $\G_K(z,s)$ is analytic in $z \in U(\delta, \theta)$ for each $s \in [\delta_0, \delta_0^{-1}]$, the same is true for $f_n(z,s)$.

Let $\mathsf{Z}$ be a countable dense subset of $U(\delta, \theta)$ and $\mathsf{S}$ be a countable dense subset of $[\delta_0, \delta_0^{-1}]$. From (\ref{S6MB}) we know that there exists $R(u) > 0$, depending on $\theta, \delta, u$, such that for $s \in [\delta_0, \delta_0^{-1}]$ and $z \in F_u$
\begin{equation}\label{YD2}
\left|f_{n}(z,s) \right| \leq R(u) \mbox{ and } \left|\partial_z f_{n}(z,s) \right| \leq R(u),
\end{equation}
where we recall that $\{F_u\}_{u \geq 1}$ are as in Lemma \ref{DomLip}. In particular, we see that $f_{n}(z,s)$ is a bounded sequence for each $(z,s) \in \mathsf{Z} \times \mathsf{S}$. By a diagonalization argument we may pass to a subsequence $n_h$, such that $\lim_h f_{n_h}(z,s)$ exists for each $(z,s) \in \mathsf{Z} \times \mathsf{S}$. We denote this limit by $f_{\infty}(z,s)$. \\

In the remainder of this step we prove that $f_{\infty}(z,s)$ has a continuous extension to $F_u \times [\delta_0, s_1^{\infty}]$ for each $u \in \mathbb{Z}_{\geq 2}$, which would imply that $f_{\infty}(z,s)$ has a continuous extension to $U(\delta, \theta) \times [\delta_0, s_1^{\infty}]$, which we continue to call $f_{\infty}(z,s)$. We claim that there exists $D(u) > 0$, depending on $\theta, \delta, u$, such that for $(x,s), (y,t) \in (\mathsf{Z} \cap F_u) \times (\mathsf{S} \cap [\delta_0, s^{\infty}_1))$ we have 
\begin{equation}\label{YD3}
|f_{\infty}(x,s) - f_{\infty}(y,t)| \leq D(u) \cdot \left( |s-t| + |x-y| \right).
\end{equation}
If true, (\ref{YD3}) would imply that $f_{\infty}(z,s)$ is uniformly continuous on $(\mathsf{Z} \cap F_u) \times (\mathsf{S} \cap [\delta_0, s^{\infty}_1))$ and hence has a unique continuous extension to $F_u \times [\delta_0, s^{\infty}_1]$, which also satisfies (\ref{YD3}) for $(x,s), (y,t) \in F_u \times [\delta_0, s^{\infty}_1]$. Here, we implicitly used that $\mathsf{Z} \cap F_u$ is dense in $F_u$, since $u \geq 2$, and that $ \mathsf{S} \cap [\delta_0, s^{\infty}_1)$ is dense in $[\delta_0, s^{\infty}_1]$, since $s^{\infty}_1 > \delta_0$ (see the begining of this step).\\

From Lemma \ref{DomLip} there is $\lambda_u > 0$ and for each $x,y \in F_u$ a piecewise smooth contour $\gamma_{x,y}$ connecting $x,y$ such that $\gamma_{x,y} \subset F_u$ and $\| \gamma_{x,y}\| \leq \lambda_v |x-y|$. In view of (\ref{YD2}) we can find a constant $R_z(u)$, depending on $\theta, \delta, u$, such that for $t \in [\delta_0, s_1^{n_h}]$ and $x,y \in F_u$
\begin{equation}\label{YD4}
\left| f_{n_h}(x,t) - f_{n_h}(y,t) \right| = \left| \int_{\gamma_{x,y}} \partial_z f_{n_h}(z,t) dz \right| \leq R_z(u) \cdot |x- y|.
\end{equation}
In addition, from (\ref{YB3}) we can find a constant $R_s(u)$, depending on $\theta, \delta, u$, such that for $z\in F_u$ and $s,t \in [\delta_0, s_1^{n_h}]$
\begin{equation}\label{YD5}
\begin{split}
\left| \left[e^{\theta G(z/s, s)} - 1 \right] f_{n_h}(z,s) - \left[e^{\theta G(z/t, t)} - 1 \right] f_{n_h}(z,t) \right|  \leq O(K_{n_h}^{-1/2}) + R_s(u) |s - t|.
\end{split}
\end{equation}
Combining (\ref{YD5}) and (\ref{PT6}) we conclude that for any sequences $a_h, b_h \in [\delta_0, s_1^{n_h}]$
\begin{equation}\label{YD7}
\limsup_{h \rightarrow \infty} \sup_{z \in F_u} \left| f_{n_h}(z,a_h) - f_{n_h}(z,b_h) \right| \leq \frac{C(u) R(u) + R_s(u)}{c(u)} \cdot \limsup_{ h \rightarrow \infty} |a_h - b_h|.
\end{equation}
Taking the $h \rightarrow \infty$ limit in (\ref{YD4}), using (\ref{YD7}) and applying the triangle inequality we obtain (\ref{YD3}). \\

{\bf \raggedleft Step 5.}  Using that (\ref{YD3}) is satisfied for all $(x,s), (y,t) \in F_u \times [\delta_0, s^{\infty}_0]$, (\ref{YD4}), (\ref{YD7}) and the pointwise convergence of $f_{n_h}$ to $f_{\infty}$ on $(\mathsf{Z} \cap F_u) \times (\mathsf{S} \cap [\delta_0, s^{\infty}_0))$ (which is dense in $F_u \times [\delta_0, s_0^{\infty}]$) we conclude for any sequence $a_h \in [\delta_0, s^{n_h}_1]$ such that $a_h \rightarrow a \in [\delta_0, s_1^{\infty}]$ that
\begin{equation}\label{YE1}
\limsup_{h \rightarrow \infty} \sup_{z \in F_u} \left| f_{n_h}(z, a_h) - f_{\infty}(z, a) \right| = 0.
\end{equation}
In the remainder of this step we prove that $f_{\infty}(z,s) = C(z_1^{\infty},s_1^{\infty}; z,s)$ for all $(z,s) \in U(\theta, \delta) \times [\delta_0, s_0^{\infty}]$. If true, then (\ref{YE1}) applied to $a_h = s^{n_h}_2$ and $a = s^{\infty}_2$ implies (\ref{YA}).\\

From (\ref{YE1}) we have for each $u \in \mathbb{Z}_{\geq 2}$ and $s \in [\delta_0, s_0^{\infty}]$ that $f_{\infty}(\cdot, s)$ is the uniform over $F_u$ limit of analytic functions in $U(\theta, \delta)$. This implies that for each $s \in [\delta_0, s_0^{\infty}]$ the function $f_{\infty}(z,s)$ is analytic in $U(\theta, \delta)$, see \cite[Chapter 2, Theorem 5.2]{SS}. In addition, (\ref{YE1}) being true for all $u \in \mathbb{Z}_{\geq 2}$ implies
\begin{equation}\label{YE2}
\limsup_{h \rightarrow \infty} \sup_{z \in F_u} \left| \partial_z f_{n_h}(z, a_h) - \partial_z f_{\infty}(z, a) \right| = 0,
\end{equation}
in view of \cite[Chapter 2, Theorem 5.3]{SS}. 

From (\ref{YB3}) we have for $z \in F_u$ and any sequence $a_h \in [\delta_0, s^{n_h}_0]$ such that $a_h \rightarrow a \in [\delta_0, s_0^{\infty}]$
\begin{equation}\label{YE3}
\begin{split}
&\lim_{ h \rightarrow \infty} \left[ e^{\theta G(z/s^{n_h}_1,s^{n_h}_1)} - 1 \right] \cdot f_{n_h}(z,s^{n_h}_1) + \theta \int_{a_h}^{s^{n_h}_1} \partial_z f_{n_h}(z,u) du \\
& - \left[ e^{\theta G(z/a_h,a_h)} - 1 \right] \cdot f_{n_h}(z, a_h) = 0.
\end{split}
\end{equation} 
Using the continuity of $e^{\theta G(z/s,s)}$, the convergence in (\ref{YE1}), (\ref{YE2}) and of $s^{n_h}_1$ to $s_1^{\infty}$, we can apply the bounded convergence theorem to (\ref{YE3}) and obtain for all $z \in U(\theta, \delta)$ and $a \in [\delta_0, s_0^{\infty}]$
\begin{equation}\label{YE4}
\begin{split}
& \left[ e^{\theta G(z/s^{\infty}_0,s^{\infty}_0)} - 1 \right] \cdot f_{\infty}(z,s^{\infty}_0) + \theta \int_{a}^{s^{\infty}_0} \partial_z f_{\infty}(z,u) du  - \left[ e^{\theta G(z/a,a)} - 1 \right] \cdot f_{\infty}(z, a) = 0.
\end{split}
\end{equation} 
On the other hand, we have from (\ref{YC4}) (with $z_2^n = z$ and $s_2^n = s_1^n$) and (\ref{YE1}) for $z \in F_u$ that 
\begin{equation}\label{YE5}
\begin{split}
&f_{\infty}(z, s^{\infty}_1) = C(z^{\infty}_1,s^{\infty}_1; z ,s^{\infty}_1).
\end{split}
\end{equation} 
Our work in this step shows that $f_{\infty}(z,s)$ satisfies the conditions of Lemma \ref{LUnique2}, from which we conclude that $f_{\infty}(z,s) = C(z_1^{\infty},s_1^{\infty}; z,s)$ for all  $(z,s) \in U(\theta, \delta) \times [\delta_0, s_0^{\infty}]$.

%
\subsection{Proof of Lemmas \ref{FProp} and \ref{LUnique2}}\label{Section7.3} In this section we give the proofs of the two lemmas from Section \ref{Section7.1}.

\begin{proof}[Proof of Lemma \ref{FProp}] The first three properties in the lemma follow immediately from the second formula in (\ref{Transport}) and the definition of the square root in Definition \ref{Branch}. Computing the derivative of $F(x;s,t)$ with respect to $x$ on $\mathbb{R} \setminus [a(t), b(t)]$ we get
\begin{equation}\label{FDer}
F'(x;s,t) = \frac{s+t}{2t} + \frac{s-t}{2t} \cdot \frac{2x - a(t) - b(t)}{\sqrt{(x- a(t))(x-b(t))}}.
\end{equation}
In view of Definition \ref{Branch} each summand is positive on $(-\infty, a(t))$ and $(b(t), \infty)$ and so $F'(x;s,t) > 0$ for $x \in \mathbb{R} \setminus [a(t), b(t)]$. In addition, by a direct computation using (\ref{S7DefAB}) and (\ref{Transport}) we have $F(a(t); s,t) = c(s,t)$ and $F(b(t); s,t) = d(s,t)$. This proves the last two properties in the lemma.
\end{proof}

\begin{proof}[Proof of Lemma \ref{LUnique2}] Let us define $\tilde{f}(z,s) = C(z_1, q; z,s)$. If we can show that $\tilde{f}(z,s)$ satisfies (\ref{S6LID}), then $g(z,s):= \tilde{f}(z,s) - f(z,s)$ also satisfies (\ref{S6LID}) and $g(z,q) = 0$ for all $z \in U(\delta, \theta)$. From Lemma \ref{LUnique} we conclude that $g(z,s) = 0$ and hence $\tilde{f}(z,s) = f(z,s)$ as desired. We have thus reduced the problem to showing that $\tilde{f}(z,s)$ satisfies (\ref{S6LID}). Since both sides of (\ref{S6LID}) clearly agree when $s = q$, it suffices to show that the $s$-derivatives agree, which is equivalent to
\begin{equation}\label{WX1}
\begin{split}
&\partial_s \left( [e^{\theta G(z/s, s)} - 1] \tilde{f}(z,s) \right) + \theta \partial_z \tilde{f}(z,s)   = 0.
\end{split}
\end{equation}

Let $\gamma$ be a positively oriented contour that encloses $[a(s_1), b(s_1)]$, and excludes the points $z_1$ and $F(z;q,s)$ for $s \in [p,q]$. Notice that such a choice is possible in view of Lemma \ref{FProp}. We then have the following formula for $\tilde{f}(z,s)$
\begin{equation}\label{WX2}
\begin{split}
&\tilde{f}(z,s) =  \frac{\theta^{-1}}{2 \sqrt{(F(z;q,s) - a(q))(F(z;q,s)- b(q))}} \cdot \oint_{\gamma} d\zeta \frac{  \sqrt{(\zeta- a(q))(\zeta- b(q)) } }{2\pi \i (\zeta- F(z;q,s)) (\zeta- z_1)^2 } \\
& \times \frac{[e^{\theta G(z/s, s)} + 1 ]^2 \cdot \theta s_1 - [e^{\theta G(z/s, s)} - 1 ]^2}{[e^{\theta G(z/s, s)} + 1 ]^2 \cdot \theta s - [e^{\theta G(z/s, s)} - 1 ]^2},
\end{split}
\end{equation}
which one observes by directly computing the integral as (minus) the residues at $\zeta = F(z;q,s)$ and $\zeta = z_1$ (there is no residue at infinity). In addition, from (\ref{Transport}) we have
\begin{equation}\label{WX3}
F(z;q,s) = z + \frac{\theta (q - s)}{e^{\theta G(z/s,s)} - 1} = \frac{e^{\theta G(z/s,s)}}{e^{\theta G(z/s,s)} + 1} + \frac{\theta q}{e^{\theta G(z/s,s)} - 1},
\end{equation}
where the second equality can be checked directly using (\ref{S31E14}).

Let us denote 
\begin{equation}\label{WX4}
\begin{split}
&H(\zeta,w, s) = \frac{\theta^{-1}}{2 \sqrt{\left( \frac{w}{w+1} + \frac{\theta q}{w-1} - a(q) \right) \cdot \left(  \frac{w}{w+1} + \frac{\theta q}{w-1} - b(q)  \right)}}\\
&\times \frac{1}{\zeta - \frac{w}{w+1} - \frac{\theta q}{w - 1}} \cdot \frac{(w+1)^2 \cdot \theta s_1 - (w-1)^2}{(w+1)^2\cdot \theta s - (w - 1)^2} \mbox{ and } W(z,s) = e^{\theta G(z/s,s)}.
\end{split}
\end{equation}
Combining (\ref{WX2}), (\ref{WX3}) and (\ref{WX4}), we see that (\ref{WX1}) is equivalent to
\begin{equation}\label{WX5}
\begin{split}
\frac{1}{2\pi \i} \oint_{\gamma}d\zeta \frac{\Phi^+(\zeta/q,q)  e^{\theta G(\zeta/q, q)}}{(\zeta-z_1)^2} \cdot \left[A(\zeta, z,s) + B(\zeta, z,s) + C(\zeta, z,s) \right] = 0,
\end{split}
\end{equation}
where we have set
\begin{equation}\label{WX6}
\begin{split}
&A(\zeta, z,s) =  \left[ (W(z,s) - 1) \cdot \partial_s W(z,s) + \theta \partial_z W(z,s)\right] \cdot \partial_w H(\zeta, W(z,s), s) ,
\end{split}
\end{equation}
\begin{equation}\label{WX7}
\begin{split}
&B(\zeta, z,s) =   (W(z,s) - 1) \cdot \partial_s H(\zeta, W(z,s), s) \\
&= \frac{(-\theta)(W(z,s) - 1)  (W(z,s) + 1)^2}{(W(z,s)+1)^2\cdot \theta s - (W(z,s) - 1)^2} \cdot H(\zeta, W(z,s), s),
\end{split}
\end{equation}
\begin{equation}\label{WX8}
\begin{split}
&C(\zeta, z,s) = \partial_s W(z,s) \cdot H(\zeta, W(z,s), s).
\end{split}
\end{equation}

In the remainder of the proof we show that
\begin{equation}\label{WX9}
A(\zeta, z,s) = 0 \mbox{ and } B(\zeta, z,s) + C(\zeta, z,s) = 0,
\end{equation}
which proves (\ref{WX5}) and hence the lemma. Recall from (\ref{SpecEqn}) that
\begin{equation*}
\begin{split}
 &\partial_s G(z/s,s) - (z/s^2) \partial_z G(z/s,s)  = -\frac{\theta \cdot  \partial_z G(z/s,s)}{s\left(e^{\theta G(z/s,s)} - 1 \right) }.
\end{split}
\end{equation*}
The latter gives
$$(W(z,s) - 1) \cdot \partial_s W(z,s) =(W(z,s) - 1) \cdot W(z,s) \cdot \theta \cdot \left( \partial_s G(z/s,s) - (z/s^2) \partial_z G(z/s,s) \right) $$
$$= -(\theta^2/ s) \cdot W(z,s) \cdot \partial_z G(z/s,s) = - \theta \partial_z W(z,s),$$
which in turn proves the first equality in (\ref{WX9}). Using the formulas in (\ref{WX7}) and (\ref{WX8}) we see that to prove the second equality in (\ref{WX9}) it suffices to show that 
\begin{equation*}
\partial_s W(z,s) = \frac{ \theta (W(z,s) - 1) \cdot (W(z,s) +1)^2  }{(W(z,s)+1)^2 \cdot \theta s - (W(z,s)-1)^2},
\end{equation*}
which one verifies by a direct computation using (\ref{S31E14}).
\end{proof}

%
\section{Convergence to the GFF}\label{Section8} The goal of this section is to prove Theorems \ref{ThmLLN} and \ref{ThmMain}. In Section \ref{Section8.1} we recall the formulation and some basic properties of the Gaussian free field. In Section \ref{Section8.2} we prove an analogue of Theorem \ref{ThmMain}, see Theorem \ref{MainThmNew}, which is in terms of a slightly different height function. In Section \ref{Section8.3} we give the proofs of Theorems \ref{ThmLLN} and \ref{ThmMain}, and the latter will be deduced by a simple change of variables from Theorem \ref{MainThmNew}.

%
\subsection{The Gaussian free field}\label{Section8.1} In this section we briefly recall the formulation and some basic properties of the Gaussian free field (GFF). Our discussion will follow the exposition in \cite[Section 8.1]{DK19}, which in turn is based on \cite[Section 4.5]{BGJ}. For a more thorough background on the subject we refer to \cite{Sheff}, \cite[Section 4]{Dub}, \cite[Section 2]{HMP}, and the references therein.

\begin{definition} The Gaussian free field with Dirichlet boundary conditions in the upper half-plane $\mathbb{H}$ is a (generalized) centered Gaussian field $\mathcal{F}$ on $\mathbb{H}$ with covariance given by
\begin{equation}\label{GFFCov}
\mathbb{E} \left[ \mathcal{F}(z) \mathcal{F}(w)\right] = - \frac{1}{2\pi} \log \left| \frac{z - w}{z - \overline{w}} \right|, \hspace{2mm} z,w \in \mathbb{H}.
\end{equation}
\end{definition}
We remark that $\mathcal{F}$ can be viewed as a probability Gaussian measure on a suitable class of generalized functions on $\mathbb{H}$; however, one cannot define the value of $\mathcal{F}$ at a given point $z \in \mathbb{H}$ (this is related to the singularity of (\ref{GFFCov}) at $z = w$).

Even though $\mathcal{F}$ does not have a pointwise value; one can define the (usual distributional) pairing $\mathcal{F}(\phi)$, whenever $\phi$ is a smooth function of compact support, and the latter is a mean zero normal random variable. In general, one can characterize the distribution of $\mathcal{F}$ through pairings with test functions as follows. If $\{\phi_k\}$ is any sequence of compactly supported smooth functions on $\mathbb{H}$, then the pairings $\{ \mathcal{F}(\phi_k) \}$ form a sequence of centered normal variables with covariance
$$\mathbb{E} \left[ \mathcal{F}(\phi_k) \mathcal{F} (\phi_l)\right] = \int_{\mathbb{H}^2} \phi_k(z) \phi_l(w) \left( - \frac{1}{2\pi} \log \left| \frac{z - w}{z - \overline{w}} \right| \right)|dz|^2|dw|^2.$$

An important property of $\mathcal{F}$ that will be useful for us is that it can be integrated against smooth functions on smooth curves $\gamma \subset \mathbb{H}$. We isolate the statement in the following lemma.
\begin{lemma}\label{GFFCH}\cite[Lemma 4.6]{BGJ} Let $\gamma \subset \mathbb{H}$ be a smooth curve and $\mu$ a measure on $\mathbb{H}$, whose support is $\gamma$ and whose density with respect to the natural (arc length) measure on $\gamma$ is a given by a smooth function $g(z)$ such that
\begin{equation}\label{VarCont}
\iint\limits_{\gamma \times \gamma} g(z) g(w) \left(  - \frac{1}{2\pi} \log \left| \frac{z - w}{z - \overline{w}}\right| \right) dz dw < \infty.
\end{equation}
Then
$$ \int_{\mathbb{H}} \mathcal{F} d\mu = \int_\gamma \mathcal{F}(u) g(u) du$$
 is a well-defined Gaussian centered random variable of variance given by (\ref{VarCont}).
 Moreover, if we have two such measures $\mu_1$ and $\mu_2$ (with two curves $\gamma_1$ and $\gamma_2$ and two densities $g_1$ and $g_2$), then $X_1 = \int_{\gamma_1} \mathcal{F}(u) g_1(u)du$, $X_2 = \int_{\gamma_2} \mathcal{F}(u) g_2(u)du$ are jointly Gaussian with covariance 
$$\mathbb{E}[X_1 X_2] = \iint\limits_{\gamma_1 \times \gamma_2} g_1(z) g_2(w) \left(  - \frac{1}{2\pi} \log \left| \frac{z - w}{z - \overline{w}}\right| \right) dz dw.$$
\end{lemma}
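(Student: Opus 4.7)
The plan is to realize Lemma \ref{GFFCH} as a standard consequence of the fact that the GFF extends isometrically from smooth compactly supported test functions to a Hilbert space of ``generalized functions'' with finite logarithmic energy, and then verify that the measure $g\,d\sigma_\gamma$ (with $\sigma_\gamma$ arc length on $\gamma$) belongs to that Hilbert space precisely because of the hypothesis \eqref{VarCont}.

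First, I would fix a smooth mollifier family $\{\rho_\varepsilon\}_{\varepsilon > 0}$ on $\mathbb{R}^2$ supported in a ball of radius $\varepsilon$, and set $\phi_\varepsilon(z) = (\mu * \rho_\varepsilon)(z)$, i.e.\ $\phi_\varepsilon(z) = \int_\gamma \rho_\varepsilon(z-u)\, g(u)\,du$. For $\varepsilon$ small enough (depending on the distance from $\gamma$ to $\partial\mathbb{H}$), each $\phi_\varepsilon$ is smooth and compactly supported in $\mathbb{H}$, so $\mathcal{F}(\phi_\varepsilon)$ is a well-defined centered Gaussian with variance
\[
\iint_{\mathbb{H}\times\mathbb{H}} \phi_\varepsilon(z)\phi_\varepsilon(w) K(z,w)\,|dz|^2|dw|^2, \qquad K(z,w) = -\tfrac{1}{2\pi}\log\bigl|(z-w)/(z-\bar w)\bigr|.
\]
The candidate for $\int_\gamma \mathcal{F}(u) g(u)\,du$ is then the $L^2(\Omega)$-limit of $\mathcal{F}(\phi_\varepsilon)$ as $\varepsilon \to 0^+$.

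Next, I would prove this sequence is Cauchy in $L^2(\Omega)$ by estimating, for $\varepsilon,\varepsilon' > 0$, the quadratic form $\mathbb{E}[(\mathcal{F}(\phi_\varepsilon)-\mathcal{F}(\phi_{\varepsilon'}))^2]$, which unfolds as a double integral against $K$. Since $\phi_\varepsilon \to g\,d\sigma_\gamma$ weakly as measures and $K$ has only a mild logarithmic singularity on the diagonal (and is harmonic, hence bounded, off the diagonal in $\mathbb{H}$), the standard potential-theoretic argument, using \eqref{VarCont} as the uniform integrability input, shows
\[
\iint \phi_\varepsilon(z)\phi_{\varepsilon'}(w) K(z,w)\,|dz|^2|dw|^2 \xrightarrow[\varepsilon,\varepsilon'\to 0]{} \iint_{\gamma\times\gamma} g(z)g(w) K(z,w)\,dz\,dw.
\]
Thus $\mathcal{F}(\phi_\varepsilon)$ converges in $L^2(\Omega)$ to a centered Gaussian random variable whose variance equals \eqref{VarCont}, and defining $\int_\gamma \mathcal{F}(u)g(u)\,du$ as this limit gives the first claim. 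The joint Gaussianity statement for two measures $\mu_1,\mu_2$ follows immediately because $\bigl(\mathcal{F}(\phi_\varepsilon^{(1)}),\mathcal{F}(\phi_\varepsilon^{(2)})\bigr)$ is jointly Gaussian for each $\varepsilon$ and coordinate-wise $L^2$-limits of jointly Gaussian vectors are jointly Gaussian; the covariance formula follows by polarizing the variance computation.

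The main obstacle is the justification of the passage to the limit in the double integral, since $\mu$ is singular (supported on a one-dimensional set) while the natural inner product on the Hilbert space where $\mathcal{F}$ is defined is built from $|dz|^2$ on $\mathbb{H}$. The key technical point is that the logarithmic kernel $K$ is locally integrable with respect to planar Lebesgue measure \emph{and} with respect to $d\sigma_\gamma \otimes d\sigma_\gamma$ under assumption \eqref{VarCont}; using these two integrabilities together with Fubini and dominated convergence (with an appropriate logarithmic dominating function based on the distance to the diagonal) is what makes the approximation argument go through. Once this is handled, everything else is routine Gaussian Hilbert space manipulation.
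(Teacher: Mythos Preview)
The paper does not prove this lemma at all; it is simply quoted from \cite[Lemma 4.6]{BGJ} as a known result and used as a black box in Section~\ref{Section8.1}. So there is no ``paper's own proof'' to compare against.

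That said, your sketch is the standard and correct way to establish the result: mollify the singular measure $g\,d\sigma_\gamma$ to get smooth compactly supported test functions $\phi_\varepsilon$, use the finiteness condition~\eqref{VarCont} to show the resulting Gaussians $\mathcal{F}(\phi_\varepsilon)$ form a Cauchy sequence in $L^2(\Omega)$, and define the pairing as the limit. The one place to be slightly careful is the uniform integrability / dominated convergence step near the diagonal, but for a smooth curve $\gamma$ with smooth density $g$ the logarithmic singularity of $K$ integrated against arc length is mild enough that this goes through routinely. The covariance and joint Gaussianity claims then follow exactly as you say, by polarization and the fact that $L^2$-limits of jointly Gaussian vectors remain jointly Gaussian.
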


Another property of $\mathcal{F}$ that we require is that it behaves well under bijective maps, which leads to the notion of a pullback.
\begin{definition}\label{Pullback}
Given a domain $D$ and a bijection $\Omega: D \rightarrow \mathbb{H}$, the pullback $\mathcal{F} \circ \Omega$ is a generalized centered Gaussian field on $D$ with covariance
$$\mathbb{E} \left[\mathcal{F}(\Omega(z)) \mathcal{F}(\Omega(w)) \right] =  - \frac{1}{2\pi} \log \left| \frac{\Omega(z) - \Omega(w)}{\Omega(z) - \overline{\Omega}(w)} \right|, \hspace{2mm} z,w \in D.$$
Integrals of $\mathcal{F} \circ \Omega$ with respect to measures can be computed through
$$ \int_D (\mathcal{F} \circ \Omega)d\mu = \int_{\mathbb{H}} \mathcal{F} d\Omega(\mu),$$
where $d\Omega(\mu)$ stands for the pushforward of the measure $\mu$.
\end{definition}
The above definition immediately implies the following analogue of Lemma \ref{GFFCH}.
\begin{lemma}\label{GFFCD}\cite[Lemma 4.8]{BGJ} In the notation of Definition \ref{Pullback}, let $\mu$ be a measure on $D$ whose support is a smooth curve $\gamma$ and whose density with respect to the natural (length) measure on $\gamma$ is given by a smooth function $g(z)$ such that
\begin{equation}\label{VarContD}
\iint\limits_{\gamma \times \gamma} g_1(z) g_2(w) \left(  - \frac{1}{2\pi} \log \left| \frac{\Omega(z) - \Omega(w)}{\Omega(z) - \overline{\Omega}(w)}\right| \right) dz dw < \infty.
\end{equation}
Then
$$ \int_{D} (\mathcal{F} \circ \Omega) d\mu = \int_\gamma \mathcal{F}(\Omega(u)) g(u) du$$
 is a well-defined Gaussian centered random variable of variance given by (\ref{VarContD}).
 Moreover, if we have two such measures $\mu_1$ and $\mu_2$ (with two curves $\gamma_1$ and $\gamma_2$ and two densities $g_1$ and $g_2$), then $X_1 = \int_{\gamma_1} \mathcal{F}(\Omega(u)) g_1(u)du$, $X_2 = \int_{\gamma_2} \mathcal{F}(\Omega(u)) g_2(u)du$ are jointly Gaussian with covariance 
$$\mathbb{E}[X_1 X_2] = \iint\limits_{\gamma_1 \times \gamma_2} g_1(z) g_2(w) \left(  - \frac{1}{2\pi} \log \left| \frac{\Omega(z) - \Omega(w)}{\Omega(z) - \overline{\Omega}(w)}\right| \right) dz dw.$$
\end{lemma}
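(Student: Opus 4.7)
The plan is to reduce Lemma \ref{GFFCD} to Lemma \ref{GFFCH} via the defining relation $\int_D (\mathcal{F} \circ \Omega)\, d\mu = \int_{\mathbb{H}} \mathcal{F}\, d\Omega(\mu)$ from Definition \ref{Pullback}, together with a change of variables in the resulting double integral. The only genuine content is bookkeeping: identifying the pushforward $\Omega(\mu)$ as a measure on a smooth curve in $\mathbb{H}$ with a smooth density against arc length, verifying that the corresponding variance integral is finite, and then rewriting that integral back on $D$.

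First I would observe that since $\gamma \subset D$ is a smooth curve and $\Omega$ is a (smooth) bijection onto $\mathbb{H}$, the image $\tilde\gamma := \Omega(\gamma) \subset \mathbb{H}$ is again a smooth curve, and the pushforward $\Omega(\mu)$ is supported on $\tilde\gamma$. Parametrizing $\gamma$ by arc length $s \mapsto u(s)$, the curve $\tilde\gamma$ is parametrized by $s \mapsto \Omega(u(s))$, whose speed is $|d\Omega(u(s))/ds| =: J(s) > 0$. If $\tilde g$ denotes the density of $\Omega(\mu)$ with respect to arc length on $\tilde\gamma$, then the change-of-variables identity $\tilde g(\Omega(u))\, J(s)\, ds = g(u)\, ds$ forces $\tilde g(\Omega(u))\, d\tilde s(\Omega(u)) = g(u)\, ds(u)$ as measures on $\tilde\gamma$ and $\gamma$ respectively. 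In particular $\tilde g$ is smooth on $\tilde\gamma$.

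Next I would apply Lemma \ref{GFFCH} to the measure $\Omega(\mu)$ on $\tilde\gamma$. Its hypothesis is that the variance integral
\begin{equation*}
\iint_{\tilde\gamma \times \tilde\gamma} \tilde g(\zeta)\tilde g(\eta)\Bigl(-\frac{1}{2\pi}\log\Bigl|\frac{\zeta-\eta}{\zeta-\bar\eta}\Bigr|\Bigr)\, d\tilde s(\zeta)\, d\tilde s(\eta)
\end{equation*}
is finite. Substituting $\zeta = \Omega(u)$, $\eta = \Omega(v)$ and using $\tilde g(\Omega(u))\, d\tilde s(\Omega(u)) = g(u)\, ds(u)$ converts this into exactly (\ref{VarContD}), which is finite by assumption. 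Lemma \ref{GFFCH} then yields that $\int_{\mathbb{H}} \mathcal{F}\, d\Omega(\mu)$ is a centered Gaussian whose variance is given by (\ref{VarContD}); by Definition \ref{Pullback} this integral equals $\int_D (\mathcal{F} \circ \Omega)\, d\mu = \int_\gamma \mathcal{F}(\Omega(u)) g(u)\, du$, giving the first assertion.

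The covariance statement for two measures $\mu_1,\mu_2$ is handled identically: applying the covariance part of Lemma \ref{GFFCH} to the pair $\Omega(\mu_1), \Omega(\mu_2)$ shows that $X_1,X_2$ are jointly Gaussian with covariance
\begin{equation*}
\iint_{\tilde\gamma_1 \times \tilde\gamma_2} \tilde g_1(\zeta)\tilde g_2(\eta)\Bigl(-\frac{1}{2\pi}\log\Bigl|\frac{\zeta-\eta}{\zeta-\bar\eta}\Bigr|\Bigr)\, d\tilde s(\zeta)\, d\tilde s(\eta),
\end{equation*}
and the same change of variables converts this into the stated covariance. The only step that requires any care is the parametrization argument used to identify $\Omega(\mu)$ as a smooth measure on $\tilde\gamma$ and to justify the change of variables — everything else is an immediate consequence of Definition \ref{Pullback} and Lemma \ref{GFFCH}.
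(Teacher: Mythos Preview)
Your proposal is correct and matches the paper's approach: the paper does not give a detailed proof but simply states that ``the above definition immediately implies the following analogue of Lemma \ref{GFFCH}'' and cites \cite[Lemma 4.8]{BGJ}. You have spelled out exactly this reduction---pushing forward $\mu$ by $\Omega$, applying Lemma \ref{GFFCH} on $\mathbb{H}$, and changing variables back---which is precisely what the paper has in mind.
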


%
\subsection{From $\P$ to the GFF}\label{Section8.2} In this section we prove an analogue of Theorem \ref{ThmMain} -- this is Theorem \ref{MainThmNew} below. Theorem \ref{MainThmNew} differs from Theorem \ref{ThmMain} in two ways. Firstly, it is formulated more generally, so that unlike Theorem \ref{ThmMain} where $s_1, \dots, s_m$ and the test functions $f_1, \dots, f_m$ are fixed, will now allow the latter to vary together with $K$ as long as they converge in the appropriate sense. Secondly, Theorem \ref{MainThmNew} is formulated in terms of a slightly different height function $\hat{\mathcal{H}}_K$, which is defined in equation (\ref{S8Height}) below. This choice is made to ease the application of Theorem \ref{MainTechThm} and also to simplify various formulas in the proof.\\

We start by introducing some useful notation. For $s > 0$ we recall from (\ref{S7DefAB}) that
\begin{equation}\label{S8DefAB}
a(s) = s \cdot z_-(s) = (1/2) (1- \theta s) - \sqrt{\theta s}  \mbox{ and } b(s) = s \cdot z_+(s) = (1/2) (1- \theta s) - \sqrt{\theta s} ,  
\end{equation}
where $z_{\pm}(s)$ were defined in (\ref{S31E12}). We also introduce the domain
\begin{equation}\label{WA1}
\hat{\mathcal{D}} = \{(x,s) \in \mathbb{R}^2: s > 0 \mbox{ and } x \in (a(s), b(s)) \},
\end{equation}
and also the map $\hat{\Omega}: \hat{\mathcal{D}} \rightarrow \mathbb{H}$ via
\begin{equation}\label{WA2}
\hat{\Omega}(x,s) = \frac{1 - \theta s}{2} - x + \i \cdot \sqrt{x - a(s)} \cdot \sqrt{ b(s) - x}.
\end{equation}
One readily verifies that $\hat{\Omega}(x,s)$ defines a smooth bijection between $\hat{\mathcal{D}}$ and $\mathbb{H}$ with inverse 
\begin{equation}\label{WA3}
\hat{\Omega}^{-1}(z) = \left( \frac{1 - |z|^2}{2} - \Re[z], \frac{|z|^2}{\theta} \right).
\end{equation}
We finally introduce the height function $\hat{\mathcal{H}}_K(x,s)$ for $(x,s) \in \mathbb{R} \times (0,\infty)$ via
\begin{equation}\label{S8Height}
\hat{\mathcal{H}}_K(x,s) = \sum_{i = 1}^n {\bf 1} \{\ell^n_i \geq x K \}, \mbox{ where } n = \lceil s K \rceil
\end{equation}
and as usual $(\ell^1, \ell^2, \dots)$ is distributed according to $\P$ as in Definition \ref{BKCC}. 

With the above notation we can state the main result of the section.
\begin{theorem}\label{MainThmNew}  Fix $\theta > 0$, assume that $(\ell^1, \ell^2, \dots)$ is distributed according to $\P$ as in Definition \ref{BKCC}, and let $\hat{\mathcal{H}}_K$ be as in (\ref{S8Height}). Then, the centered random height functions
$$\sqrt{\theta \pi} \cdot \left( \hat{\mathcal{H}}_K (s,x) - \mathbb{E} \left[ \hat{\mathcal{H}}_K(s,x) \right] \right)$$
converge to the pullback of the Gaussian free field with Dirichlet boundary condition on the upper half-plane $\mathbb{H}$ with respect to the map $\hat{\Omega}$ from (\ref{WA2}) in the following sense. Fix $m \in \mathbb{N}$, numbers $s_1, \dots, s_m \in (0, \infty)$, entire functions $f_1, \dots, f_m$ as well as sequences $s_i^K \in (0, \infty)$, and entire functions $f_i^K$ for $i \in \llbracket 1, m \rrbracket$ such that 
$$\lim_K s_i^K = s_i \mbox{ and } \lim_K f_i^K(z) = f_i(z),$$
where the latter convergence happens uniformly over compact subsets of $\mathbb{C}$. 

With the above data, the random variables
\begin{equation} \label{HeightPair}
\int_{\mathbb{R}} \sqrt{\theta \pi} \left( \hat{\mathcal{H}}_K(x, s_i^K) - \mathbb{E} \left[ \hat{\mathcal{H}}_K(x, s_i^K)  \right] \right) f^K_i(x) dx, \hspace{3mm} i \in \llbracket 1, m \rrbracket,
\end{equation}
converge in the sense of moments to the joint distribution of the similar averages
$$\int_{a(s_i)}^{b(s_i)} \mathcal{F} \left( \hat{\Omega}(x,s) \right) f_i(x) dx, \hspace{3mm} i \in \llbracket 1, m \rrbracket$$
of the pullback of the GFF. In the above formula $a(s), b(s)$ are as in (\ref{S8DefAB}). 

Equivalently, the variables in (\ref{HeightPair}) converge jointly in the sense of moments to a Gaussian vector $(X_1, \dots, X_m)$ with mean zero and covariance
\begin{equation}\label{CovGFF}
\mathrm{Cov}(X_i, X_j)= \int_{a(s_i)}^{b(s_i)} \int_{a(s_j)}^{b(s_j)} f_i(x) f_j(y) \left( - \frac{1}{2\pi} \log \left| \frac{\hat{\Omega}(x,s_i) - \hat{\Omega}(y,s_j)  }{\hat{\Omega}(x,s_i) - \overline{\hat{\Omega}}(y,s_j) } \right|\right) dy dx.
\end{equation}
\end{theorem}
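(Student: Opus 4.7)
The plan is to derive Theorem \ref{MainThmNew} from the Gaussian field convergence in Theorem \ref{MainTechThm} in two essentially independent steps: (i) express the centered height pairings as contour integrals of $\mathsf G_K(z,s)$, and (ii) match the resulting double-contour covariance with the pull-back GFF kernel (\ref{CovGFF}).

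For step (i), fix an entire $f$ and set $F(z)=\int_0^z f(\zeta)\,d\zeta$. Since both $\hat{\mathcal H}_K(\cdot,s)$ and $\mathbb E[\hat{\mathcal H}_K(\cdot,s)]$ equal $n$ for $x$ sufficiently negative and $0$ for $x$ sufficiently large, the centered height vanishes off a compact set, so integration by parts gives
\begin{equation*}
\int_{\mathbb R}\bigl(\hat{\mathcal H}_K(x,s)-\mathbb E[\hat{\mathcal H}_K(x,s)]\bigr)f(x)\,dx = \sum_{i=1}^n F(\ell_i^n/K) - \mathbb E\Bigl[\sum_{i=1}^n F(\ell_i^n/K)\Bigr] = \frac{1}{2\pi\i}\oint_{\gamma}F(z)\,\mathsf G_K(z,s)\,dz,
\end{equation*}
for any positively oriented contour $\gamma\subset U(\delta,\theta)$ enclosing the interval $[-\theta\delta^{-1},\delta^{-2}]$ (such $\gamma$ automatically surrounds every scaled particle $\ell_i^n/K$ when $n/K\in[\delta_0,\delta_0^{-1}]$). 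Choosing $\delta$ small enough that $s_i,s_i^K\in[\delta_0,\delta_0^{-1}]$ for all large $K$ and fixing a common $\gamma$, Theorem \ref{MainTechThm} combined with the uniform convergences $s_i^K\to s_i$, $f_i^K\to f_i$ and the uniform moment bounds of Proposition \ref{S33P1} yield joint moment convergence of the vector in (\ref{HeightPair}) to a centered Gaussian with
\begin{equation*}
\mathrm{Cov}(X_i,X_j)=\frac{\theta\pi}{(2\pi\i)^2}\oint_{\gamma}\oint_{\gamma}F_i(z)F_j(w)\,C(z,s_i;w,s_j)\,dw\,dz.
\end{equation*}

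Step (ii) is the substantive computation. My approach is to find a primitive $L(z,s_i;w,s_j)$ with $\partial_z\partial_w L=(\theta\pi/(2\pi\i)^2)\,C$ that decays at infinity in $z$ and $w$, integrate by parts twice to transfer the derivatives back to $f_i=F_i'$ and $f_j=F_j'$, deform each contour onto the cut $[a(s_i),b(s_i)]$ (respectively $[a(s_j),b(s_j)]$) carrying the bulk of the limiting particle density, and collect the jumps of $L$. The candidate for $L$ is motivated by three observations: the identity $|\hat\Omega(x,s)|^2=\theta s$ for $x\in[a(s),b(s)]$, which follows immediately from (\ref{WA2}) and (\ref{S8DefAB}); the appearance of the transport map $F(w;s_1,s_2)$ from (\ref{Transport}) inside $C$ in (\ref{LimCov}); and the parallel Wigner matrix computation in \cite{BorWig}. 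The natural guess is a constant multiple of
\begin{equation*}
\log\bigl(\hat\Omega_+(z,s_i)-\hat\Omega_+(w,s_j)\bigr)-\log\bigl(\hat\Omega_+(z,s_i)\hat\Omega_+(w,s_j)-\theta\min(s_i,s_j)\bigr),
\end{equation*}
where $\hat\Omega_+(z,s):=(1-\theta s)/2-z-\sqrt{(z-a(s))(z-b(s))}$ is the holomorphic extension of $\overline{\hat\Omega(x,s)}$ to $\mathbb C\setminus[a(s),b(s)]$ via Definition \ref{Branch}. The second logarithm kills the growth of the first at infinity, and the jumps of both logarithms across the cuts combine (using $\hat\Omega_+=\overline{\hat\Omega}$ on one side and $\hat\Omega_+=\hat\Omega$ on the other together with $|\hat\Omega(x,s)|^2=\theta s$) to produce the kernel $-\tfrac{1}{2\pi}\log\bigl|(\hat\Omega(x,s_i)-\hat\Omega(y,s_j))/(\hat\Omega(x,s_i)-\overline{\hat\Omega(y,s_j)})\bigr|$.

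The main obstacle is the algebraic verification $\partial_z\partial_w L=(\theta\pi/(2\pi\i)^2)\,C$. Substituting (\ref{S31E14}) for $e^{\theta G(z/s,s)}$ and the identity $F(w;s_i,s_j)=w+\theta(s_i-s_j)/(e^{\theta G(w/s_j,s_j)}-1)$ from (\ref{Transport}), the verification reduces to a rational identity in the two square roots $\sqrt{(z-a(s_i))(z-b(s_i))}$ and $\sqrt{(w-a(s_j))(w-b(s_j))}$, with careful branch tracking via Definition \ref{Branch} and Lemma \ref{FProp}. Since $C$ in (\ref{LimCov}) is defined under the ordering $s_1\geq s_2$, the symmetry $C(z,s_i;w,s_j)=C(w,s_j;z,s_i)$ stated after (\ref{LimCov}) lets us verify the identity in the case $s_i\geq s_j$ alone. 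Once $L$ is confirmed, the remaining contour manipulations are routine: analyticity of $F_i,F_j$ and the decay of $L$ eliminate contributions from enlarging the contours, the cut-jumps produce the pull-back GFF kernel, and the $\sqrt{\theta\pi}$ factor in (\ref{HeightPair}) exactly absorbs the $\theta^{-1}$ in $C$ and the $1/(2\pi)$ in the GFF covariance.
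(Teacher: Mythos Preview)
Your step~(i) is correct and is exactly how the paper begins (its Step~1): one writes $\mathcal L_i^K$ as a contour integral of $\mathsf G_K(z,s_i^K)$ against the antiderivative $F_i^K$, invokes Theorem~\ref{MainTechThm} for joint cumulants, and reduces everything to showing that the double contour integral of $C(z_1,s_i;z_2,s_j)$ against $R_i(z_1)R_j(z_2)$ equals the right side of (\ref{CovGFF}).

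The gap is in step~(ii). Your candidate primitive $L$ does not satisfy $\partial_z\partial_w L=(\mathrm{const})\,C$. Here is a clean way to see it in the equal-level case $s_i=s_j=s$: from (\ref{LimCov}) with $s_1=s_2$ one has $x_2=z_2$ and the factor on the last line equals $1$, and a direct expansion shows that as $z_1\to z_2$ the bracket on the second line of (\ref{LimCov}) vanishes to second order, so $C(z_1,s;z_2,s)$ is \emph{regular} on the diagonal $z_1=z_2$. Your $L$, on the other hand, contains $\log(\hat\Omega_+(z_1,s)-\hat\Omega_+(z_2,s))$, whose mixed second derivative has a genuine pole $1/(\hat\Omega_+(z_1)-\hat\Omega_+(z_2))^2\sim(\partial_z\hat\Omega_+)^{-2}/(z_1-z_2)^2$ there; the second logarithm does not cancel it (its mixed derivative is regular at $z_1=z_2$). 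So the claimed identity is false as written. Numerically one finds that $\partial_z\partial_w L-1/(z_1-z_2)^2$ does equal $2\theta\,C$ in the single-level case, which suggests the correct object is $\log\frac{\hat\Omega_+(z_1,s)-\hat\Omega_+(z_2,s)}{z_1-z_2}$ rather than $\log(\hat\Omega_+(z_1,s)-\hat\Omega_+(z_2,s))$; but then the two-level extension, the branch structure, and the behaviour of $L$ along the deformed contours all need to be reworked --- this is the actual content of the proof, not a routine tail end.

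For comparison, the paper does \emph{not} look for a global mixed primitive. It first collapses the $z_1$-contour onto the cut $[a(s_1),b(s_1)]$ and integrates by parts in the real variable $v_1$ (giving (\ref{HB3})); then it collapses the $z_2$-contour onto $[a(s_2),b(s_2)]$ piecewise (Step~3, with the key observation (\ref{HC6}) that the contributions off the inner cut cancel), integrates by parts in $z_2$ using the explicit antiderivative $V(z_2,v_1)$ in (\ref{HC9}) together with (\ref{HC10}), and finally checks by a direct (and lengthy) real-variable identity (\ref{HD1}) that the resulting kernel matches the GFF kernel. The equal-level case $s_1=s_2$ is handled separately in Steps~5--6 because the limiting behaviour on the cut is different. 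If you want to salvage your route, you would need to (a) fix the primitive, (b) verify the corrected mixed-derivative identity in the two-level case --- this is of comparable difficulty to the paper's (\ref{HD1}) --- and (c) justify the contour collapse with the extra logarithmic branch points present; none of these is done in the proposal.
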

\begin{proof} For $i \in \llbracket 1, m \rrbracket$ and $K \geq 1$ we set
$$\mathcal{L}_i^K = \int_{\mathbb{R}} \left( \hat{\mathcal{H}}_K(x, s_i^K) - \mathbb{E} \left[ \hat{\mathcal{H}}_K(x, s_i^K)  \right] \right) f^K_i(x) dx.$$
Since $\mathcal{L}_i^K$ are centered, to prove the theorem it suffices to show that all third and higher order joint cumulants of $\mathcal{L}_1^K, \dots, \mathcal{L}_m^K$ converge to zero and also
\begin{equation}\label{GA1}
\lim_{K \rightarrow \infty} \mathrm{Cov} \left(\mathcal{L}_i^K, \mathcal{L}_j^K \right) =  \int_{a(s_i)}^{b(s_i)} \int_{a(s_j)}^{b(s_j)} f_i(x) f_j(y) \left( - \frac{\theta^{-1}}{2\pi^2} \cdot \log \left| \frac{\hat{\Omega}(x,s_i) - \hat{\Omega}(y,s_j)  }{\hat{\Omega}(x,s_i) - \overline{\hat{\Omega}}(y,s_j) } \right|\right) dy dx.
\end{equation}
For clarity we split the proof into six steps. In Step 1 we show that all third and higher order joint cumulants of $\mathcal{L}_1^K, \dots, \mathcal{L}_m^K$ converge to zero and find a formula for the left side of (\ref{GA1}), see (\ref{HA0}). In Step 2 we reduce the problem to showing that the formula in (\ref{HA0}) agrees with the left side of (\ref{GA1}), see (\ref{HB1}), and find an alternative expression for the LHS of (\ref{HB1}), see (\ref{HB3}). In Steps 3 and 4 we establish (\ref{HB1}) in the case when the two levels, called $s_1$ and $s_2$ in that equation, are distinct, while in Steps 5 and 6 we prove (\ref{HB1}) when the two levels are the same.\\

{\bf \raggedleft Step 1.} Let $\delta \in (0,1/2)$, $\delta_0 = 2\delta$ be sufficiently small so that $s_i^K \in [\delta_0, \delta_0^{-1}]$ for all $ i \in \llbracket 1, m \rrbracket$ and $K \geq 1$, and let $U(\delta, \theta)$ be as in (\ref{S6DefU}). In this step we prove that all third and higher order joint cumulants of $\mathcal{L}_1^K, \dots, \mathcal{L}_m^K$ converge to zero as $K \rightarrow \infty$ and also
\begin{equation}\label{HA0}
\begin{split}
&  \lim_{K \rightarrow \infty} \mathrm{Cov} \left( \mathcal{L}^K_{i}, \mathcal{L}^K_{j} \right) =  \frac{1}{(2\pi \i)^2} \oint_{\gamma} \oint_{\gamma} C(z_1, s_i; z_2, s_j) \cdot R_i(z_1) R_j(z_2) dz_1 dz_2.
\end{split}
\end{equation}
In equation (\ref{HA0}) we have that $C(z_1, s_1; z_2, s_2)$ is as in (\ref{LimCov}) and $\gamma$ is a positively oriented contour that encloses $[-\theta \delta^{-1}, \delta^{-2}]$ and is contained in $U(\delta, \theta)$. In addition, we have that $R_i$ is the primitive of $f_i$, i.e. $R'_i(z) = f_i(z)$, such that $R_i(0) = 0$ for $i \in \llbracket 1, m \rrbracket$. We similarly denote by $R_i^K(z)$ the primitive of $f_i^K(z)$ such that $R_i^K(0) = 0$ for $i \in \llbracket 1, m \rrbracket$ and $K \geq 1$.

Note that if $K \geq \delta_0^{-1}$, we have that $\hat{\mathcal{H}}_K(x,s)$ is deterministic for $x \not \in [-\theta \delta^{-1}, 1]$ and so we conclude that for $K \geq \delta_0^{-1}$ and $i \in \llbracket 1, m \rrbracket$
\begin{equation}\label{HA1}
\int_{\mathbb{R}}  \hspace{-1mm}\left( \hat{\mathcal{H}}_K(x, s_i^K) - \mathbb{E} \hspace{-1mm}\left[ \hat{\mathcal{H}}_K(x, s_i^K)  \right] \right)  \hspace{-1mm} f^K_i(x) dx = \int_{-\theta \delta^{-1}}^1  \hspace{-1mm} \left( \hat{\mathcal{H}}_K(x, s_i^K) - \mathbb{E}\hspace{-1mm} \left[ \hat{\mathcal{H}}_K(x, s_i^K)  \right] \right)  \hspace{-1mm} f^K_i(x) dx.
\end{equation}
We next observe for $K \geq \delta_0^{-1}$ and $i \in \llbracket 1, m \rrbracket$ that
\begin{equation}\label{HA2}
\int_{-\theta \delta^{-1}}^1\hat{\mathcal{H}}_K(x, s_i^K)  f^K_i(x) dx = \sum_{j = 1}^{n} \int_{\ell_{j+1}^n/K}^{\ell_{j}^n/K} j \cdot f^K_i(x) dx =  \sum_{j = 1}^n R^K_i(\ell_j^n/K) - n \cdot R^K_i(-\theta  \delta^{-1}),
\end{equation}
where $\ell_{n+1}^n = -\theta  \delta^{-1} K $ and $n = \lceil s_i^K \cdot K \rceil$. Using (\ref{HA1}), (\ref{HA2}), the residue theorem and the formula for $\G_K(z,s)$ in (\ref{S6DefG}) we conclude for $K \geq \delta_0^{-1}$ and $i \in \llbracket 1, m \rrbracket$
\begin{equation}\label{HA3}
\int_{\mathbb{R}} \left( \hat{\mathcal{H}}_K(x, s_i^K) - \mathbb{E} \left[ \hat{\mathcal{H}}_K(x, s_i^K)  \right] \right) f^K_i(x) dx = \frac{1}{2\pi \i} \oint_{\gamma} R^K_i(z) \cdot \G_K(z, s_i^K) dz,
\end{equation}
where $\gamma$ is as above. From (\ref{HA3}) and the linearity of cumulants, see (\ref{S3Linearity}), we conclude for $k \geq 1$
\begin{equation}\label{HA4}
M \left( \mathcal{L}^K_{i_1}, \dots , \mathcal{L}^K_{i_k}\right) = \frac{1}{(2\pi \i)^k} \oint_{\gamma} \cdots \oint_{\gamma} M \left(  \G_K(z_1, s_{i_1}^K), \dots,  \G_K(z_k, s_{i_k}^K) \right) \cdot \prod_{r = 1}^k R^K_{i_r}(z_r) dz_r.
\end{equation}
Since the third and higher order joint cumulants of $\G_K$'s vanish as $K \rightarrow \infty$ (in view of Theorem \ref{MainTechThm} and the fact that $\gamma \subset U(\delta, \theta)$) we conclude from (\ref{HA4}) that the third and higher order cumulants of $\mathcal{L}_i^K$ vanish. Finally, taking the limit on both sides of (\ref{HA4}) when $k = 2$ and applying Theorem \ref{MainTechThm} to the right side we obtain (\ref{HA0}). We mention that in the last two limits we used that $R_i^K(z)$ converge uniformly over $\gamma$ to $R_i(z)$ as $K \rightarrow \infty$ due to the assumed convergence of $f_i^K(z)$.\\

{\bf \raggedleft Step 2.} From our work in Step 1 we see that to complete the proof it suffices to show that the right side of (\ref{HA0}) agrees with the right side of (\ref{GA1}). Using the formula for $C(z_1, s_1; z_2, s_2)$ from (\ref{LimCov}), we see that it suffices to show for $\delta_0^{-1} \geq s_1 \geq s_2 \geq \delta_0$ that 
\begin{equation}\label{HB1}
\begin{split}
&\frac{1}{(2\pi \i)^2} \oint_{\gamma} \oint_{\gamma} \frac{1}{2(z_1- x_2)^2} \left( - 1 + \frac{(z_1 - b(s_1))(x_2 - a(s_1)) + (x_2 - b(s_1)) (z_1- a(s_1))}{2\sqrt{(z_1 - a(s_1))(z_1- b(s_1))} \cdot \sqrt{(x_2- a(s_1))(x_2 - b(s_1))} }  \right) \\
&\times \frac{[e^{\theta G(z_2/s_2, s_2)} + 1 ]^2 \cdot \theta s_1 - [e^{\theta G(z_2/s_2, s_2)} - 1 ]^2}{[e^{\theta G(z_2/s_2, s_2)} + 1 ]^2 \cdot \theta s_2 - [e^{\theta G(z_2/s_2, s_2)} - 1 ]^2} \cdot R_1(z_1) R_2(z_2) dz_1 dz_2 \\
& =  \int_{a(s_1)}^{b(s_1)} \int_{a(s_2)}^{b(s_2)} R'_1(x) R'_2(y) \left( - \frac{1}{2\pi^2} \cdot \log \left| \frac{\hat{\Omega}(x,s_1) - \hat{\Omega}(y,s_2)  }{\hat{\Omega}(x,s_1) - \overline{\hat{\Omega}}(y,s_2) } \right|\right) dy dx,
\end{split}
\end{equation}
where $x_2 = F(z_2; s_1, s_2)$ and we recall from (\ref{Transport}) that
\begin{equation}\label{HB2}
\begin{split}
x_2 &= F(z_2;s_1,s_2) = z_2 + \frac{\theta (s_1-s_2)}{e^{\theta G(z/s_2,s_2)} -1} \\
&= z_2 + \frac{(s_1 - s_1)(2z_2 - 1 - s_2 \theta)}{4s_2} + \frac{s_1 -s_2 }{2s_2} \cdot \sqrt{(z_2- a(s_2))(z_2-b(s_2))}.
\end{split}
\end{equation}
The equality in (\ref{HB1}) will be established over the next several steps. By the linearity of both sides in $R_1(z_1)$ and $R_2(z_2)$, we may assume that $R_i(z)$ are real-valued on $\mathbb{R}$, which we do in the sequel. In the remainder of this step we find an alternative formula for the LHS of (\ref{HB1}) -- see (\ref{HB3}) below.\\

We start with the LHS of (\ref{HB1}) and deform the $z_1$ contour so that it traverses $[a(s_1), b(s_1)]$ once in the positive and once in the negative direction. Observe that the square roots are purely imaginary and come with opposite sign when we approach $[a(s_1), b(s_1)]$ from the upper and lower half-planes. On the other hand, the term coming from the $-1$ in the first line of (\ref{HB1}) cancels when we integrate over $[a(s_1), b(s_1)]$ in the positive and negative direction. By Cauchy's theorem we do not change the value of the integral in the process of deformation and so from the bounded convergence theorem we see that LHS of (\ref{HB1}) equals
\begin{equation*}
\begin{split}
&\frac{\i }{(2\pi \i)^2} \oint_{\gamma} \int_{a(s_1)}^{b(s_1)} \frac{R_1(v_1) R_2(z_2) }{(v_1- x_2)^2}\cdot \frac{(v_1 - b(s_1))(x_2 - a(s_1)) + (x_2 - b(s_1)) (v_1- a(s_1))}{2\sqrt{v_1 - a(s_1)} \cdot \sqrt{b(s_1)- v_1} \cdot \sqrt{(x_2- a(s_1))(x_2 - b(s_1))} } \\
&\times \frac{[e^{\theta G(z_2/s_2, s_2)} + 1 ]^2 \cdot \theta s_1 - [e^{\theta G(z_2/s_2, s_2)} - 1 ]^2}{[e^{\theta G(z_2/s_2, s_2)} + 1 ]^2 \cdot \theta s_2 - [e^{\theta G(z_2/s_2, s_2)} - 1 ]^2} dv_1 dz_2,
\end{split}
\end{equation*}
where we mention that we relabelled $z_1$ to $v_1$ to emphasize that it is now real. We integrate by parts in the $v_1$ variable and change the order of the integrals, which leads to 
\begin{equation}\label{HB3}
\begin{split}
&\mbox{LHS of (\ref{HB1})} = \frac{-\i }{(2\pi \i)^2}\int_{a(s_1)}^{b(s_1)} \oint_{\gamma}  \frac{R'_1(v_1) R_2(z_2) }{(v_1- x_2)}\cdot \frac{\sqrt{v_1 - a(s_1)}\sqrt{b(s_1)- v_1}}{\sqrt{(x_2- a(s_1))(x_2 - b(s_1))}} \\
&\times \frac{[e^{\theta G(z_2/s_2, s_2)} + 1 ]^2 \cdot \theta s_1 - [e^{\theta G(z_2/s_2, s_2)} - 1 ]^2}{[e^{\theta G(z_2/s_2, s_2)} + 1 ]^2 \cdot \theta s_2 - [e^{\theta G(z_2/s_2, s_2)} - 1 ]^2} dz_2 dv_1.
\end{split}
\end{equation}

{\bf \raggedleft Step 3.} In this step we assume that $s_1 > s_2$ and proceed to find an alternative formula for the LHS of (\ref{HB1}) -- see (\ref{HC14}) below. \\

\begin{figure}[h]
\centering
\scalebox{0.7}{\includegraphics{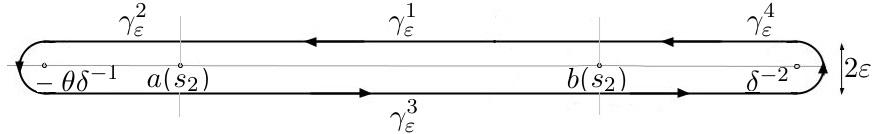}}
\caption{ The contour $\gamma_{\varepsilon} = \cup_{i = 1}^4 \gamma_{\varepsilon}^i$.}
\label{S8_1}
\end{figure}
For $\varepsilon > 0$ we let $\gamma_{\epsilon}$ be the positively oriented contour that goes around $[- \theta \delta^{-1}, \delta^{-2}]$ and is exactly distance $\varepsilon$ away from this segment. In particular, $\gamma_{\varepsilon}$ consists of two segments connecting $-\theta \delta^{-1} +\i \varepsilon$ with $\delta^{-2} + \i \varepsilon$ and $-\theta \delta^{-1} -\i \varepsilon$ with $\delta^{-2} - \i \varepsilon$, as well as two half-circles centered at $-\theta \delta^{-1}$ and $\delta^{-2}$ of radius $\varepsilon$, see Figure \ref{S8_1}. Let $\gamma_{\varepsilon}^1$ be the segment connecting $b(s_2) + \i \varepsilon$ to $a(s_2) + \i \varepsilon$, $\gamma^3_{\varepsilon}$ be the segment connecting $a(s_2) - \i \varepsilon$ to $b(s_2) + \i \varepsilon$, $\gamma_{\varepsilon}^2$ be the portion of $\gamma_{\varepsilon}$ to the left of $a(s_2)$ and $\gamma_{\varepsilon}^4$ the portion of $\gamma_{\varepsilon}$ to the right of $b(s_2)$. Note that $\gamma_{\varepsilon} = \cup_{i = 1}^4 \gamma_{\varepsilon}^i$. By Cauchy's theorem we can deform $\gamma$ in (\ref{HB3}) to $\gamma_{\varepsilon}$ without affecting the value of the integral and so
\begin{equation}\label{HC1}
\begin{split}
&\mbox{LHS of (\ref{HB1})} = \frac{-\i }{(2\pi \i)^2}\int_{a(s_1)}^{b(s_1)}R'_1(v_1) \cdot \left[ A^1_{\varepsilon} + A^{2}_{\varepsilon} + A^{3}_{\varepsilon} + A^4_{\varepsilon} \right] dv_1, \mbox{ where }
\end{split}
\end{equation}
\begin{equation}\label{HC2}
\begin{split}
&A^{i}_{\varepsilon} = \int_{\gamma^i_{\varepsilon}} \frac{ R_2(z_2) }{(v_1- x_2)}\cdot \frac{\sqrt{v_1 - a(s_1)}\sqrt{b(s_1)- v_1}}{\sqrt{(x_2- a(s_1))(x_2 - b(s_1))}} \\
& \times  \frac{[e^{\theta G(z_2/s_2, s_2)} + 1 ]^2 \cdot \theta s_1 - [e^{\theta G(z_2/s_2, s_2)} - 1 ]^2}{[e^{\theta G(z_2/s_2, s_2)} + 1 ]^2 \cdot \theta s_2 - [e^{\theta G(z_2/s_2, s_2)} - 1 ]^2} dz_2.
\end{split}
\end{equation}

From (\ref{HB2}) we have for $v_2 \in (a(s_2), b(s_2))$
\begin{equation}\label{HC3}
\lim_{ \varepsilon \rightarrow 0+} F(v_2 \pm \i \varepsilon; s_1,s_2) =  v_2 + \frac{(s_1 - s_2)(2v_2 - 1 - \theta s_2)}{4s_2} \pm \i \cdot \frac{(s_1 - s_2) \sqrt{v_2 - a(s_2)} \sqrt{b(s_2) - v_2} }{2s_2},
\end{equation}
while for $v_2 \not \in  (a(s_2), b(s_2))$
\begin{equation}\label{HC3.5}
\lim_{ \varepsilon \rightarrow 0+} F(v_2 \pm \i\varepsilon; s_1,s_2) =  v_2 + \frac{(s_1 - s_2)(2v_2 - 1 - \theta s_2)}{4s_2} + \frac{(s_1 - s_2) \sqrt{(v_2 - a(s_2))(v_2 - b(s_2))} }{2s_2}.
\end{equation}
In addition, from (\ref{S31E14}) we have
\begin{equation}\label{S8E14}
W(z,s):= e^{\theta G(z/s,s)} = \frac{(1 - \theta s) - 2 \sqrt{(z - a(s))(z - b(s))}  }{2 (1 - z) },
\end{equation}
and so for $v_2 \in (a(s_2), b(s_2))$ we have
\begin{equation}\label{HC4}
\lim_{ \varepsilon \rightarrow 0+} W(v_2 \pm \i \varepsilon;s_2) =  \frac{1 - \theta s}{2 (1 - v_2)} \mp \frac{1}{1 - v_2} \cdot \sqrt{v_2 -a (s_2)} \cdot \sqrt{b(s_2) - v_2},
\end{equation}
while for $v_2 \not \in (a(s_2), b(s_2))$ we have
\begin{equation}\label{HC5}
\lim_{ \varepsilon \rightarrow 0+} W(v_2 \pm \i \varepsilon;s_2) =  \begin{cases}\frac{(1 - \theta s) - 2 \sqrt{(v_2 - a(s_2))(v_2 - b(s_2))}  }{2 (1 - v_2) }  &\mbox{ if } v_2 \neq 1 \\ 1 &\mbox{ if } v_2 = 1.\end{cases}
\end{equation}
We mention that from (\ref{S8DefAB}) and (\ref{S31E13}) we have that $[a(s_2), b(s_2)] \subset [-\theta s_2, 1]$. \\

If we start letting $\varepsilon \rightarrow 0+$, we see that the integral over $\gamma_{\varepsilon}^{2}$ converges to the integral over $[-\theta \delta^{-1}, a(s_2)]$ once in the positive and once in the negative direction. From (\ref{HC5}) the term on the second line of (\ref{HC2}) converges to the same limit as we approach $[-\theta \delta^{-1}, a(s_2)]$ from the upper and lower half-planes. From (\ref{HC3.5}) the same is true for the term $\frac{ R_2(z_2) }{(v_1- x_2)}$. Finally, from Lemma \ref{FProp} we have for $v_2 \not \in [a(s_2), b(s_2)]$ that $F(v_2; s_1, s_2) \not \in [a(s_1), b(s_1)]$. The latter shows that the term $\sqrt{(x_2- a(s_1))(x_2 - b(s_1))}$ also converges to the same limit as we approach $[-\theta \delta^{-1}, a(s_2)]$ from the upper and lower half-planes. Overall, we conclude that the integrand in (\ref{HC2}) for $i = 2$ converges to the same limit as we approach $[-\theta \delta^{-1}, a(s_2)]$ from the upper and lower half-planes, and since the two integrals come with opposite orientation by the bounded convergence theorem we conclude
\begin{equation}\label{HC6}
\lim_{\varepsilon \rightarrow 0+} A^{i}_{\varepsilon} = 0,
\end{equation}
if $i = 2$. One analogously shows that (\ref{HC6}) holds if $i = 4$. \\

We next note that if $z_2 \in \gamma_{\varepsilon}^{1}$ and the integrand in $A_{\varepsilon}^1$ is $a^1(z_2)$, then $\overline{z_2} \in \gamma_{\varepsilon}^3$ and $a_1(z_2) = \overline{a^3(\overline{z_2})}$, where $a^3$ is the integrand of $A^3_{\varepsilon}$. Here, we also used that $R_2(z)$ is real-valued on $\mathbb{R}$, see the discussion after (\ref{HB2}). The latter shows that the imaginary parts of $A_{\varepsilon}^1$ and $A_{\varepsilon}^3$ are equal, while the the real parts have opposite sign (note that the integrals have opposite orientation). We conclude that
\begin{equation}\label{HC7}
A^{1}_{\varepsilon} + A^3_{\varepsilon} = 2 \i \cdot \Im[A^1_{\varepsilon}].
\end{equation}
We next perform integration by parts in $z_2$ to conclude 
\begin{equation}\label{HC8}
\begin{split}
&A^1_{\varepsilon} =- \i \int_{\gamma_{\varepsilon}^1}   V(z_2,v_1) \cdot R_2'(z_2) dz_2 \\
&- \i \cdot V(b(s_2) + \i \varepsilon, v_1) \cdot R_2( b(s_2) + \i \varepsilon) + \i \cdot V(a(s_2) + \i \varepsilon, v_1) \cdot R_2( a(s_2) + \i \varepsilon), 
\end{split}
\end{equation}
where 
\begin{equation}\label{HC9}
\begin{split}
V(z_2, v_1) = &\log\left(\frac{\sqrt{v_1 - a(s_1)} \sqrt{x_2 - b(s_1)} - \i \sqrt{x_2 - a(s_1)} \sqrt{b(s_1) -v_1}}{\sqrt{v_1 - a(s_1)} \sqrt{x_2 - b(s_1)} + \i \sqrt{x_2 - a(s_1)} \sqrt{b(s_1) -v_1}} \right).
\end{split}
\end{equation}
In (\ref{HC9}) all the logarithms and square roots are with respect to the principal branch as usual. We mention that in deriving (\ref{HC8}) we used that 
\begin{equation}\label{HC10}
\partial_{z_2} x_2 = \partial_{z_2} F(z_2; s_1, s_2) = \frac{[e^{\theta G(z_2/s_2, s_2)} + 1 ]^2 \cdot \theta s_1 - [e^{\theta G(z_2/s_2, s_2)} - 1 ]^2}{[e^{\theta G(z_2/s_2, s_2)} + 1 ]^2 \cdot \theta s_2 - [e^{\theta G(z_2/s_2, s_2)} - 1 ]^2},
\end{equation}
which one verifies by a direct computation using (\ref{HB2}) and (\ref{S8E14}).

Combining (\ref{HC3}), (\ref{HC7}), (\ref{HC8}) and (\ref{HC9}), we conclude by the bounded convergence theorem 
\begin{equation}\label{HC11}
\lim_{\varepsilon \rightarrow 0+} A^{1}_{\varepsilon} + A^3_{\varepsilon} = 2\i \int_{a(s_2)}^{b(s_2)} \tilde{V}(v_2, v_1)dv_2] -\i \tilde{V}(b(s_2), v_1) R_2(b(s_2))  + \i \tilde{V}(a(s_2), v_1)R_2(a(s_2)),
\end{equation}
where 
\begin{equation}\label{HC12}
\begin{split}
\tilde{V}(v_2, v_1) = \log\left|\frac{\sqrt{v_1 - a(s_1)} \sqrt{y_2 - b(s_1)} - \i \sqrt{y_2 - a(s_1)} \sqrt{b(s_1) -v_1}}{\sqrt{v_1 - a(s_1)} \sqrt{y_2 - b(s_1)} + \i \sqrt{y_2 - a(s_1)} \sqrt{b(s_1) -v_1}} \right|
\end{split}
\end{equation}
and we have set 
\begin{equation}\label{HC13}
\begin{split}
y_2 = v_2 + \frac{(s_1 - s_2)(2v_2 - 1 - \theta s_2)}{4s_2} + \i \cdot \frac{(s_1 - s_2) \cdot \sqrt{v_2 - a(s_2)} \cdot \sqrt{b(s_2) - v_2} }{2s_2}.
\end{split}
\end{equation}
We mention that the sign in front of the integral in (\ref{HC11}) got changed from the orientation of $\gamma_{\varepsilon}^1$.

Notice that if $v_2 = b(s_2)$ we have from Lemma \ref{FProp} that $y_2 \in (b(s_1), \infty)$ and so the numerator and denominator in (\ref{HC12}) are conjugates of each other meaning that $\tilde{V}(b(s_2),v_1) = 0$. One shows analogously that $\tilde{V}(a(s_2), v_1) = 0$. Combining the last observation with (\ref{HC1}), (\ref{HC6}), (\ref{HC11}) and the bounded convergence theorem we conclude that 
\begin{equation}\label{HC14}
\begin{split}
&\mbox{LHS of (\ref{HB1})} = \frac{-1}{2 \pi^2}\int_{a(s_1)}^{b(s_1)} \int_{a(s_2)}^{b(s_2)} R'_1(v_1) R_2'(v_2) \tilde{V}(v_2, v_1) dv_2 dv_1.
\end{split}
\end{equation}

{\bf \raggedleft Step 4.} In this step we show that (\ref{HB1}) holds when $s_1 > s_2$. In view of (\ref{HC12}) and (\ref{HC14}), we see that it suffices to show for $v_1 \in (a(s_1, b(s_1))$ and $v_2 \in (a(s_2), b(s_2))$ that
\begin{equation}\label{HD1}
\begin{split}
&\left|\frac{\sqrt{v_1 - a(s_1)} (y_2 - b(s_1)) - \i \sqrt{y_2 - a(s_1)} \sqrt{y_2 - b(s_1)} \sqrt{b(s_1) -v_1}}{\sqrt{v_1 - a(s_1)} (y_2 - b(s_1)) + \i \sqrt{y_2 - a(s_1)} \sqrt{y_2 - b(s_1)} \sqrt{b(s_1) -v_1}} \right|^2   \\
&=   \left| \frac{\hat{\Omega}(v_1,s_1) - \hat{\Omega}(v_2,s_2)  }{\hat{\Omega}(v_1,s_1) - \overline{\hat{\Omega}}(v_2,s_2) } \right|^2.
\end{split}
\end{equation}

We first show that
\begin{equation}\label{HD2}
 \frac{(s_1 - s_2)(2v_2 -1+ \theta s_2)}{4s_2 } + \frac{s_1 + s_2}{2s_2} \cdot \i \sqrt{(v_2 -a(s_2))( b(s_2) - v_2)} =\sqrt{y_2 - a(s_1)}\cdot \sqrt{y_2 - b(s_1)}.
\end{equation}
Since both sides are complex numbers in $\mathbb{H}$ (using Lemma \ref{FProp} for the right side), it suffices to show that their squares are the same. Squaring both sides we seek to show that 
\begin{equation*}
\begin{split}
 &\frac{(s_1 - s_2)^2(2v_2 -1+s_2\theta)^2}{16s_2^2 } + \frac{(s_1 + s_2)^2(v_2-a(s_2))(v_2-b(s_2))}{4s_2^2} \\
&+ \i \cdot  \frac{s_1 + s_2}{s_2}  \cdot \frac{(s_1 - s_2)(2v_2 -1+s_2\theta)}{4s_2 } \cdot \sqrt{(v_2 - a(s_2))(b(s_2) - v_2)}  = (y_2 - a(s_1))(y_2 - b(s_1)),
\end{split}
\end{equation*}
which can now be verified directly by comparing the real and imaginary parts using the formula for $y_2$ from (\ref{HC13}).

Let us denote 
\begin{equation}\label{HD3}
a = \sqrt{v_1 - a(s_1)}, \hspace{2mm} b = \sqrt{b(s_1) - v_1}, \hspace{2mm} x = \sqrt{v_2 - a(s_2)}, \hspace{2mm} y = \sqrt{b(s_2) - v_2},
\end{equation}
from which one readily deduces the equaitons
\begin{equation}\label{HD4}
\begin{split}
&\theta s_1 = (1/4) (a^2+ b^2)^2, \hspace{2mm} \theta s_2 = (1/4) (x^2 + y^2)^2, \hspace{2mm}\\
& v_1 = 1/2 + (1/2)(a^2 - b^2) - (1/8)(a^2+b^2)^2, \hspace{2mm} v_2 = 1/2 + (1/2)(x^2 - y^2) - (1/8)(x^2+y^2)^2\\
&y_2 - b(s_1) = \frac{[(a^2+ b^2)^2 + (x^2 + y^2)^2] \cdot [1/2 + (1/2)(x^2 - y^2) - (1/8)(x^2+y^2)^2 ]}{2 (x^2 + y^2)^2}\\
&- \frac{[1+ (x^2 + y^2)^2/4] \cdot [(a^2+ b^2)^2 - (x^2 + y^2)^2]}{4 (x^2 + y^2)^2}-\frac{1}{2} + \frac{(a^2+ b^2)^2}{8} - \frac{a^2+ b^2}{2} \\
& + \i \cdot \frac{xy[(a^2+ b^2)^2 - (x^2 + y^2)^2]}{2(x^2 + y^2)^2}.
\end{split}
\end{equation}
Using (\ref{HD3}) and (\ref{HD4}) as well as the formula for $\hat{\Omega}(x,s)$ from (\ref{WA2}) we conclude that 
\begin{equation}\label{HD5}
\begin{split}
\mbox{RHS of (\ref{HD1})} = \frac{A^2 + B^2}{A^2 +C^2}, \mbox{ where }
\end{split}
\end{equation}
\begin{equation}\label{HD6}
\begin{split}
&A = (1/2) \cdot (x^2 -y^2 - a^2 + b^2), \hspace{2mm} B = xy-ab, \hspace{2mm} C = ab + xy.
\end{split}
\end{equation}
Using (\ref{HD3}) and (\ref{HD4}) as well as the formula (\ref{HD2}) we conclude that 
\begin{equation}\label{HD7}
\begin{split}
\mbox{LHS of (\ref{HD1})} = \frac{(X_1 +Y_2)^2 + (X_2 - Y_1)^2}{(X_1-Y_2)^2 + (X_2 + Y_1)^2}, \mbox{ where }
\end{split}
\end{equation}
\begin{equation}\label{HD8}
\begin{split}
 X_1 = &\frac{a[(a^2+ b^2)^2 + (x^2 + y^2)^2] \cdot [1/2 + (1/2)(x^2 - y^2) - (1/8)(x^2+y^2)^2 ]}{2 (x^2 + y^2)^2}\\
&- \frac{a [1+ (x^2 + y^2)^2/4] \cdot [(a^2+ b^2)^2 - (x^2 + y^2)^2]}{4 (x^2 + y^2)^2}  -\frac{a}{2} + \frac{a(a^2 + b^2)^2}{8} - \frac{a(a^2 + b^2)}{2},\\
&X_2 = \frac{axy[(a^2+ b^2)^2 - (x^2 + y^2)^2]}{2(x^2 + y^2)^2},
\end{split}
\end{equation}
\begin{equation}\label{HD9}
\begin{split}
 Y_1 = \frac{b[(a^2+b^2)^2 - (x^2 + y^2)^2](x^2 - y^2)}{4 (x^2 + y^2)^2}, \hspace{2mm} Y_2 = \frac{bxy[(a^2+b^2)^2 + (x^2 + y^2)^2]}{2 (x^2 + y^2)^2}.
\end{split}
\end{equation}
One checks directly that 
$$(A^2 +B^2) \cdot \left[ (X_1-Y_2)^2 + (X_2 + Y_1)^2 \right] = (A^2 + C^2) \left[(X_1 +Y_2)^2 + (X_2 - Y_1)^2 \right],$$
which from (\ref{HD5}) and (\ref{HD7}) allows us to conclude (\ref{HD1}).\\

{\bf \raggedleft Step 5.} In the remaining steps we assume that $s_1 = s_2$ and proceed to establish (\ref{HB1}) in this case. To simiplify the notation we write $a = a(s_1) = a(s_2)$ and $b = b(s_1) = b(s_2)$. We claim that for each $v_1 \in (a,b)$ we have
\begin{equation}\label{HE1}
\begin{split}
& \oint_{\gamma}  \frac{R_2(z_2) }{(v_1- z_2)}\cdot \frac{\sqrt{v_1 - a}\sqrt{b- v_1}}{\sqrt{(z_2- a)(z_2 - b)}} dz_2 = -2 \i \int_{a}^b R_2'(v_2) \\
& \times \left[ 2 \log \left(\sqrt{v_1 - a}\sqrt{b-v_2} + \sqrt{v_2-a} \sqrt{b - v_1} \right) - \log|v_1 - v_2| - \log (b-a) \right] dv_2.  
\end{split}
\end{equation}
We will prove (\ref{HE1}) in the next step. Here, we assume its validity and conclude the proof of (\ref{HB1}).

Combining (\ref{HE1}) with (\ref{HB3}), where we remark that the second line disappears and $x_2 = z_2$ as $s_1 = s_2$, we see that to prove (\ref{HB1}) it suffices to show for $v_1, v_2 \in (a,b)$
\begin{equation}\label{HE2}
\begin{split}
- \log \left| \frac{\hat{\Omega}(v_1,s_1) - \hat{\Omega}(v_2,s_1)  }{\hat{\Omega}(v_1,s_1) - \overline{\hat{\Omega}}(v_2,s_1) } \right|  =  & \hspace{1mm} 2 \log \left(\sqrt{v_1 - a}\sqrt{b-v_2} + \sqrt{v_2-a} \sqrt{b - v_1} \right) \\
&- \log|v_1 - v_2| - \log (b-a).
\end{split}
\end{equation}
One readily verifies that $\log|v_1 - v_2| + \log (b-a)$ is equal to 
$$\log \left(\sqrt{v_1 - a}\sqrt{b-v_2} + \sqrt{v_2-a} \sqrt{b - v_1} \right) + \log \left|\sqrt{v_1 - a}\sqrt{b-v_2} - \sqrt{v_2-a} \sqrt{b - v_1} \right|,$$
which shows that to prove (\ref{HE2}) we only need to show that 
\begin{equation}\label{HE3}
\begin{split}
 \left| \frac{\hat{\Omega}(v_1,s_1) - \hat{\Omega}(v_2,s_1)  }{\hat{\Omega}(v_1,s_1) - \overline{\hat{\Omega}}(v_2,s_1) } \right|^2  =  \left(\frac{\sqrt{v_1 - a}\sqrt{b-v_2} - \sqrt{v_2-a} \sqrt{b - v_1}}{\sqrt{v_1 - a}\sqrt{b-v_2} + \sqrt{v_2-a} \sqrt{b - v_1}}\right)^2.
\end{split}
\end{equation}
Using the definition of $\hat{\Omega}(x,s)$ from (\ref{WA2}) we see that 
$$\left|  \frac{\hat{\Omega}(v_1,s_1) - \hat{\Omega}(v_2,s_1)  }{\Omega(v_1,s_1) - \overline{\hat{\Omega}}(v_2,s_1) } \right|^2 = \left|\frac{(v_2 - v_1) + \i \sqrt{v_1 -a}\sqrt{b-v_1} - \i \sqrt{v_2 -a}\sqrt{b-v_2}}{(v_2 - v_1) + \i \sqrt{v_1 -a}\sqrt{b-v_1} + \i \sqrt{v_2 -a}\sqrt{b-v_2}} \right|^2.$$
By a direct computation one verifies that the right side of the last equation and the right side of (\ref{HE3}) are both equal to 
$$\frac{(v_1 - a)(b-v_2) + (v_2-a)(b - v_1) - 2\sqrt{v_1 - a}\sqrt{v_2 - a}\sqrt{b-v_1}\sqrt{b-v_2} }{(v_1 - a)(b-v_2) + (v_2-a)(b - v_1) + 2\sqrt{v_1 - a}\sqrt{v_2 - a}\sqrt{b-v_1}\sqrt{b-v_2}}.$$
This completes the proof of (\ref{HE3}) and hence (\ref{HB1}) when $s_1 = s_2$. \\

{\bf \raggedleft Step 6.} In this final step we prove (\ref{HE1}). In the sequel we assume that $v_1 \in (a,b)$ is fixed. Let $\epsilon > 0$ be such that $(v_1 - \epsilon, v_1 + \epsilon) \subset (a,b)$. For $\varepsilon \in (0, \epsilon)$ we define the contour $\Gamma_{\varepsilon, \epsilon}$ as follows. $\Gamma_{\varepsilon, \epsilon}$ starts from the point $b - \i \varepsilon$ and follows the circle centered at $b$ with radius $\varepsilon$ counterclockwise until the point $b + \i \varepsilon$, afterwards it goes to the left along the segment connecting the points $b + \i \varepsilon$ and $v_1 + \epsilon + \i \varepsilon$; it follows the circle centered at $v_1 + \i \varepsilon$ and radius $\epsilon$ counterclockwise until the point $v_1 - \epsilon + \i \varepsilon$ and goes to the left along the segment connecting $v_1 - \epsilon + \i \varepsilon$ and $a + \i \varepsilon$; it then follows the circle centered at $a$ with radius $\varepsilon$ counterclockwise until the point $a - \i \varepsilon$ and then goes to the right along the segment connecting $a - \i \varepsilon$ and $v_1 - \epsilon - \i \varepsilon$; finally it follows the circle centered at $v_1 - \i \varepsilon$ and radius $\epsilon$ counterclockwise until the point $v_1 - \epsilon - \i \varepsilon$ and goes to the right along the segment connecting $v_1 - \epsilon - \i \varepsilon$ and $b - \i \varepsilon$, see Figure \ref{S8_2}.
\begin{figure}[h]
\centering
\scalebox{0.6}{\includegraphics{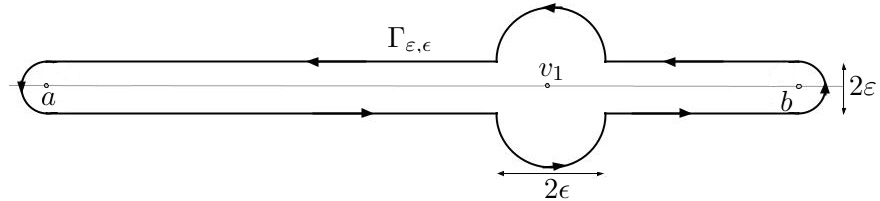}}
\caption{ The contour $\Gamma_{\varepsilon, \epsilon}$.}
\label{S8_2}
\end{figure}

By Cauchy's theorem we see that 
\begin{equation}\label{HF1}
\oint_{\gamma}  \frac{R_2(z_2) }{(v_1- z_2)}\cdot \frac{\sqrt{v_1 - a}\sqrt{b- v_1}}{\sqrt{(z_2- a)(z_2 - b)}}dz_2  = \oint_{\Gamma_{\varepsilon, \epsilon}}  \frac{R_2(z_2) }{(v_1- z_2)}\cdot \frac{\sqrt{v_1 - a}\sqrt{b- v_1}}{\sqrt{(z_2- a)(z_2 - b)}} dz_2.
\end{equation}
We let $\varepsilon \rightarrow 0+$ and conclude
\begin{equation}\label{HF2}
\begin{split}
\lim_{\varepsilon \rightarrow 0+} \oint_{\Gamma_{\varepsilon, \epsilon}}  \frac{R_2(z_2) }{(v_1- z_2)}\cdot \frac{\sqrt{v_1 - a}\sqrt{b- v_1}}{\sqrt{(z_2- a)(z_2 - b)}} dz_2 = T_1(\epsilon) + T_2(\epsilon) + T_3(\epsilon), \mbox{ where }
\end{split}
\end{equation}
\begin{equation}\label{HF3}
\begin{split}
&T_1(\epsilon) = 2 \i \int_{a}^{v_1 - \epsilon} \frac{R_2(v_2)}{(v_1 - v_2)} \cdot \frac{\sqrt{v_1 -a } \sqrt{b- v_1} }{ \sqrt{v_2 -a} \sqrt{b-v_2}}dv_2, \\
& T_2(\epsilon) = 2 \i \int_{v_1 + \epsilon}^b \frac{R_2(v_2)}{(v_1 - v_2)} \cdot \frac{\sqrt{v_1 -a } \sqrt{b- v_1} }{ \sqrt{v_2 -a} \sqrt{b-v_2}}dv_2, \\
&T_3(\epsilon) = \int_{C_{\epsilon}^+(v_1) \cup C_{\epsilon}^-(v_1)} \frac{R_2(z_2) }{(v_1- z_2)}\cdot \frac{\sqrt{v_1 - a}\sqrt{b- v_1}}{\sqrt{(z_2- a)(z_2 - b)}} dz_2,
\end{split}
\end{equation}
with $C^{+}_{\epsilon}(v_1)$, $C^-_{\epsilon}(v_1)$ being positively oriented half-circles of radius $\epsilon$ around $v_1$ in the upper and lower half-planes, respectively. We mention that in deriving (\ref{HF2}) we used the bounded convergence theorem and fact that the square roots are purely imaginary and come with opposite sign when we approach $[a,b]$ from the upper and lower half-planes. \\

We next integrate by parts the integrals in $T_1(\epsilon)$ and $T_2(\epsilon)$ to get
 \begin{equation}\label{HF4}
\begin{split}
&T_1(\epsilon) = -2 \i \int_{a}^{v_1 - \epsilon} R_2'(v_2) H(v_2) dv_2 + 2\i \left[ R_2(v_1 - \epsilon) H(v_1 - \epsilon) - R_2(a) H(a) \right], \\
& T_2(\epsilon) = -2 \i \int_{v_1 + \epsilon}^{b} R_2'(v_2) H(v_2) dv_2 + 2\i \left[ R_2(b) H(b) - R_2(v_1 + \epsilon) H(v_1 + \epsilon) \right], \mbox{ where } \\
&H(v_2) = 2 \log \left(\sqrt{v_1 - a}\sqrt{b-v_2} + \sqrt{v_2-a} \sqrt{b - v_1} \right) - \log|v_1 - v_2| .
\end{split}
\end{equation}
We observe that $H(a) = \log (b-a) = H(b)$ and $R_2(v_1-\epsilon) G(v_1 - \epsilon) - R_2(v_1 + \epsilon) G(v_1 + \epsilon) = O(\epsilon \log \epsilon^{-1})$. The latter statements and the dominated convergence theorem imply
\begin{equation}\label{HF5}
\lim_{\epsilon \rightarrow 0+} T_1(\epsilon) + T_2(\epsilon) = -2 \i \int_a^b R_2'(v_2) H(v_2) dv_2 + 2\i \cdot [R_2(b) - R(a)] \log (b-a).
\end{equation}

We next turn to $T_3(\epsilon)$ and parametrize $C_{\epsilon}^+(v_1)$ through $z_2 = v_1 + \epsilon e^{\i \phi}$ with $\phi \in (0, \pi)$ and $C_{\epsilon}^-(v_1)$ through $z_2 = v_1 + \epsilon e^{\i  \phi}$ with $\phi \in (-\pi, 0)$. This leads to 
$$T_3(\epsilon) = \i \sqrt{v_1 - a}\sqrt{b-v_1} \left[ \int_{0}^{\pi}  \hspace{-2mm} \frac{R_2(v_1 + \epsilon e^{\i \phi}) d\phi}{\sqrt{\epsilon e^{\i \phi} + v_1 - a} \sqrt{\epsilon e^{\i \phi} + v_1 - b}} +\int_{-\pi}^{0} \frac{R_2(v_1 + \epsilon e^{\i \phi}) d\phi}{\sqrt{\epsilon e^{\i \phi} + v_1 - a} \sqrt{\epsilon e^{\i \phi} + v_1 - b}}  \right] \hspace{-1mm}.$$
We can let $\epsilon$ converge to $0+$ above, which by the bounded convergence theorem gives 
\begin{equation}\label{HF6}
\lim_{\epsilon \rightarrow 0+} T_3(\epsilon) = \sqrt{v_1 - a}\sqrt{b-v_1} \cdot  \left[ \int_0^{\pi} \frac{R_2(v_1) d\phi}{\sqrt{v_1 - a} \sqrt{b-v_1}} -   \int_{-\pi}^{0} \frac{R_2(v_1) d\phi}{\sqrt{v_1 - a} \sqrt{b-v_1}} \right] = 0,
\end{equation}
where the sign change comes from the fact that we are approaching the real line from the upper and lower half-planes in the two integrals. Combining (\ref{HF1}), (\ref{HF2}), (\ref{HF5}) and (\ref{HF6}) we conclude 
$$\oint_{\gamma}  \frac{R_2(z_2) }{(v_1- z_2)}\cdot \frac{\sqrt{v_1 - a}\sqrt{b- v_1}}{\sqrt{(z_2- a)(z_2 - b)}}dz_2   = -2 \i \int_a^b R_2'(v_2) H(v_2) dv_2 + 2\i \cdot[R_2(b) - R(a)] \log (b-a).$$
The latter is equivalent to (\ref{HE1}) once we use that $R_2(b) - R_2(a) = \int_a^b R'_2(v_2) dv_2$. 
\end{proof}

%
\subsection{Proof of Theorems \ref{ThmLLN} and \ref{ThmMain}}\label{Section8.3} In this section we give the proofs of Theorems \ref{ThmLLN} and \ref{ThmMain}.

\begin{proof}[Proof of Theorem \ref{ThmLLN}] For clarity we split the proof into three steps. In the first step we make use of Proposition \ref{S32P2} to prove that the CDF of $\mu_n$ from (\ref{S31E1}) is close to that of $\mu(\cdot,n/K)$ from (\ref{S31E11}) -- see equation (\ref{YU3}). In Step 2 we prove an analogue of (\ref{S1LLNEq}) for a different height function, see (\ref{YV2}), and then in Step 3 we deduce (\ref{S1LLNEq}) by applying a suitable change of variables to (\ref{YV2}).\\

{\bf \raggedleft Step 1.} Recall from (\ref{S31E1}) that 
\begin{equation}\label{YU1}
\mu_{n} = \frac{1}{n} \cdot \sum_{i = 1}^n \delta(\ell_i^n/n),
\end{equation}
and let $\mu(x,s)$ be as in (\ref{S31E11}). The goal of this step is to prove that for each $\delta > 0$, there exist positive constants $c_1, C_1 > 0$, depending on $\delta$ and $\theta$, such that if $n \in [\delta K, \delta^{-1} K]$ and $x \in \mathbb{R}$ 
\begin{equation}\label{YU3}
\P\left( \left| \int_x^{\infty} \mu_n(dy) - \int_{x}^{\infty} \mu(y,n/K)dy \right| \geq K^{-1/16} \right) \leq C_1 \cdot e^{-c_1 K^{5/4}}.
\end{equation}
The idea behind proving (\ref{YU3}) is to replace the integrals in (\ref{YU3}) with ones involving a Lipschitz function $g_x(y)$, which is approximating ${\bf 1}\{x \geq y\}$, and use Proposition \ref{S32P2}.\\

We first note that almost surely $\ell_i^n \in [-n \theta, K]$ for $i \in \llbracket 1, n \rrbracket$. The latter and (\ref{S31E11}) imply that $\mu_n$ and $\mu(\cdot, n/K)$ are both supported on $[-\theta, \delta^{-1}]$, and so (\ref{YU3}) is satisfied for any $x \not \in [-\theta, \delta^{-1}]$ for any choice of $c_1, C_1 > 0$ as the difference of the two integrals is exactly zero. In the remainder we assume that $x \in  [-\theta, \delta^{-1}]$, and let $g_x(y)$ be defined as
\begin{equation}\label{YU4}
g_x(y) = \begin{cases} 0 &\mbox{ if } y < x, \\ K^{1/4} \cdot (y - x) &\mbox{ if } y \in [x, x+K^{-1/4}] \\ 1 &\mbox{ if }  y \in [x + K^{-1/4}, \delta^{-1}] \\ 1 - K^{1/4} (y - \delta^{-1}) &\mbox{ if } y \in [\delta^{-1}, \delta^{-1} + K^{-1/4}] \\ 0 &\mbox{ if } y \geq \delta^{-1} + K^{-1/4}. \end{cases}
\end{equation}
Note that since $\ell_i^n - \ell_{i+1}^n \geq \theta$ for all $i \in \llbracket 1, n-1\rrbracket$ and $\mu(y, n/K)$ is at most $\theta^{-1}$, we can find a constant $a_1 > 0$, depending on $\theta$, such that 
\begin{equation}\label{YU5}
\begin{split}
&a_1 \cdot K^{-1/4} \geq \int_x^{\infty} \mu_n(dy) - \int_{\mathbb{R}} g_x(y) \mu_n(dy) \geq 0, \mbox{ and }\\
&a_1 \cdot K^{-1/4} \geq \int_x^{\infty} \mu(y, n/K)dy - \int_{\mathbb{R}} g_x(y) \mu(y, n/K)dy \geq 0.
\end{split}
\end{equation}

Recalling the notation in (\ref{S32E3}) we observe that 
\begin{equation}\label{YU6}
\|{g}_x\|_{\operatorname{Lip}} = K^{1/4}.
\end{equation}
In addition, observe that 
$$[g_x]_{H^{1/2}(\mathbb{R})} := \int_{\mathbb{R}} \int_{\mathbb{R}} \frac{| g_x(y_1) -  g_x(y_2)|^2}{|y_1 - y_2|^2} \leq K^{1/2} \cdot (\delta^{-1} + \theta + 2)^2.$$
On the other hand, as can be deduced from the proof of \cite[Proposition 3.4]{NPV} we have
$$[g_x]_{H^{1/2}(\mathbb{R})}  = 2 C(1,1/2)^{-1} \cdot \|g_x \|_{1/2}^2, \mbox{ where }C(1,1/2) = \int_{\mathbb{R}} \frac{1 - \cos (x)}{x^2}dx \in (0, \infty),$$ 
which implies that we can find a positive constant $a_2 >0 $, depending on $\theta$ and $\delta$, such that
\begin{equation}\label{YU7}
\|g_x \|_{1/2} \leq a_2 \cdot K^{1/4}.
\end{equation}
Combining (\ref{YU6}), (\ref{YU7}) and Proposition \ref{S32P2}, applied to $p = 2$, $\gamma = K^{-3/8}$, we conclude that there is a constant $C_2 > 0$, depending on $\theta, \delta$, such that for $n \geq 2$ and $n/K \in [\delta, \delta^{-1}]$
\begin{equation}\label{YU8}
\begin{split}
& \P  \left(\left|  \int_{\mathbb{R}}  g_x(y)\mu_{n}(dy)- \int_{\mathbb{R}} g_x(y)\mu(y, n/K) dy \right|\ge a_2 K^{-1/8}+ \theta K^{1/4} n^{-2} \right)  \\
& \le  \exp \left(-2\pi^2 \theta K^{-3/4} n^2+ C_2 \cdot  n\log n  \right),
\end{split}
\end{equation}
Equations (\ref{YU5}) and (\ref{YU8}) together imply (\ref{YU3}).\\

{\bf \raggedleft Step 2.} For $x \in \mathbb{R}$ and $s > 0$ we define 
\begin{equation}\label{YV1}
\hat{\mathsf{h}}(x,s) = \int_x^{\infty} \mu(y,s) dy \mbox{ and } \hat{\mathsf{h}}_K(x,s) = \int_x^{\infty} \mu_n(dy),\mbox{ where } n = \lceil s \cdot K \rceil.
\end{equation}
Fix a compact set $\mathcal{V}_1 \subset \mathbb{R} \times (0, \infty)$. In this step we prove that for any $\epsilon > 0$ we have 
\begin{equation}\label{YV2}
\lim_{K \rightarrow \infty} \P \left( \sup_{(x,s) \in \mathcal{V}_1} \left| \hat{\mathsf{h}}_K(x,s)  - \hat{\mathsf{h}}(x,s) \right| > \epsilon \right) = 0.
\end{equation}

Let $\delta \in (0,1/2), \delta_0 = 2\delta$ be small enough so that $\mathcal{V}_1 \subset [-\delta_0^{-1}, \delta_0^{-1}] \times [\delta_0, \delta_0^{-1}]$. Let $\mathsf{X} = \{ x \in K^{-1} \cdot \mathbb{Z}: x \in [-\delta_0^{-1}-1/K, \delta_0^{-1} + 1/K] \}$, $\mathsf{S} = \{s \in K^{-1} \cdot \mathbb{Z}: \delta_0 \leq s \leq \delta_0^{-1} + K^{-1} \}$, and note that we can find a constant $A_1 > 0$, depending on $\delta$, such that 
\begin{equation}\label{YV3}
|\mathsf{X}| \leq A_1 \cdot K \mbox{ and } |\mathsf{S}| \leq A_1 \cdot K
\end{equation}
By taking a union bound of (\ref{YU3}) and using (\ref{YV3}) we conclude that we can find a constant $B_1 > 0$, depending on $\delta, \theta$, such that for $K \geq \delta_0^{-1}$
\begin{equation}\label{YV3.5}
\P\left( \max_{x \in \mathsf{X}, s \in \mathsf{S}} \left| \int_x^{\infty} \mu_{sK} (dy) - \int_{x}^{\infty} \mu(y,s)dy \right| \geq K^{-1/16} \right) \leq B_1 K^2 \cdot e^{-c_1 K^{5/4}}.
\end{equation}

We can also find a constant $A_2 > 0$, depending on $\theta, \delta$, such that if $(x,s) \in  [-\delta_0^{-1}, \delta_0^{-1}] \times [\delta_0, \delta_0^{-1}]$
\begin{equation}\label{YV4}
\left| \hat{\mathsf{h}}_K(x,s) - \hat{\mathsf{h}}_K(\bar{x},s) \right| \leq A_2 \cdot K^{-1},
\end{equation}
where $\bar{x} = \max\{ y \in \mathsf{X}: y \leq x\}$. In deriving the last expression we used that $\ell_i^n - \ell_{i+1}^n \geq \theta$ for all $n \geq 1$ and $i \in \llbracket 1, n-1 \rrbracket$. As a consequence of the bounded convergence theorem, we have that $\hat{\mathsf{h}}(x,s) $ is continuous on $\mathbb{R} \times (0,\infty)$ and hence uniformly continuous on $[-\delta^{-1}, \delta^{-1}] \times [\delta, \delta^{-1}]$. In particular, we can find $\eta > 0$ such that for $(x,s), (y,t) \in [-\delta^{-1}, \delta^{-1}] \times [\delta, \delta^{-1}]$
\begin{equation}\label{YV5}
\left| \hat{\mathsf{h}}(x,s) - \hat{\mathsf{h}}(y,t) \right| \leq \epsilon/2 \mbox{ if } |x-y| + |s - t| < \eta.
\end{equation}

Let $K_0$ be sufficiently large so that $K_0 \geq \delta_0^{-1}, 2 K_0^{-1} < \eta$ and $ K_0^{-1/16} + A_2 K_0^{-1} < \epsilon/2$. Combining (\ref{YV1}), (\ref{YV3.5}), (\ref{YV4}) and (\ref{YV5}) we conclude for $K \geq K_0$ that
\begin{equation*}
\begin{split}
&\P \left( \sup_{(x,s) \in \mathcal{V}_1} \left| \hat{\mathsf{h}}_K(x,s)  - \hat{\mathsf{h}}(x,s) \right| > \epsilon \right)  \leq \P \left( \sup_{(x,s) \in \mathcal{V}_1} \left| \hat{\mathsf{h}}_K(\bar{x},\bar{s} )  - \hat{\mathsf{h}}(\bar{x}, \bar{s} ) \right| > \epsilon/2 - A_2 \cdot K^{-1}  \right)\\
& \leq  \P \left( \max_{x \in \mathsf{X}, s \in \mathsf{S}} \left| \hat{\mathsf{h}}_K(x,s )  - \hat{\mathsf{h}}(x, s ) \right| > \epsilon/2 - A_2 \cdot K^{-1}  \right) \\
&\leq  \P \left( \max_{x \in \mathsf{X}, s \in \mathsf{S}} \left| \hat{\mathsf{h}}_K(x,s )  - \hat{\mathsf{h}}(x, s ) \right| >  K^{-1/16}  \right) \leq B_1 K^2 \cdot e^{-c_1 K^{5/4}},
\end{split}
\end{equation*}
where as before $\bar{x} = \max\{ y \in \mathsf{X}: y \leq x\}$ and also $\bar{s} = K^{-1} \cdot \lceil s K \rceil$. The last equation implies (\ref{YV2}).\\

{ \bf \raggedleft Step 3.} In this step we fix a compact set $\mathcal{V} \subset \mathbb{R} \times (0, \infty)$ and prove (\ref{S1LLNEq}). From the definition of $\mu(x,s)$ in (\ref{S31E11}) and $\nu(x,s)$ in (\ref{S1Density}) we observe that 
\begin{equation*}
\nu\left(x,s \right) = \mu\left( (\theta/s) \cdot  \left( x - (s-1)/2 \right), s \theta^{-1} \right),
\end{equation*}
which implies that 
\begin{equation}\label{YW1}
\begin{split}
&h(x,s) = \int_{x}^{\infty} \nu(y,s) dy = (s/\theta) \cdot \int_{(\theta/s)(x - (s-1)/2) }^{\infty} \mu(y, s \theta^{-1})dy \\
&= (s/\theta) \cdot \hat{\mathsf{h}}\left((\theta/s)(x - (s-1)/2), s \theta^{-1}\right) .
\end{split}
\end{equation}
We also note from (\ref{S1DHF}) that for $n = \lceil s \cdot \theta^{-1} \cdot K \rceil$  
\begin{equation}\label{YW2}
\begin{split}
&\mathcal{H}_K(x,s) = \sum_{i = 1}^n {\bf 1}\{\tilde{\ell}_i^n/K \geq x\} = \sum_{i = 1}^n {\bf 1}\{\ell_i^n/n  \geq Kx/n + K/(2n) - \theta(n+1)/(2n) \} \\
& = n \cdot \int_{(\theta/s)(x - (s-1)/2) }^{\infty} \mu_n(dx) + O(1) = K \cdot (s/\theta) \cdot \hat{\mathsf{h}}_K\left( (\theta/s)(x - (s-1)/2), s\theta^{-1} \right) + O(1),
\end{split}
\end{equation} 
where the constants in the big $O$ notations depend on $\theta, \mathcal{V}$ and (\ref{YW2}) holds if $(x,s) \in \mathcal{V}$. 

We define the set 
$$\mathcal{V}_1 = \{(y,t): y = (\theta/s) \cdot (x-(s-1)/2) \mbox{ and } t = s \theta^{-1} \mbox{ for some } (x,s) \in \mathcal{V} \}.$$
Since $\mathcal{V}$ is a compact subset of $\mathbb{R} \times (0, \infty)$, we conclude the same of $\mathcal{V}_1$. From (\ref{YW1}) and (\ref{YW2}) we conclude that there is a constant $C > 0$, depending on $\theta, \mathcal{V}$, such that
\begin{equation*}
\sup_{(x,s) \in \mathcal{V}} \left| K^{-1} \cdot  \mathcal{H}_K(x,s) - h(x,s) \right|  \leq C K^{-1} + \sup_{(x,s) \in \mathcal{V}_1} (s/\theta) \cdot \left| \hat{\mathsf{h}}_K(x,s)  - \hat{\mathsf{h}}(x,s) \right|.
\end{equation*}
Combining the last equation with (\ref{YV2}) we conclude (\ref{S1LLNEq}). This suffices for the proof.
\end{proof}

\begin{proof}[Proof of Theorem \ref{ThmMain}] The idea is to recast the problem into the setup of Theorem \ref{MainThmNew}, apply that result, and then perform a simple change of variables to conclude (\ref{S1CovGFF}). We first note from (\ref{S1DHF}) and (\ref{S8Height}) that for $s > 0$
$$\mathcal{H}_K(x,s) = \hat{\mathcal{H}}_K\left( x + \frac{1}{2} - \frac{\theta (n+1)}{2K} , \theta^{-1} \cdot s \right), \mbox{ where } n = \lceil s \cdot \theta^{-1} \cdot K \rceil.$$
The latter shows that if $X^K_i$ are as in the statement of the theorem, we have
$$X_i^K = \int_{\mathbb{R}} \sqrt{\theta \pi} \left( \hat{\mathcal{H}}_K(x, \theta^{-1}  s_i) - \mathbb{E} \left[ \hat{\mathcal{H}}_K(x, \theta^{-1} s_i )  \right] \right) \tilde{f}^K_i(x) dx, \mbox{ where }$$
$$\tilde{f}^K_i(x) =  f_i \left(x - \frac{1}{2} + \frac{\theta ( \lceil s_i \cdot \theta^{-1} \cdot K \rceil+1)}{2K} \right).$$
Note that the conditions of Theorem \ref{MainThmNew} are satisfied with $s_i^K = \theta^{-1} s_i$ and $\tilde{f}_i^K(x)$, which converge to $\tilde{f}_i(x) = f_i(x-1/2 + s_i/2)$ uniformly over compacts in $\mathbb{C}$. We conclude that $X^K = (X^K_1, \dots, X^K_m)$ converge in the sense of moments to a Gaussian vector $(X_1, \dots, X_m)$ with mean zero and covariance
\begin{equation}\label{S8IP1}
\begin{split}
&\mathrm{Cov}(X_i, X_j) = \int_{a(s_i/\theta)}^{b(s_i/\theta)} \int_{a(s_j/\theta)}^{b(s_j/\theta)} \tilde{f}_i(y_1) \tilde{f}_j(y_2) \left( - \frac{1}{2\pi} \log \left| \frac{\hat{\Omega}(y_1,s_i/\theta) - \hat{\Omega}(y_2,s_j/ \theta )  }{\hat{\Omega}(y_1,s_i/\theta) - \overline{\hat{\Omega}}(y_2,s_j/\theta ) } \right|\right) dy_2 dy_1.
\end{split}
\end{equation}
We may now apply the change of variables $y_1 = x_1 + (1/2)(1-s_i)$ and $y_2 = x_2 + (1/2) (1-s_j)$ to (\ref{S8IP1}) and conclude
\begin{equation}\label{S8IP2}
\begin{split}
&\mathrm{Cov}(X_i, X_j) = \int_{-\sqrt{s_i}}^{\sqrt{s_i}} \int_{-\sqrt{s_j}}^{\sqrt{s_j}} f_i(x_1) f_j(x_2) \\
&\times  \left( - \frac{1}{2\pi} \log \left| \frac{-x_1 + \i \sqrt{s_i - x_1^2} +x_2 - \i  \sqrt{s_j - x_2^2}  }{-x_1 + \i \sqrt{s_i - x_1^2} +x_2 + \i  \sqrt{s_j - x_2^2}  } \right|\right) dx_2 dx_1,
\end{split}
\end{equation}
where we also used the formula for $a(s), b(s)$ from (\ref{S8DefAB}) and for $\hat{\Omega}$ from (\ref{WA2}). The right side of (\ref{S8IP2}) is readily seen to equal the right side of (\ref{S1CovGFF}) as the two absolute values agree in view of the definition of $\Omega(x,s)$ in (\ref{S1Map}). This suffices for the proof.
\end{proof}

%
\begin{appendix}

%
\section{Joint cumulants} \label{AppendixA} In this section we summarize some notation and results about joint cumulants of random variables. For more background we refer the reader to \cite[Chapter 3]{Taqqu}. For $n$ bounded complex-valued random variables $X_1, \dots, X_n$ we let $M(X_1, \dots, X_n)$ denote their joint cumulant. Explicitly,
\begin{equation}\label{CumDef}
M(X_1, \dots, X_n) = \frac{\partial^n}{\partial z_1 \cdots \partial z_k} \log \mathbb{E} \left[ \exp \left( \sum_{i = 1}^n z_i \cdot X_i \right) \right] \Bigg{\vert}_{z_1 = \cdots = z_k = 0}.
\end{equation}
If $n = 1$, then $M(X_1) = \mathbb{E}[X_1]$. For every subset $J = \{j_1, \dots, j_k \} \subseteq \{ 1, \dots, n \}$ we write 
$$\bX_J = \{ X_{j_1}, \dots, X_{j_k} \} \mbox{ and } \bX^J = X_{j_1} \times\cdots \times X_{j_k},$$
where $\times$ denotes the usual product. If $Y$ is a bounded random variable we also write 
$$M(\bX_J) = M (X_{j_1}, \dots, X_{j_k}) \mbox{, } \mathbb{E}\left[ \bX^J \right] = \mbox{$\mathbb{E} \left[ \prod_{j \in J} X_j \right]$}, \mbox{ and } M(Y;\bX_J) = M(Y, X_{j_1}, \dots, X_{j_k}) $$
where we remark that the definitions make sense as the joint cumulant of a set of random variables $\bX_J$ is invariant with respect to permutations of $J$.

We record the following basic properties of $M(X_1, \dots, X_n)$, see \cite[Section 3.1]{Taqqu}. If $Y$ is a bounded complex-valued random variable and $c_1, \dots, c_n \in \mathbb{C}$ we have 
\begin{equation}\label{S3Linearity}
\begin{split}
&M(X_1 + Y,  X_2, \dots,  X_n )  = M(X_1, \dots, X_n)  +   M(Y, X_2, \dots, X_n) , \\
&M(c_1 X_1, \dots, c_n X_n) = \prod_{i = 1}^n c_i \cdot M(X_1, \dots, X_n) \\
&M( X_1 + c_1, \dots,  X_n + c_n) = M(X_1, \dots, X_n) \mbox{ if $n \geq 2$, while }M(X_1 + c_1) = M(X_1) + c_1.\\
\end{split}
\end{equation}

From \cite[Proposition 3.2.1]{Taqqu} we have the following identities that relate joint moments and joint cumulants of the bounded complex-valued random variables $X_1, \dots, X_n$. 
\begin{lemma}\label{CumToMom} For any $J \subseteq \llbracket 1, n \rrbracket$ we have
\begin{equation}\label{Mal1}
\begin{split}
&\mathbb{E} \left[ \bX^J \right] = \sum_{ \pi = \{ b_1, \dots, b_k \} \in \mathcal{P}(J)} M(\bX_{b_1}) \cdots M(\bX_{b_k})\\
\end{split}
\end{equation}
\begin{equation}\label{Mal2}
\begin{split}
&M(\bX_J) = \sum_{\sigma = \{a_1, \dots, a_r\} \in \mathcal{P}(J)} (-1)^{r-1} (r-1)! \cdot \mathbb{E}\left[ \bX^{a_1} \right] \cdots \mathbb{E} \left[ \bX^{a_r} \right],
\end{split}
\end{equation}
where $\mathcal{P}(J)$ is the set of partitions of $b$, see \cite[Section 2.2]{Taqqu}. If $X,Y$ are bounded complex-valued random variables we also have
\begin{equation}\label{Mal3}
\begin{split}
&M(XY,X_1, \dots, X_n) = M(X,Y,X_1, \dots, X_n) + \sum_{I \subseteq \llbracket 1, n \rrbracket} M(X;\bX_I ) \cdot M(Y;\bX_{I^c} ),
\end{split}
\end{equation}
where $I^c = \llbracket 1, n \rrbracket \setminus I$.
\end{lemma}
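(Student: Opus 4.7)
All three identities are classical results in the theory of joint cumulants, and the plan is to derive them directly from the generating-function definition (\ref{CumDef}) together with standard combinatorics on the lattice of set partitions. Since the statement is essentially a reference lemma, the cleanest route is to invoke \cite[Proposition 3.2.1]{Taqqu}; below I outline the key steps one would carry out for a self-contained derivation.

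First I would introduce the joint moment generating function $\Phi(\mathbf{z}) = \mathbb{E}\bigl[\exp(\sum_{i=1}^n z_i X_i)\bigr]$ and its logarithm $\Psi(\mathbf{z}) = \log \Phi(\mathbf{z})$. By (\ref{CumDef}) (and the fact that all $X_i$ are bounded, so $\Phi$ is analytic near $0$ and $\Phi(0)=1$), the coefficient of the squarefree monomial $\prod_{j\in J} z_j$ in the formal expansion of $\Psi$ is exactly $M(\bX_J)$, while the analogous coefficient in $\Phi$ is $\mathbb{E}[\bX^J]$. Writing $\Phi = \exp(\Psi)$ and expanding the exponential, each term in the expansion is a product of $\Psi$-monomials; restricting attention to a squarefree monomial $\prod_{j\in J} z_j$ forces these $\Psi$-monomials to correspond to the blocks of a set partition of $J$. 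Reading off the coefficient yields (\ref{Mal1}).

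Equation (\ref{Mal2}) is then the Möbius inversion of (\ref{Mal1}) on the lattice $\mathcal{P}(J)$ ordered by refinement. The classical computation that the Möbius function from the finest partition to a partition with $r$ blocks equals $(-1)^{r-1}(r-1)!$ supplies the signs; alternatively, one can expand $\Psi = \log(1 + (\Phi - 1))$ as a power series, substitute the moment expansion $\Phi - 1 = \sum_{J \neq \emptyset} \mathbb{E}[\bX^J]\prod_{j\in J} z_j + \cdots$, and match coefficients of $\prod_{j\in J} z_j$ directly.

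For (\ref{Mal3}), the approach is to introduce an auxiliary variable and view the product $XY$ as a single random variable obtained by identifying two formal arguments. Concretely, one applies (\ref{Mal2}) to the $(n+2)$-tuple $(X, Y, X_1, \dots, X_n)$ and also to the $(n+1)$-tuple $(XY, X_1, \dots, X_n)$, expresses every $\mathbb{E}\bigl[(XY)^a\bigr]$ with $XY$-slot in the block $a$ as $\mathbb{E}[X\cdot Y \cdot \bX^{a\setminus\{XY\}}]$, and then reassembles both sides via (\ref{Mal1}). After cancellations, the partitions of $\{X,Y\} \cup \llbracket 1, n\rrbracket$ in which $X$ and $Y$ lie in a common block contribute the term $M(X,Y,X_1,\dots,X_n)$, while those in which $X$ and $Y$ lie in distinct blocks factor, by the product structure of cumulants over independent blocks in the combinatorial expansion, into pairs $M(X;\bX_I)\,M(Y;\bX_{I^c})$ indexed by $I \subseteq \llbracket 1, n\rrbracket$. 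The main obstacle is the combinatorial bookkeeping in (\ref{Mal3}); once the partitions are organized according to whether the $X$- and $Y$-slots coincide, the identity follows from straightforward sign cancellations, but it is considerably cleaner to cite \cite[Proposition 3.2.1]{Taqqu} as done in the paper.
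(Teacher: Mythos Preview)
Your proposal is correct and in fact goes beyond what the paper does: the paper simply states the lemma and cites \cite[Proposition 3.2.1]{Taqqu} without giving any argument, whereas you outline the standard generating-function/M\"obius-inversion derivation of (\ref{Mal1})--(\ref{Mal2}) and the block-splitting combinatorics for (\ref{Mal3}). Since the paper treats this as a pure reference lemma, your sketch is consistent with (and more detailed than) the paper's approach.
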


%
\section{Guessing the covariance} \label{AppendixB} In this section we explain how the formula for the limiting covariance $C(z_1, s_1; z_2, s_2)$ in (\ref{LimCov}) was guessed. The goal is to provide a heuristic argument rather than a formal derivation, and so we will not dwell on technical points such as well-posedness of certain functions, existence of enough derivatives, divisions by zero etc. Throughout we assume that all $s$ variables lie in $[\delta, \delta^{-1}]$ and $z$ variables lie in $\mathcal{U}= \mathbb{C}\setminus [-\theta \delta^{-1}, \delta^{-1}]$ for some $\delta \in (0,1)$.  \\

As mentioned in Remark \ref{S7SingleLev}, the formula for $C(z_1, s_1;z_2, s_1)$ was known from previous works. On the other hand, the argument in the proof of Theorem \ref{MainTechThm} suggests that if $f(z,s) = C(z_1, s_1; z, s)$, then $f(z,s)$ satisfies the integral equation from (\ref{S5LID}), namely
\begin{equation*}
  \left[e^{\theta G\left(z/s_1, s_1\right)} - 1 \right] f(z,s_1) + \theta \cdot \int_{s}^{s_1}  \partial_{z} f(z,u)du -   \left[e^{\theta G\left(z/s, s\right)} - 1 \right] f(z,s)  = 0.
\end{equation*}
Differentiating the latter with respect to $s$ we conclude that $f(z,s)$ satisfies the differential equation
\begin{equation}\label{PO1}
\begin{split}
&\partial_s f(z,s) + f(z,s) \cdot \frac{\theta e^{\theta G\left(z/s, s\right)}  \left[\partial_s G\left(z/s, s\right) - (z/s^2) \cdot \partial_z G\left(z/s, s\right) \right]  }{\left[e^{\theta G\left(z/s, s\right)} - 1 \right]} +  \frac{ \theta \cdot \partial_z f(z,s)  }{\left[e^{\theta G\left(z/s, s\right)} - 1 \right]}    = 0,
\end{split}
\end{equation}
with initial condition $f(z,s_1) = C(z_1, s_1;z, s_1)$. Equation (\ref{PO1}) is a first order linear PDE, which can be solved using the method of characteristics as we explain next. 

For $x_0 \in \mathcal{U}$ we let $X_{x_0}: [\delta , s_1] \rightarrow \mathbb{C}$ be the solution to the ODE
\begin{equation}\label{PO2}
X'(s) = \theta \left[\exp \left(\theta G \left( X(s)/s, s \right) \right) - 1 \right]^{-1},
\end{equation}
with initial condition $X(s_1) = x_0$. Setting $v_{x_0}(s) =f(X_{x_0}(s),s)$ from (\ref{PO1}) we see $v_{x_0}(s)$ solves
\begin{equation}\label{PO3}
\begin{split}
&v'(s) + v(s) \cdot \frac{\theta e^{\theta G\left(X_{x_0}(s)/s, s\right)} \left[\partial_s G\left(X_{x_0}(s)/s, s\right) - (X_{x_0}(s)/s^2)\partial_z G\left(X_{x_0}(s)/s, s\right)  \right]  }{\left[e^{\theta G\left(X_{x_0}(s)/s, s\right)} - 1 \right]}  = 0,\\
&\mbox{ with initial condition } v(s_1)=  C(z_1, s_1; x_0,s_1).
\end{split}
\end{equation}
Equation (\ref{PO3}) is a linear ODE with solution given by 
\begin{equation}\label{PO4}
\begin{split}
&v(s) = C(z_1, s_1; x_0,s_1)  \\
& \times \exp \left( \int_{s}^{s_1} \frac{\theta e^{\theta G\left(X_{x_0}(u)/u, u\right)} \left[\partial_s G\left(X_{x_0}(u)/u,u\right) - (X_{x_0}(u)/u^2)\partial_z G\left( X_{x_0}(u)/u,u\right)  \right]  }{\left[e^{\theta G\left(X_{x_0}(u)/u, s\right)} - 1 \right]} du \right).
\end{split}
\end{equation}

We also note that (\ref{PO2}) has integral curves
\begin{equation}\label{PO5}
\exp \left(\theta G \left( X(s)/s, s \right) \right) = C.
\end{equation}
Indeed, differentiating the left side of (\ref{PO5}) with respect to $s$, we get
\begin{equation*}
\begin{split}
&\theta \exp \left(\theta G \left( X(s)/s, s \right) \right)  \cdot \left( \partial_s G \left( X(s)/s, s \right) - (X(s)/s^2) \partial_z G \left( X(s)/s, s \right) + (X'(s)/s)  \partial_z G \left( X(s)/s, s \right) \right)  \\
&\theta \exp \left(\theta G \left( X(s)/s, s \right) \right) \cdot \left[ -\frac{\theta \cdot  \partial_z G(X(s)/s,s)}{s\left(e^{\theta G(X(s)/s,s)} - 1 \right) }  + (X'(s)/s)  \partial_z G \left( X(s)/s, s \right)\right] = 0,
\end{split}
\end{equation*}
where in going from the first to the second line we used (\ref{S7SpecEqn2}) which states that
\begin{equation}\label{SASPecEqn2}
\begin{split}
 &\partial_s G(z/s,s) - (z/s^2) \partial_z G(z/s,s) =  -\frac{\theta \cdot  \partial_z G(z/s,s)}{s\left(e^{\theta G(z/s,s)} - 1 \right) },
\end{split}
\end{equation}
and in the last equality we used that $X(s)$ solves (\ref{PO2}). Using (\ref{PO5}) and (\ref{SASPecEqn2}) we rewrite (\ref{PO4}) as 
\begin{equation}\label{PO6}
\begin{split}
&v(s) = C(z_1, s_1; x_0,s_1) \cdot \exp \left( -\int_{s}^{s_1} \frac{\theta^2 e^{\theta G\left(X_{x_0}(u)/u, u\right)} \partial_z G(X_{x_0}(u)/u,u) }{u\left[e^{\theta G\left(X_{x_0}(u)/u, u\right)} - 1 \right]^2} du \right)  \\
&  = C(z_1, s_1; x_0,s_1) \cdot \exp \left( - \frac{\theta^2 e^{\theta G\left(x_0/s_1, s_1\right)}}{\left[e^{\theta G\left(x_0/s_1, s_1\right)} - 1 \right]^2} \int_{s}^{s_1} \frac{\partial_z G(X_{x_0}(u)/u,u) }{u} du \right).
\end{split}
\end{equation}
We now let $C_{x_0} = e^{\theta G(x_0/s_1, s_1)}$ and note from (\ref{S31E14}) that 
\begin{equation}\label{PO7}
\begin{split}
e^{\theta G(z/s,s)} =\frac{(1 - s\theta) - 2 \sqrt{(z - sz_+(s))(z - sz_-(s))}  }{2 (1 - z) } = \frac{(1 - s\theta) - 2 \sqrt{(z - a(s))(z - b(s))}  }{2 (1 - z) },
\end{split}
\end{equation}
where from (\ref{S31E12}) we have 
\begin{equation}\label{PO8}
a(s) = sz_-(s) = (1/2)(1 - \theta s) - \sqrt{\theta s} \mbox{ and }b(s) = sz_+(s) = (1/2)(1 - \theta s) + \sqrt{\theta s} .
\end{equation}

We now have
$$e^{\theta G(z/s, s)} = C_{x_0} \iff ( 1- s\theta) - 2\sqrt{(z - a(s))(z - b(s))}  = 2 C_{x_0} (1 - z) $$
$$\implies 4 (z- a(s)) (z-b(s)) = [2C_{x_0} (1-z) - ( 1- s\theta) ]^2.$$
The above sets up a quadratic equation for $z$ with two roots that are given by
$$z= 1 \mbox{ and } z = \frac{C_{x_0} \cdot \theta s + C_{x_0}^2 + \theta s - C_{x_0}}{C_{x_0}^2 - 1}.$$
Using the latter observation and (\ref{PO5}) we conclude that the solution to (\ref{PO2}) is precisely
\begin{equation}\label{PO9}
X_{x_0}(s) = \frac{C_{x_0} \cdot \theta s + C_{x_0}^2 + \theta s - C_{x_0}}{C_{x_0}^2 - 1} = \frac{C_{x_0}}{C_{x_0} + 1} + \frac{\theta s}{C_{x_0} - 1} .
\end{equation}

Our next task is to compute the integral in the second line of (\ref{PO6}). By direct computation using (\ref{PO7}), (\ref{PO9}) and the identities
$$\frac{\theta \partial_z G(z/u,u) }{u} = \frac{\partial_z \left( e^{\theta G(z/u, u)} \right)}{e^{\theta G(z/u, u)}} \mbox{ and } C_{x_0} = e^{\theta G(X_{x_0}(s_1)/s_1, s_1)} =e^{\theta G(X_{x_0}(u)/u, u)}  ,$$
we obtain
\begin{equation*}
\frac{\theta \partial_z G(X_{x_0}(u)/u,u) }{u} = - \frac{(C_{x_0}^2 - 1) (\theta s - 1) \cdot \sqrt{P} + s^2 (C_{x_0} + 1)^2 \theta^2 +(6-2C_{x_0}^2) + (C_{x_0} - 1)^2}{\sqrt{P} \cdot (s (C_{x_0}+1) \theta - C_{x_0} + 1) \cdot (\theta s + \sqrt{P}) },
\end{equation*}
where 
\begin{equation*}
P = \left( \frac{s \theta (C_{x_0} +1)^2 - (C_{x_0} - 1)^2 }{C_{x_0}^2 - 1 } \right)^2.
\end{equation*}
Opening $\sqrt{P}$ with positive sign and performing some simplifications we arrive at
\begin{equation}\label{PO10}
\frac{\theta \partial_z G(X_{x_0}(u)/u,u) }{u}  = - \frac{(C_{x_0} - 1)^2 (C_{x_0} + 1)^2}{C_{x_0} \cdot (C_{x_0}^2 \theta u + 2 C_{x_0} \theta u - C_{x_0}^2 + \theta u + 2 C_{x_0} - 1)}.
\end{equation}
Using (\ref{PO10}) we compute directly
\begin{equation}\label{PO11}
\begin{split}
\int_{s}^{s_1} \frac{\theta^2 \partial_z G(X_{x_0}(u)/u,u) }{u} du = - \frac{(C_{x_0} - 1)^2 }{C_{x_0}} \cdot \int_{s}^{s_1} \frac{\theta (C_{x_0} + 1)^2 du}{(C_{x_0} + 1)^2 \theta u - (C_{x_0} - 1)^2 }  \\
= - \frac{(C_{x_0} - 1)^2 }{C_{x_0}} \cdot \left[ \log \left( (C_{x_0} + 1)^2 \theta s_1 - (C_{x_0} - 1)^2  \right) -  \log \left( (C_{x_0} + 1)^2 \theta s - (C_{x_0} - 1)^2  \right) \right].
\end{split}
\end{equation}

Substituting (\ref{PO11}) into (\ref{PO6}) and using that $C_{x_0} = e^{\theta G(x_0/s_1, s_1)}$ we conclude
\begin{equation}\label{PO12}
f(X_{x_0}(s), s) = v(s)  =  C(z_1, s_1; x_0,s_1)  \cdot \frac{(C_{x_0} + 1)^2 \cdot \theta s_1 - (C_{x_0} - 1)^2}{(C_{x_0} + 1)^2 \cdot \theta s - (C_{x_0} - 1)^2}.
\end{equation}
Setting $X_{x_0}(s) = z$ we can backwards engineer from the above equations
$$C_{x_0} = e^{\theta G(z/s,s)} \mbox{ and } x_0 = X_{x_0}(s_1)  = z + \frac{\theta (s_1 - s)}{C_{x_0} - 1} = F(z; s_1,s),$$
where we recall that $F(z;s,t)$ was introduced in (\ref{Transport}). With the latter identities (\ref{PO12}) suggests the formula
\begin{equation}\label{PO13}
\begin{split}
&f(z, s) = C(z_1, s_1; F(z; s_1,s),s_1)  \cdot  \frac{\left(e^{\theta G(z/s,s)} + 1 \right)^2 \cdot \theta s_1 - \left(e^{\theta G(z/s,s)} - 1 \right)^2}{\left(e^{\theta G(z/s,s)} + 1\right)^2 \cdot \theta s - \left(e^{\theta G(z/s,s)} - 1 \right)^2},
\end{split}
\end{equation}
which is precisely the formula in (\ref{LimCov}).
\end{appendix}

\bibliographystyle{alpha}
\bibliography{PD}

\begin{thebibliography}{BKMM07}

\bibitem[ADPZ20]{ADPZ}
K.~Astala, E.~Duse, I.~Prause, and X.~Zhong.
\newblock Dimer models and conformal structures.
\newblock 2020.
\newblock arXiv:2004.02599.

\bibitem[AG22]{AggGorin}
A.~Aggarwal and V.~Gorin.
\newblock Gaussian unitary ensemble in random lozenge tilings.
\newblock {\em Probab. Theory Relat. Fields}, 184(3-4):1139--1166, 2022.

\bibitem[Agg23]{Aggarwal19}
A.~Aggarwal.
\newblock Universality for lozenge tilings local statistics.
\newblock {\em Ann. Math.}, 198(3):881--1012, 2023.

\bibitem[AH21]{AggHuang21}
A.~Aggarwal and J.~Huang.
\newblock Edge statistics for lozenge tilings of polygons, {I}{I}: {A}iry line
  ensemble.
\newblock 2021.
\newblock arXiv:2108.12874.

\bibitem[Ahn20]{Ahn20}
A.~Ahn.
\newblock Global universality of {M}acdonald plane partitions.
\newblock {\em Ann. Inst. H. Poincar{\'e} Probab. Statist.}, 56(3):1641--1705,
  2020.

\bibitem[AM90]{AM90}
J.~Ambj{\o}rn and Yu. Makeenko.
\newblock Properties of loop equations for the {H}ermitian matrix model and for
  two-dimensional gravity.
\newblock {\em Mod. Phys. Lett. A}, 5:1753--1763, 1990.

\bibitem[Bar01]{Bar}
Y.~Baryshnikov.
\newblock G{U}{E}s and queues.
\newblock {\em Probab. Theory and Relat. Fields}, 119:256--274, 2001.

\bibitem[BBDS06]{BBDT}
J.~Baik, A.~Borodin, P.~Deift, and T.~Suidan.
\newblock A model for the bus system in {C}uernevaca ({M}exico).
\newblock {\em Journal of Physics A: Mathematical and General}, 39(28):8965,
  2006.

\bibitem[BC14]{BorCor}
A.~Borodin and I.~Corwin.
\newblock Macdonald processes.
\newblock {\em Probab. Theory Relat. Fields}, 158:225--400, 2014.

\bibitem[BF14]{BorFer}
A.~Borodin and P.~Ferrari.
\newblock Anisotropic growth of random surfaces in $2$+$1$ dimensions.
\newblock {\em Commun. Math. Phys.}, 325(2):603--684, 2014.

\bibitem[BG13]{BoGu2}
G.~Borot and A.~Guionnet.
\newblock Asymptotic expansion of $\beta$ matrix models in the one-cut regime.
\newblock {\em Commun. Math. Phys.}, 317:447--483, 2013.

\bibitem[BG15]{BGJ}
A.~Borodin and V.~Gorin.
\newblock General $\beta$-{J}acobi corners process and the {G}aussian free
  field.
\newblock {\em Comm. Pure Appl. Math.}, 68:1774--1844, 2015.

\bibitem[BG18]{BufGor18}
A.~Bufetov and V.~Gorin.
\newblock Flucutaitons of particle systems determined by {S}chur generating
  functions.
\newblock {\em Adv. Math.}, 338:702--781, 2018.

\bibitem[BG19]{BufGor19}
A.~Bufetov and V.~Gorin.
\newblock Fourier transform on high-dimensional unitary groups with
  applications to random tilings.
\newblock {\em Duke Math. J.}, 168(13):2559--2649, 2019.

\bibitem[BG24]{BoGu}
G.~Borot and A.~Guionnet.
\newblock Asymptotic expansion of $\beta$ matrix models in the multi-cut
  regime.
\newblock {\em Forum Math. Sigma}, 12:e13, 2024.

\bibitem[BGG17]{BGG}
A.~Borodin, V.~Gorin, and A.~Guionnet.
\newblock Gaussian asymptotics of discrete $\beta$-ensembles.
\newblock {\em Publications math{\' e}matiques de l'IH{\' E}S}, 125:1--78,
  2017.

\bibitem[BKMM07]{BKMM}
J.~Baik, T.~Kriecherbauer, K.T.-R. McLaughlin, and P.D. Miller.
\newblock {\em Discrete orthogonal polynomials: asymptotics and applications.
  In: Annals of Mathematical Studies}.
\newblock Princeton University Press, Princeton, 2007.

\bibitem[BO07]{BO07}
A.~Borodin and G.~Olshanski.
\newblock Asymptotics of {P}lancherel-type random partitions.
\newblock {\em J. Algebra}, 313:40--60, 2007.

\bibitem[BO12]{BO12}
A.~Borodin and G.~Olshanski.
\newblock The boundary of the {G}elfand-{T}setlin graph: a new approach.
\newblock {\em Adv. Math.}, 230(4-6):1738--1779, 2012.

\bibitem[Bor14]{BorWig}
A.~Borodin.
\newblock {C}{L}{T} for spectra of submatrices of {W}igner random matrices.
\newblock {\em Mosc. Math. J.}, 14(1):29--38, 2014.

\bibitem[BR89]{Rota}
G.~Birkhoff and G.-C. Rota.
\newblock {\em Ordinary {D}ifferential {E}quations, {F}ourth {E}dition}.
\newblock John Wiley and Sons, Inc, New York, USA, 1989.

\bibitem[CDM23]{CDM23}
C.~Cuenca, M.~Dolega, and A.~Moll.
\newblock Universality of global asymptotics of {J}ack-deformed random {Y}oung
  diagrams at varying temperatures.
\newblock 2023.
\newblock arXiv:2304.04089.

\bibitem[CE06]{EyCh}
L.O. Chekhov and B.~Eynard.
\newblock Matrix eigenvalue model: {F}eynman graph technique for all genera.
\newblock {\em JHEP}, (0612:026), 2006.
\newblock arXiv: math-ph/0604014.

\bibitem[CEP96]{CEP}
H.~Cohn, N.~Elkies, and J.~Propp.
\newblock Local statistics for random domino tilings of the {A}ztec diamond.
\newblock {\em Duke Math. J.}, 85:117--166, 1996.

\bibitem[CKP01]{CKP}
H.~Cohn, R.~Kenyon, and J.~Propp.
\newblock A variational principle for domino tilings.
\newblock {\em J. Amer. Math. Soc.}, 14:297--346, 2001.

\bibitem[DD22]{DD21}
S.~Das and E.~Dimitrov.
\newblock Large deviations for discrete $\beta$-ensembles.
\newblock {\em J. Funct. Anal.}, 283(1):109487, 2022.

\bibitem[DJM16]{DJM16}
E.~Duse, K.~Johansson, and A.~Metcalfe.
\newblock The {C}usp-{A}iry process.
\newblock {\em Electron. J. Probab.}, 21:1--50, 2016.

\bibitem[DK19]{DK19}
E.~Dimitrov and A.~Knizel.
\newblock Log-gases on quadratic lattices via discrete loop equations and
  $q$-plane partitions.
\newblock {\em J. Funct. Anal.}, 276(10):3067--3169, 2019.

\bibitem[DK21]{DK21}
E.~Dimitrov and A.~Knizel.
\newblock Multi-level loop equations for $\beta$-corners processes.
\newblock 2021.
\newblock arXiv:2108.07710.

\bibitem[DK22]{DK2020}
E.~Dimitrov and A.~Knizel.
\newblock Asymptotics of discrete $\beta $-corners processes via two-level
  discrete loop equations.
\newblock {\em Probab. Math. Phys.}, 3(2):247--342, 2022.

\bibitem[DM15]{DM15}
E.~Duse and A.~Metcalfe.
\newblock Asymptotic geometry of discrete interlacing patterns: {P}art {I}.
\newblock {\em Intern. J. Math.}, 26:1550093, 2015.

\bibitem[DM18]{DM18}
E.~Duse and A.~Metcalfe.
\newblock Universal edge fluctuations of discrete interlaced particle systems.
\newblock {\em Ann. Math. Blaise Pascal}, 25(1):75--197, 2018.

\bibitem[DM20]{DM20}
E.~Duse and A.~Metcalfe.
\newblock Asymptotic geometry of discrete interlacing patterns: {P}art {I}{I}.
\newblock {\em Ann. Inst. Fourier}, 70:375--436, 2020.

\bibitem[DNPV12]{NPV}
E.~Di~Nezza, G.~Palatucci, and E.~Valdinoci.
\newblock Hitchhiker's guide to the fractional {S}obolev spaces.
\newblock {\em Bull. Sci. Math.}, 136:521--573, 2012.

\bibitem[Dub09]{Dub}
J.~Dub{\' e}dat.
\newblock S{L}{E} and free field: partition functions and couplings.
\newblock {\em J. Amer. Math. Soc.}, 22:995--1054, 2009.

\bibitem[Dur19]{Durrett}
R.~Durrett.
\newblock {\em Probability: {T}heory and {E}xamples, 4th ed.}
\newblock Cambridge University Press, Cambridge, 2019.

\bibitem[Eyn04]{Ey1}
B.~Eynard.
\newblock All genus correlation functions for the {H}ermitian $1$-matrix model.
\newblock {\em JHEP}, (0411:031), 2004.
\newblock arXiv:hep-th/0407261.

\bibitem[GH19]{huang}
A.~Guionnet and J.~Huang.
\newblock Rigidity and edge universality of discrete $\beta$-ensembles.
\newblock {\em Comm. Pure Appl. Math.}, 72(9):1875--1982, 2019.

\bibitem[GH22]{GH22}
V.~Gorin and J.~Huang.
\newblock Dynamical {L}oop {E}quations.
\newblock 2022.
\newblock arXiv:2205.15785v1.

\bibitem[Gor21]{GorBook}
V.~Gorin.
\newblock {\em Lectures on Random Lozenge Tilings, Vol. 193}.
\newblock Cambridge University Press, Cambridge, 2021.

\bibitem[GP15]{GorPan15}
V.~Gorin and G.~Panova.
\newblock Asymptotics of symmetric polynomials with applications to statistical
  mechanics and representaiton theory.
\newblock {\em Ann. Probab.}, 43:3052--3132, 2015.

\bibitem[GS15]{GS}
V.~Gorin and M.~Shkolnikov.
\newblock Multilevel {D}yson {B}rownian motions via {J}ack polynomials.
\newblock {\em Probab. Theory Relat. Fields}, 163:413--463, 2015.

\bibitem[HMP10]{HMP}
X.~Hu, J.~Miller, and Y.~Peres.
\newblock Thick points of the {G}aussian free field.
\newblock {\em Ann. Probab.}, 38:896--926, 2010.

\bibitem[Hua21]{HuangJGF}
J.~Huang.
\newblock Law of large numbers and central limit theorems through {J}ack
  generating functions.
\newblock {\em Adv. Math.}, 380:107545, 2021.

\bibitem[Hua24]{Huang24}
J.~Huang.
\newblock Edge statistics for lozenge tilings of polygons, {I}: concentration
  of height function on strip domains.
\newblock {\em Probab. Theory Relat. Fields}, 188(1-2):337--485, 2024.

\bibitem[HYZ23]{HYZ}
J.~Huang, F.~Yang, and L.~Zhang.
\newblock Pearcey universality at cusps of polygonal lozenge tiling.
\newblock 2023.
\newblock arXiv:2306.01178.

\bibitem[JN06]{JohNor}
K.~Johansson and E.~Nordenstam.
\newblock Eigenvalues of {G}{U}{E} minors.
\newblock {\em Electron. J. Probab.}, 11:1342--1371, 2006.

\bibitem[Joh98]{JL}
K.~Johansson.
\newblock On fluctuations of eigenvalues of random {H}ermitian matrices.
\newblock {\em Duke Mathematical Journal}, 91:151--204, 1998.

\bibitem[Joh01]{jo-en}
K.~Johansson.
\newblock Discrete orthogonal polynomial ensembles and the {P}lancherel
  measure.
\newblock {\em Ann. Math.}, pages 259--296, 2001.

\bibitem[Joh05]{Joh05}
K.~Johansson.
\newblock The arctic circle boundary and the {A}iry point process.
\newblock {\em Ann. Probab.}, 33(1):1--30, 2005.

\bibitem[Ken00]{Kenyon00}
R.~Kenyon.
\newblock {\em The Planar Dimer Model With Boundary: A Survey, In: Directions
  in Mathematical Quasicrystals (M. Baake and R.V. Moody, ed.)}.
\newblock AMS, Providence, 307-328, 2000.

\bibitem[Ken08]{Kenyon08}
R.~Kenyon.
\newblock Height fluctuations in the honeycomb dimer model.
\newblock {\em Commun. Math. Phys.}, 281:675--709, 2008.

\bibitem[Ken09]{Kenyon09}
R.~Kenyon.
\newblock {\em Lectures on {D}imers, {I}n: Statistical {M}echanics, Vol. 16, of
  {I}{A}{S}/{P}ark {C}ity {M}ath. {S}er.}
\newblock AMS, Providence, 191-230, 2009.

\bibitem[KLS10]{KLS}
R.~Koekoek, P.~A. Lesky, and R.~F. Swarttouw.
\newblock {\em Hypergeometric Orthogonal Polynomials and Their $q$-Analogues}.
\newblock Springer Monographs in Mathematics, Springer-Verlag, New York, 2010.

\bibitem[KO07]{KO07}
R.~Kenyon and A.~Okounkov.
\newblock Limit shapes and the complex {B}urgers equation.
\newblock {\em Acta. Math.}, 199(2):263--302, 2007.

\bibitem[KOO98]{KOO}
S.~Kerov, A.~Okounkov, and G.~Olshanski.
\newblock The boundary of {Y}oung graph with {J}ack edge multiplicities.
\newblock {\em Int. Math. Res. Not.}, 1998(4):173--199, 1998.

\bibitem[KOR02]{KOR}
W.~Konig, N.~O\'Connel, and S.~Roch.
\newblock Non-colliding random walks, tandem queues, and discrete orthogonal
  polynomial ensembles.
\newblock {\em Electronic Journal of Probability}, 7:1--24, 2002.

\bibitem[KOS06]{KOS}
R.~Kenyon, A.~Okounkov, and S.~Sheffield.
\newblock Dimers and amoebae.
\newblock {\em Ann. Math.}, 163:1019--1056, 2006.

\bibitem[KS10]{KS}
T.~Kriecherbauer and M.~Shcherbina.
\newblock Fluctuations of eigenvalues of matrix models and their applications.
\newblock 2010.
\newblock Preprint, arXiv:1003.6121.

\bibitem[Mac95]{Mac}
I.~G. Macdonald.
\newblock {\em Symmetric functions and {H}all polynomials}.
\newblock Oxford University Press Inc., New York, 2 edition, 1995.

\bibitem[Mat19]{Matveev19}
K.~Matveev.
\newblock Macdonald-positive specializations of the algebra of symmetric
  functions: {P}roof of the {K}erov conjecture.
\newblock {\em Ann. Math.}, 189:277--316, 2019.

\bibitem[Mid83]{MI83}
A.A. Midgal.
\newblock Loop equations and $1/n$ expansion.
\newblock {\em Phys. Reports}, 102:199--290, 1983.

\bibitem[Nek16]{N}
N.~Nekrasov.
\newblock {B}{P}{S}/{C}{F}{T} correspondence: non-perturbative
  {D}yson-{S}chwinger equations and $qq$-characters.
\newblock {\em J. High Energy Phys.}, 3:Article 181, 2016.

\bibitem[Ner03]{Ne}
Yu. Neretin.
\newblock Rayleigh triangles and non-matrix interpolation of matrix beta
  integrals.
\newblock {\em Sbornik: Mathematics}, 194(4):515--540, 2003.

\bibitem[Nor09]{Nor}
E.~Nordenstam.
\newblock Interlaced particles in tilings and random matrices.
\newblock 2009.
\newblock Doctoral Thesis, KTH.

\bibitem[NP12]{NS}
N.~Nekrasov and V.~Pestun.
\newblock Seiberg-{W}itten geometry of four-dimensional $\mathcal{N} = 2$
  quiver gauge theories.
\newblock 2012.
\newblock arXiv:1211.2240.

\bibitem[NPS18]{NPS}
N.~Nekrasov, V.~Pestun, and S.~Shatashvilli.
\newblock Quantum geometry and quiver gauge theories.
\newblock {\em Commun. Math. Phys.}, 357:519--567, 2018.

\bibitem[Oko01]{okounkov}
A.~Okounkov.
\newblock Infinite wedge and random partitions.
\newblock {\em Sel. Math.}, 7(1):57--81, 2001.

\bibitem[OO98]{OO98}
A.~Okounkov and G.~Olshanski.
\newblock Asymptotics of {J}ack polynomials as the number of variables goes to
  infinity.
\newblock {\em Int. Math. Res. Not.}, 13(3):641--682, 1998.

\bibitem[OR03]{OR03}
A.~Okounkov and N.~Reshetikhin.
\newblock Correlation function of {S}chur process with application to local
  geometry of a random $3$-dimensional {Y}oung diagram.
\newblock {\em J. Amer. Math. Soc.}, 16:581--603, 2003.

\bibitem[OR06]{OR06}
A.~Okounkov and N.~Reshetikhin.
\newblock The birth of a random matrix.
\newblock {\em Mosc. Math. J.}, 6:553--566, 2006.

\bibitem[OR07]{OR07}
A.~Okounkov and N.~Reshetikhin.
\newblock Random skew plane partitions and the {P}earcey process.
\newblock {\em Commun. Math. Phys.}, 269(3):571--509, 2007.

\bibitem[Pet14]{P1}
L.~Petrov.
\newblock Asymptotics of random lozenge tilings via {G}elfand-{T}setlin
  schemes.
\newblock {\em Probab. Theory Relat. Fields}, 160:429--487, 2014.

\bibitem[Pet15]{P2}
L.~Petrov.
\newblock Asymptotics of uniformly random lozenge tilings of polygons.
  {G}aussian free field.
\newblock {\em Ann. Probab.}, 43:1--43, 2015.

\bibitem[PT11]{Taqqu}
G.~Peccati and M.~Taqqu.
\newblock {\em Wiener {C}haos: {M}oments, {C}umulants and {D}iagrams}.
\newblock Springer-Verlag Italia, Italy, 2011.

\bibitem[Shc13]{S}
M.~Shcherbina.
\newblock Fluctuations of linear eigenvalue statistics of $\beta$ matrix models
  in the multi-cut regime.
\newblock {\em J. Stat. Phys.}, 151:1004--1034, 2013.

\bibitem[She05]{Sheff05}
S.~Sheffield.
\newblock Random surfaces.
\newblock {\em Ast{\'e}risque}, (304), 2005.
\newblock Soc. Math. France, Paris.

\bibitem[She07]{Sheff}
S.~Sheffield.
\newblock Gaussian free fields for mathematicians.
\newblock {\em Probab. Theory Relat. Fields}, 139:521--541, 2007.

\bibitem[SS03]{SS}
E.~Stein and R.~Shakarchi.
\newblock {\em Complex analysis}.
\newblock Princeton University Press, Princeton, 2003.

\end{thebibliography}

\end{document}